\providecommand{\U}[1]{\protect\rule{.1in}{.1in}}
\newtheorem{theorem}{Theorem}
\newtheorem{corollary}[theorem]{Corollary}
\newtheorem{lemma}[theorem]{Lemma}
\newtheorem{notation}[theorem]{Notation}
\newtheorem{proposition}[theorem]{Proposition}
\newtheorem{remark}[theorem]{Remark}
\newenvironment{proof}[1][Proof]{\textbf{#1.} }{\ \rule{0.5em}{0.5em}}
\begin{document}

\title{Higher-Dimensional General Jacobi Identities I}
\author{Hirokazu NISHIMURA\\Institute of Mathematics, University of Tsukuba\\Tsukuba, Ibaraki 305-8571\\Japan}
\maketitle

\begin{abstract}
It was shown by the author [International Journal of Theoretical Physics 36
(1997), 1099-1131] that what is called the general Jacobi identity, obtaining
in microcubes, underlies the Jacobi identity of vector fields. It is well
known in the theory of Lie algebras that a plethora of higher-dimensional
generalizations of the Jacobi identity hold, though they are usually
established not as a derivation on the nose from the axioms of Lie algebras
but by making an appeal to the so-called Poincar\'{e}-Birkhoff-Witt theorem
and the like. The general Jacobi identity was rediscovered by Kirill Mackenzie
in the second decade of this century [Geometric Methods in Physics, 357-366,
Birkh\"{a}user/Springer 2013]. The principal objective in this paper is to
investigate a four-dimensional generalization of the general Jacobi identity
in detail. In a subsequent paper we will propose a uniform method for
establishing a bevy of higher-dimensional generalizations of the general
Jacobi identity under a single umbrella.

\end{abstract}

\section{Introduction}

It is known in synthetic differential geometry (cf. \cite{ko} and \cite{la})
that vector fields on a microlinear space $M$ forms a Lie algebra, for which
the following antisymmetry holds:%

\begin{equation}
\left[  X_{1},X_{2}\right]  +\left[  X_{2},X_{1}\right]  =0 \label{1.1}%
\end{equation}
It was shown in \cite{kola}\ that a bit deeper theorem in the following
underlies the above identity.

\begin{theorem}
Let $M$\ be a microlinear space. Given microsquares $\gamma_{12},\gamma
_{21}:D^{2}\rightarrow M$ with $\gamma_{12}\mid D\left(  2\right)
=\gamma_{21}\mid D\left(  2\right)  $, we have
\begin{equation}
\left(  \gamma_{12}\overset{\cdot}{-}\gamma_{21}\right)  +\left(  \gamma
_{21}\overset{\cdot}{-}\gamma_{12}\right)  =0 \label{1.2}%
\end{equation}

\end{theorem}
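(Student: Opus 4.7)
The plan is to reduce the identity to the antisymmetry of negation in the tangent module at $m := \gamma_{12}(0,0) = \gamma_{21}(0,0)$. The operation $\overset{\cdot}{-}$ is, by virtue of microlinearity, literally a subtraction in an $R$-module, so once that is spelled out carefully there is essentially nothing left to compute.

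First I would recall the definition of $\gamma_{12}\,\overset{\cdot}{-}\,\gamma_{21}$. The relevant quasi-colimit is the one expressing $D^{2}$ as $D(2)$ glued to a further copy of $D$ along the origin, via the map $d\mapsto (d,d)$ (equivalently: a map out of $D^{2}$ into a microlinear object is the same as a map out of $D(2)$ together with a compatible tangent vector at the origin, accounting for the coefficient of $d_{1}d_{2}$). Applied to our two microsquares that coincide on $D(2)$, microlinearity of $M$ delivers a unique tangent vector $\tau\in T_{m}M$ that records the discrepancy $\gamma_{12}(d_{1},d_{2})$ versus $\gamma_{21}(d_{1},d_{2})$ along $d_{1}d_{2}$; by definition $\tau=\gamma_{12}\,\overset{\cdot}{-}\,\gamma_{21}$.

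Next, I would observe that interchanging the two microsquares reverses the sign of this discrepancy, so the uniqueness clause of the quasi-colimit forces $\gamma_{21}\,\overset{\cdot}{-}\,\gamma_{12}=-\tau$ in $T_{m}M$. Invoking the $R$-module structure on $T_{m}M$, itself a further consequence of microlinearity, I would then conclude
\begin{equation*}
(\gamma_{12}\,\overset{\cdot}{-}\,\gamma_{21})+(\gamma_{21}\,\overset{\cdot}{-}\,\gamma_{12})=\tau+(-\tau)=0.
\end{equation*}

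The principal obstacle is not computation but sign bookkeeping: one has to ensure that the conventions chosen for the quasi-colimit presentation of $D^{2}$, for the extraction of $\tau$ from the pair $(\gamma_{12},\gamma_{21})$, and for the additive structure on $T_{m}M$, are mutually compatible. Once those conventions are pinned down in a single diagram, the asserted identity (\ref{1.2}) is essentially tautological, amounting to $\tau-\tau=0$; it is precisely this that Kock--Lavendhomme observe underlies the antisymmetry (\ref{1.1}) of the bracket of vector fields.
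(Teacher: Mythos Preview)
The paper does not prove this statement itself; it is quoted from Kock--Lavendhomme as a known result underpinning the antisymmetry of the bracket. Your overall strategy---establish that $\gamma_{21}\overset{\cdot}{-}\gamma_{12}=-(\gamma_{12}\overset{\cdot}{-}\gamma_{21})$ in $T_mM$ and conclude via $\tau+(-\tau)=0$---is the standard one and is correct in outline.

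Your description of the underlying quasi-colimit, however, is not right and does not match the paper's framework. The strong difference is \emph{not} defined via a presentation of $D^{2}$ as ``$D(2)$ glued to a further copy of $D$ along the origin''; a single microsquare does not decompose as a $D(2)$-map together with a tangent vector (the restriction $M^{D^{2}}\to M^{D(2)}$ has no canonical section, so there is no such splitting). The relevant quasi-colimit, recorded in the paper as Lemma~\ref{l2.1} and Corollary~\ref{cl2.1}, glues \emph{two} copies of $D^{2}$ along $D(2)$ to produce $D^{3}\{(1,3),(2,3)\}$, with structure maps $j_{1}^{2}(d_{1},d_{2})=(d_{1},d_{2},d_{1}d_{2})$ and $j_{2}^{2}(d_{1},d_{2})=(d_{1},d_{2},0)$; one then sets $(\gamma_{1}\overset{\cdot}{-}\gamma_{2})(d)=\mathfrak{n}^{2}_{(\gamma_{1},\gamma_{2})}(0,0,d)$. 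With this correct setup in hand, the sign reversal you assert (but do not argue) follows from the involution $(d_{1},d_{2},d_{3})\mapsto(d_{1},d_{2},d_{1}d_{2}-d_{3})$ of $D^{3}\{(1,3),(2,3)\}$, which interchanges $j_{1}^{2}$ and $j_{2}^{2}$; uniqueness in Corollary~\ref{cl2.1} then gives $(\gamma_{21}\overset{\cdot}{-}\gamma_{12})(d)=\mathfrak{n}^{2}_{(\gamma_{12},\gamma_{21})}(0,0,-d)$, i.e.\ the $(-1)$-scalar multiple in $T_{m}M$, and your conclusion follows.
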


Now we consider the famous Jacobi identity.
\begin{equation}
\left[  X_{1},\left[  X_{2},X_{3}\right]  \right]  +\left[  X_{2},\left[
X_{3},X_{1}\right]  \right]  +\left[  X_{3},\left[  X_{1},X_{2}\right]
\right]  =0\label{1.3}%
\end{equation}
It claims that the sum of $\left[  X_{1},\left[  X_{2},X_{3}\right]  \right]
$'s\ with the three cyclic permutations of $\left\{  1,2,3\right\}  $\ applied
vanishes. We note in passing that the three cyclic permutations of $\left\{
1,2,3\right\}  $\ are no other than the three even permutations of $\left\{
1,2,3\right\}  $. It has been demonstrated in \cite{ni1}, \cite{ni2},
\cite{ni3} and \cite{ni5} that the following deeper theorem underlies the
above identity.

\begin{theorem}
(General Jacobi Identity) Let $M$\ be a microlinear space. Given microcubes
$\gamma_{123},\gamma_{132},\gamma_{213},\gamma_{231},\gamma_{312},\gamma
_{321}:D^{3}\rightarrow M$ with
\begin{align*}
\gamma_{123}  &  \mid\left\{  \left(  d_{1},d_{2},d_{3}\right)  \in D^{3}\mid
d_{2}d_{3}=0\right\}  =\gamma_{132}\mid\left\{  \left(  d_{1},d_{2}%
,d_{3}\right)  \in D^{3}\mid d_{2}d_{3}=0\right\} \\
\gamma_{231}  &  \mid\left\{  \left(  d_{1},d_{2},d_{3}\right)  \in D^{3}\mid
d_{2}d_{3}=0\right\}  =\gamma_{321}\mid\left\{  \left(  d_{1},d_{2}%
,d_{3}\right)  \in D^{3}\mid d_{2}d_{3}=0\right\} \\
\gamma_{231}  &  \mid\left\{  \left(  d_{1},d_{2},d_{3}\right)  \in D^{3}\mid
d_{1}d_{3}=0\right\}  =\gamma_{213}\mid\left\{  \left(  d_{1},d_{2}%
,d_{3}\right)  \in D^{3}\mid d_{1}d_{3}=0\right\} \\
\gamma_{312}  &  \mid\left\{  \left(  d_{1},d_{2},d_{3}\right)  \in D^{3}\mid
d_{1}d_{3}=0\right\}  =\gamma_{132}\mid\left\{  \left(  d_{1},d_{2}%
,d_{3}\right)  \in D^{3}\mid d_{1}d_{3}=0\right\} \\
\gamma_{312}  &  \mid\left\{  \left(  d_{1},d_{2},d_{3}\right)  \in D^{3}\mid
d_{1}d_{2}=0\right\}  =\gamma_{321}\mid\left\{  \left(  d_{1},d_{2}%
,d_{3}\right)  \in D^{3}\mid d_{1}d_{2}=0\right\} \\
\gamma_{123}  &  \mid\left\{  \left(  d_{1},d_{2},d_{3}\right)  \in D^{3}\mid
d_{1}d_{2}=0\right\}  =\gamma_{213}\mid\left\{  \left(  d_{1},d_{2}%
,d_{3}\right)  \in D^{3}\mid d_{1}d_{2}=0\right\}
\end{align*}
we have
\begin{align}
&  \left(  \left(  \gamma_{123}\underset{1}{\overset{\cdot}{-}}\gamma
_{132}\right)  \overset{\cdot}{-}\left(  \gamma_{231}\underset{1}%
{\overset{\cdot}{-}}\gamma_{321}\right)  \right)  +\nonumber\\
&  \left(  \left(  \gamma_{231}\underset{2}{\overset{\cdot}{-}}\gamma
_{213}\right)  \overset{\cdot}{-}\left(  \gamma_{312}\underset{2}%
{\overset{\cdot}{-}}\gamma_{132}\right)  \right)  +\nonumber\\
&  \left(  \left(  \gamma_{312}\underset{3}{\overset{\cdot}{-}}\gamma
_{321}\right)  \overset{\cdot}{-}\left(  \gamma_{123}\underset{3}%
{\overset{\cdot}{-}}\gamma_{213}\right)  \right) \nonumber\\
&  =0 \label{1.4}%
\end{align}

\end{theorem}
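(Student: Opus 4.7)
The plan is to unfold both sides by iterated application of the strong difference operation and reduce (\ref{1.4}) to an equality of tangent vectors, which by microlinearity can in turn be verified formally at the level of the underlying Weil algebras.

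First I would recall the general shape of strong difference: by microlinearity of $M$, two microsquares $\alpha,\beta:D^{2}\rightarrow M$ with $\alpha\mid D(2)=\beta\mid D(2)$ admit a unique strong difference $\alpha\overset{\cdot}{-}\beta:D\rightarrow M$, and analogously two microcubes $\gamma,\gamma^{\prime}:D^{3}\rightarrow M$ agreeing on the coordinate subobject $\{d_{j}d_{k}=0\}$ admit a strong difference $\gamma\underset{i}{\overset{\cdot}{-}}\gamma^{\prime}:D^{2}\rightarrow M$, where $\{i,j,k\}=\{1,2,3\}$. The six compatibility hypotheses in the statement are exactly what is required in order that each of the six inner strong differences appearing in (\ref{1.4}) be defined, one pair of cubes per condition.

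Next I would verify that, for each of the three outer parentheses in (\ref{1.4}), the two microsquares produced by the inner strong differences agree on $D(2)$, so that each outer strong difference is a well-defined tangent vector in $M^{D}$. This upgrades (\ref{1.4}) into an additive identity among three elements of the microlinear module of tangent vectors, for which Theorem stated in (\ref{1.2}) serves as a lower-dimensional prototype.

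Finally I would invoke microlinearity once more: the six cubes $\gamma_{\sigma}$ together factor through a suitable quasi-colimit of infinitesimal objects, whose coordinate ring is spanned by a short list of monomials in $d_{1},d_{2},d_{3}$. The identity (\ref{1.4}) then becomes a formal equation in this Weil algebra, and the twelve monomial contributions arising from the three cyclic summands cancel in pairs indexed by the three transpositions of $\{1,2,3\}$. The main obstacle is the combinatorial bookkeeping: each of the six microcubes appears in exactly two inner strong differences, each of the three summands uses a different subtraction direction, and one must ensure that signs, subscripts and compatibility data line up consistently around the hexagon of data before the final cancellation can be read off mechanically.
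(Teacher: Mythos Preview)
The paper does not actually prove this theorem; it merely states it in the Introduction and cites \cite{ni1}, \cite{ni2}, \cite{ni3}, \cite{ni5} for the demonstration. So there is no in-paper proof to compare your proposal against directly.

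That said, your outline matches exactly the methodology the paper carries out in full for the four-dimensional analogue (Theorem~\ref{t3.2}): one first establishes an explicit quasi-colimit diagram of small objects (the paper's Theorem~\ref{t3.1} in dimension four), obtains from microlinearity a single map $\mathfrak{m}$ out of the apex object through which all the microcubes factor, and then computes every nested strong difference as an explicit expression in $\mathfrak{m}$, finally checking coordinate-by-coordinate that the contributions sum to zero. Your three-step plan is precisely this, specialized to dimension three.

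Where your proposal remains a sketch rather than a proof is in the ``main obstacle'' you yourself flag at the end. You have not written down the quasi-colimit object (the three-dimensional analogue of the $53$-dimensional $P$ in Theorem~\ref{t3.1}), you have not verified that the six cubes glue along it, and you have not carried out the explicit strong-difference computations that produce the cancelling terms. In the paper's four-dimensional proof these steps occupy dozens of pages of coordinate bookkeeping; in dimension three the analogous computation is shorter but still substantive, and simply asserting that ``the twelve monomial contributions\ldots cancel in pairs'' is the conclusion, not the argument. So your approach is correct, but the proof as written defers all of the actual work.
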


The general Jacobi identity was rediscovered by Kirill Mackenzie \cite{ma} in
a somewhat different context. We add that the general Jacobi identity plays a
fundamental role in a combinatorial or geometric proof of Jacobi-like
identities in so-called Fr\"{o}licher-Nijenhuis calculus (cf. \cite{ni4}).

Now we consider the following four-dimensional analogue of the Jacobi
identity.
\begin{align}
&  \left[  X_{1},\left[  X_{2},\left[  X_{3},X_{4}\right]  \right]  \right]
+\left[  X_{1},\left[  X_{3},\left[  X_{4},X_{2}\right]  \right]  \right]
+\left[  X_{1},\left[  X_{4},\left[  X_{2},X_{3}\right]  \right]  \right]
+\nonumber\\
&  \left[  X_{2},\left[  X_{1},\left[  X_{4},X_{3}\right]  \right]  \right]
+\left[  X_{2},\left[  X_{3},\left[  X_{1},X_{4}\right]  \right]  \right]
+\left[  X_{2},\left[  X_{4},\left[  X_{3},X_{1}\right]  \right]  \right]
+\nonumber\\
&  \left[  X_{3},\left[  X_{1},\left[  X_{2},X_{4}\right]  \right]  \right]
+\left[  X_{3},\left[  X_{2},\left[  X_{4},X_{1}\right]  \right]  \right]
+\left[  X_{3},\left[  X_{4},\left[  X_{1},X_{2}\right]  \right]  \right]
+\nonumber\\
&  \left[  X_{4},\left[  X_{1},\left[  X_{3},X_{2}\right]  \right]  \right]
+\left[  X_{4},\left[  X_{2},\left[  X_{1},X_{3}\right]  \right]  \right]
+\left[  X_{4},\left[  X_{3},\left[  X_{2},X_{1}\right]  \right]  \right]
\nonumber\\
&  =0 \label{1.5}%
\end{align}
It claims that the sum of $\left[  X_{1},\left[  X_{2},\left[  X_{3}%
,X_{4}\right]  \right]  \right]  $'s\ with the twelve even permutations of
$\left\{  1,2,3,4\right\}  $\ applied vanishes.

The principal objective in this paper is to establish a four-dimensional
version of the general Jacobi identity underpinning the above identity
(\ref{1.5}). In a subsequent paper we will discuss a slew of
higher-dimensional general Jacobi identities underlying the higher-dimensional
Jacobi identities discussed in \cite{bh} and \cite{we} (the former called them
\textit{generalized Jacobi identities}) from a coherent standpoint. For a good
introduction to generalized Jacobi identities, the reader is referred to
Chapter 8 of \cite{re}. We know well that various higher-dimensional Jacobi
identities are logical consequences of the three-dimensional Jacobi identity,
but we guess that higher-dimensional general Jacobi identities are by no means
logical consequences of the three-dimensional general Jacobi identity. We
assume the reader to be familiar with \cite{la} up to Chapter 3.

\section{Strong Differences}

First we introduce the notion of a simplicial small object after \cite{ni1},
though in a somewhat generalized form.

\begin{notation}
(\underline{Simplicial small objects}) Let $n$ be a natural number. Given a
subset $\mathfrak{p}$\ of
\[
\left\{  \left(  i,j\right)  \in\mathbb{N}\times\mathbb{N}\mid1\leq i\leq
n,1\leq j\leq n,i\neq j\right\}
\]
and a subset $\xi$ of%
\[
\left\{  i\in\mathbb{N}\mid1\leq i\leq n\right\}
\]
$D^{n}\left\{  \mathfrak{p,}\xi\right\}  $ denotes the set
\[
\left\{  \left(  d_{1},...,d_{n}\right)  :D^{n}\mid d_{i}d_{j}=0\text{ for any
}\left(  i,j\right)  \in\mathfrak{p}\text{, }d_{i}=0\text{ for any }i\in
\xi\right\}
\]
which is surely a small object. By way of example, we have
\begin{align*}
D\left(  2\right)   &  =D^{2}\left\{  \left(  1,2\right)  \right\} \\
D(3)  &  =D^{3}\left\{  \left(  1,2\right)  ,\left(  1,3\right)  ,\left(
2,3\right)  \right\}
\end{align*}
and $D^{3}\left\{  1,3\right\}  $ can be identified with $D$ via the canonical
isomorphism%
\[
d\in D\mapsto\left(  0,d,0\right)  \in D^{3}\left\{  1,3\right\}
\]

\end{notation}

The notion of strong difference in synthetic differential geometry is based
upon the following lemma.

\begin{lemma}
\label{l2.1}(cf. the first Lemma in \S 3.4 of \cite{la}) The diagram
\begin{equation}%
\begin{array}
[c]{ccccc}
&  & D^{3}\left\{  \left(  1,3\right)  ,\left(  2,3\right)  \right\}  &  & \\
& \nearrow &  & \nwarrow & \\
D^{2} &  &  &  & D^{2}\\
& \nwarrow &  & \nearrow & \\
&  & D^{2}\left\{  \left(  1,2\right)  \right\}  &  &
\end{array}
\label{l2.1.1}%
\end{equation}
with the lower two arrows being the canonical injections and the upper two
arrows being%
\begin{align*}
j_{1}^{2}  &  :\left(  d_{1},d_{2}\right)  \in D^{2}\mapsto\left(  d_{1}%
,d_{2},d_{1}d_{2}\right)  \in D^{3}\left\{  \left(  1,3\right)  ,\left(
2,3\right)  \right\} \\
j_{2}^{2}  &  :\left(  d_{1},d_{2}\right)  \in D^{2}\mapsto\left(  d_{1}%
,d_{2},0\right)  \in D^{3}\left\{  \left(  1,3\right)  ,\left(  2,3\right)
\right\}
\end{align*}
from left to right is a quasi-colimit diagram.
\end{lemma}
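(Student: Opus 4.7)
The plan is to unwind the meaning of ``quasi-colimit'' and verify the universal property by a direct coefficient calculation, reducing to the line $R$ via microlinearity. To say that (\ref{l2.1.1}) is a quasi-colimit means that, for every microlinear space $M$, applying the contravariant functor $M^{(-)}$ turns the diagram into a pullback of sets; concretely, I must show that given any microsquares $\gamma_{1},\gamma_{2}:D^{2}\rightarrow M$ with $\gamma_{1}\mid D(2)=\gamma_{2}\mid D(2)$, there exists a unique $\phi:D^{3}\{(1,3),(2,3)\}\rightarrow M$ satisfying $\phi\circ j_{1}^{2}=\gamma_{1}$ and $\phi\circ j_{2}^{2}=\gamma_{2}$.

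By the very definition of microlinearity it suffices to verify this on $M=R$, since microlinear spaces are precisely those on which the designated quasi-colimits of small objects are detected correctly. For $M=R$, morphisms out of each small object of the form $D^{n}\{\mathfrak{p},\xi\}$ are polynomials in the coordinates reduced modulo the nilpotency relations $d_{i}^{2}=0$ together with the additional relations built into the defining set. The relevant monomial bases are therefore $1,d_{1},d_{2},d_{1}d_{2}$ on $D^{2}$, the three monomials $1,d_{1},d_{2}$ on $D(2)$, and, since $d_{1}d_{3}=d_{2}d_{3}=0$ kills every mixed term containing $d_{3}$ (and incidentally $d_{1}d_{2}d_{3}=0$ as well), the five monomials $1,d_{1},d_{2},d_{3},d_{1}d_{2}$ on $D^{3}\{(1,3),(2,3)\}$.

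Writing $\phi(d_{1},d_{2},d_{3})=A+B_{1}d_{1}+B_{2}d_{2}+B_{3}d_{3}+Cd_{1}d_{2}$ and pulling back along the two maps $j_{1}^{2}$ and $j_{2}^{2}$ yields
\[
\phi\circ j_{1}^{2}(d_{1},d_{2})=A+B_{1}d_{1}+B_{2}d_{2}+(B_{3}+C)d_{1}d_{2},\qquad\phi\circ j_{2}^{2}(d_{1},d_{2})=A+B_{1}d_{1}+B_{2}d_{2}+Cd_{1}d_{2}.
\]
Expanding $\gamma_{k}(d_{1},d_{2})=a^{(k)}+b_{1}^{(k)}d_{1}+b_{2}^{(k)}d_{2}+c^{(k)}d_{1}d_{2}$ for $k=1,2$, the compatibility $\gamma_{1}\mid D(2)=\gamma_{2}\mid D(2)$ forces equality of the first three coefficients, whereupon the equations $\phi\circ j_{k}^{2}=\gamma_{k}$ are uniquely solved by $A=a^{(1)}$, $B_{i}=b_{i}^{(1)}$ for $i=1,2$, $C=c^{(2)}$ and $B_{3}=c^{(1)}-c^{(2)}$. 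This settles both existence and uniqueness, first over $R$ and then, by microlinearity, over every microlinear $M$.

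The one point one has to watch is the role of the new direction $d_{3}$: because $j_{1}^{2}$ sends $(d_{1},d_{2})$ to $(d_{1},d_{2},d_{1}d_{2})$, both the terms $B_{3}d_{3}$ and $Cd_{1}d_{2}$ of $\phi$ feed into the single $d_{1}d_{2}$-coefficient of $\phi\circ j_{1}^{2}$. Accordingly, $c^{(1)}$ and $c^{(2)}$ are free to disagree, and the gap $c^{(1)}-c^{(2)}$ is precisely what gets recorded in the $d_{3}$-direction; this is the bookkeeping that later underwrites the strong-difference operation $\overset{\cdot}{-}$. I do not foresee any substantive obstacle beyond making that observation together with correctly enumerating the five-dimensional monomial basis on $D^{3}\{(1,3),(2,3)\}$.
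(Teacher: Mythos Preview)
Your argument is correct: the coefficient calculation over $R$ is exactly what is required, and your identification of the five-dimensional basis $1,d_{1},d_{2},d_{3},d_{1}d_{2}$ on $D^{3}\{(1,3),(2,3)\}$ together with the solution $B_{3}=c^{(1)}-c^{(2)}$ is precisely the point. The paper does not supply its own proof of this lemma but simply cites Lavendhomme \cite{la}, \S 3.4; your computation is the standard one that appears there.

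One small remark on the framing: your opening sentence inverts the logical order. The \emph{definition} of quasi-colimit is that the cocone becomes a limit cone upon applying $R^{(-)}$ (equivalently, that the associated diagram of Weil algebras is a limit); it is not a statement about all microlinear $M$. Microlinear spaces are then \emph{defined} to be those $M$ for which $M^{(-)}$ carries every quasi-colimit of small objects to a limit. So the reduction to $M=R$ is not ``by microlinearity'' but is the very content of what you must check. This does not affect the mathematics, since you do carry out the verification over $R$, but the exposition would read more cleanly with the dependency stated in the right direction.
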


\begin{corollary}
\label{cl2.1}Let $M$\ be a microlinear space with two microsquares $\gamma
_{1},\gamma_{2}:D^{2}\rightarrow M$ abiding by%
\[
\gamma_{1}\mid D^{2}\left\{  \left(  1,2\right)  \right\}  =\gamma_{2}\mid
D^{2}\left\{  \left(  1,2\right)  \right\}
\]
Then there exists a unique mapping%
\[
\mathfrak{n}_{\left(  \gamma_{1},\gamma_{2}\right)  }^{2}:D^{3}\left\{
\left(  1,3\right)  ,\left(  2,3\right)  \right\}  \rightarrow M
\]
such that $\mathfrak{n}_{\left(  \gamma_{1},\gamma_{2}\right)  }^{2}\circ
j_{1}^{2}=\gamma_{1}$ and $\mathfrak{n}_{\left(  \gamma_{1},\gamma_{2}\right)
}^{2}\circ j_{2}^{2}=\gamma_{2}$.
\end{corollary}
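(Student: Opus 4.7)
The plan is to invoke Lemma \ref{l2.1} together with the hypothesis that $M$ is microlinear. Recall that microlinearity of $M$ means precisely that the functor $\mathrm{Hom}(-,M)$ (equivalently, exponentiation $M^{(-)}$) converts quasi-colimit diagrams of small objects into limit diagrams of sets. Applying this principle to the quasi-colimit diagram \eqref{l2.1.1} produces a limit (pullback) diagram in which $\mathrm{Hom}(D^3\{(1,3),(2,3)\},M)$ is the pullback of the pair of restriction maps $\mathrm{Hom}(D^2,M) \rightrightarrows \mathrm{Hom}(D^2\{(1,2)\},M)$ induced by the two canonical injections $D^2\{(1,2)\} \hookrightarrow D^2$.

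First I would therefore identify the compatibility condition for this pullback: a pair $(\gamma_1,\gamma_2)$ of elements of $\mathrm{Hom}(D^2,M)$ lies in the pullback exactly when their restrictions to $D^2\{(1,2)\}$ along the respective injections coincide. This is precisely the hypothesis $\gamma_1 \mid D^2\{(1,2)\} = \gamma_2 \mid D^2\{(1,2)\}$ imposed in the statement of the corollary.

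Next I would read off the conclusion. The universal property of the pullback delivers, from the compatible pair $(\gamma_1,\gamma_2)$, a unique element of $\mathrm{Hom}(D^3\{(1,3),(2,3)\},M)$, which we christen $\mathfrak{n}_{(\gamma_1,\gamma_2)}^2$. The two projection equalities that come for free from being the pullback datum are exactly $\mathfrak{n}_{(\gamma_1,\gamma_2)}^2 \circ j_1^2 = \gamma_1$ and $\mathfrak{n}_{(\gamma_1,\gamma_2)}^2 \circ j_2^2 = \gamma_2$, and the uniqueness clause in the corollary is the uniqueness clause of the pullback.

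There is no genuine obstacle: the work has already been done in Lemma \ref{l2.1}. The only thing to watch is bookkeeping, namely checking that the two arrows along which the restrictions are taken in the pullback match the two arrows $j_1^2$ and $j_2^2$ at the correct sides (i.e., that $j_1^2$ and $j_2^2$ both retract to the canonical inclusion $D^2\{(1,2)\} \hookrightarrow D^2$ in each factor of the diagram), but this is immediate from the explicit formulas $(d_1,d_2) \mapsto (d_1,d_2,d_1d_2)$ and $(d_1,d_2) \mapsto (d_1,d_2,0)$, both of which reduce to $(d_1,d_2)\mapsto(d_1,d_2)$ when $d_1 d_2 = 0$.
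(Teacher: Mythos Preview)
Your proposal is correct and matches the paper's intent: the corollary is stated without proof immediately after Lemma~\ref{l2.1}, being the standard immediate consequence of applying microlinearity to that quasi-colimit diagram. Your explicit unpacking of the pullback universal property and the bookkeeping check that both $j_1^2$ and $j_2^2$ restrict to the identity on $D^2\{(1,2)\}$ is exactly the reasoning the paper leaves implicit.
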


\begin{notation}
In the above notation in Corollary \ref{cl2.1} we write $\gamma_{1}%
\overset{\cdot}{-}\gamma_{2}$ for the mapping%
\[
d\in D\mapsto\mathfrak{n}_{\left(  \gamma_{1},\gamma_{2}\right)  }^{2}\left(
0,0,d\right)
\]

\end{notation}

The notion of strong difference can easily be relativized.

\begin{lemma}
\label{l2.2}Let $n$\ be a natural number. The diagram
\begin{equation}%
\begin{array}
[c]{ccccc}
&  & D^{n+3}\left\{  \left(  n+1,n+3\right)  ,\left(  n+2,n+3\right)  \right\}
&  & \\
& \nearrow &  & \nwarrow & \\
D^{n+2} &  &  &  & D^{n+2}\\
& \nwarrow &  & \nearrow & \\
&  & D^{n+2}\left\{  \left(  n+1,n+2\right)  \right\}  &  &
\end{array}
\label{l2.2.1}%
\end{equation}
with the lower two arrows being the canonical injections and the upper two
arrows being%
\begin{align*}
j_{1}^{n+2}  &  :\left(  d_{1},...,d_{n},d_{n+1},d_{n+2}\right)  \in
D^{n+2}\mapsto\left(  d_{1},...,d_{n},d_{n+1},d_{n+2},d_{n+1}d_{n+2}\right)
\in D^{n+3}\left\{  \left(  n+1,n+3\right)  ,\left(  n+2,n+3\right)  \right\}
\\
j_{2}^{n+2}  &  :\left(  d_{1},...,d_{n},d_{n+1},d_{n+2}\right)  \in
D^{n+2}\mapsto\left(  d_{1},...,d_{n},d_{n+1},d_{n+2},0\right)  \in
D^{n+3}\left\{  \left(  n+1,n+3\right)  ,\left(  n+2,n+3\right)  \right\}
\end{align*}
from left to right is a quasi-colimit diagram.
\end{lemma}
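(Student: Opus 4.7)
The plan is to reduce Lemma \ref{l2.2} to Lemma \ref{l2.1} by exhibiting diagram (\ref{l2.2.1}) as the Cartesian product of $\mathrm{id}_{D^{n}}$ with diagram (\ref{l2.1.1}), and then transferring the quasi-colimit property across the exponential adjunction, whose codomain remains microlinear.

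First I would record the evident product decompositions
\[
D^{n+2}\cong D^{n}\times D^{2},\qquad D^{n+2}\{(n+1,n+2)\}\cong D^{n}\times D^{2}\{(1,2)\},
\]
\[
D^{n+3}\{(n+1,n+3),(n+2,n+3)\}\cong D^{n}\times D^{3}\{(1,3),(2,3)\},
\]
and observe that under these identifications the two canonical injections along the bottom of (\ref{l2.2.1}) become $\mathrm{id}_{D^{n}}$ times the canonical injections appearing in (\ref{l2.1.1}), while $j_{1}^{n+2}=\mathrm{id}_{D^{n}}\times j_{1}^{2}$ and $j_{2}^{n+2}=\mathrm{id}_{D^{n}}\times j_{2}^{2}$. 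Hence (\ref{l2.2.1}) is literally (\ref{l2.1.1}) multiplied on the left by $D^{n}$.

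Next I would appeal to the exponential adjunction: for any microlinear space $M$ and any small object $K$, mappings $D^{n}\times K\to M$ correspond naturally to mappings $K\to M^{D^{n}}$. Crucially, $M^{D^{n}}$ is itself microlinear (a standard closure property of microlinearity under exponentiation by small objects; cf.\ \cite{la}). Consequently, testing the quasi-colimit property of (\ref{l2.2.1}) against $M$ is equivalent to testing the quasi-colimit property of (\ref{l2.1.1}) against the microlinear space $M^{D^{n}}$. The desired conclusion then follows at once from Lemma \ref{l2.1} applied to $M^{D^{n}}$.

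The main point requiring care—and thus the obstacle I would address first—is the bookkeeping in the second step: one must verify that the adjunction faithfully sends the morphisms $j_{i}^{n+2}$ (resp.\ the canonical injections of (\ref{l2.2.1})) to the morphisms $j_{i}^{2}$ (resp.\ the canonical injections of (\ref{l2.1.1})) when the target is switched from $M$ to $M^{D^{n}}$. This is really just the product decomposition recorded in the first step combined with the naturality of the adjunction, but it is the natural place where a subtle indexing slip could hide, and once it is dispatched nothing further remains.
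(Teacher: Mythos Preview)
Your argument is correct. Note, however, that the paper does not actually supply a proof of this lemma: it is stated immediately after the sentence ``The notion of strong difference can easily be relativized,'' with Lemma~\ref{l2.1} already on record, and no further justification is given. Your product-and-adjunction argument is precisely the standard way to cash out that remark, and it works because $M^{D^{n}}$ is microlinear whenever $M$ is (Proposition~1 in \S2.3 of \cite{la}); applying Lemma~\ref{l2.1} to $M^{D^{n}}$ then yields the perception-as-colimit property for $M$ against diagram~(\ref{l2.2.1}), and in particular for $M=\mathbb{R}$, which is the quasi-colimit condition at the Weil-algebra level.

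An alternative route, closer in spirit to what the paper does explicitly for Theorem~\ref{t3.1}, would be to verify the quasi-colimit property directly by writing out the general polynomial form of a map $D^{n+3}\{(n+1,n+3),(n+2,n+3)\}\to\mathbb{R}$ via the Kock--Lawvere axiom and checking that it is determined by its restrictions along $j_{1}^{n+2}$ and $j_{2}^{n+2}$ subject to the compatibility over $D^{n+2}\{(n+1,n+2)\}$. Your approach is more conceptual and avoids that bookkeeping entirely; the direct approach has the minor advantage of not invoking closure of microlinearity under exponentials, but that result is elementary and cited in the paper's background reference anyway.
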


\begin{corollary}
\label{cl2.2}Let $n$\ be a natural number. Let $M$\ be a microlinear space
with two mappings $\gamma_{1},\gamma_{2}:D^{n+2}\rightarrow M$ abiding by%
\[
\gamma_{1}\mid D^{n+2}\left\{  \left(  n+1,n+2\right)  \right\}  =\gamma
_{2}\mid D^{n+2}\left\{  \left(  n+1,n+2\right)  \right\}
\]
Then there exists a unique mapping%
\[
\mathfrak{n}_{\left(  \gamma_{1},\gamma_{2}\right)  }^{n+2}:D^{n+3}\left\{
\left(  n+1,n+3\right)  ,\left(  n+2,n+3\right)  \right\}  \rightarrow M
\]
such that $\mathfrak{n}_{\left(  \gamma_{1},\gamma_{2}\right)  }^{n+2}\circ
j_{1}^{n+2}=\gamma_{1}$ and $\mathfrak{n}_{\left(  \gamma_{1},\gamma
_{2}\right)  }^{n+2}\circ j_{2}^{3}=\gamma_{2}$.
\end{corollary}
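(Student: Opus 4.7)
The plan is to reduce the statement to a direct application of the universal property furnished by Lemma \ref{l2.2}, exactly parallel to how Corollary \ref{cl2.1} falls out of Lemma \ref{l2.1}. The only novelty is bookkeeping: we have $n$ extra parameters $d_{1},\ldots,d_{n}$ that play no essential role.

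First I would observe that microlinearity of $M$ means that applying the functor $\operatorname{Hom}(-,M)$ to the quasi-colimit diagram \eqref{l2.2.1} produces a genuine limit diagram of sets. Concretely, this says that the pair $(\gamma_{1},\gamma_{2})$ of maps $D^{n+2}\rightarrow M$ amalgamates into a unique map out of $D^{n+3}\left\{(n+1,n+3),(n+2,n+3)\right\}$ provided they agree on the common piece sitting at the bottom of \eqref{l2.2.1}. Second, I would verify that the bottom piece of \eqref{l2.2.1} is precisely $D^{n+2}\left\{(n+1,n+2)\right\}$, with the two downward legs being the canonical inclusions. Hence the compatibility condition required by the universal property is literally the hypothesis
\[
\gamma_{1}\mid D^{n+2}\left\{(n+1,n+2)\right\}=\gamma_{2}\mid D^{n+2}\left\{(n+1,n+2)\right\}.
\]
Third, invoking the universal property of the quasi-colimit immediately yields a unique map $\mathfrak{n}_{(\gamma_{1},\gamma_{2})}^{n+2}:D^{n+3}\left\{(n+1,n+3),(n+2,n+3)\right\}\rightarrow M$ satisfying $\mathfrak{n}_{(\gamma_{1},\gamma_{2})}^{n+2}\circ j_{1}^{n+2}=\gamma_{1}$ and $\mathfrak{n}_{(\gamma_{1},\gamma_{2})}^{n+2}\circ j_{2}^{n+2}=\gamma_{2}$ (correcting the evident typo ``$j_{2}^{3}$'' in the statement to ``$j_{2}^{n+2}$'').

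I do not expect any genuine obstacle here. The argument is formally identical to the proof of Corollary \ref{cl2.1}, with the additional parameters $d_{1},\ldots,d_{n}$ treated as dummies throughout; indeed, the only content of Lemma \ref{l2.2} beyond Lemma \ref{l2.1} is the assertion that appending $n$ free coordinates to both sides of a quasi-colimit diagram produces another quasi-colimit diagram, which is the standard stability of quasi-colimits under products with a fixed small object. Thus the entire proof reduces to ``apply Lemma \ref{l2.2} and the microlinearity of $M$.''
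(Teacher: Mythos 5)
Your proposal is correct and is precisely the argument the paper intends: the corollary is an immediate application of microlinearity of $M$ to the quasi-colimit diagram of Lemma \ref{l2.2}, whose bottom vertex is $D^{n+2}\left\{\left(n+1,n+2\right)\right\}$, exactly as Corollary \ref{cl2.1} follows from Lemma \ref{l2.1}. You are also right that ``$j_{2}^{3}$'' in the statement is a typo for ``$j_{2}^{n+2}$''.
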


\begin{notation}
Let $n$\ be a natural number. Let $M$\ be a microlinear space. Given
$\gamma:D^{n}\rightarrow M$ and a permutation $\sigma$ of $\left\{
1,...,n\right\}  $, we write $\gamma^{\sigma}$ for the mapping%
\[
\left(  d_{1},...,d_{n}\right)  \in D^{n}\mapsto\gamma\left(  d_{\sigma
^{-1}\left(  1\right)  },...,d_{\sigma^{-1}\left(  n\right)  }\right)  \in M
\]

\end{notation}

\begin{notation}
Let $n$\ be a natural number. Let $M$\ be a microlinear space.

\begin{enumerate}
\item Given $\gamma_{1},\gamma_{2}:D^{n+2}\rightarrow M$ with%
\[
\gamma_{1}\mid D^{n+2}\left\{  \left(  n+1,n+2\right)  \right\}  =\gamma
_{2}\mid D^{n+2}\left\{  \left(  n+1,n+2\right)  \right\}
\]
we write $\gamma_{1}\overset{\cdot}{\underset{1...n}{-}}\gamma_{2}$ for the
mapping%
\[
\left(  d_{1},...,d_{n},d_{n+1}\right)  \in D^{n+1}\mapsto\mathfrak{n}%
_{\left(  \gamma_{1},\gamma_{2}\right)  }^{n+2}\left(  d_{1},...,d_{n}%
,0,0,d_{n+1}\right)  \in M
\]

\item Given a permutation $\sigma$ of $\left\{  1,...,n,n+1,n+2\right\}  $ and
$\gamma_{1},\gamma_{2}:D^{n+2}\rightarrow M$ with%
\[
\gamma_{1}\mid D^{n+2}\left\{  \left(  \sigma\left(  n+1\right)
,\sigma\left(  n+2\right)  \right)  \right\}  =\gamma_{2}\mid D^{n+2}\left\{
\left(  \sigma\left(  n+1\right)  ,\sigma\left(  n+2\right)  \right)
\right\}
\]
we write $\gamma_{1}\overset{\cdot}{\underset{\sigma\left(  1\right)
...\sigma\left(  n\right)  }{-}}\gamma_{2}$ for $\left(  \gamma_{1}\right)
^{\sigma}\overset{\cdot}{\underset{1...n}{-}}\left(  \gamma_{2}\right)
^{\sigma}$.
\end{enumerate}
\end{notation}

The following result is well known.

\begin{lemma}
\label{l2.3}(cf. Proposition 6 in \S 2.2 of \cite{la}) The diagram%
\begin{equation}%
\begin{array}
[c]{ccccc}
&  & D^{2}\left\{  \left(  1,2\right)  \right\}  &  & \\
& \nearrow &  & \nwarrow & \\
D^{2}\left\{  1\right\}  &  &  &  & D^{2}\left\{  2\right\} \\
& \nwarrow &  & \nearrow & \\
&  & 1 &  &
\end{array}
\label{l2.3.1}%
\end{equation}
with the four arrows being the canonical injections is a quasi-colimit diagram.
\end{lemma}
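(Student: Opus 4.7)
The plan is to recognise the diagram (\ref{l2.3.1}) as one of the prototypical microlinearity test-diagrams and to reduce the quasi-colimit condition to an application of the Kock--Lawvere axiom. Since this result is explicitly attributed to Proposition~6 of \S2.2 of \cite{la}, the task is really to sketch the verification that the cocone enjoys the required universal property against every microlinear space.

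First, I would render the identifications explicit: the projections $(0,d_{2})\mapsto d_{2}$ and $(d_{1},0)\mapsto d_{1}$ give isomorphisms $D^{2}\{1\}\cong D\cong D^{2}\{2\}$, and under these both canonical injections out of $1$ pick out $0\in D$. The quasi-colimit claim then reduces to the assertion that, for every microlinear $M$ and every pair $\gamma_{1},\gamma_{2}:D\to M$ with $\gamma_{1}(0)=\gamma_{2}(0)$, there exists a unique $\gamma:D^{2}\{(1,2)\}\to M$ whose restrictions along the two coordinate axes recover $\gamma_{1}$ and $\gamma_{2}$ respectively. In the base universe $R$ this follows directly from Kock--Lawvere: any map $D^{2}\{(1,2)\}\to R$ admits a unique representation $(d_{1},d_{2})\mapsto a+b\,d_{1}+c\,d_{2}$, because no genuine $d_{1}d_{2}$-term survives under the defining relation $d_{1}d_{2}=0$. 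Such a map is therefore uniquely determined by its two axial restrictions provided they agree at the origin, and conversely any compatible pair of axial restrictions assembles into one. Microlinearity of $M$ then propagates this representable case to general $M$, which is precisely what the quasi-colimit condition demands.

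The main obstacle is resisting a seductive set-theoretic shortcut: one might hope to argue that $d_{1}d_{2}=0$ forces $d_{1}=0$ or $d_{2}=0$, thereby reducing gluing to a trivial case split. This inference fails in SDG, so the extension off the two axes must be justified polynomially via Kock--Lawvere rather than by any pointwise case analysis; the above coefficient expansion is the one place where care is needed, and it is the content of the cited proposition in \cite{la}.
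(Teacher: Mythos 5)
Your proposal is correct and follows exactly the standard route: reduce the quasi-colimit property to the statement that $R^{D^{2}\{(1,2)\}}\cong R^{D}\times_{R}R^{D}$, verify this by the Kock--Lawvere representation $a+bd_{1}+cd_{2}$ on $D(2)$, and let the definition of microlinearity carry the representable case to arbitrary $M$. The paper itself gives no proof (it cites Proposition 6 of \S 2.2 of \cite{la}), but this coefficient-matching argument is precisely the technique the paper deploys for its own quasi-colimit verification in Theorem \ref{t3.1}, and your closing caution that $d_{1}d_{2}=0$ does not permit a pointwise case split is well placed.
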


\begin{corollary}
\label{cl2.3}Let $M$\ be a microlinear space. Given $\gamma_{1},\gamma
_{2}:D^{2}\rightarrow M$,
\[
\gamma_{1}\mid D^{2}\left\{  \left(  1,2\right)  \right\}  =\gamma_{2}\mid
D^{2}\left\{  \left(  1,2\right)  \right\}
\]
obtains iff both%
\[
\gamma_{1}\mid D^{2}\left\{  1\right\}  =\gamma_{2}\mid D^{2}\left\{
1\right\}
\]
and%
\[
\gamma_{1}\mid D^{2}\left\{  2\right\}  =\gamma_{2}\mid D^{2}\left\{
2\right\}
\]
obtain.
\end{corollary}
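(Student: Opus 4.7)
The plan is to derive this corollary as a direct application of Lemma~\ref{l2.3}, treating it as the standard corollary of a quasi-colimit property for a microlinear space. Recall that $D^{2}\{1\}$ and $D^{2}\{2\}$ are the two axes of $D^{2}\{(1,2)\}$, both canonically included, and the inclusions factor through the common basepoint $1$. So the diagram (\ref{l2.3.1}) sits inside $D^{2}\{(1,2)\}$ in the evident way.

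I would begin with the easy direction. If $\gamma_{1}\mid D^{2}\{(1,2)\}=\gamma_{2}\mid D^{2}\{(1,2)\}$, then restricting both sides along the canonical injections $D^{2}\{1\}\hookrightarrow D^{2}\{(1,2)\}$ and $D^{2}\{2\}\hookrightarrow D^{2}\{(1,2)\}$ immediately gives the two desired equalities. This direction uses nothing beyond functoriality of restriction.

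For the nontrivial direction, suppose $\gamma_{1}\mid D^{2}\{1\}=\gamma_{2}\mid D^{2}\{1\}$ and $\gamma_{1}\mid D^{2}\{2\}=\gamma_{2}\mid D^{2}\{2\}$. I would package these two restrictions, together with the common value at the basepoint $1$, as a single cocone under the diagram (\ref{l2.3.1}) with vertex $M$. Since $M$ is microlinear and the diagram is a quasi-colimit by Lemma~\ref{l2.3}, this cocone factors uniquely through $D^{2}\{(1,2)\}$. But the restrictions $\gamma_{1}\mid D^{2}\{(1,2)\}$ and $\gamma_{2}\mid D^{2}\{(1,2)\}$ both provide such factorizations, so by uniqueness they coincide.

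There is no real obstacle here; the only minor point to handle carefully is checking that the data $(\gamma_{i}\mid D^{2}\{1\},\gamma_{i}\mid D^{2}\{2\})$ really does assemble into a cocone, which amounts to noting that both restrict to the single value $\gamma_{i}(0,0)$ at the basepoint, and that this common value is the same for $i=1,2$ (it is forced by either of the hypothesized axis-agreements). Once that compatibility is recorded, the quasi-colimit property of Lemma~\ref{l2.3} does all the work.
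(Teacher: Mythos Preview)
Your proof is correct and is precisely the intended argument: the paper offers no explicit proof for this corollary, treating it as immediate from Lemma~\ref{l2.3}, and your write-up is the natural spelling-out of that implication via microlinearity and the uniqueness in the quasi-colimit property.
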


It can readily be relativized and generalized.

\begin{lemma}
\label{l2.4}The diagram%
\begin{equation}%
\begin{array}
[c]{ccccc}
&  & D^{n+m_{1}+m_{2}}\left\{
\begin{array}
[c]{c}%
\left(  n+i,n+m_{1}+j\right)  \mid\\
1\leq i\leq m_{1},1\leq j\leq m_{2}%
\end{array}
\right\}  &  & \\
& \nearrow &  & \nwarrow & \\
D^{n+m_{1}+m_{2}}\left\{
\begin{array}
[c]{c}%
n+1,...,\\
n+m_{1}%
\end{array}
\right\}  &  &  &  & D^{n+m_{1}+m_{2}}\left\{
\begin{array}
[c]{c}%
n+m_{1}+1,...,\\
n+m_{1}+m_{2}%
\end{array}
\right\} \\
& \nwarrow &  & \nearrow & \\
&  & D^{n} &  &
\end{array}
\label{l2.4.1}%
\end{equation}
with the four arrows being the canonical injections is a quasi-colimit diagram.
\end{lemma}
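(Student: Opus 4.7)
The plan is to dualize the claim to the category of Weil algebras and verify the resulting pullback by a direct monomial calculation. Recall that a diagram of small objects is a quasi-colimit exactly when the dual diagram of Weil $R$-algebras is a pullback, because microlinearity of $M$ is precisely the requirement that $M^{(-)}$ convert such a pullback of Weil algebras into a limit in the ambient topos. Hence it suffices to establish the Weil-algebra pullback square dual to (\ref{l2.4.1}).

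Write $W_{\top}$, $W_L$, $W_R$, $W_{\bot}$ for the Weil algebras of the four corners of (\ref{l2.4.1}). Concretely, $W_{\top}$ is $R[\epsilon_1,\ldots,\epsilon_{n+m_1+m_2}]$ modulo the ideal generated by every square $\epsilon_k^{2}$ together with every cross-product $\epsilon_{n+i}\epsilon_{n+m_1+j}$ for $1\le i\le m_1$, $1\le j\le m_2$; the algebra $W_L$ is the further quotient killing $\epsilon_{n+1},\ldots,\epsilon_{n+m_1}$, the algebra $W_R$ the further quotient killing $\epsilon_{n+m_1+1},\ldots,\epsilon_{n+m_1+m_2}$, and $W_{\bot}$ kills all of $\epsilon_{n+1},\ldots,\epsilon_{n+m_1+m_2}$.

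First I would single out the canonical $R$-module basis of $W_{\top}$ consisting of squarefree monomials $\prod_{i\in I}\epsilon_i$ indexed by subsets $I\subseteq\{1,\ldots,n+m_1+m_2\}$ that meet at most one of the two blocks $B_1=\{n+1,\ldots,n+m_1\}$ and $B_2=\{n+m_1+1,\ldots,n+m_1+m_2\}$. These monomials split into three disjoint families according as $I$ avoids both blocks, uses $B_1$ only, or uses $B_2$ only, which are respectively the standard basis of $W_{\bot}$, the part of the basis of $W_R$ not lying in $W_{\bot}$, and the part of the basis of $W_L$ not lying in $W_{\bot}$. Second, I would use this trichotomy to verify the universal property: given $R$-algebra maps $\varphi_L\colon W_L\to A$ and $\varphi_R\colon W_R\to A$ agreeing on $W_{\bot}$, define $\varphi_{\top}\colon W_{\top}\to A$ on each basis monomial by applying $\varphi_L$ if the monomial avoids $B_1$ and $\varphi_R$ if it avoids $B_2$; consistency on the $W_{\bot}$-overlap is the hypothesis, and compatibility with the cross-product relations is automatic because no basis monomial uses both blocks. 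Uniqueness of $\varphi_{\top}$ is immediate since the images of $W_L$ and $W_R$ generate $W_{\top}$.

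Having shown $W_{\top}\cong W_L\times_{W_{\bot}}W_R$, microlinearity of $M$ yields the required limit diagram for $M^{(-)}$, so that (\ref{l2.4.1}) is a quasi-colimit. I expect the main obstacle to be purely bookkeeping: one must track the three classes of basis monomials and correctly identify the projections to $W_L$, $W_R$, $W_{\bot}$, but no conceptual input beyond Lemma \ref{l2.3} (the base case $n=0$, $m_1=m_2=1$) is required. An alternative route would be a double induction on $m_1+m_2$ iterating Lemma \ref{l2.3} together with a $D^n$-parametrization in the spirit of Lemma \ref{l2.2}, but the direct algebraic verification above is the more transparent.
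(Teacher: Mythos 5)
The paper gives no argument for this lemma at all: it is stated as an immediate relativization of Lemma \ref{l2.3}, which is itself quoted from Lavendhomme. So your proposal is being measured against the standard Weil-algebra argument rather than against anything in the text. Your overall strategy is the right one — dualize to Weil algebras and do monomial bookkeeping — and your identification of the four algebras $W_{\top},W_L,W_R,W_{\bot}$ and of the basis trichotomy for $W_{\top}$ (monomials meeting neither block, $B_1$ only, or $B_2$ only) is correct and is exactly the combinatorial content of the lemma.

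The gap is that you verify the wrong universal property: the variance is backwards. Dualizing the cocone of small objects under the contravariant functor $R^{(-)}$ yields a cone with vertex $W_{\top}$ and arrows $W_{\top}\to W_L\to W_{\bot}$ and $W_{\top}\to W_R\to W_{\bot}$, and the quasi-colimit condition is that this is a \emph{limit} cone, i.e.\ that the canonical map $W_{\top}\to W_L\times_{W_{\bot}}W_R$ is an isomorphism. What you check instead is an amalgamation property for algebra maps \emph{out of} $W_L$ and $W_R$ into a test algebra $A$; that is the pushout $W_L\otimes_{W_{\bot}}W_R$, not the pullback, and for these algebras it is false. Already for $n=0$, $m_1=m_2=1$, take $A=R[\epsilon_1,\epsilon_2]/(\epsilon_1^2,\epsilon_2^2)$ with $\varphi_L,\varphi_R$ the evident inclusions of $R[\epsilon_2]/(\epsilon_2^2)$ and $R[\epsilon_1]/(\epsilon_1^2)$: they agree on $W_{\bot}=R$, yet the map you define cannot be an algebra homomorphism, since $\epsilon_1\epsilon_2=0$ in $W_{\top}$ while $\varphi_R(\epsilon_1)\varphi_L(\epsilon_2)=\epsilon_1\epsilon_2\neq0$ in $A$. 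This is precisely the step you dismiss as ``automatic because no basis monomial uses both blocks'': the cross relations must be \emph{preserved} by $\varphi_{\top}$, and nothing forces $\varphi_R(\epsilon_{n+i})\varphi_L(\epsilon_{n+m_1+j})=0$. (A dimension count confirms the failure: the pushout has dimension $2^{n+m_1+m_2}$, whereas $\dim W_{\top}=2^{n}(2^{m_1}+2^{m_2}-1)$.) The repair is easy and uses your trichotomy verbatim: under the two quotient maps a basis monomial $e_I$ of $W_{\top}$ goes to $(e_I,e_I)$, $(0,e_I)$ or $(e_I,0)$ in $W_L\times W_R$ according as $I$ meets neither block, meets $B_1$, or meets $B_2$; these images are linearly independent and span the fibre product, since $\dim(W_L\times_{W_{\bot}}W_R)=\dim W_L+\dim W_R-\dim W_{\bot}=2^{n+m_2}+2^{n+m_1}-2^{n}$, which equals $\dim W_{\top}$ by inclusion--exclusion. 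Hence the canonical map is an isomorphism and the lemma follows.
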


\begin{corollary}
\label{cl2.4}Let $M$\ be a microlinear space. Given $\gamma_{1},\gamma
_{2}:D^{n+m_{1}+m_{2}}\rightarrow M$,
\begin{align*}
\gamma_{1}  &  \mid D^{n+m_{1}+m_{2}}\left\{  \left(  n+i,n+m_{1}+j\right)
\mid1\leq i\leq m_{1},1\leq j\leq m_{2}\right\} \\
&  =\gamma_{2}\mid D^{n+m_{1}+m_{2}}\left\{  \left(  n+i,n+m_{1}+j\right)
\mid1\leq i\leq m_{1},1\leq j\leq m_{2}\right\}
\end{align*}
obtains iff both%
\[
\gamma_{1}\mid D^{n+m_{1}+m_{2}}\left\{  n+1,...,n+m_{1}\right\}  =\gamma
_{2}\mid D^{n+m_{1}+m_{2}}\left\{  n+1,...,n+m_{1}\right\}
\]
and%
\[
\gamma_{1}\mid D^{n+m_{1}+m_{2}}\left\{  n+m_{1}+1,...,n+m_{1}+m_{2}\right\}
=\gamma_{2}\mid D^{n+m_{1}+m_{2}}\left\{  n+m_{1}+1,...,n+m_{1}+m_{2}\right\}
\]
obtain.
\end{corollary}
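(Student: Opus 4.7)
The plan is to derive Corollary \ref{cl2.4} as an immediate application of Lemma \ref{l2.4} together with the defining property of microlinearity: for a microlinear $M$, every quasi-colimit diagram of small objects is carried by $\mathrm{Hom}(-,M)$ to a genuine limit diagram of sets. In the case at hand this means that the diagram (\ref{l2.4.1}), when probed by mappings into $M$, behaves as an honest pushout; in particular, specifying a mapping from the top object into $M$ is the same as specifying a compatible pair of mappings out of the two middle objects.

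For the forward implication I would simply restrict. The two upper arrows in (\ref{l2.4.1}) are canonical injections of subsets of $D^{n+m_1+m_2}$, so the assumed equality of $\gamma_1$ and $\gamma_2$ on the top object restricts through each of these to give the two desired equalities on the middle objects.

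For the backward implication, suppose the two hypothesized middle-object equalities hold. Since the lower two arrows of (\ref{l2.4.1}) are also canonical subset inclusions, $\gamma_1$ and $\gamma_2$ automatically agree on $D^n$, so we have a compatible family of restrictions over the lower three objects of the diagram. The universal property of the quasi-colimit, made genuine for mappings into $M$ by microlinearity, produces a unique extension of this family to a mapping from the top object into $M$. Both $\gamma_1$ and $\gamma_2$ restricted to the top object serve as such an extension, hence by uniqueness they coincide there.

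The argument carries no genuine obstacle; the only thing to watch is the purely clerical check that the canonical injections labelling the arrows of (\ref{l2.4.1}) correspond exactly to the restriction notation $\gamma \mapsto \gamma \mid D^{n+m_1+m_2}\{\cdots\}$ used in the statement of the corollary. All the substantive content lives in Lemma \ref{l2.4} itself, which one would establish in the same Kock--Lavendhomme style as the prototype Lemma \ref{l2.3}.
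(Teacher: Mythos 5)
Your proposal is correct and is exactly the intended argument: the paper states Corollary \ref{cl2.4} without proof as an immediate consequence of Lemma \ref{l2.4}, relying on precisely the reasoning you spell out (microlinearity turns the quasi-colimit diagram (\ref{l2.4.1}) into a limit when probed by $M$, the forward direction is restriction along the upper canonical injections, and the backward direction follows from the uniqueness clause of the resulting universal property applied to the compatible pair of restrictions). Nothing further is needed.
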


\begin{proposition}
\label{p2.1}Let $M$\ be a microlinear space. Then we have the following two statements:

\begin{enumerate}
\item Given
\[
\gamma_{1}:D^{4}\rightarrow M,\gamma_{2}:D^{4}\rightarrow M,\gamma_{3}%
:D^{4}\rightarrow M,\gamma_{4}:D^{4}\rightarrow M
\]
if it holds that
\begin{align}
\gamma_{1}  &  \mid D^{4}\left\{  (3,4)\right\}  =\gamma_{2}\mid D^{4}\left\{
(3,4)\right\} \nonumber\\
\gamma_{3}  &  \mid D^{4}\left\{  (3,4)\right\}  =\gamma_{4}\mid D^{4}\left\{
(3,4)\right\} \nonumber\\
\gamma_{1}  &  \mid D^{4}\left\{  \left(  2,3\right)  ,\left(  2,4\right)
\right\}  =\gamma_{3}\mid D^{4}\left\{  \left(  2,3\right)  ,\left(
2,4\right)  \right\} \label{p2.1.0.1}\\
\gamma_{2}  &  \mid D^{4}\left\{  \left(  2,3\right)  ,\left(  2,4\right)
\right\}  =\gamma_{4}\mid D^{4}\left\{  \left(  2,3\right)  ,\left(
2,4\right)  \right\}  \label{p2.1.0.2}%
\end{align}
then all of
\begin{align*}
&  \gamma_{1}\overset{\cdot}{\underset{12}{-}}\gamma_{2}\\
&  \gamma_{3}\overset{\cdot}{\underset{12}{-}}\gamma_{4}\\
&  \left(  \gamma_{1}\overset{\cdot}{\underset{12}{-}}\gamma_{2}\right)
\overset{\cdot}{\underset{1}{-}}\left(  \gamma_{3}\overset{\cdot}%
{\underset{12}{-}}\gamma_{4}\right)
\end{align*}
are well defined.

\item Given
\begin{align*}
\gamma_{1}  &  :D^{4}\rightarrow M,\gamma_{2}:D^{4}\rightarrow M,\gamma
_{3}:D^{4}\rightarrow M,\gamma_{4}:D^{4}\rightarrow M,\\
\gamma_{5}  &  :D^{4}\rightarrow M,\gamma_{6}:D^{4}\rightarrow M,\gamma
_{7}:D^{4}\rightarrow M,\gamma_{8}:D^{4}\rightarrow M
\end{align*}
if it holds that
\begin{align}
\gamma_{1}  &  \mid D^{4}\left\{  (3,4)\right\}  =\gamma_{2}\mid D^{4}\left\{
(3,4)\right\} \nonumber\\
\gamma_{3}  &  \mid D^{4}\left\{  (3,4)\right\}  =\gamma_{4}\mid D^{4}\left\{
(3,4)\right\} \nonumber\\
\gamma_{5}  &  \mid D^{4}\left\{  (3,4)\right\}  =\gamma_{6}\mid D^{4}\left\{
(3,4)\right\} \nonumber\\
\gamma_{7}  &  \mid D^{4}\left\{  (3,4)\right\}  =\gamma_{8}\mid D^{4}\left\{
(3,4)\right\} \nonumber\\
\gamma_{1}  &  \mid D^{4}\left\{  \left(  2,3\right)  ,\left(  2,4\right)
\right\}  =\gamma_{3}\mid D^{4}\left\{  \left(  2,3\right)  ,\left(
2,4\right)  \right\} \nonumber\\
\gamma_{2}  &  \mid D^{4}\left\{  \left(  2,3\right)  ,\left(  2,4\right)
\right\}  =\gamma_{4}\mid D^{4}\left\{  \left(  2,3\right)  ,\left(
2,4\right)  \right\} \nonumber\\
\gamma_{5}  &  \mid D^{4}\left\{  \left(  2,3\right)  ,\left(  2,4\right)
\right\}  =\gamma_{7}\mid D^{4}\left\{  \left(  2,3\right)  ,\left(
2,4\right)  \right\} \nonumber\\
\gamma_{6}  &  \mid D^{4}\left\{  \left(  2,3\right)  ,\left(  2,4\right)
\right\}  =\gamma_{8}\mid D^{4}\left\{  \left(  2,3\right)  ,\left(
2,4\right)  \right\} \nonumber\\
\gamma_{1}  &  \mid D^{4}\left\{  \left(  1,2\right)  ,\left(  1,3\right)
,\left(  1,4\right)  \right\}  =\gamma_{5}\mid D^{4}\left\{  \left(
1,2\right)  ,\left(  1,3\right)  ,\left(  1,4\right)  \right\}
\label{p2.1.0.3}\\
\gamma_{2}  &  \mid D^{4}\left\{  \left(  1,2\right)  ,\left(  1,3\right)
,\left(  1,4\right)  \right\}  =\gamma_{6}\mid D^{4}\left\{  \left(
1,2\right)  ,\left(  1,3\right)  ,\left(  1,4\right)  \right\}
\label{p2.1.0.4}\\
\gamma_{3}  &  \mid D^{4}\left\{  \left(  1,2\right)  ,\left(  1,3\right)
,\left(  1,4\right)  \right\}  =\gamma_{7}\mid D^{4}\left\{  \left(
1,2\right)  ,\left(  1,3\right)  ,\left(  1,4\right)  \right\}
\label{p2.1.0.5}\\
\gamma_{4}  &  \mid D^{4}\left\{  \left(  1,2\right)  ,\left(  1,3\right)
,\left(  1,4\right)  \right\}  =\gamma_{8}\mid D^{4}\left\{  \left(
1,2\right)  ,\left(  1,3\right)  ,\left(  1,4\right)  \right\}
\label{p2.1.0.6}%
\end{align}
then all of
\begin{align*}
&  \gamma_{1}\overset{\cdot}{\underset{12}{-}}\gamma_{2}\\
&  \gamma_{3}\overset{\cdot}{\underset{12}{-}}\gamma_{4}\\
&  \gamma_{5}\overset{\cdot}{\underset{12}{-}}\gamma_{6}\\
&  \gamma_{7}\overset{\cdot}{\underset{12}{-}}\gamma_{8}\\
&  \left(  \gamma_{1}\overset{\cdot}{\underset{12}{-}}\gamma_{2}\right)
\overset{\cdot}{\underset{1}{-}}\left(  \gamma_{3}\overset{\cdot}%
{\underset{12}{-}}\gamma_{4}\right) \\
&  \left(  \gamma_{5}\overset{\cdot}{\underset{12}{-}}\gamma_{6}\right)
\overset{\cdot}{\underset{1}{-}}\left(  \gamma_{7}\overset{\cdot}%
{\underset{12}{-}}\gamma_{8}\right) \\
&  \left(  \left(  \gamma_{1}\overset{\cdot}{\underset{12}{-}}\gamma
_{2}\right)  \overset{\cdot}{\underset{1}{-}}\left(  \gamma_{3}\overset{\cdot
}{\underset{12}{-}}\gamma_{4}\right)  \right)  \overset{\cdot}{-}\left(
\left(  \gamma_{5}\overset{\cdot}{\underset{12}{-}}\gamma_{6}\right)
\overset{\cdot}{\underset{1}{-}}\left(  \gamma_{7}\overset{\cdot}%
{\underset{12}{-}}\gamma_{8}\right)  \right)
\end{align*}
are well defined.
\end{enumerate}
\end{proposition}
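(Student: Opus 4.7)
The plan is to establish the four inner strong differences by a direct appeal to Corollary \ref{cl2.2} with $n=2$ and then to peel off the outer strong differences one level at a time, each time reducing the required compatibility via Corollary \ref{cl2.4} to a pair of independent equalities on coordinate-vanishing slices. The engine that powers each reduction is a \emph{restriction principle}: for $\gamma_{1},\gamma_{2}:D^{n+2}\rightarrow M$ satisfying the hypothesis of Corollary \ref{cl2.2}, the map $\mathfrak{n}_{(\gamma_{1},\gamma_{2})}^{n+2}$, when evaluated on the slice of its domain where a chosen coordinate is set to $0$, is nothing but $\mathfrak{n}^{n+1}$ built from the correspondingly sliced inputs. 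Once this principle is in hand, the remainder is bookkeeping.

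For Part 1, Corollary \ref{cl2.2} (with $n=2$) produces $\alpha=\gamma_{1}\overset{\cdot}{\underset{12}{-}}\gamma_{2}$ and $\beta=\gamma_{3}\overset{\cdot}{\underset{12}{-}}\gamma_{4}$ as maps $D^{3}\rightarrow M$. The outer $\overset{\cdot}{\underset{1}{-}}$ then demands $\alpha\mid D^{3}\{(2,3)\}=\beta\mid D^{3}\{(2,3)\}$, and by Corollary \ref{cl2.4} (with $n=m_{1}=m_{2}=1$) this splits into agreement on $D^{3}\{3\}$ and on $D^{3}\{2\}$. On $D^{3}\{3\}$, the identity $\mathfrak{n}^{4}\circ j_{2}^{4}=\gamma_{2}$ collapses $\alpha(d_{1},d_{2},0)$ to $\gamma_{2}(d_{1},d_{2},0,0)$ and $\beta(d_{1},d_{2},0)$ to $\gamma_{4}(d_{1},d_{2},0,0)$; the point $(d_{1},d_{2},0,0)$ lies trivially in $D^{4}\{(2,3),(2,4)\}$, so (\ref{p2.1.0.2}) finishes this case. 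On $D^{3}\{2\}$, I would apply the restriction principle to rewrite $\alpha(d_{1},0,d_{3})$ as an $\mathfrak{n}^{3}$-expression in the $d_{2}=0$ slices of $\gamma_{1},\gamma_{2}$, and likewise for $\beta$; hypotheses (\ref{p2.1.0.1}) and (\ref{p2.1.0.2}) force those slices to coincide---since $d_{2}=0$ automatically implies $d_{2}d_{3}=d_{2}d_{4}=0$---so the two $\mathfrak{n}^{3}$'s agree.

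For Part 2, the same pattern iterates one level deeper. The four inner strong differences and the two level-one compositions $\epsilon=(\gamma_{1}\overset{\cdot}{\underset{12}{-}}\gamma_{2})\overset{\cdot}{\underset{1}{-}}(\gamma_{3}\overset{\cdot}{\underset{12}{-}}\gamma_{4})$ and $\eta=(\gamma_{5}\overset{\cdot}{\underset{12}{-}}\gamma_{6})\overset{\cdot}{\underset{1}{-}}(\gamma_{7}\overset{\cdot}{\underset{12}{-}}\gamma_{8})$ are all handled by Part 1. The outermost $\overset{\cdot}{-}$ requires $\epsilon\mid D^{2}\{(1,2)\}=\eta\mid D^{2}\{(1,2)\}$, which by Corollary \ref{cl2.3} splits as agreement on $D^{2}\{2\}$ and on $D^{2}\{1\}$. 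On $D^{2}\{2\}$, iterating $\mathfrak{n}\circ j_{2}=(\text{second argument})$ twice collapses $\epsilon(d_{1},0)$ to $\gamma_{4}(d_{1},0,0,0)$ and $\eta(d_{1},0)$ to $\gamma_{8}(d_{1},0,0,0)$, and (\ref{p2.1.0.6}) applies at $(d_{1},0,0,0)\in D^{4}\{(1,2),(1,3),(1,4)\}$. On $D^{2}\{1\}$, two nested applications of the restriction principle rewrite $\epsilon(0,d_{2})$ as a double-$\mathfrak{n}$ expression in the $d_{1}=0$ slices of $\gamma_{1},\gamma_{2},\gamma_{3},\gamma_{4}$; the hypotheses (\ref{p2.1.0.3})--(\ref{p2.1.0.6}) identify those slices with the $d_{1}=0$ slices of $\gamma_{5},\gamma_{6},\gamma_{7},\gamma_{8}$ (again because $d_{1}=0$ forces every $d_{1}d_{j}=0$), so $\epsilon(0,d_{2})=\eta(0,d_{2})$.

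The only step with genuine content is the restriction principle itself. To establish it I would, for each coordinate one wishes to freeze at $0$, construct a natural inclusion of the smaller quasi-colimit apex of Lemma \ref{l2.1} (or Lemma \ref{l2.2}) into the larger, verify that this inclusion intertwines the respective $j_{1},j_{2}$ maps, and then invoke the uniqueness clause in Corollary \ref{cl2.2} to identify the two composites. Once the principle is available, what remains of Parts 1 and 2 is a matter of correctly matching, at every peel, the hypothesis among (\ref{p2.1.0.1})--(\ref{p2.1.0.6}) that governs the slice under consideration.
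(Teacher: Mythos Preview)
Your proposal is correct and follows essentially the same route as the paper: split the required compatibility via Corollary~\ref{cl2.4} (or Corollary~\ref{cl2.3}) into a ``last-coordinate-zero'' piece handled by $\mathfrak{n}\circ j_{2}=\gamma_{2}$ and a ``passive-coordinate-zero'' piece handled by restricting the quasi-colimit of Lemma~\ref{l2.2} to the slice. The only cosmetic difference is that the paper first decomposes the hypotheses (\ref{p2.1.0.1})--(\ref{p2.1.0.6}) themselves via Corollary~\ref{cl2.4} into equalities on $D^{4}\{2\}$, $D^{4}\{3,4\}$, $D^{4}\{1\}$, $D^{4}\{2,3,4\}$ before matching, whereas you observe directly that the vanishing-coordinate slices lie in the relevant $D^{4}\{\mathfrak{p}\}$; both arrive at the same restricted quasi-colimit argument you call the ``restriction principle.''
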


\begin{proof}
We deal with the above two statements in order.

\begin{enumerate}
\item For the first statement, we have to show that
\[
\left(  \gamma_{1}\overset{\cdot}{\underset{12}{-}}\gamma_{2}\right)  \mid
D^{3}\left\{  \left(  2,3\right)  \right\}  =\left(  \gamma_{3}\overset{\cdot
}{\underset{12}{-}}\gamma_{4}\right)  \mid D^{3}\left\{  \left(  2,3\right)
\right\}
\]
which is, by dint of Corollary \ref{cl2.4}, tantamout to showing that
\begin{align}
\left(  \gamma_{1}\overset{\cdot}{\underset{12}{-}}\gamma_{2}\right)   &  \mid
D^{3}\left\{  2\right\}  =\left(  \gamma_{3}\overset{\cdot}{\underset{12}{-}%
}\gamma_{4}\right)  \mid D^{3}\left\{  2\right\} \label{p2.1.1}\\
\left(  \gamma_{1}\overset{\cdot}{\underset{12}{-}}\gamma_{2}\right)   &  \mid
D^{3}\left\{  3\right\}  =\left(  \gamma_{3}\overset{\cdot}{\underset{12}{-}%
}\gamma_{4}\right)  \mid D^{3}\left\{  3\right\}  \label{p2.1.2}%
\end{align}
because of the quasi-colimit diagram%
\[%
\begin{array}
[c]{ccccc}
&  & D^{3}\left\{  \left(  2,3\right)  \right\}  &  & \\
& \nearrow &  & \nwarrow & \\
D^{3}\left\{  2\right\}  &  &  &  & D^{3}\left\{  3\right\} \\
& \nwarrow &  & \nearrow & \\
&  & D^{3}\left\{  2,3\right\}  &  &
\end{array}
\]
with the four arrows being the canonical injections (Lemma \ref{l2.4} with
$n=m_{1}=m_{2}=1$). Due to the quasi-colimit diagram
\[%
\begin{array}
[c]{ccccc}
&  & D^{4}\left\{  \left(  2,3\right)  ,\left(  2,4\right)  \right\}  &  & \\
& \nearrow &  & \nwarrow & \\
D^{4}\left\{  2\right\}  &  &  &  & D^{4}\left\{  3,4\right\} \\
& \nwarrow &  & \nearrow & \\
&  & D^{4}\left\{  2,3,4\right\}  &  &
\end{array}
\]
with the four arrows being the canonical injections (Lemma \ref{l2.4} with
$n=m_{1}=1$ and $m_{2}=2$), the condition (\ref{p2.1.0.1}) is equivalent to
the conditions
\begin{align}
\gamma_{1}  &  \mid D^{4}\left\{  2\right\}  =\gamma_{3}\mid D^{4}\left\{
2\right\} \label{p2.1.3}\\
\gamma_{1}  &  \mid D^{4}\left\{  3,4\right\}  =\gamma_{3}\mid D^{4}\left\{
3,4\right\}  \label{p2.1.4}%
\end{align}
while the condition (\ref{p2.1.0.2}) is equivalent to the conditions
\begin{align}
\gamma_{2}  &  \mid D^{4}\left\{  2\right\}  =\gamma_{4}\mid D^{4}\left\{
2\right\} \label{p2.1.5}\\
\gamma_{2}  &  \mid D^{4}\left\{  3,4\right\}  =\gamma_{4}\mid D^{4}\left\{
3,4\right\}  \label{p2.1.6}%
\end{align}
In order to show that (\ref{p2.1.1}) obtains, we note that the quasi-colimit
diagram in (\ref{l2.2.1}) with $n=2$ is to be restricted to the quasi-colimit
diagram
\[%
\begin{array}
[c]{ccccc}
&  & D^{5}\left\{  2,\left(  3,5\right)  ,\left(  4,5\right)  \right\}  &  &
\\
& \nearrow &  & \nwarrow & \\
D^{4}\left\{  2\right\}  &  &  &  & D^{4}\left\{  2\right\} \\
& \nwarrow &  & \nearrow & \\
&  & D^{4}\left\{  2,\left(  3,4\right)  \right\}  &  &
\end{array}
\]
so that the conditions (\ref{p2.1.3}) and (\ref{p2.1.5}) imply (\ref{p2.1.1}).
It is easy to see that
\begin{align*}
\left(  \left(  \gamma_{1}\overset{\cdot}{\underset{12}{-}}\gamma_{2}\right)
\mid D^{3}\left\{  3\right\}  \right)  \circ i_{2}^{3}  &  =\left(  \gamma
_{1}\mid D^{4}\left\{  3,4\right\}  \right)  \circ i_{2}^{4}=\left(
\gamma_{2}\mid D^{4}\left\{  3,4\right\}  \right)  \circ i_{2}^{4}\\
\left(  \left(  \gamma_{3}\overset{\cdot}{\underset{12}{-}}\gamma_{4}\right)
\mid D^{3}\left\{  3\right\}  \right)  \circ i_{2}^{3}  &  =\left(  \gamma
_{3}\mid D^{4}\left\{  3,4\right\}  \right)  \circ i_{2}^{4}=\left(
\gamma_{4}\mid D^{4}\left\{  3,4\right\}  \right)  \circ i_{2}^{4}%
\end{align*}
obtain with
\begin{align*}
i_{2}^{3}  &  :\left(  d_{1},d_{2}\right)  \in D^{2}\mapsto\left(  d_{1}%
,d_{2},0\right)  \in D^{3}\\
i_{2}^{4}  &  :\left(  d_{1},d_{2}\right)  \in D^{2}\mapsto\left(  d_{1}%
,d_{2},0,0\right)  \in D^{4}%
\end{align*}
so that (\ref{p2.1.4}) and (\ref{p2.1.6}) imply (\ref{p2.1.2}).

\item For the second statement, we have to show that
\[
\left(  \left(  \gamma_{1}\overset{\cdot}{\underset{12}{-}}\gamma_{2}\right)
\overset{\cdot}{\underset{1}{-}}\left(  \gamma_{3}\overset{\cdot}%
{\underset{12}{-}}\gamma_{4}\right)  \right)  \mid D^{2}\left\{  \left(
1,2\right)  \right\}  =\left(  \left(  \gamma_{5}\overset{\cdot}{\underset
{12}{-}}\gamma_{6}\right)  \overset{\cdot}{\underset{1}{-}}\left(  \gamma
_{7}\overset{\cdot}{\underset{12}{-}}\gamma_{8}\right)  \right)  \mid
D^{2}\left\{  \left(  1,2\right)  \right\}
\]
which is tantamout to showing that
\begin{align}
\left(  \left(  \gamma_{1}\overset{\cdot}{\underset{12}{-}}\gamma_{2}\right)
\overset{\cdot}{\underset{1}{-}}\left(  \gamma_{3}\overset{\cdot}%
{\underset{12}{-}}\gamma_{4}\right)  \right)   &  \mid D^{2}\left\{
1\right\}  =\left(  \left(  \gamma_{5}\overset{\cdot}{\underset{12}{-}}%
\gamma_{6}\right)  \overset{\cdot}{\underset{1}{-}}\left(  \gamma_{7}%
\overset{\cdot}{\underset{12}{-}}\gamma_{8}\right)  \right)  \mid
D^{2}\left\{  1\right\} \label{p2.1.7}\\
\left(  \left(  \gamma_{1}\overset{\cdot}{\underset{12}{-}}\gamma_{2}\right)
\overset{\cdot}{\underset{1}{-}}\left(  \gamma_{3}\overset{\cdot}%
{\underset{12}{-}}\gamma_{4}\right)  \right)   &  \mid D^{2}\left\{
2\right\}  =\left(  \left(  \gamma_{5}\overset{\cdot}{\underset{12}{-}}%
\gamma_{6}\right)  \overset{\cdot}{\underset{1}{-}}\left(  \gamma_{7}%
\overset{\cdot}{\underset{12}{-}}\gamma_{8}\right)  \right)  \mid
D^{2}\left\{  2\right\}  \label{p2.1.8}%
\end{align}
because of the quasi-colimit diagram%
\[%
\begin{array}
[c]{ccccc}
&  & D^{2}\left\{  \left(  1,2\right)  \right\}  &  & \\
& \nearrow &  & \nwarrow & \\
D^{2}\left\{  1\right\}  &  &  &  & D^{2}\left\{  2\right\} \\
& \nwarrow &  & \nearrow & \\
&  & D^{2}\left\{  1,2\right\}  &  &
\end{array}
\]
with the four arrows being the canonical injections (Lemma \ref{l2.3}). Due to
the quasi-colimit diagram
\[%
\begin{array}
[c]{ccccc}
&  & D^{4}\left\{  (1,2),(1,3),(1,4)\right\}  &  & \\
& \nearrow &  & \nwarrow & \\
D^{4}\left\{  1\right\}  &  &  &  & D^{4}\left\{  2,3,4\right\} \\
& \nwarrow &  & \nearrow & \\
&  & D^{4}\left\{  1,2,3,4\right\}  &  &
\end{array}
\]
with the four arrows being the canonical injections (Lemma \ref{l2.4} with
$n=0$, $m_{1}=1$ and $m_{2}=3$), we have

\begin{itemize}
\item the condition (\ref{p2.1.0.3}) is equivalent to the conditions
\begin{align}
\gamma_{1}  &  \mid D^{4}\left\{  1\right\}  =\gamma_{5}\mid D^{4}\left\{
1\right\} \label{p2.1.9}\\
\gamma_{1}  &  \mid D^{4}\left\{  2,3,4\right\}  =\gamma_{5}\mid D^{4}\left\{
2,3,4\right\}  \label{p2.1.10}%
\end{align}

\item the condition (\ref{p2.1.0.4}) is equivalent to the conditions
\begin{align}
\gamma_{2}  &  \mid D^{4}\left\{  1\right\}  =\gamma_{6}\mid D^{4}\left\{
1\right\} \label{p2.1.11}\\
\gamma_{2}  &  \mid D^{4}\left\{  2,3,4\right\}  =\gamma_{6}\mid D^{4}\left\{
2,3,4\right\}  \label{p2.1.12}%
\end{align}

\item the condition (\ref{p2.1.0.5}) is equivalent to the conditions
\begin{align}
\gamma_{3}  &  \mid D^{4}\left\{  1\right\}  =\gamma_{7}\mid D^{4}\left\{
1\right\} \label{p2.1.13}\\
\gamma_{3}  &  \mid D^{4}\left\{  2,3,4\right\}  =\gamma_{7}\mid D^{4}\left\{
2,3,4\right\}  \label{p2.1.14}%
\end{align}

\item the condition (\ref{p2.1.0.6}) is equivalent to the conditions
\begin{align}
\gamma_{4}  &  \mid D^{4}\left\{  1\right\}  =\gamma_{8}\mid D^{4}\left\{
1\right\} \label{p2.1.15}\\
\gamma_{4}  &  \mid D^{4}\left\{  2,3,4\right\}  =\gamma_{8}\mid D^{4}\left\{
2,3,4\right\}  \label{p2.1.16}%
\end{align}

\end{itemize}

In order to show that (\ref{p2.1.7}) obtains, we note first that the
quasi-colimit diagram in \ref{l2.2.1} with $n=2$ is to be restricted to the
quasi-colimit diagram
\[%
\begin{array}
[c]{ccccc}
&  & D^{5}\left\{  1,\left(  3,5\right)  ,\left(  4,5\right)  \right\}  &  &
\\
& \nearrow &  & \nwarrow & \\
D^{4}\left\{  1\right\}  &  &  &  & D^{4}\left\{  1\right\} \\
& \nwarrow &  & \nearrow & \\
&  & D^{4}\left\{  1,\left(  3,4\right)  \right\}  &  &
\end{array}
\]
so that we have

\begin{itemize}
\item the conditions (\ref{p2.1.9}) and (\ref{p2.1.11}) imply
\begin{equation}
\left(  \gamma_{1}\overset{\cdot}{\underset{12}{-}}\gamma_{2}\right)  \mid
D^{3}\left\{  1\right\}  =\left(  \gamma_{5}\overset{\cdot}{\underset{12}{-}%
}\gamma_{6}\right)  \mid D^{3}\left\{  1\right\}  \label{p2.1.17}%
\end{equation}

\item the conditions (\ref{p2.1.13}) and (\ref{p2.1.15}) imply
\begin{equation}
\left(  \gamma_{3}\overset{\cdot}{\underset{12}{-}}\gamma_{4}\right)  \mid
D^{3}\left\{  1\right\}  =\left(  \gamma_{7}\overset{\cdot}{\underset{12}{-}%
}\gamma_{8}\right)  \mid D^{3}\left\{  1\right\}  \label{p2.1.18}%
\end{equation}

\end{itemize}

We note also that the quasi-colimit diagram in (\ref{l2.2.1}) with $n=1$\ is
to be restricted to the quasi-colimit diagram
\[%
\begin{array}
[c]{ccccc}
&  & D^{4}\left\{  1,\left(  2,4\right)  ,\left(  3,4\right)  \right\}  &  &
\\
& \nearrow &  & \nwarrow & \\
D^{3}\left\{  1\right\}  &  &  &  & D^{3}\left\{  1\right\} \\
& \nwarrow &  & \nearrow & \\
&  & D^{3}\left\{  1,\left(  2,3\right)  \right\}  &  &
\end{array}
\]
so that the conditions (\ref{p2.1.17}) and (\ref{p2.1,18}) imply the condition
(\ref{p2.1.7}). It is easy to see that
\begin{align*}
&  \left(  \left(  \left(  \gamma_{1}\overset{\cdot}{\underset{12}{-}}%
\gamma_{2}\right)  \overset{\cdot}{\underset{1}{-}}\left(  \gamma_{3}%
\overset{\cdot}{\underset{12}{-}}\gamma_{4}\right)  \right)  \mid
D^{2}\left\{  2\right\}  \right)  \circ i^{2}\\
&  =\left(  \left(  \gamma_{1}\overset{\cdot}{\underset{12}{-}}\gamma
_{2}\right)  \mid D^{3}\left\{  2,3\right\}  \right)  \circ i^{3}=\left(
\left(  \gamma_{3}\overset{\cdot}{\underset{12}{-}}\gamma_{4}\right)  \mid
D^{3}\left\{  2,3\right\}  \right)  \circ i^{3}\\
&  =\left(  \gamma_{1}\mid D^{4}\left\{  2,3,4\right\}  \right)  \circ
i^{4}=\left(  \gamma_{2}\mid D^{4}\left\{  2,3,4\right\}  \right)  \circ
i^{4}\\
&  =\left(  \gamma_{3}\mid D^{4}\left\{  2,3,4\right\}  \right)  \circ
i^{4}=\left(  \gamma_{4}\mid D^{4}\left\{  2,3,4\right\}  \right)  \circ i^{4}%
\end{align*}
and
\begin{align*}
&  \left(  \left(  \left(  \gamma_{5}\overset{\cdot}{\underset{12}{-}}%
\gamma_{6}\right)  \overset{\cdot}{\underset{1}{-}}\left(  \gamma_{7}%
\overset{\cdot}{\underset{12}{-}}\gamma_{8}\right)  \right)  \mid
D^{2}\left\{  2\right\}  \right)  \circ i^{2}\\
&  =\left(  \left(  \gamma_{5}\overset{\cdot}{\underset{12}{-}}\gamma
_{6}\right)  \mid D^{3}\left\{  2,3\right\}  \right)  \circ i^{3}=\left(
\left(  \gamma_{7}\overset{\cdot}{\underset{12}{-}}\gamma_{8}\right)  \mid
D^{3}\left\{  2,3\right\}  \right)  \circ i^{3}\\
&  =\left(  \gamma_{5}\mid D^{4}\left\{  2,3,4\right\}  \right)  \circ
i^{4}=\left(  \gamma_{6}\mid D^{4}\left\{  2,3,4\right\}  \right)  \circ
i^{4}\\
&  =\left(  \gamma_{7}\mid D^{4}\left\{  2,3,4\right\}  \right)  \circ
i^{4}=\left(  \gamma_{8}\mid D^{4}\left\{  2,3,4\right\}  \right)  \circ i^{4}%
\end{align*}
obtain with
\begin{align*}
i^{2}  &  :d\in D\mapsto\left(  d,0\right)  \in D^{2}\left\{  2\right\} \\
i^{3}  &  :d\in D\mapsto\left(  d,0,0\right)  \in D^{3}\left\{  2,3\right\} \\
i^{4}  &  :d\in D\mapsto\left(  d,0,0,0\right)  \in D^{4}\left\{
2,3,4\right\}
\end{align*}
so that (\ref{p2.1.10}), (\ref{p2.1.12}), (\ref{p2.1.14}) and (\ref{p2.1.16})
imply (\ref{p2.1.8}).
\end{enumerate}
\end{proof}

\begin{notation}
Let $M$\ be a microlinear space.

\begin{enumerate}
\item We denote by $\mathfrak{X}\left(  M\right)  $\ the totality of vector
fields on $M$. It forms a Lie algebra. We take the third viewpoint of a vector
field in the essentially equivalent three discussed in \S 3.2 of \cite{la}.
Namely, a vector field $X$\ on $M$\ is a mapping $d\in D\mapsto X_{d}\in
M^{M}$ with $X_{0}=\mathrm{id}_{M}$.

\item Given $X,...,X_{n}\in\mathfrak{X}\left(  M\right)  $, we denote by
$X_{n}\ast...\ast X_{1}$\ the mapping $\left(  d_{1},...,d_{n}\right)  \in
D^{n}\mapsto\left(  X_{n}\right)  _{d_{n}}\circ...\circ\left(  X_{1}\right)
_{d_{1}}\in M^{M}$.

\item We recall (cf. Proposition 8 in \S 3.4 of \cite{la}) that%
\[
\left[  X_{1},X_{2}\right]  =X_{2}\ast X_{1}\overset{\cdot}{-}\left(
X_{1}\ast X_{2}\right)  ^{\sigma_{21}}%
\]
with%
\[
\sigma_{21}=\left(
\begin{array}
[c]{c}%
12\\
21
\end{array}
\right)
\]
We recall (cf. Prposition 2.7 of \cite{ni1}) that%
\begin{align*}
&  \left[  X_{1},\left[  X_{2},X_{3}\right]  \right] \\
&  =\left(  X_{3}\ast X_{2}\ast X_{1}\underset{1}{\overset{\cdot}{-}}\left(
X_{2}\ast X_{3}\ast X_{1}\right)  ^{\sigma_{132}}\right)  \overset{\cdot}%
{-}\left(  \left(  X_{1}\ast X_{3}\ast X_{2}\right)  ^{\sigma_{231}}%
\underset{1}{\overset{\cdot}{-}}\left(  X_{1}\ast X_{2}\ast X_{3}\right)
^{\sigma_{321}}\right)
\end{align*}
with%
\[
\sigma_{132}=\left(
\begin{array}
[c]{c}%
123\\
132
\end{array}
\right)  ,\sigma_{231}=\left(
\begin{array}
[c]{c}%
123\\
312
\end{array}
\right)  ,\sigma_{321}=\left(
\begin{array}
[c]{c}%
123\\
321
\end{array}
\right)
\]
We note that%
\begin{align*}
&  \left[  X_{1},\left[  X_{2},\left[  X_{3},X_{4}\right]  \right]  \right] \\
&  =\left(
\begin{array}
[c]{c}%
\left(  X_{4}\ast X_{3}\ast X_{2}\ast X_{1}\underset{12}{\overset{\cdot}{-}%
}\left(  X_{3}\ast X_{4}\ast X_{2}\ast X_{1}\right)  ^{\sigma_{1243}}\right)
\underset{1}{\overset{\cdot}{-}}\\
\left(  \left(  X_{2}\ast X_{4}\ast X_{3}\ast X_{1}\right)  ^{\sigma_{1342}%
}\underset{12}{\overset{\cdot}{-}}\left(  X_{2}\ast X_{3}\ast X_{4}\ast
X_{1}\right)  ^{\sigma_{1432}}\right)
\end{array}
\right)  \overset{\cdot}{-}\\
&  \left(
\begin{array}
[c]{c}%
\left(  \left(  X_{1}\ast X_{4}\ast X_{3}\ast X_{2}\right)  ^{\sigma_{2341}%
}\underset{12}{\overset{\cdot}{-}}\left(  X_{1}\ast X_{3}\ast X_{4}\ast
X_{2}\right)  ^{\sigma_{2431}}\right)  \underset{1}{\overset{\cdot}{-}}\\
\left(  \left(  X_{1}\ast X_{2}\ast X_{4}\ast X_{3}\right)  ^{\sigma_{3421}%
}\underset{12}{\overset{\cdot}{-}}\left(  X_{1}\ast X_{2}\ast X_{3}\ast
X_{4}\right)  ^{\sigma_{4321}}\right)
\end{array}
\right)
\end{align*}
with%
\begin{align*}
\sigma_{1243}  &  =\left(
\begin{array}
[c]{c}%
1234\\
1243
\end{array}
\right)  ,\sigma_{1342}=\left(
\begin{array}
[c]{c}%
1234\\
1423
\end{array}
\right)  ,\sigma_{1432}=\left(
\begin{array}
[c]{c}%
1234\\
1432
\end{array}
\right)  ,\sigma_{2341}=\left(
\begin{array}
[c]{c}%
1234\\
4123
\end{array}
\right)  ,\\
\sigma_{2431}  &  =\left(
\begin{array}
[c]{c}%
1234\\
4132
\end{array}
\right)  ,\sigma_{3421}=\left(
\begin{array}
[c]{c}%
1234\\
4312
\end{array}
\right)  ,\sigma_{4321}=\left(
\begin{array}
[c]{c}%
1234\\
4321
\end{array}
\right)
\end{align*}

\end{enumerate}
\end{notation}

\section{\label{s3}A Four-Dimensional General Jacobi Identity}

\begin{theorem}
\label{t3.1}The diagram whose underlying directed graph consists of vertices
\begin{align*}
&  P,Q^{1234},Q^{1243},Q^{1324},Q^{1342},Q^{1423},Q^{1432},Q^{2134}%
,Q^{2143},Q^{2314},Q^{2341},Q^{2413},Q^{2431},\\
&  Q^{3124},Q^{3142},Q^{3214},Q^{3241},Q^{3412},Q^{3421},Q^{4123}%
,Q^{4132},Q^{4213},Q^{4231},Q^{4312},Q^{4321},\\
&  R_{12}^{1234,1243},R_{12}^{1342,1432},R_{12}^{2341,2431},R_{12}%
^{3421,4321},R_{12}^{2134,2143},R_{12}^{3412,4312},R_{13}^{1324,1342}%
,R_{13}^{1243,1423}\\
&  R_{13}^{3241,3421},R_{13}^{2431,4231},R_{13}^{3124,3142},R_{13}%
^{2413,4213},R_{14}^{1423,1432},R_{14}^{1234,1324},R_{14}^{4231,4321}%
,R_{14}^{2341,3241},\\
&  R_{14}^{4123,4132},R_{14}^{2314,3214},R_{23}^{2314,2341},R_{23}%
^{2143,2413},R_{23}^{3142,3412},R_{23}^{1432,4132},R_{23}^{3214,3241}%
,R_{23}^{1423,4123},\\
&  R_{24}^{2413,2431},R_{24}^{2134,2314},R_{24}^{4132,4312},R_{24}%
^{1342,3142},R_{24}^{4213,4231},R_{24}^{1324,3124},R_{34}^{3412,3421}%
,R_{34}^{3124,3214},\\
&  R_{34}^{4123,4213},R_{34}^{1243,2143},R_{34}^{4312,4321},R_{34}^{1234,2134}%
\end{align*}
and edges
\begin{align*}
f_{1234} &  :Q^{1234}\rightarrow P,f_{1243}:Q^{1243}\rightarrow P,f_{1324}%
:Q^{1324}\rightarrow P,f_{1342}:Q^{1342}\rightarrow P,\\
f_{1423} &  :Q^{1423}\rightarrow P,f_{1432}:Q^{1432}\rightarrow P,f_{2134}%
:Q^{2134}\rightarrow P,f_{2143}:Q^{2143}\rightarrow P,\\
f_{2314} &  :Q^{2314}\rightarrow P,f_{2341}:Q^{2341}\rightarrow P,f_{2413}%
:Q^{2413}\rightarrow P,f_{2431}:Q^{2431}\rightarrow P,\\
f_{3124} &  :Q^{3124}\rightarrow P,f_{3142}:Q^{3142}\rightarrow P,f_{3214}%
:Q^{3214}\rightarrow P,f_{3241}:Q^{3241}\rightarrow P,\\
f_{3412} &  :Q^{3412}\rightarrow P,f_{3421}:Q^{3421}\rightarrow P,f_{4123}%
:Q^{4123}\rightarrow P,f_{4132}:Q^{4132}\rightarrow P,\\
f_{4213} &  :Q^{4213}\rightarrow P,f_{4231}:Q^{4231}\rightarrow P,f_{4312}%
:Q^{4312}\rightarrow P,f_{4321}:Q^{4321}\rightarrow P,
\end{align*}%
\begin{align*}
g_{12}^{1234,1243} &  :R_{12}^{1234,1243}\rightarrow Q^{1234},h_{12}%
^{1234,1243}:R_{12}^{1234,1243}\rightarrow Q^{1243},\\
g_{12}^{1342,1432} &  :R_{12}^{1342,1432}\rightarrow Q^{1342},h_{12}%
^{1342,1432}:R_{12}^{1342,1432}\rightarrow Q^{1432},\\
g_{12}^{2341,2431} &  :R_{12}^{2341,2431}\rightarrow Q^{2341},h_{12}%
^{2341,2431}:R_{12}^{2341,2431}\rightarrow Q^{2431},\\
g_{12}^{3421,4321} &  :R_{12}^{3421,4321}\rightarrow Q^{3421},h_{12}%
^{3421,4321}:R_{12}^{3421,4321}\rightarrow Q^{4321},\\
g_{12}^{2134,2143} &  :R_{12}^{2134,2143}\rightarrow Q^{2134},h_{12}%
^{2134,2143}:R_{12}^{2134,2143}\rightarrow Q^{2143},\\
g_{12}^{3412,4312} &  :R_{12}^{3412,4312}\rightarrow Q^{3412},h_{12}%
^{3412,4312}:R_{12}^{3412,4312}\rightarrow Q^{4312},
\end{align*}%
\begin{align*}
g_{13}^{1324,1342} &  :R_{13}^{1324,1342}\rightarrow Q^{1324},h_{13}%
^{1324,1342}:R_{13}^{1324,1342}\rightarrow Q^{1342},\\
g_{13}^{1243,1423} &  :R_{13}^{1243,1423}\rightarrow Q^{1243},h_{13}%
^{1243,1423}:R_{13}^{1243,1423}\rightarrow Q^{1423},\\
g_{13}^{3241,3421} &  :R_{13}^{3241,3421}\rightarrow Q^{3241},h_{13}%
^{3241,3421}:R_{13}^{3241,3421}\rightarrow Q^{3421},\\
g_{13}^{2431,4231} &  :R_{13}^{2431,4231}\rightarrow Q^{2431},h_{13}%
^{2431,4231}:R_{13}^{2431,4231}\rightarrow Q^{4231},\\
g_{13}^{3124,3142} &  :R_{13}^{3124,3142}\rightarrow Q^{3124},h_{13}%
^{3124,3142}:R_{13}^{3124,3142}\rightarrow Q^{3142},\\
g_{13}^{2413,4213} &  :R_{13}^{2413,4213}\rightarrow Q^{2413},h_{13}%
^{2413,4213}:R_{13}^{2413,4213}\rightarrow Q^{4213},
\end{align*}%
\begin{align*}
g_{14}^{1423,1432} &  :R_{14}^{1423,1432}\rightarrow Q^{1423},h_{14}%
^{1423,1432}:R_{14}^{1423,1432}\rightarrow Q^{1432},\\
g_{14}^{1234,1324} &  :R_{14}^{1234,1324}\rightarrow Q^{1234},h_{14}%
^{1234,1324}:R_{14}^{1234,1324}\rightarrow Q^{1324},\\
g_{14}^{4231,4321} &  :R_{14}^{4231,4321}\rightarrow Q^{4231},h_{14}%
^{4231,4321}:R_{14}^{4231,4321}\rightarrow Q^{4321},\\
g_{14}^{2341,3241} &  :R_{14}^{2341,3241}\rightarrow Q^{2341},h_{14}%
^{2341,3241}:R_{14}^{2341,3241}\rightarrow Q^{3241},\\
g_{14}^{4123,4132} &  :R_{14}^{4123,4132}\rightarrow Q^{4123},h_{14}%
^{4123,4132}:R_{14}^{4123,4132}\rightarrow Q^{4132},\\
g_{14}^{2314,3214} &  :R_{14}^{2314,3214}\rightarrow Q^{2314},h_{14}%
^{2314,3214}:R_{14}^{2314,3214}\rightarrow Q^{3214},
\end{align*}%
\begin{align*}
g_{23}^{2314,2341} &  :R_{23}^{2314,2341}\rightarrow Q^{2314},h_{23}%
^{2314,2341}:R_{23}^{2314,2341}\rightarrow Q^{2341},\\
g_{23}^{2143,2413} &  :R_{23}^{2143,2413}\rightarrow Q^{2143},h_{23}%
^{2143,2413}:R_{23}^{2143,2413}\rightarrow Q^{2413},\\
g_{23}^{3142,3412} &  :R_{23}^{3142,3412}\rightarrow Q^{3142},h_{23}%
^{3142,3412}:R_{23}^{3142,3412}\rightarrow Q^{3412},\\
g_{23}^{1432,4132} &  :R_{23}^{1432,4132}\rightarrow Q^{1432},h_{23}%
^{1432,4132}:R_{23}^{1432,4132}\rightarrow Q^{4132},\\
g_{23}^{3214,3241} &  :R_{23}^{3214,3241}\rightarrow Q^{3214},h_{23}%
^{3214,3241}:R_{23}^{3214,3241}\rightarrow Q^{3241},\\
g_{23}^{1423,4123} &  :R_{23}^{1423,4123}\rightarrow Q^{1423},h_{23}%
^{1423,4123}:R_{23}^{1423,4123}\rightarrow Q^{4123},
\end{align*}%
\begin{align*}
g_{24}^{2413,2431} &  :R_{24}^{2413,2431}\rightarrow Q^{2413},h_{24}%
^{2413,2431}:R_{24}^{2413,2431}\rightarrow Q^{2431},\\
g_{24}^{2134,2314} &  :R_{24}^{2134,2314}\rightarrow Q^{2134},h_{24}%
^{2134,2314}:R_{24}^{2134,2314}\rightarrow Q^{2314},\\
g_{24}^{4132,4312} &  :R_{24}^{4132,4312}\rightarrow Q^{4132},h_{24}%
^{4132,4312}:R_{24}^{4132,4312}\rightarrow Q^{4312},\\
g_{24}^{1342,3142} &  :R_{24}^{1342,3142}\rightarrow Q^{1342},h_{24}%
^{1342,3142}:R_{24}^{1342,3142}\rightarrow Q^{3142},\\
g_{24}^{4213,4231} &  :R_{24}^{4213,4231}\rightarrow Q^{4213},h_{24}%
^{4213,4231}:R_{24}^{4213,4231}\rightarrow Q^{4231},\\
g_{24}^{1324,3124} &  :R_{24}^{1324,3124}\rightarrow Q^{1324},h_{24}%
^{1324,3124}:R_{24}^{1324,3124}\rightarrow Q^{3124},
\end{align*}%
\begin{align*}
g_{34}^{3412,3421} &  :R_{34}^{3412,3421}\rightarrow Q^{3412},h_{34}%
^{3412,3421}:R_{34}^{3412,3421}\rightarrow Q^{3421},\\
g_{34}^{3124,3214} &  :R_{34}^{3124,3214}\rightarrow Q^{3124},h_{34}%
^{3124,3214}:R_{34}^{3124,3214}\rightarrow Q^{3214},\\
g_{34}^{4123,4213} &  :R_{34}^{4123,4213}\rightarrow Q^{4123},h_{34}%
^{4123,4213}:R_{34}^{4123,4213}\rightarrow Q^{4213},\\
g_{34}^{1243,2143} &  :R_{34}^{1243,2143}\rightarrow Q^{1243},h_{34}%
^{1243,2143}:R_{34}^{1243,2143}\rightarrow Q^{2143},\\
g_{34}^{4312,4321} &  :R_{34}^{4312,4321}\rightarrow Q^{4312},h_{34}%
^{4312,4321}:R_{34}^{4312,4321}\rightarrow Q^{4321},\\
g_{34}^{1234,2134} &  :R_{34}^{1234,2134}\rightarrow Q^{1234},h_{34}%
^{1234,2134}:R_{34}^{1234,2134}\rightarrow Q^{2134}%
\end{align*}
with $P$\ being labelled
\[
D^{53}\left\{
\begin{array}
[c]{c}%
\left(  1,5\right)  ,\left(  2,5\right)  ,\left(  1,6\right)  ,\left(
3,6\right)  ,\left(  1,7\right)  ,\left(  4,7\right)  ,\left(  2,8\right)
,\left(  3,8\right)  ,\left(  2,9\right)  ,\left(  4,9\right)  ,\\
\left(  3,10\right)  ,\left(  4,10\right)  ,\left(  5,6\right)  ,\left(
5,7\right)  ,\left(  5,8\right)  ,\left(  5,9\right)  ,\left(  6,7\right)
,\left(  6,8\right)  ,\left(  6,10\right)  ,\left(  7,9\right)  ,\\
\left(  7,10\right)  ,\left(  8,9\right)  ,\left(  8,10\right)  ,\left(
9,10\right)  ,\left(  1,i_{11,15}\right)  ,\left(  2,i_{11,15}\right)
,\left(  3,i_{11,15}\right)  ,\\
\left(  5,i_{11,15}\right)  ,\left(  6,i_{11,15}\right)  ,\left(
7,i_{11,15}\right)  ,\left(  8,i_{11,15}\right)  ,\left(  9,i_{11,15}\right)
,\left(  10,i_{11,15}\right)  ,\\
\left(  1,i_{16,20}\right)  ,\left(  2,i_{16,20}\right)  ,\left(
4,i_{16,20}\right)  ,\left(  5,i_{16,20}\right)  ,\left(  6,i_{16,20}\right)
,\left(  7,i_{16,20}\right)  ,\\
\left(  8,i_{16,20}\right)  ,\left(  9,i_{16,20}\right)  ,\left(
10,i_{16,20}\right)  ,\left(  1,i_{21,25}\right)  \left(  3,i_{21,25}\right)
,\left(  4,i_{21,25}\right)  ,\\
\left(  5,i_{21,25}\right)  ,\left(  6,i_{21,25}\right)  ,\left(
7,i_{21,25}\right)  ,\left(  8,i_{21,25}\right)  ,\left(  9,i_{21,25}\right)
,\left(  10,i_{21,25}\right)  ,\\
\left(  2,i_{26,30}\right)  ,\left(  3,i_{26,30}\right)  ,\left(
4,i_{26,30}\right)  ,\left(  5,i_{26,30}\right)  ,\left(  6,i_{26,30}\right)
,\left(  7,i_{26,30}\right)  ,\\
\left(  8,i_{26,30}\right)  ,\left(  9,i_{26,30}\right)  ,\left(
10,i_{26,30}\right)  ,\left(  i_{11,15},i_{11,15}^{\prime}\right)  ,\left(
i_{11,15},i_{16,20}\right)  ,\\
\left(  i_{11,15},i_{21,25}\right)  ,\left(  i_{11,15},i_{26,30}\right)
,\left(  i_{16,20},i_{16,20}^{\prime}\right)  ,\left(  i_{16,20}%
,i_{21,25}\right)  ,\\
\left(  i_{16,20},i_{26,30}\right)  ,\left(  i_{21,25},i_{21,25}^{\prime
}\right)  ,\left(  i_{21,25},i_{26,30}\right)  ,\left(  i_{26,30}%
,i_{26,30}^{\prime}\right)  ,\\
\left(  1,i_{31,53}\right)  ,\left(  2,i_{31,53}\right)  ,\left(
3,i_{31,53}\right)  ,\left(  4,i_{31,53}\right)  ,\left(  5,i_{31,53}\right)
,\left(  6,i_{31,53}\right)  ,\\
\left(  7,i_{31,53}\right)  ,\left(  8,i_{31,53}\right)  ,\left(
9,i_{31,53}\right)  ,\left(  10,i_{31,53}\right)  ,\left(  i_{11,15}%
,i_{31,53}\right)  ,\\
\left(  i_{16,20},i_{31,53}\right)  ,\left(  i_{21,25},i_{31,53}\right)
,\left(  i_{26,30},i_{31,53}\right)  ,\left(  i_{31,53},i_{31,53}^{\prime
}\right)  \\
\mid1i_{11,15},i_{11,15}^{\prime},i_{16,20},i_{16,20}^{\prime},i_{21,25}%
,i_{21,25}^{\prime},i_{26,30},i_{26,30}^{\prime},i_{31,53},i_{31,53}^{\prime
}\in\mathbb{N},\\
1\leq i_{11,15}\leq15,11\leq i_{11,15}^{\prime}\leq15,16\leq i_{16,20}%
\leq20,\\
16\leq i_{16,20}^{\prime}\leq20,21\leq i_{21,25}\leq25,21\leq i_{21,25}%
^{\prime}\leq25,\\
26\leq i_{26,30}\leq30,26\leq i_{26,30}^{\prime}\leq30,31\leq i_{31,53}%
\leq53,\\
31\leq i_{31,53}^{\prime}\leq53
\end{array}
\right\}
\]
all of
\begin{align*}
&  Q^{1234},Q^{1243},Q^{1324},Q^{1342},Q^{1423},Q^{1432},Q^{2134}%
,Q^{2143},Q^{2314},Q^{2341},Q^{2413},Q^{2431},\\
&  Q^{3124},Q^{3142},Q^{3214},Q^{3241},Q^{3412},Q^{3421},Q^{4123}%
,Q^{4132},Q^{4213},Q^{4231},Q^{4312},Q^{4321}%
\end{align*}
being laballed $D^{4}$, all of
\[
R_{12}^{1234,1243},R_{12}^{1342,1432},R_{12}^{2341,2431},R_{12}^{3421,4321}%
,R_{12}^{2134,2143},R_{12}^{3412,4312}%
\]
being labelled $D^{4}\left\{  \left(  3,4\right)  \right\}  $, all of%
\[
R_{13}^{1324,1342},R_{13}^{1243,1423},R_{13}^{3241,3421},R_{13}^{2431,4231}%
,R_{13}^{3124,3142},R_{13}^{2413,4213}%
\]
being labelled $D^{4}\left\{  \left(  2,4\right)  \right\}  $, all of%
\[
R_{14}^{1423,1432},R_{14}^{1234,1324},R_{14}^{4231,4321},R_{14}^{2341,3241}%
,R_{14}^{4123,4132},R_{14}^{2314,3214}%
\]
being labelled $D^{4}\left\{  \left(  2,3\right)  \right\}  $, all of%
\[
R_{23}^{2314,2341},R_{23}^{2143,2413},R_{23}^{3142,3412},R_{23}^{1432,4132}%
,R_{23}^{3214,3241},R_{23}^{1423,4123}%
\]
being labelled $D^{4}\left\{  \left(  1,4\right)  \right\}  $, all of%
\[
R_{24}^{2413,2431},R_{24}^{2134,2314},R_{24}^{4132,4312},R_{24}^{1342,3142}%
,R_{24}^{4213,4231},R_{24}^{1324,3124}%
\]
being labelled $D^{4}\left\{  \left(  1,3\right)  \right\}  $%
\[
R_{34}^{3412,3421},R_{34}^{3124,3214},R_{34}^{4123,4213},R_{34}^{1243,2143}%
,R_{34}^{4312,4321},R_{34}^{1234,2134}%
\]
being labelled $D^{4}\left\{  \left(  1,2\right)  \right\}  $, the edges%
\begin{align*}
& f_{1234},f_{1243},f_{1324},f_{1342},f_{1423},f_{1432},f_{2134}%
,f_{2143},f_{2314},f_{2341},f_{2413},f_{2431},\\
& f_{3124},f_{3142},f_{3214},f_{3241},f_{3412},f_{3421},f_{4123}%
,f_{4132},f_{4213},f_{4231},f_{4312},f_{4321}%
\end{align*}
standing for mappings
\begin{align*}
&  f_{1234}\left(  d_{1},d_{2},d_{3},d_{4}\right)  \\
&  =\left(  d_{1},d_{2},d_{3},d_{4},\underset{5}{0},...,\underset{53}%
{0}\right)
\end{align*}%
\begin{align*}
&  f_{1243}\left(  d_{1},d_{2},d_{3},d_{4}\right)  \\
&  =\left(
\begin{array}
[c]{c}%
d_{1},d_{2},d_{3},d_{4},\underset{5}{0},...,\underset{9}{0},\underset
{10}{d_{3}d_{4}},\underset{11}{0},...,\underset{20}{0},d_{1}d_{3}%
d_{4},\underset{22}{0},...,\underset{25}{0},d_{2}d_{3}d_{4},\underset{27}%
{0},...,\underset{30}{0},\\
d_{1}d_{2}d_{3}d_{4},\underset{32}{0},...,\underset{53}{0}%
\end{array}
\right)
\end{align*}%
\begin{align*}
&  f_{1324}\left(  d_{1},d_{2},d_{3},d_{4}\right)  \\
&  =\left(
\begin{array}
[c]{c}%
d_{1},d_{2},d_{3},d_{4},\underset{5}{0},...,\underset{7}{0},d_{2}%
d_{3},\underset{9}{0},\underset{10}{0},d_{1}d_{2}d_{3},\underset{12}%
{0},...,\underset{26}{0},d_{2}d_{3}d_{4},\underset{28}{0},...,\underset{31}%
{0},\\
d_{1}d_{2}d_{3}d_{4},\underset{33}{0},...,\underset{53}{0}%
\end{array}
\right)
\end{align*}%
\begin{align*}
&  f_{1342}\left(  d_{1},d_{2},d_{3},d_{4}\right)  \\
&  =\left(
\begin{array}
[c]{c}%
d_{1},d_{2},d_{3},d_{4},\underset{5}{0},...,\underset{7}{0},d_{2}d_{3}%
,d_{2}d_{4},\underset{10}{0},d_{1}d_{2}d_{3},\underset{12}{0},...,\underset
{15}{0},d_{1}d_{2}d_{4},\underset{17}{0},...,\underset{27}{0},\\
d_{2}d_{3}d_{4},\underset{29}{0},...,\underset{32}{0},d_{1}d_{2}d_{3}%
d_{4},\underset{34}{0},...,\underset{53}{0}%
\end{array}
\right)
\end{align*}%
\begin{align*}
&  f_{1423}\left(  d_{1},d_{2},d_{3},d_{4}\right)  \\
&  =\left(
\begin{array}
[c]{c}%
d_{1},d_{2},d_{3},d_{4},\underset{5}{0},...,\underset{8}{0},d_{2}d_{4}%
,d_{3}d_{4},\underset{11}{0},...,\underset{15}{0},d_{1}d_{2}d_{4}%
,\underset{17}{0},...,\underset{20}{0},d_{1}d_{3}d_{4},\\
\underset{22}{0},...,\underset{28}{0},d_{2}d_{3}d_{4},\underset{30}%
{0},...,\underset{33}{0},d_{1}d_{2}d_{3}d_{4},\underset{35}{0},...,\underset
{53}{0}%
\end{array}
\right)
\end{align*}%
\begin{align*}
&  f_{1432}\left(  d_{1},d_{2},d_{3},d_{4}\right)  \\
&  =\left(
\begin{array}
[c]{c}%
d_{1},d_{2},d_{3},d_{4},\underset{5}{0},...,\underset{7}{0},d_{2}d_{3}%
,d_{2}d_{4},d_{3}d_{4},d_{1}d_{2}d_{3},\underset{12}{0},...,\underset{15}%
{0},d_{1}d_{2}d_{4},\underset{16}{0},...,\underset{20}{0},\\
d_{1}d_{3}d_{4},\underset{22}{0},...,\underset{29}{0},d_{2}d_{3}%
d_{4},\underset{31}{0},...,\underset{34}{0},d_{1}d_{2}d_{3}d_{4},\underset
{36}{0},...,\underset{53}{0}%
\end{array}
\right)
\end{align*}%
\begin{align*}
&  f_{2134}\left(  d_{1},d_{2},d_{3},d_{4}\right)  \\
&  =\left(  d_{1},d_{2},d_{3},d_{4},d_{1}d_{2},\underset{6}{0},...,\underset
{11}{0},d_{1}d_{2}d_{3},\underset{13}{0},...,\underset{16}{0},d_{1}d_{2}%
d_{4},\underset{18}{0},...,\underset{35}{0},d_{1}d_{2}d_{3}d_{4},\underset
{37}{0},...,\underset{53}{0}\right)
\end{align*}%
\begin{align*}
&  f_{2143}\left(  d_{1},d_{2},d_{3},d_{4}\right)  \\
&  =\left(
\begin{array}
[c]{c}%
d_{1},d_{2},d_{3},d_{4},d_{1}d_{2},\underset{6}{0},...,\underset{9}{0}%
,d_{3}d_{4},\underset{11}{0},d_{1}d_{2}d_{3},\underset{13}{0},...,\underset
{16}{0},d_{1}d_{2}d_{4},\underset{18}{0},...,\underset{20}{0},d_{1}d_{3}%
d_{4},\\
\underset{22}{0},...,\underset{25}{0},d_{2}d_{3}d_{4},\underset{27}%
{0},...,\underset{36}{0},d_{1}d_{2}d_{3}d_{4},\underset{38}{0},...,\underset
{53}{0}%
\end{array}
\right)
\end{align*}%
\begin{align*}
&  f_{2314}\left(  d_{1},d_{2},d_{3},d_{4}\right)  \\
&  =\left(
\begin{array}
[c]{c}%
d_{1},d_{2},d_{3},d_{4},d_{1}d_{2},d_{1}d_{3},\underset{7}{0},...,\underset
{12}{0},d_{1}d_{2}d_{3},\underset{14}{0},...,\underset{16}{0},d_{1}d_{2}%
d_{4},\underset{18}{0},...,\underset{21}{0},d_{1}d_{3}d_{4},\\
\underset{23}{0},...,\underset{37}{0},d_{1}d_{2}d_{3}d_{4},\underset{39}%
{0},...,\underset{53}{0}%
\end{array}
\right)
\end{align*}%
\begin{align*}
&  f_{2341}\left(  d_{1},d_{2},d_{3},d_{4}\right)  \\
&  =\left(
\begin{array}
[c]{c}%
d_{1},d_{2},d_{3},d_{4},d_{1}d_{2},d_{1}d_{3},d_{1}d_{4},\underset{8}%
{0},...,\underset{12}{0},d_{1}d_{2}d_{3},\underset{14}{0},...,\underset{17}%
{0},d_{1}d_{2}d_{4},\underset{19}{0},...,\underset{22}{0},\\
d_{1}d_{3}d_{4},\underset{24}{0},...,\underset{38}{0},d_{1}d_{2}d_{3}%
d_{4},\underset{40}{0},...,\underset{53}{0}%
\end{array}
\right)
\end{align*}%
\begin{align*}
&  f_{2413}\left(  d_{1},d_{2},d_{3},d_{4}\right)  \\
&  =\left(
\begin{array}
[c]{c}%
d_{1},d_{2},d_{3},d_{4},d_{1}d_{2},\underset{6}{0},d_{1}d_{4},\underset{8}%
{0},\underset{9}{0},d_{3}d_{4},\underset{11}{0},d_{1}d_{2}d_{3},\underset
{13}{0},...,\underset{17}{0},d_{1}d_{2}d_{4},\underset{19}{0},...,\underset
{23}{0},\\
d_{1}d_{3}d_{4},\underset{25}{0},d_{2}d_{3}d_{4},\underset{27}{0}%
,...,\underset{39}{0},d_{1}d_{2}d_{3}d_{4},\underset{41}{0},...,\underset
{53}{0}%
\end{array}
\right)
\end{align*}%
\begin{align*}
&  f_{2431}\left(  d_{1},d_{2},d_{3},d_{4}\right)  \\
&  =\left(
\begin{array}
[c]{c}%
d_{1},d_{2},d_{3},d_{4},d_{1}d_{2},d_{1}d_{3},d_{1}d_{4},\underset{8}%
{0},\underset{9}{0},d_{3}d_{4},\underset{11}{0},\underset{12}{0},d_{1}%
d_{2}d_{3},\underset{14}{0},...,\underset{17}{0},d_{1}d_{2}d_{4},\\
\underset{19}{0},...,\underset{24}{0},d_{1}d_{3}d_{4},d_{2}d_{3}%
d_{4},\underset{27}{0},...,\underset{40}{0},d_{1}d_{2}d_{3}d_{4},\underset
{42}{0},...,\underset{53}{0}%
\end{array}
\right)
\end{align*}%
\begin{align*}
&  f_{3124}\left(  d_{1},d_{2},d_{3},d_{4}\right)  \\
&  =\left(
\begin{array}
[c]{c}%
d_{1},d_{2},d_{3},d_{4},\underset{5}{0},d_{1}d_{3},\underset{7}{0},d_{2}%
d_{3},\underset{9}{0},...,\underset{13}{0},d_{1}d_{2}d_{3},\underset{15}%
{0},...,\underset{21}{0},d_{1}d_{3}d_{4},\underset{23}{0},...,\underset{26}%
{0},\\
d_{2}d_{3}d_{4},\underset{28}{0},...,\underset{41}{0},d_{1}d_{2}d_{3}%
d_{4},\underset{43}{0},...,\underset{53}{0}%
\end{array}
\right)
\end{align*}%
\begin{align*}
&  f_{3142}\left(  d_{1},d_{2},d_{3},d_{4}\right)  \\
&  =\left(
\begin{array}
[c]{c}%
d_{1},d_{2},d_{3},d_{4},\underset{5}{0},d_{1}d_{3},\underset{7}{0},d_{2}%
d_{3},d_{2}d_{4},\underset{10}{0},...,\underset{13}{0},d_{1}d_{2}%
d_{3},\underset{15}{0},d_{1}d_{2}d_{4},\underset{17}{0},...,\underset{21}%
{0},d_{1}d_{3}d_{4},\\
\underset{23}{0},...,\underset{27}{0},d_{2}d_{3}d_{4},\underset{29}%
{0},...,\underset{42}{0},d_{1}d_{2}d_{3}d_{4},\underset{44}{0},...,\underset
{53}{0}%
\end{array}
\right)
\end{align*}%
\begin{align*}
&  f_{3214}\left(  d_{1},d_{2},d_{3},d_{4}\right)  \\
&  =\left(
\begin{array}
[c]{c}%
d_{1},d_{2},d_{3},d_{4},d_{1}d_{2},d_{1}d_{3},\underset{7}{0},d_{2}%
d_{3},\underset{9}{0},...,\underset{14}{0},d_{1}d_{2}d_{3},\underset{16}%
{0},d_{1}d_{2}d_{4},\underset{18}{0},...,\underset{21}{0},d_{1}d_{3}d_{4},\\
\underset{23}{0},...,\underset{26}{0},d_{2}d_{3}d_{4},\underset{28}%
{0},...,\underset{43}{0},d_{1}d_{2}d_{3}d_{4},\underset{45}{0},...,\underset
{53}{0}%
\end{array}
\right)
\end{align*}%
\begin{align*}
&  f_{3241}\left(  d_{1},d_{2},d_{3},d_{4}\right)  \\
&  =\left(
\begin{array}
[c]{c}%
d_{1},d_{2},d_{3},d_{4},d_{1}d_{2},d_{1}d_{3},d_{1}d_{4},d_{2}d_{3}%
,\underset{9}{0},...,\underset{14}{0},d_{1}d_{2}d_{3},\underset{16}%
{0},\underset{17}{0},d_{1}d_{2}d_{4},\underset{19}{0},...,\underset{22}{0},\\
d_{1}d_{3}d_{4},\underset{24}{0},...,\underset{26}{0},d_{2}d_{3}%
d_{4},\underset{28}{0},...,\underset{44}{0},d_{1}d_{2}d_{3}d_{4},\underset
{46}{0},...,\underset{53}{0}%
\end{array}
\right)
\end{align*}%
\begin{align*}
&  f_{3412}\left(  d_{1},d_{2},d_{3},d_{4}\right)  \\
&  =\left(
\begin{array}
[c]{c}%
d_{1},d_{2},d_{3},d_{4},\underset{5}{0},d_{1}d_{3},d_{1}d_{4},d_{2}d_{3}%
,d_{2}d_{4},\underset{10}{0},...,\underset{13}{0},d_{1}d_{2}d_{3}%
,\underset{15}{0},...,\underset{18}{0},d_{1}d_{2}d_{4},\\
\underset{20}{0},...,\underset{22}{0},d_{1}d_{3}d_{4},\underset{24}%
{0},...,\underset{27}{0},d_{2}d_{3}d_{4},\underset{29}{0},...,\underset{45}%
{0},d_{1}d_{2}d_{3}d_{4},\underset{47}{0},...,\underset{53}{0}%
\end{array}
\right)
\end{align*}%
\begin{align*}
&  f_{3421}\left(  d_{1},d_{2},d_{3},d_{4}\right)  \\
&  =\left(
\begin{array}
[c]{c}%
d_{1},d_{2},d_{3},d_{4},d_{1}d_{2},d_{1}d_{3},d_{1}d_{4},d_{2}d_{3},d_{2}%
d_{4},\underset{10}{0},...,\underset{14}{0},d_{1}d_{2}d_{3},\underset{16}%
{0},...,\underset{19}{0},d_{1}d_{2}d_{4},\\
\underset{21}{0},\underset{22}{0},d_{1}d_{3}d_{4},\underset{24}{0}%
,...,\underset{27}{0},d_{2}d_{3}d_{4},\underset{29}{0},...,\underset{46}%
{0},d_{1}d_{2}d_{3}d_{4},\underset{48}{0},...,\underset{53}{0}%
\end{array}
\right)
\end{align*}%
\begin{align*}
&  f_{4123}\left(  d_{1},d_{2},d_{3},d_{4}\right)  \\
&  =\left(
\begin{array}
[c]{c}%
d_{1},d_{2},d_{3},d_{4},\underset{5}{0},\underset{6}{0},d_{1}d_{4}%
,\underset{7}{0},\underset{7}{0},d_{2}d_{4},d_{3}d_{4},\underset{11}%
{0},...,\underset{18}{0},d_{1}d_{2}d_{4},\underset{20}{0},...,\underset
{23}{0,}d_{1}d_{3}d_{4},\\
\underset{25}{0},...,\underset{28}{0},d_{2}d_{3}d_{4},\underset{30}%
{0},...,\underset{47}{0},d_{1}d_{2}d_{3}d_{4},\underset{49}{0},...,\underset
{53}{0}%
\end{array}
\right)
\end{align*}%
\begin{align*}
&  f_{4132}\left(  d_{1},d_{2},d_{3},d_{4}\right)  \\
&  =\left(
\begin{array}
[c]{c}%
d_{1},d_{2},d_{3},d_{4},\underset{5}{0},\underset{6}{0},d_{1}d_{4},d_{2}%
d_{3},d_{2}d_{4},d_{3}d_{4},d_{1}d_{2}d_{3},\underset{12}{0},...,\underset
{18}{0},d_{1}d_{2}d_{4},\underset{20}{0},...,\underset{23}{0},\\
d_{1}d_{3}d_{4},\underset{25}{0},...,\underset{29}{0},d_{2}d_{3}%
d_{4},\underset{31}{0},...,\underset{48}{0},d_{1}d_{2}d_{3}d_{4},\underset
{50}{0},...,\underset{53}{0}%
\end{array}
\right)
\end{align*}%
\begin{align*}
&  f_{4213}\left(  d_{1},d_{2},d_{3},d_{4}\right)  \\
&  =\left(
\begin{array}
[c]{c}%
d_{1},d_{2},d_{3},d_{4},d_{1}d_{2},\underset{6}{0},d_{1}d_{4},\underset{8}%
{0},d_{2}d_{4},d_{3}d_{4},\underset{11}{0},d_{1}d_{2}d_{3},\underset{13}%
{0},...,\underset{19}{0},d_{1}d_{2}d_{4},\\
\underset{21}{0},...,\underset{23}{0},d_{1}d_{3}d_{4},\underset{25}%
{0},...,\underset{28}{0},d_{2}d_{3}d_{4},\underset{30}{0},...,\underset{49}%
{0},d_{1}d_{2}d_{3}d_{4},\underset{51}{0},...,\underset{53}{0}%
\end{array}
\right)
\end{align*}%
\begin{align*}
&  f_{4231}\left(  d_{1},d_{2},d_{3},d_{4}\right)  \\
&  =\left(
\begin{array}
[c]{c}%
d_{1},d_{2},d_{3},d_{4},d_{1}d_{2},d_{1}d_{3},d_{1}d_{4},\underset{8}{0}%
,d_{2}d_{4},d_{3}d_{4},\underset{11}{0},\underset{12}{0},d_{1}d_{2}%
d_{3},\underset{14}{0},...,\underset{19}{0},\\
d_{1}d_{2}d_{4},\underset{21}{0},...,\underset{24}{0},d_{1}d_{3}%
d_{4},\underset{26}{0},...,\underset{28}{0},d_{2}d_{3}d_{4},\underset{30}%
{0},...,\underset{50}{0},d_{1}d_{2}d_{3}d_{4},\underset{52}{0},\underset
{53}{0}%
\end{array}
\right)
\end{align*}%
\begin{align*}
&  f_{4312}\left(  d_{1},d_{2},d_{3},d_{4}\right)  \\
&  =\left(
\begin{array}
[c]{c}%
d_{1},d_{2},d_{3},d_{4},\underset{5}{0},d_{1}d_{3},d_{1}d_{4},d_{2}d_{3}%
,d_{2}d_{4},d_{3}d_{4},\underset{11}{0},...,\underset{13}{0},d_{1}d_{2}%
d_{3},\underset{15}{0},...,\underset{18}{0},\\
d_{1}d_{2}d_{4},\underset{20}{0},...,\underset{24}{0},d_{1}d_{3}%
d_{4},\underset{26}{0},...,\underset{29}{0},d_{2}d_{3}d_{4},\underset{31}%
{0},...,\underset{51}{0},d_{1}d_{2}d_{3}d_{4},\underset{53}{0}%
\end{array}
\right)
\end{align*}%
\begin{align*}
&  f_{4321}\left(  d_{1},d_{2},d_{3},d_{4}\right)  \\
&  =\left(
\begin{array}
[c]{c}%
d_{1},d_{2},d_{3},d_{4},d_{1}d_{2},d_{1}d_{3},d_{1}d_{4},d_{2}d_{3},d_{2}%
d_{4},d_{3}d_{4},\underset{11}{0},...,\underset{14}{0},d_{1}d_{2}%
d_{3},\underset{16}{0},...,\underset{19}{0},\\
d_{1}d_{2}d_{4},\underset{21}{0},...,\underset{24}{0},d_{1}d_{3}%
d_{4},\underset{26}{0},...,\underset{29}{0},d_{2}d_{3}d_{4},\underset{31}%
{0},...,\underset{52}{0},d_{1}d_{2}d_{3}d_{4}%
\end{array}
\right)
\end{align*}
respectively, and all the other edges standing for the identity mappings
(e.g., both $g_{12}^{1234,1243}$\ and $h_{12}^{1234,1243}$\ standing for the
mapping $\left(  d_{1},d_{2},d_{3},d_{4}\right)  \in D^{4}\left\{  \left(
3,4\right)  \right\}  \mapsto\left(  d_{1},d_{2},d_{3},d_{4}\right)  \in
D^{4}$ while both $g_{13}^{1324,1342}$\ and $h_{13}^{1324,1342}$  \ standing
for the mapping  $\left(  d_{1},d_{2},d_{3},d_{4}\right)  \in D^{4}\left\{
\left(  2,4\right)  \right\}  \mapsto\left(  d_{1},d_{2},d_{3},d_{4}\right)
\in D^{4}$) is a quasi-colimit diagram.
\end{theorem}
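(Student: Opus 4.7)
The plan is to verify this quasi-colimit by decomposing $P$ as an iterated pushout built from the basic quasi-colimits in Lemmas~\ref{l2.1}--\ref{l2.4}, following the combinatorial structure of the triple-iterated bracket $[X_a,[X_b,[X_c,X_d]]]$ summed over the twelve even permutations of $\{1,2,3,4\}$. The key conceptual point is that each strong difference in Proposition~\ref{p2.1} is a pushout of the sort guaranteed by Corollary~\ref{cl2.2}, so that building $P$ amounts to composing these pushouts in the right order.

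First, I would organize the twenty-four $Q^\sigma\cong D^4$ along the Cayley graph of $S_4$ with respect to the adjacent transpositions: each of the thirty-six $R$-vertices $R_{ij}^{\sigma,\tau}$ corresponds to one edge, where the subscript $ij$ records which strong difference $\overset{\cdot}{\underset{ij}{-}}$ is being performed, and the label $D^4\{(k,\ell)\}$ is precisely the compatibility locus required by Corollary~\ref{cl2.2}. Second, I would build $P$ in three successive pushout phases mirroring the nesting of the bracket. At the innermost phase, pairs of adjacent microsquares are glued along their $D^4\{(3,4)\}$-, $D^4\{(2,4)\}$-, or $D^4\{(2,3)\}$-loci to produce intermediate objects carrying new triple-product coordinates. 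At the next phase, these intermediates are glued along $D^4\{(1,4)\}$-, $D^4\{(1,3)\}$-, and $D^4\{(1,2)\}$-loci, adjoining further quadruple-product coordinates. A final outermost strong-difference step (Lemma~\ref{l2.1}) combines the six resulting objects into one. At each phase the new coordinates introduced are exactly those indexed by $11$--$30$ (triple-product slots after the innermost differences) and $31$--$53$ (quadruple-product slots after the two outer levels), as already ensured by the well-definedness argument of Proposition~\ref{p2.1}. Since a composite of quasi-colimit squares is again a quasi-colimit, and Corollary~\ref{cl2.4} allows one to decompose multi-index compatibility conditions into individual pair conditions, the universal property for $P$ transfers automatically from the universal properties of the intermediate stages.

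The main obstacle is combinatorial bookkeeping rather than any new mathematical idea. One must match each of the twenty-four explicit formulae $f_\sigma$ against the output of the inductive construction, verify that every forbidden product $(i,j)$ listed in the enormous label of $P$ is accounted for as exactly one compatibility condition imposed at some step (neither missing nor doubled), and confirm that the twenty-three quadruple-product slots in positions $31$--$53$ correspond exactly to the independent copies of $d_1d_2d_3d_4$ produced along distinct paths through the Cayley graph. Once this bookkeeping is in place, the conclusion follows from repeated invocation of Lemmas~\ref{l2.1}--\ref{l2.4} and Corollary~\ref{cl2.4}.
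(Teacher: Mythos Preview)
Your approach is genuinely different from the paper's. The paper gives a completely direct verification via the general Kock--Lawvere axiom: it writes each $\theta^{\sigma}:D^{4}\to\mathbb{R}$ as an explicit polynomial $a^{\sigma}+a_{1}^{\sigma}d_{1}+\cdots+a_{1234}^{\sigma}d_{1}d_{2}d_{3}d_{4}$, translates the thirty-six compatibility relations $\theta^{\sigma}\circ g=\theta^{\tau}\circ h$ into equalities among coefficients, observes that these equalities identify exactly the coefficients that must coincide (all constants and linear terms agree; each $a_{ij}$ falls into two classes of twelve; each $a_{ijk}$ into six classes of four; the $a_{1234}^{\sigma}$ remain free except for $\sigma=1234$), and then reads off that this is precisely the Weil algebra of the labelled object $P$. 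No iterated pushout, no appeal to Lemmas~\ref{l2.1}--\ref{l2.4}; just a brute coefficient count.

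Your structural idea is appealing, but as written it has a real gap. The claim that the thirty-six $R$-vertices sit on the edges of the Cayley graph of $S_{4}$ for adjacent transpositions is not correct as stated: for instance $R_{13}^{1324,1342}$ glues $Q^{1324}$ and $Q^{1342}$ along $D^{4}\{(2,4)\}$, not along $D^{4}\{(3,4)\}$, even though $1324$ and $1342$ differ by swapping positions $3$ and $4$. The gluing loci are dictated by the strong differences appearing in Theorem~\ref{t3.2}, not by any single Cayley-graph generating set, and the same pair of permutations can be adjacent for one transposition yet glued along a different locus. Consequently your ``three-phase'' decomposition does not match the actual diagram: the thirty-six relations are not stratified by bracket depth, and several of them are shared between different terms of the identity (e.g.\ $R_{12}^{2341,2431}$ is used both in the $\underset{12}{\overset{\cdot}{-}}$ term and in the $\underset{21}{\overset{\cdot}{-}}$ term). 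Until you produce an explicit cofinal subdiagram or an explicit sequence of pushout squares whose composite is demonstrably the labelled $P$, the appeal to ``composites of quasi-colimits are quasi-colimits'' is vacuous. In practice, checking that your iterated construction lands on the specific $53$-coordinate object with the specific forbidden products requires exactly the coefficient bookkeeping the paper carries out directly; the structural packaging does not shortcut it.
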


\begin{proof}
Let $\theta^{1234}:Q^{1234}\rightarrow\mathbb{R}$, $\theta^{1243}%
:Q^{1243}\rightarrow\mathbb{R}$, $\theta^{1324}:Q^{1324}\rightarrow\mathbb{R}%
$, $\theta^{1342}:Q^{1342}\rightarrow\mathbb{R}$, $\theta^{1423}%
:Q^{1423}\rightarrow\mathbb{R}$, $\theta^{1432}:Q^{1432}\rightarrow\mathbb{R}%
$, $\theta^{2134}:Q^{2134}\rightarrow\mathbb{R}$, $\theta^{2143}%
:Q^{2143}\rightarrow\mathbb{R}$, $\theta^{2314}:Q^{2314}\rightarrow\mathbb{R}%
$, $\theta^{2341}:Q^{2341}\rightarrow\mathbb{R}$, $\theta^{2413}%
:Q^{2413}\rightarrow\mathbb{R}$, $\theta^{2431}:Q^{2431}\rightarrow\mathbb{R}%
$, $\theta^{3124}:Q^{3124}\rightarrow\mathbb{R}$, $\theta^{3142}%
:Q^{3142}\rightarrow\mathbb{R}$, $\theta^{3214}:Q^{3214}\rightarrow\mathbb{R}%
$, $\theta^{3241}:Q^{3241}\rightarrow\mathbb{R}$, $\theta^{3412}%
:Q^{3412}\rightarrow\mathbb{R}$, $\theta^{3421}:Q^{3421}\rightarrow\mathbb{R}%
$, $\theta^{4123}:Q^{4123}\rightarrow\mathbb{R}$, $\theta^{4132}%
:Q^{4132}\rightarrow\mathbb{R}$, $\theta^{4213}:Q^{4213}\rightarrow\mathbb{R}%
$, $\theta^{4231}:Q^{4231}\rightarrow\mathbb{R}$, $\theta^{4312}%
:Q^{4312}\rightarrow\mathbb{R}$ and $\theta^{4321}:Q^{4321}\rightarrow
\mathbb{R}$ be mappings, which are to be of the following forms by dint of the
general Kock-Lawvere axiom (cf. \S 2.1.3 of \cite{la}):%
\begin{align*}
&  \theta^{1234}\left(  d_{1},d_{2},d_{3},d_{4}\right) \\
&  =a^{1234}+a_{1}^{1234}d_{1}+a_{2}^{1234}d_{2}+a_{3}^{1234}d_{3}%
+a_{4}^{1234}d_{1}+a_{12}^{1234}d_{1}d_{2}+a_{13}^{1234}d_{1}d_{3}%
+a_{14}^{1234}d_{1}d_{4}+\\
&  a_{23}^{1234}d_{2}d_{3}+a_{24}^{1234}d_{2}d_{4}+a_{34}^{1234}d_{3}%
d_{4}+a_{123}^{1234}d_{1}d_{2}d_{3}+a_{124}^{1234}d_{1}d_{2}d_{4}%
+a_{134}^{1234}d_{1}d_{3}d_{4}+\\
&  a_{234}^{1234}d_{2}d_{3}d_{4}+a_{1234}^{1234}d_{1}d_{2}d_{3}d_{4}%
\end{align*}%
\begin{align*}
&  \theta^{1243}\left(  d_{1},d_{2},d_{3},d_{4}\right) \\
&  =a^{1243}+a_{1}^{1243}d_{1}+a_{2}^{1243}d_{2}+a_{3}^{1243}d_{3}%
+a_{4}^{1243}d_{1}+a_{12}^{1243}d_{1}d_{2}+a_{13}^{1243}d_{1}d_{3}%
+a_{14}^{1243}d_{1}d_{4}+\\
&  a_{23}^{1243}d_{2}d_{3}+a_{24}^{1243}d_{2}d_{4}+a_{34}^{1243}d_{3}%
d_{4}+a_{123}^{1243}d_{1}d_{2}d_{3}+a_{124}^{1243}d_{1}d_{2}d_{4}%
+a_{134}^{1243}d_{1}d_{3}d_{4}+\\
&  a_{234}^{1243}d_{2}d_{3}d_{4}+a_{1234}^{1243}d_{1}d_{2}d_{3}d_{4}%
\end{align*}%
\begin{align*}
&  \theta^{1324}\left(  d_{1},d_{2},d_{3},d_{4}\right) \\
&  =a^{1324}+a_{1}^{1324}d_{1}+a_{2}^{1324}d_{2}+a_{3}^{1324}d_{3}%
+a_{4}^{1324}d_{1}+a_{12}^{1324}d_{1}d_{2}+a_{13}^{1324}d_{1}d_{3}%
+a_{14}^{1324}d_{1}d_{4}+\\
&  a_{23}^{1324}d_{2}d_{3}+a_{24}^{1324}d_{2}d_{4}+a_{34}^{1324}d_{3}%
d_{4}+a_{123}^{1324}d_{1}d_{2}d_{3}+a_{124}^{1324}d_{1}d_{2}d_{4}%
+a_{134}^{1324}d_{1}d_{3}d_{4}+\\
&  a_{234}^{1324}d_{2}d_{3}d_{4}+a_{1234}^{1324}d_{1}d_{2}d_{3}d_{4}%
\end{align*}%
\begin{align*}
&  \theta^{1342}\left(  d_{1},d_{2},d_{3},d_{4}\right) \\
&  =a^{1342}+a_{1}^{1342}d_{1}+a_{2}^{1342}d_{2}+a_{3}^{1342}d_{3}%
+a_{4}^{1342}d_{1}+a_{12}^{1342}d_{1}d_{2}+a_{13}^{1342}d_{1}d_{3}%
+a_{14}^{1342}d_{1}d_{4}+\\
&  a_{23}^{1342}d_{2}d_{3}+a_{24}^{1342}d_{2}d_{4}+a_{34}^{1342}d_{3}%
d_{4}+a_{123}^{1342}d_{1}d_{2}d_{3}+a_{124}^{1342}d_{1}d_{2}d_{4}%
+a_{134}^{1342}d_{1}d_{3}d_{4}+\\
&  a_{234}^{1342}d_{2}d_{3}d_{4}+a_{1234}^{1342}d_{1}d_{2}d_{3}d_{4}%
\end{align*}%
\begin{align*}
&  \theta^{1423}\left(  d_{1},d_{2},d_{3},d_{4}\right) \\
&  =a^{1423}+a_{1}^{1423}d_{1}+a_{2}^{1423}d_{2}+a_{3}^{1423}d_{3}%
+a_{4}^{1423}d_{1}+a_{12}^{1423}d_{1}d_{2}+a_{13}^{1423}d_{1}d_{3}%
+a_{14}^{1423}d_{1}d_{4}+\\
&  a_{23}^{1423}d_{2}d_{3}+a_{24}^{1423}d_{2}d_{4}+a_{34}^{1423}d_{3}%
d_{4}+a_{123}^{1423}d_{1}d_{2}d_{3}+a_{124}^{1423}d_{1}d_{2}d_{4}%
+a_{134}^{1423}d_{1}d_{3}d_{4}+\\
&  a_{234}^{1423}d_{2}d_{3}d_{4}+a_{1234}^{1423}d_{1}d_{2}d_{3}d_{4}%
\end{align*}%
\begin{align*}
&  \theta^{1432}\left(  d_{1},d_{2},d_{3},d_{4}\right) \\
&  =a^{1432}+a_{1}^{1432}d_{1}+a_{2}^{1432}d_{2}+a_{3}^{1432}d_{3}%
+a_{4}^{1432}d_{1}+a_{12}^{1432}d_{1}d_{2}+a_{13}^{1432}d_{1}d_{3}%
+a_{14}^{1432}d_{1}d_{4}+\\
&  a_{23}^{1432}d_{2}d_{3}+a_{24}^{1432}d_{2}d_{4}+a_{34}^{1432}d_{3}%
d_{4}+a_{123}^{1432}d_{1}d_{2}d_{3}+a_{124}^{1432}d_{1}d_{2}d_{4}%
+a_{134}^{1432}d_{1}d_{3}d_{4}+\\
&  a_{234}^{1432}d_{2}d_{3}d_{4}+a_{1234}^{1432}d_{1}d_{2}d_{3}d_{4}%
\end{align*}%
\begin{align*}
&  \theta^{2134}\left(  d_{1},d_{2},d_{3},d_{4}\right) \\
&  =a^{2134}+a_{1}^{2134}d_{1}+a_{2}^{2134}d_{2}+a_{3}^{2134}d_{3}%
+a_{4}^{2134}d_{1}+a_{12}^{2134}d_{1}d_{2}+a_{13}^{2134}d_{1}d_{3}%
+a_{14}^{2134}d_{1}d_{4}+\\
&  a_{23}^{2134}d_{2}d_{3}+a_{24}^{2134}d_{2}d_{4}+a_{34}^{2134}d_{3}%
d_{4}+a_{123}^{2134}d_{1}d_{2}d_{3}+a_{124}^{2134}d_{1}d_{2}d_{4}%
+a_{134}^{2134}d_{1}d_{3}d_{4}+\\
&  a_{234}^{2134}d_{2}d_{3}d_{4}+a_{1234}^{2134}d_{1}d_{2}d_{3}d_{4}%
\end{align*}%
\begin{align*}
&  \theta^{2143}\left(  d_{1},d_{2},d_{3},d_{4}\right) \\
&  =a^{2143}+a_{1}^{2143}d_{1}+a_{2}^{2143}d_{2}+a_{3}^{2143}d_{3}%
+a_{4}^{2143}d_{1}+a_{12}^{2143}d_{1}d_{2}+a_{13}^{2143}d_{1}d_{3}%
+a_{14}^{2143}d_{1}d_{4}+\\
&  a_{23}^{2143}d_{2}d_{3}+a_{24}^{2143}d_{2}d_{4}+a_{34}^{2143}d_{3}%
d_{4}+a_{123}^{2143}d_{1}d_{2}d_{3}+a_{124}^{2143}d_{1}d_{2}d_{4}%
+a_{134}^{2143}d_{1}d_{3}d_{4}+\\
&  a_{234}^{2143}d_{2}d_{3}d_{4}+a_{1234}^{2143}d_{1}d_{2}d_{3}d_{4}%
\end{align*}%
\begin{align*}
&  \theta^{2314}\left(  d_{1},d_{2},d_{3},d_{4}\right) \\
&  =a^{2314}+a_{1}^{2314}d_{1}+a_{2}^{2314}d_{2}+a_{3}^{2314}d_{3}%
+a_{4}^{2314}d_{1}+a_{12}^{2314}d_{1}d_{2}+a_{13}^{2314}d_{1}d_{3}%
+a_{14}^{2314}d_{1}d_{4}+\\
&  a_{23}^{2314}d_{2}d_{3}+a_{24}^{2314}d_{2}d_{4}+a_{34}^{2314}d_{3}%
d_{4}+a_{123}^{2314}d_{1}d_{2}d_{3}+a_{124}^{2314}d_{1}d_{2}d_{4}%
+a_{134}^{2314}d_{1}d_{3}d_{4}+\\
&  a_{234}^{2314}d_{2}d_{3}d_{4}+a_{1234}^{2314}d_{1}d_{2}d_{3}d_{4}%
\end{align*}%
\begin{align*}
&  \theta^{2341}\left(  d_{1},d_{2},d_{3},d_{4}\right) \\
&  =a^{2341}+a_{1}^{2341}d_{1}+a_{2}^{2341}d_{2}+a_{3}^{2341}d_{3}%
+a_{4}^{2341}d_{1}+a_{12}^{2341}d_{1}d_{2}+a_{13}^{2341}d_{1}d_{3}%
+a_{14}^{2341}d_{1}d_{4}+\\
&  a_{23}^{2341}d_{2}d_{3}+a_{24}^{2341}d_{2}d_{4}+a_{34}^{2341}d_{3}%
d_{4}+a_{123}^{2341}d_{1}d_{2}d_{3}+a_{124}^{2341}d_{1}d_{2}d_{4}%
+a_{134}^{2341}d_{1}d_{3}d_{4}+\\
&  a_{234}^{2341}d_{2}d_{3}d_{4}+a_{1234}^{2341}d_{1}d_{2}d_{3}d_{4}%
\end{align*}%
\begin{align*}
&  \theta^{2413}\left(  d_{1},d_{2},d_{3},d_{4}\right) \\
&  =a^{2413}+a_{1}^{2413}d_{1}+a_{2}^{2413}d_{2}+a_{3}^{2413}d_{3}%
+a_{4}^{2413}d_{1}+a_{12}^{2413}d_{1}d_{2}+a_{13}^{2413}d_{1}d_{3}%
+a_{14}^{2413}d_{1}d_{4}+\\
&  a_{23}^{2413}d_{2}d_{3}+a_{24}^{2413}d_{2}d_{4}+a_{34}^{2413}d_{3}%
d_{4}+a_{123}^{2413}d_{1}d_{2}d_{3}+a_{124}^{2413}d_{1}d_{2}d_{4}%
+a_{134}^{2413}d_{1}d_{3}d_{4}+\\
&  a_{234}^{2413}d_{2}d_{3}d_{4}+a_{1234}^{2413}d_{1}d_{2}d_{3}d_{4}%
\end{align*}%
\begin{align*}
&  \theta^{2431}\left(  d_{1},d_{2},d_{3},d_{4}\right) \\
&  =a^{2431}+a_{1}^{2431}d_{1}+a_{2}^{2431}d_{2}+a_{3}^{2431}d_{3}%
+a_{4}^{2431}d_{1}+a_{12}^{2431}d_{1}d_{2}+a_{13}^{2431}d_{1}d_{3}%
+a_{14}^{2431}d_{1}d_{4}+\\
&  a_{23}^{2431}d_{2}d_{3}+a_{24}^{2431}d_{2}d_{4}+a_{34}^{2431}d_{3}%
d_{4}+a_{123}^{2431}d_{1}d_{2}d_{3}+a_{124}^{2431}d_{1}d_{2}d_{4}%
+a_{134}^{2431}d_{1}d_{3}d_{4}+\\
&  a_{234}^{2431}d_{2}d_{3}d_{4}+a_{1234}^{2431}d_{1}d_{2}d_{3}d_{4}%
\end{align*}%
\begin{align*}
&  \theta^{3124}\left(  d_{1},d_{2},d_{3},d_{4}\right) \\
&  =a^{3124}+a_{1}^{3124}d_{1}+a_{2}^{3124}d_{2}+a_{3}^{3124}d_{3}%
+a_{4}^{3124}d_{1}+a_{12}^{3124}d_{1}d_{2}+a_{13}^{3124}d_{1}d_{3}%
+a_{14}^{3124}d_{1}d_{4}+\\
&  a_{23}^{3124}d_{2}d_{3}+a_{24}^{3124}d_{2}d_{4}+a_{34}^{3124}d_{3}%
d_{4}+a_{123}^{3124}d_{1}d_{2}d_{3}+a_{124}^{3124}d_{1}d_{2}d_{4}%
+a_{134}^{3124}d_{1}d_{3}d_{4}+\\
&  a_{234}^{3124}d_{2}d_{3}d_{4}+a_{1234}^{3124}d_{1}d_{2}d_{3}d_{4}%
\end{align*}%
\begin{align*}
&  \theta^{3142}\left(  d_{1},d_{2},d_{3},d_{4}\right) \\
&  =a^{3142}+a_{1}^{3142}d_{1}+a_{2}^{3142}d_{2}+a_{3}^{3142}d_{3}%
+a_{4}^{3142}d_{1}+a_{12}^{3142}d_{1}d_{2}+a_{13}^{3142}d_{1}d_{3}%
+a_{14}^{3142}d_{1}d_{4}+\\
&  a_{23}^{3142}d_{2}d_{3}+a_{24}^{3142}d_{2}d_{4}+a_{34}^{3142}d_{3}%
d_{4}+a_{123}^{3142}d_{1}d_{2}d_{3}+a_{124}^{3142}d_{1}d_{2}d_{4}%
+a_{134}^{3142}d_{1}d_{3}d_{4}+\\
&  a_{234}^{3142}d_{2}d_{3}d_{4}+a_{1234}^{3142}d_{1}d_{2}d_{3}d_{4}%
\end{align*}%
\begin{align*}
&  \theta^{3214}\left(  d_{1},d_{2},d_{3},d_{4}\right) \\
&  =a^{3214}+a_{1}^{3214}d_{1}+a_{2}^{3214}d_{2}+a_{3}^{3214}d_{3}%
+a_{4}^{3214}d_{1}+a_{12}^{3214}d_{1}d_{2}+a_{13}^{3214}d_{1}d_{3}%
+a_{14}^{3214}d_{1}d_{4}+\\
&  a_{23}^{3214}d_{2}d_{3}+a_{24}^{3214}d_{2}d_{4}+a_{34}^{3214}d_{3}%
d_{4}+a_{123}^{3214}d_{1}d_{2}d_{3}+a_{124}^{3214}d_{1}d_{2}d_{4}%
+a_{134}^{3214}d_{1}d_{3}d_{4}+\\
&  a_{234}^{3214}d_{2}d_{3}d_{4}+a_{1234}^{3214}d_{1}d_{2}d_{3}d_{4}%
\end{align*}%
\begin{align*}
&  \theta^{3241}\left(  d_{1},d_{2},d_{3},d_{4}\right) \\
&  =a^{3241}+a_{1}^{3241}d_{1}+a_{2}^{3241}d_{2}+a_{3}^{3241}d_{3}%
+a_{4}^{3241}d_{1}+a_{12}^{3241}d_{1}d_{2}+a_{13}^{3241}d_{1}d_{3}%
+a_{14}^{3241}d_{1}d_{4}+\\
&  a_{23}^{3241}d_{2}d_{3}+a_{24}^{3241}d_{2}d_{4}+a_{34}^{3241}d_{3}%
d_{4}+a_{123}^{3241}d_{1}d_{2}d_{3}+a_{124}^{3241}d_{1}d_{2}d_{4}%
+a_{134}^{3241}d_{1}d_{3}d_{4}+\\
&  a_{234}^{3241}d_{2}d_{3}d_{4}+a_{1234}^{3241}d_{1}d_{2}d_{3}d_{4}%
\end{align*}%
\begin{align*}
&  \theta^{3412}\left(  d_{1},d_{2},d_{3},d_{4}\right) \\
&  =a^{3412}+a_{1}^{3412}d_{1}+a_{2}^{3412}d_{2}+a_{3}^{3412}d_{3}%
+a_{4}^{3412}d_{1}+a_{12}^{3412}d_{1}d_{2}+a_{13}^{3412}d_{1}d_{3}%
+a_{14}^{3412}d_{1}d_{4}+\\
&  a_{23}^{3412}d_{2}d_{3}+a_{24}^{3412}d_{2}d_{4}+a_{34}^{3412}d_{3}%
d_{4}+a_{123}^{3412}d_{1}d_{2}d_{3}+a_{124}^{3412}d_{1}d_{2}d_{4}%
+a_{134}^{3412}d_{1}d_{3}d_{4}+\\
&  a_{234}^{3412}d_{2}d_{3}d_{4}+a_{1234}^{3412}d_{1}d_{2}d_{3}d_{4}%
\end{align*}%
\begin{align*}
&  \theta^{3421}\left(  d_{1},d_{2},d_{3},d_{4}\right) \\
&  =a^{3421}+a_{1}^{3421}d_{1}+a_{2}^{3421}d_{2}+a_{3}^{3421}d_{3}%
+a_{4}^{3421}d_{1}+a_{12}^{3421}d_{1}d_{2}+a_{13}^{3421}d_{1}d_{3}%
+a_{14}^{3421}d_{1}d_{4}+\\
&  a_{23}^{3421}d_{2}d_{3}+a_{24}^{3421}d_{2}d_{4}+a_{34}^{3421}d_{3}%
d_{4}+a_{123}^{3421}d_{1}d_{2}d_{3}+a_{124}^{3421}d_{1}d_{2}d_{4}%
+a_{134}^{3421}d_{1}d_{3}d_{4}+\\
&  a_{234}^{3421}d_{2}d_{3}d_{4}+a_{1234}^{3421}d_{1}d_{2}d_{3}d_{4}%
\end{align*}%
\begin{align*}
&  \theta^{4123}\left(  d_{1},d_{2},d_{3},d_{4}\right) \\
&  =a^{4123}+a_{1}^{4123}d_{1}+a_{2}^{4123}d_{2}+a_{3}^{4123}d_{3}%
+a_{4}^{4123}d_{1}+a_{12}^{4123}d_{1}d_{2}+a_{13}^{4123}d_{1}d_{3}%
+a_{14}^{4123}d_{1}d_{4}+\\
&  a_{23}^{4123}d_{2}d_{3}+a_{24}^{4123}d_{2}d_{4}+a_{34}^{4123}d_{3}%
d_{4}+a_{123}^{4123}d_{1}d_{2}d_{3}+a_{124}^{4123}d_{1}d_{2}d_{4}%
+a_{134}^{4123}d_{1}d_{3}d_{4}+\\
&  a_{234}^{4123}d_{2}d_{3}d_{4}+a_{1234}^{4123}d_{1}d_{2}d_{3}d_{4}%
\end{align*}%
\begin{align*}
&  \theta^{4132}\left(  d_{1},d_{2},d_{3},d_{4}\right) \\
&  =a^{4132}+a_{1}^{4132}d_{1}+a_{2}^{4132}d_{2}+a_{3}^{4132}d_{3}%
+a_{4}^{4132}d_{1}+a_{12}^{4132}d_{1}d_{2}+a_{13}^{4132}d_{1}d_{3}%
+a_{14}^{4132}d_{1}d_{4}+\\
&  a_{23}^{4132}d_{2}d_{3}+a_{24}^{4132}d_{2}d_{4}+a_{34}^{4132}d_{3}%
d_{4}+a_{123}^{4132}d_{1}d_{2}d_{3}+a_{124}^{4132}d_{1}d_{2}d_{4}%
+a_{134}^{4132}d_{1}d_{3}d_{4}+\\
&  a_{234}^{4132}d_{2}d_{3}d_{4}+a_{1234}^{4132}d_{1}d_{2}d_{3}d_{4}%
\end{align*}%
\begin{align*}
&  \theta^{4213}\left(  d_{1},d_{2},d_{3},d_{4}\right) \\
&  =a^{4213}+a_{1}^{4213}d_{1}+a_{2}^{4213}d_{2}+a_{3}^{4213}d_{3}%
+a_{4}^{4213}d_{1}+a_{12}^{4213}d_{1}d_{2}+a_{13}^{4213}d_{1}d_{3}%
+a_{14}^{4213}d_{1}d_{4}+\\
&  a_{23}^{4213}d_{2}d_{3}+a_{24}^{4213}d_{2}d_{4}+a_{34}^{4213}d_{3}%
d_{4}+a_{123}^{4213}d_{1}d_{2}d_{3}+a_{124}^{4213}d_{1}d_{2}d_{4}%
+a_{134}^{4213}d_{1}d_{3}d_{4}+\\
&  a_{234}^{4213}d_{2}d_{3}d_{4}+a_{1234}^{4213}d_{1}d_{2}d_{3}d_{4}%
\end{align*}%
\begin{align*}
&  \theta^{4231}\left(  d_{1},d_{2},d_{3},d_{4}\right) \\
&  =a^{4231}+a_{1}^{4231}d_{1}+a_{2}^{4231}d_{2}+a_{3}^{4231}d_{3}%
+a_{4}^{4231}d_{1}+a_{12}^{4231}d_{1}d_{2}+a_{13}^{4231}d_{1}d_{3}%
+a_{14}^{4231}d_{1}d_{4}+\\
&  a_{23}^{4231}d_{2}d_{3}+a_{24}^{4231}d_{2}d_{4}+a_{34}^{4231}d_{3}%
d_{4}+a_{123}^{4231}d_{1}d_{2}d_{3}+a_{124}^{4231}d_{1}d_{2}d_{4}%
+a_{134}^{4231}d_{1}d_{3}d_{4}+\\
&  a_{234}^{4231}d_{2}d_{3}d_{4}+a_{1234}^{4231}d_{1}d_{2}d_{3}d_{4}%
\end{align*}%
\begin{align*}
&  \theta^{4312}\left(  d_{1},d_{2},d_{3},d_{4}\right) \\
&  =a^{4312}+a_{1}^{4312}d_{1}+a_{2}^{4312}d_{2}+a_{3}^{4312}d_{3}%
+a_{4}^{4312}d_{1}+a_{12}^{4312}d_{1}d_{2}+a_{13}^{4312}d_{1}d_{3}%
+a_{14}^{4312}d_{1}d_{4}+\\
&  a_{23}^{4312}d_{2}d_{3}+a_{24}^{4312}d_{2}d_{4}+a_{34}^{4312}d_{3}%
d_{4}+a_{123}^{4312}d_{1}d_{2}d_{3}+a_{124}^{4312}d_{1}d_{2}d_{4}%
+a_{134}^{4312}d_{1}d_{3}d_{4}+\\
&  a_{234}^{4312}d_{2}d_{3}d_{4}+a_{1234}^{4312}d_{1}d_{2}d_{3}d_{4}%
\end{align*}%
\begin{align*}
&  \theta^{4321}\left(  d_{1},d_{2},d_{3},d_{4}\right) \\
&  =a^{4321}+a_{1}^{4321}d_{1}+a_{2}^{4321}d_{2}+a_{3}^{4321}d_{3}%
+a_{4}^{4321}d_{1}+a_{12}^{4321}d_{1}d_{2}+a_{13}^{4321}d_{1}d_{3}%
+a_{14}^{4321}d_{1}d_{4}+\\
&  a_{23}^{4321}d_{2}d_{3}+a_{24}^{4321}d_{2}d_{4}+a_{34}^{4321}d_{3}%
d_{4}+a_{123}^{4321}d_{1}d_{2}d_{3}+a_{124}^{4321}d_{1}d_{2}d_{4}%
+a_{134}^{4321}d_{1}d_{3}d_{4}+\\
&  a_{234}^{4321}d_{2}d_{3}d_{4}+a_{1234}^{4321}d_{1}d_{2}d_{3}d_{4}%
\end{align*}
The conditions%
\begin{align*}
\theta^{1234}\circ g_{12}^{1234,1243}  &  =\theta^{1243}\circ h_{12}%
^{1234,1243},\theta^{1342}\circ g_{12}^{1342,1432}=\theta^{1432}\circ
h_{12}^{1342,1432},\\
\theta^{2341}\circ g_{12}^{2341,2431}  &  =\theta^{2431}\circ h_{12}%
^{2341,2431},\theta^{3421}\circ g_{12}^{3421,4321}=\theta^{4321}\circ
h_{12}^{3421,4321},\\
\theta^{2134}\circ g_{12}^{2134,2143}  &  =\theta^{2143}\circ h_{12}%
^{2134,2143},\theta^{3412}\circ g_{12}^{3412,4312}=\theta^{4312}\circ
h_{12}^{3412,4312},\\
\theta^{1324}\circ g_{13}^{1324,1342}  &  =\theta^{1342}\circ h_{13}%
^{1324,1342},\theta^{1243}\circ g_{13}^{1243,1423}=\theta^{1423}\circ
h_{13}^{1243,1423},\\
\theta^{3241}\circ g_{13}^{3241,3421}  &  =\theta^{3421}\circ h_{13}%
^{3241,3421},\theta^{2431}\circ g_{13}^{2431,4231}=\theta^{4231}\circ
h_{13}^{2431,4231},\\
\theta^{3124}\circ g_{13}^{3124,3142}  &  =\theta^{3142}\circ h_{13}%
^{3124,3142},\theta^{2413}\circ g_{13}^{2413,4213}=\theta^{4213}\circ
h_{13}^{2413,4213},\\
\theta^{1423}\circ g_{14}^{1423,1432}  &  =\theta^{1432}\circ h_{14}%
^{1423,1432},\theta^{1234}\circ g_{14}^{1234,1324}=\theta^{1324}\circ
h_{14}^{1234,1324},\\
\theta^{4231}\circ g_{14}^{4231,4321}  &  =\theta^{4321}\circ h_{14}%
^{4231,4321},\theta^{2341}\circ g_{14}^{2341,3241}=\theta^{3241}\circ
h_{14}^{2341,3241},\\
\theta^{4123}\circ g_{14}^{4123,4132}  &  =\theta^{4132}\circ h_{14}%
^{4123,4132},\theta^{2314}\circ g_{14}^{2314,3214}=\theta^{3214}\circ
h_{14}^{2314,3214},\\
\theta^{2314}\circ g_{23}^{2314,2341}  &  =\theta^{2341}\circ h_{23}%
^{2314,2341},\theta^{2143}\circ g_{23}^{2143,2413}=\theta^{2413}\circ
h_{23}^{2143,2413},\\
\theta^{3142}\circ g_{23}^{3142,3412}  &  =\theta^{3412}\circ h_{23}%
^{3142,3412},\theta^{1432}\circ g_{23}^{1432,4132}=\theta^{4132}\circ
h_{23}^{1432,4132},\\
\theta^{3214}\circ g_{23}^{3214,3241}  &  =\theta^{3241}\circ h_{23}%
^{3214,3241},\theta^{1423}\circ g_{23}^{1423,4123}=\theta^{4123}\circ
h_{23}^{1423,4123},\\
\theta^{2413}\circ g_{24}^{2413,2431}  &  =\theta^{2431}\circ h_{24}%
^{2413,2431},\theta^{2134}\circ g_{24}^{2134,2314}=\theta^{2314}\circ
h_{24}^{2134,2314},\\
\theta^{4132}\circ g_{24}^{4132,4312}  &  =\theta^{4312}\circ h_{24}%
^{4132,4312},\theta^{1342}\circ g_{24}^{1342,3142}=\theta^{3142}\circ
h_{24}^{1342,3142},\\
\theta^{4213}\circ g_{24}^{4213,4231}  &  =\theta^{4231}\circ h_{24}%
^{4213,4231},\theta^{1324}\circ g_{24}^{1324,3124}=\theta^{3124}\circ
h_{24}^{1324,3124},\\
\theta^{3412}\circ g_{34}^{3412,3421}  &  =\theta^{3421}\circ h_{34}%
^{3412,3421},\theta^{3124}\circ g_{34}^{3124,3214}=\theta^{3214}\circ
h_{34}^{3124,3214},\\
\theta^{4123}\circ g_{34}^{4123,4213}  &  =\theta^{4213}\circ h_{34}%
^{4123,4213},\theta^{1243}\circ g_{34}^{1243,2143}=\theta^{2143}\circ
h_{34}^{1243,2143},\\
\theta^{4312}\circ g_{34}^{4312,4321}  &  =\theta^{4321}\circ h_{34}%
^{4312,4321},\theta^{1234}\circ g_{34}^{1234,2134}=\theta^{2134}\circ
h_{34}^{1234,2134}%
\end{align*}
are tantamount to the following conditions in terms of coefficients of the
polynomials%
\begin{align*}
a^{1342}  &  =a^{1432},a_{1}^{1342}=a_{1}^{1432},a_{2}^{1342}=a_{2}%
^{1432},a_{3}^{1342}=a_{3}^{1432},a_{4}^{1342}=a_{4}^{1432},\\
a_{12}^{1342}  &  =a_{12}^{1432},a_{13}^{1342}=a_{13}^{1432},a_{14}%
^{1342}=a_{14}^{1432},a_{23}^{1342}=a_{23}^{1432},a_{24}^{1342}=a_{24}%
^{1432},\\
a_{123}^{1342}  &  =a_{123}^{1432},a_{124}^{1342}=a_{124}^{1432}%
\end{align*}%
\begin{align*}
a^{2341}  &  =a^{2431},a_{1}^{2341}=a_{1}^{2431},a_{2}^{2341}=a_{2}%
^{2431},a_{3}^{2341}=a_{3}^{2431},a_{4}^{2341}=a_{4}^{2431},\\
a_{12}^{2341}  &  =a_{12}^{2431},a_{13}^{2341}=a_{13}^{2431},a_{14}%
^{2341}=a_{14}^{2431},a_{23}^{2341}=a_{23}^{2431},a_{24}^{2341}=a_{24}%
^{2431},\\
a_{123}^{2341}  &  =a_{123}^{2431},a_{124}^{2341}=a_{124}^{2431}%
\end{align*}%
\begin{align*}
a^{3421}  &  =a^{4321},a_{1}^{3421}=a_{1}^{4321},a_{2}^{3421}=a_{2}%
^{4321},a_{3}^{3421}=a_{3}^{4321},a_{4}^{3421}=a_{4}^{4321},\\
a_{12}^{3421}  &  =a_{12}^{4321},a_{13}^{3421}=a_{13}^{4321},a_{14}%
^{3421}=a_{14}^{4321},a_{23}^{3421}=a_{23}^{4321},a_{24}^{3421}=a_{24}%
^{4321},\\
a_{123}^{3421}  &  =a_{123}^{4321},a_{124}^{3421}=a_{124}^{4321}%
\end{align*}%
\begin{align*}
a^{2341}  &  =a^{2314},a_{1}^{2341}=a_{1}^{2314},a_{2}^{2341}=a_{2}%
^{2314},a_{3}^{2341}=a_{3}^{2314},a_{4}^{2341}=a_{4}^{2314},\\
a_{12}^{2341}  &  =a_{12}^{2314},a_{13}^{2341}=a_{13}^{2314},a_{14}%
^{2341}=a_{14}^{2314},a_{24}^{2341}=a_{24}^{2314},a_{34}^{2341}=a_{34}%
^{2314},\\
a_{124}^{2341}  &  =a_{124}^{2314},a_{134}^{2341}=a_{134}^{2314}%
\end{align*}%
\begin{align*}
a^{2413}  &  =a^{2143},a_{1}^{2413}=a_{1}^{2143},a_{2}^{2413}=a_{2}%
^{2143},a_{3}^{2413}=a_{3}^{2143},a_{4}^{2413}=a_{4}^{2143},\\
a_{12}^{2413}  &  =a_{12}^{2143},a_{13}^{2413}=a_{13}^{2143},a_{14}%
^{2413}=a_{14}^{2143},a_{24}^{2413}=a_{24}^{2143},a_{34}^{2413}=a_{34}%
^{2143},\\
a_{124}^{2413}  &  =a_{124}^{2143},a_{134}^{2413}=a_{134}^{2143}%
\end{align*}%
\begin{align*}
a^{3412}  &  =a^{3142},a_{1}^{3412}=a_{1}^{3142},a_{2}^{3412}=a_{2}%
^{3142},a_{3}^{3412}=a_{3}^{3142},a_{4}^{3412}=a_{4}^{3142},\\
a_{12}^{3412}  &  =a_{12}^{3142},a_{13}^{3412}=a_{13}^{3142},a_{14}%
^{3412}=a_{14}^{3142},a_{24}^{3412}=a_{24}^{3142},a_{34}^{3412}=a_{34}%
^{3142},\\
a_{124}^{3412}  &  =a_{124}^{3142},a_{134}^{3412}=a_{134}^{3142}%
\end{align*}%
\begin{align*}
a^{4132}  &  =a^{1432},a_{1}^{4132}=a_{1}^{1432},a_{2}^{4132}=a_{2}%
^{1432},a_{3}^{4132}=a_{3}^{1432},a_{4}^{4132}=a_{4}^{1432},\\
a_{12}^{4132}  &  =a_{12}^{1432},a_{13}^{4132}=a_{13}^{1432},a_{14}%
^{4132}=a_{14}^{1432},a_{24}^{4132}=a_{24}^{1432},a_{34}^{4132}=a_{34}%
^{1432},\\
a_{124}^{4132}  &  =a_{124}^{1432},a_{134}^{4132}=a_{134}^{1432}%
\end{align*}%
\begin{align*}
a^{3412}  &  =a^{3421},a_{1}^{3412}=a_{1}^{3421},a_{2}^{3412}=a_{2}%
^{3421},a_{3}^{3412}=a_{3}^{3421},a_{4}^{3412}=a_{4}^{3421},\\
a_{12}^{3412}  &  =a_{12}^{3421},a_{13}^{3412}=a_{13}^{3421},a_{14}%
^{3412}=a_{14}^{3421},a_{23}^{3412}=a_{23}^{3421},a_{24}^{3412}=a_{24}%
^{3421},\\
a_{123}^{3412}  &  =a_{123}^{3421},a_{124}^{3412}=a_{124}^{3421}%
\end{align*}%
\begin{align*}
a^{3124}  &  =a^{3214},a_{1}^{3124}=a_{1}^{3214},a_{2}^{3124}=a_{2}%
^{3214},a_{3}^{3124}=a_{3}^{3214},a_{4}^{3124}=a_{4}^{3214},\\
a_{12}^{3124}  &  =a_{12}^{3214},a_{13}^{3124}=a_{13}^{3214},a_{14}%
^{3124}=a_{14}^{3214},a_{23}^{3124}=a_{23}^{3214},a_{24}^{3124}=a_{24}%
^{3214},\\
a_{123}^{3124}  &  =a_{123}^{3214},a_{124}^{3124}=a_{124}^{3214}%
\end{align*}%
\begin{align*}
a^{4123}  &  =a^{4213},a_{1}^{4123}=a_{1}^{4213},a_{2}^{4123}=a_{2}%
^{4213},a_{3}^{4123}=a_{3}^{4213},a_{4}^{4123}=a_{4}^{4213},\\
a_{12}^{4123}  &  =a_{12}^{4213},a_{13}^{4123}=a_{13}^{4213},a_{14}%
^{4123}=a_{14}^{4213},a_{23}^{4123}=a_{23}^{4213},a_{24}^{4123}=a_{24}%
^{4213},\\
a_{123}^{4123}  &  =a_{123}^{4213},a_{124}^{4123}=a_{124}^{4213}%
\end{align*}%
\begin{align*}
a^{1243}  &  =a^{2143},a_{1}^{1243}=a_{1}^{2143},a_{2}^{1243}=a_{2}%
^{2143},a_{3}^{1243}=a_{3}^{2143},a_{4}^{1243}=a_{4}^{2143},\\
a_{12}^{1243}  &  =a_{12}^{2143},a_{13}^{1243}=a_{13}^{2143},a_{14}%
^{1243}=a_{14}^{2143},a_{23}^{1243}=a_{23}^{2143},a_{24}^{1243}=a_{24}%
^{2143},\\
a_{123}^{1243}  &  =a_{123}^{2143},a_{124}^{1243}=a_{124}^{2143}%
\end{align*}%
\begin{align*}
a^{4123}  &  =a^{4132},a_{1}^{4123}=a_{1}^{4132},a_{2}^{4123}=a_{2}%
^{4132},a_{3}^{4123}=a_{3}^{4132},a_{4}^{4123}=a_{4}^{4132},\\
a_{12}^{4123}  &  =a_{12}^{4132},a_{13}^{4123}=a_{13}^{4132},a_{23}%
^{4123}=a_{23}^{4132},a_{24}^{4123}=a_{24}^{4132},a_{34}^{4123}=a_{34}%
^{4132},\\
a_{123}^{4123}  &  =a_{123}^{4132},a_{234}^{4123}=a_{234}^{4132}%
\end{align*}%
\begin{align*}
a^{4231}  &  =a^{4321},a_{1}^{4231}=a_{1}^{4321},a_{2}^{4231}=a_{2}%
^{4321},a_{3}^{4231}=a_{3}^{4321},a_{4}^{4231}=a_{4}^{4321},\\
a_{12}^{4231}  &  =a_{12}^{4321},a_{13}^{4231}=a_{13}^{4321},a_{23}%
^{4231}=a_{23}^{4321},a_{24}^{4231}=a_{24}^{4321},a_{34}^{4231}=a_{34}%
^{4321},\\
a_{123}^{4231}  &  =a_{123}^{4321},a_{234}^{4231}=a_{234}^{4321}%
\end{align*}%
\begin{align*}
a^{1234}  &  =a^{1324},a_{1}^{1234}=a_{1}^{1324},a_{2}^{1234}=a_{2}%
^{1324},a_{3}^{1234}=a_{3}^{1324},a_{4}^{1234}=a_{4}^{1324},\\
a_{12}^{1234}  &  =a_{12}^{1324},a_{13}^{1234}=a_{13}^{1324},a_{23}%
^{1234}=a_{23}^{1324},a_{24}^{1234}=a_{24}^{1324},a_{34}^{1234}=a_{34}%
^{1324},\\
a_{123}^{1234}  &  =a_{123}^{1324},a_{234}^{1234}=a_{234}^{1324}%
\end{align*}%
\begin{align*}
a^{2314}  &  =a^{3214},a_{1}^{2314}=a_{1}^{3214},a_{2}^{2314}=a_{2}%
^{3214},a_{3}^{2314}=a_{3}^{3214},a_{4}^{2314}=a_{4}^{3214},\\
a_{12}^{2314}  &  =a_{12}^{3214},a_{13}^{2314}=a_{13}^{3214},a_{23}%
^{2314}=a_{23}^{3214},a_{24}^{2314}=a_{24}^{3214},a_{34}^{2314}=a_{34}%
^{3214},\\
a_{123}^{2314}  &  =a_{123}^{3214},a_{234}^{2314}=a_{234}^{3214}%
\end{align*}
which can succintly be summarized as%
\begin{align*}
a^{1234}  &  =a^{1243}=a^{1324}=a^{1342}=a^{1423}=a^{1432}=\\
a^{2134}  &  =a^{2143}=a^{2314}=a^{2341}=a^{2413}=a^{2431}=\\
a^{3124}  &  =a^{3142}=a^{3214}=a^{3241}=a^{3412}=a^{3421}=\\
a^{4123}  &  =a^{4132}=a^{4213}=a^{4231}=a^{4312}=a^{4321}%
\end{align*}%
\begin{align*}
a_{1}^{1234}  &  =a_{1}^{1243}=a_{1}^{1324}=a_{1}^{1342}=a_{1}^{1423}%
=a_{1}^{1432}=\\
a_{1}^{2134}  &  =a_{1}^{2143}=a_{1}^{2314}=a_{1}^{2341}=a_{1}^{2413}%
=a_{1}^{2431}=\\
a_{1}^{3124}  &  =a_{1}^{3142}=a_{1}^{3214}=a_{1}^{3241}=a_{1}^{3412}%
=a_{1}^{3421}=\\
a_{1}^{4123}  &  =a_{1}^{4132}=a_{1}^{4213}=a_{1}^{4231}=a_{1}^{4312}%
=a_{1}^{4321}%
\end{align*}%
\begin{align*}
a_{2}^{1234}  &  =a_{2}^{1243}=a_{2}^{1324}=a_{2}^{1342}=a_{2}^{1423}%
=a_{2}^{1432}=\\
a_{2}^{2134}  &  =a_{2}^{2143}=a_{2}^{2314}=a_{2}^{2341}=a_{2}^{2413}%
=a_{2}^{2431}=\\
a_{2}^{3124}  &  =a_{2}^{3142}=a_{2}^{3214}=a_{2}^{3241}=a_{2}^{3412}%
=a_{2}^{3421}=\\
a_{2}^{4123}  &  =a_{2}^{4132}=a_{2}^{4213}=a_{2}^{4231}=a_{2}^{4312}%
=a_{2}^{4321}%
\end{align*}%
\begin{align*}
a_{3}^{1234}  &  =a_{3}^{1243}=a_{3}^{1324}=a_{3}^{1342}=a_{3}^{1423}%
=a_{3}^{1432}=\\
a_{3}^{2134}  &  =a_{3}^{2143}=a_{3}^{2314}=a_{3}^{2341}=a_{3}^{2413}%
=a_{3}^{2431}=\\
a_{3}^{3124}  &  =a_{3}^{3142}=a_{3}^{3214}=a_{3}^{3241}=a_{3}^{3412}%
=a_{3}^{3421}=\\
a_{3}^{4123}  &  =a_{3}^{4132}=a_{3}^{4213}=a_{3}^{4231}=a_{3}^{4312}%
=a_{3}^{4321}%
\end{align*}%
\begin{align*}
a_{4}^{1234}  &  =a_{4}^{1243}=a_{4}^{1324}=a_{4}^{1342}=a_{4}^{1423}%
=a_{4}^{1432}=\\
a_{4}^{2134}  &  =a_{4}^{2143}=a_{4}^{2314}=a_{4}^{2341}=a_{4}^{2413}%
=a_{4}^{2431}=\\
a_{4}^{3124}  &  =a_{4}^{3142}=a_{4}^{3214}=a_{4}^{3241}=a_{4}^{3412}%
=a_{4}^{3421}=\\
a_{4}^{4123}  &  =a_{4}^{4132}=a_{4}^{4213}=a_{4}^{4231}=a_{4}^{4312}%
=a_{4}^{4321}%
\end{align*}%
\begin{align*}
a_{12}^{1234}  &  =a_{12}^{1243}=a_{12}^{1324}=a_{12}^{1342}=a_{12}%
^{1423}=a_{12}^{1432}=\\
a_{12}^{3124}  &  =a_{12}^{3142}=a_{12}^{3412}=a_{12}^{4123}=a_{12}%
^{4132}=a_{12}^{4312}%
\end{align*}%
\begin{align*}
a_{12}^{2134}  &  =a_{12}^{2143}=a_{12}^{2314}=a_{12}^{2341}=a_{12}%
^{2413}=a_{12}^{2431}=\\
a_{12}^{3214}  &  =a_{12}^{3241}=a_{12}^{3421}=a_{12}^{4213}=a_{12}%
^{4231}=a_{12}^{4321}%
\end{align*}%
\begin{align*}
a_{13}^{1342}  &  =a_{13}^{1324}=a_{13}^{1432}=a_{13}^{1423}=a_{13}%
^{1234}=a_{13}^{1243}=\\
a_{13}^{4132}  &  =a_{13}^{4123}=a_{13}^{4213}=a_{13}^{2134}=a_{13}%
^{2143}=a_{13}^{2413}%
\end{align*}%
\begin{align*}
a_{13}^{3142}  &  =a_{13}^{3124}=a_{13}^{3412}=a_{13}^{3421}=a_{13}%
^{3214}=a_{13}^{3241}=\\
a_{13}^{4312}  &  =a_{13}^{4321}=a_{13}^{4231}=a_{13}^{2314}=a_{13}%
^{2341}=a_{13}^{2431}%
\end{align*}%
\begin{align*}
a_{14}^{1423}  &  =a_{14}^{1432}=a_{14}^{1243}=a_{14}^{1234}=a_{14}%
^{1342}=a_{14}^{1324}=\\
a_{14}^{2143}  &  =a_{14}^{2134}=a_{14}^{2314}=a_{14}^{3142}=a_{14}%
^{3124}=a_{14}^{3214}%
\end{align*}%
\begin{align*}
a_{14}^{4123}  &  =a_{14}^{4132}=a_{14}^{4213}=a_{14}^{4231}=a_{14}%
^{4312}=a_{14}^{4321}=\\
a_{14}^{2413}  &  =a_{14}^{2431}=a_{14}^{2341}=a_{14}^{3412}=a_{14}%
^{3421}=a_{14}^{3241}%
\end{align*}%
\begin{align*}
a_{23}^{2314}  &  =a_{23}^{2341}=a_{23}^{2134}=a_{23}^{2143}=a_{23}%
^{2431}=a_{23}^{2413}=\\
a_{23}^{1234}  &  =a_{23}^{1243}=a_{23}^{1423}=a_{23}^{4231}=a_{23}%
^{4213}=a_{23}^{4123}%
\end{align*}%
\begin{align*}
a_{23}^{3214}  &  =a_{23}^{3241}=a_{23}^{3124}=a_{23}^{3142}=a_{23}%
^{3421}=a_{23}^{3412}=\\
a_{23}^{1324}  &  =a_{23}^{1342}=a_{23}^{1432}=a_{23}^{4321}=a_{23}%
^{4312}=a_{23}^{4132}%
\end{align*}%
\begin{align*}
a_{24}^{2413}  &  =a_{24}^{2431}=a_{24}^{2143}=a_{24}^{2134}=a_{24}%
^{2341}=a_{24}^{2314}=\\
a_{24}^{1243}  &  =a_{24}^{1234}=a_{24}^{1324}=a_{24}^{3241}=a_{24}%
^{3214}=a_{24}^{3124}%
\end{align*}%
\begin{align*}
a_{24}^{4213}  &  =a_{24}^{4231}=a_{24}^{4123}=a_{24}^{4132}=a_{24}%
^{4321}=a_{24}^{4312}=\\
a_{24}^{1423}  &  =a_{24}^{1432}=a_{24}^{1342}=a_{24}^{3421}=a_{24}%
^{3412}=a_{24}^{3142}%
\end{align*}%
\begin{align*}
a_{34}^{3412}  &  =a_{34}^{3421}=a_{34}^{3142}=a_{34}^{3124}=a_{34}%
^{3241}=a_{34}^{3214}=\\
a_{34}^{1342}  &  =a_{34}^{1324}=a_{34}^{1234}=a_{34}^{2341}=a_{34}%
^{2314}=a_{34}^{2134}%
\end{align*}%
\begin{align*}
a_{34}^{4312}  &  =a_{34}^{4321}=a_{34}^{4132}=a_{34}^{4123}=a_{34}%
^{4231}=a_{34}^{4213}=\\
a_{34}^{1432}  &  =a_{34}^{1423}=a_{34}^{1243}=a_{34}^{2431}=a_{34}%
^{2413}=a_{34}^{2143}%
\end{align*}%
\[
a_{123}^{1234}=a_{123}^{1243}=a_{123}^{1423}=a_{123}^{4123}%
\]%
\[
a_{123}^{1324}=a_{123}^{1342}=a_{123}^{1432}=a_{123}^{4132}%
\]%
\[
a_{123}^{2134}=a_{123}^{2143}=a_{123}^{2413}=a_{123}^{4213}%
\]%
\[
a_{123}^{2314}=a_{123}^{2341}=a_{123}^{2431}=a_{123}^{4231}%
\]%
\[
a_{123}^{3124}=a_{123}^{3142}=a_{123}^{3412}=a_{123}^{4312}%
\]%
\[
a_{123}^{3214}=a_{123}^{3241}=a_{123}^{3421}=a_{123}^{4321}%
\]%
\[
a_{124}^{1243}=a_{124}^{1234}=a_{124}^{1324}=a_{124}^{3124}%
\]%
\[
a_{124}^{1423}=a_{124}^{1432}=a_{124}^{1342}=a_{124}^{3142}%
\]%
\[
a_{124}^{2143}=a_{124}^{2134}=a_{124}^{2314}=a_{124}^{3214}%
\]%
\[
a_{124}^{2413}=a_{124}^{2431}=a_{124}^{2341}=a_{124}^{3241}%
\]%
\[
a_{124}^{4123}=a_{124}^{4132}=a_{124}^{4312}=a_{124}^{3412}%
\]%
\[
a_{124}^{4213}=a_{124}^{4231}=a_{124}^{4321}=a_{124}^{3421}%
\]%
\[
a_{134}^{1342}=a_{134}^{1324}=a_{134}^{1234}=a_{134}^{2134}%
\]%
\[
a_{134}^{1432}=a_{134}^{1423}=a_{134}^{1243}=a_{134}^{2143}%
\]%
\[
a_{134}^{3142}=a_{134}^{3124}=a_{134}^{3214}=a_{134}^{2314}%
\]%
\[
a_{134}^{3412}=a_{134}^{3421}=a_{134}^{3241}=a_{134}^{2341}%
\]%
\[
a_{134}^{4132}=a_{134}^{4123}=a_{134}^{4213}=a_{134}^{2413}%
\]%
\[
a_{134}^{4312}=a_{134}^{4321}=a_{134}^{4231}=a_{134}^{2431}%
\]%
\[
a_{234}^{2341}=a_{234}^{2314}=a_{234}^{2134}=a_{234}^{1234}%
\]%
\[
a_{234}^{2431}=a_{234}^{2413}=a_{234}^{2143}=a_{234}^{1243}%
\]%
\[
a_{234}^{3241}=a_{234}^{3214}=a_{234}^{3124}=a_{234}^{1324}%
\]%
\[
a_{234}^{3421}=a_{234}^{3412}=a_{234}^{3142}=a_{234}^{1342}%
\]%
\[
a_{234}^{4231}=a_{234}^{4213}=a_{234}^{4123}=a_{234}^{1423}%
\]%
\[
a_{234}^{4321}=a_{234}^{4312}=a_{234}^{4132}=a_{234}^{1432}%
\]
Therefore there exists a unique mapping $\theta:P\rightarrow\mathbb{R}$ such
that%
\begin{align*}
\theta^{1234}  &  =\theta\circ f_{1234},\theta^{1243}=\theta\circ
f_{1243},\theta^{1324}=\theta\circ f_{1324},\theta^{1342}=\theta\circ
f_{1342},\theta^{1423}=\theta\circ f_{1423},\theta^{1432}=\theta\circ
f_{1432},\\
\theta^{2134}  &  =\theta\circ f_{2134},\theta^{2143}=\theta\circ
f_{2143},\theta^{2314}=\theta\circ f_{2314},\theta^{2341}=\theta\circ
f_{2341},\theta^{2413}=\theta\circ f_{2413},\theta^{2431}=\theta\circ
f_{2431},\\
\theta^{3124}  &  =\theta\circ f_{3124},\theta^{3142}=\theta\circ
f_{3142},\theta^{3214}=\theta\circ f_{3214},\theta^{3241}=\theta\circ
f_{3241},\theta^{3412}=\theta\circ f_{3412},\theta^{3421}=\theta\circ
f_{3421},\\
\theta^{4123}  &  =\theta\circ f_{4123},\theta^{4132}=\theta\circ
f_{4132},\theta^{4213}=\theta\circ f_{4213},\theta^{4231}=\theta\circ
f_{4231},\theta^{4312}=\theta\circ f_{4312},\theta^{4321}=\theta\circ f_{4321}%
\end{align*}
The proof is now complete.
\end{proof}

\begin{remark}
For our convenience we display the positions $5-30$ of $f_{ijkl}\left(
d_{1},d_{2},d_{3},d_{4}\right)  $ as tables as follows:%
\begin{equation}%
\begin{array}
[c]{ccccccc}
& 5/d_{1}d_{2} & 6/d_{1}d_{3} & 7/d_{1}d_{4} & 8/d_{2}d_{3} & 9/d_{2}d_{4} &
10/d_{3}d_{4}\\
1234 & 0 & 0 & 0 & 0 & 0 & 0\\
1243 & 0 & 0 & 0 & 0 & 0 & d_{3}d_{4}\\
1324 & 0 & 0 & 0 & d_{2}d_{3} & 0 & 0\\
1342 & 0 & 0 & 0 & d_{2}d_{3} & d_{2}d_{4} & 0\\
1423 & 0 & 0 & 0 & 0 & d_{2}d_{4} & d_{3}d_{4}\\
1432 & 0 & 0 & 0 & d_{2}d_{3} & d_{2}d_{4} & d_{3}d_{4}\\
2134 & d_{1}d_{2} & 0 & 0 & 0 & 0 & 0\\
2143 & d_{1}d_{2} & 0 & 0 & 0 & 0 & d_{3}d_{4}\\
2314 & d_{1}d_{2} & d_{1}d_{3} & 0 & 0 & 0 & 0\\
2341 & d_{1}d_{2} & d_{1}d_{3} & d_{1}d_{4} & 0 & 0 & 0\\
2413 & d_{1}d_{2} & 0 & d_{1}d_{4} & 0 & 0 & d_{3}d_{4}\\
2431 & d_{1}d_{2} & d_{1}d_{3} & d_{1}d_{4} & 0 & 0 & d_{3}d_{4}\\
3124 & 0 & d_{1}d_{3} & 0 & d_{2}d_{3} & 0 & 0\\
3142 & 0 & d_{1}d_{3} & 0 & d_{2}d_{3} & d_{2}d_{4} & 0\\
3214 & d_{1}d_{2} & d_{1}d_{3} & 0 & d_{2}d_{3} & 0 & 0\\
3241 & d_{1}d_{2} & d_{1}d_{3} & d_{1}d_{4} & d_{2}d_{3} & 0 & 0\\
3412 & 0 & d_{1}d_{3} & d_{1}d_{4} & d_{2}d_{3} & d_{2}d_{4} & 0\\
3421 & d_{1}d_{2} & d_{1}d_{3} & d_{1}d_{4} & d_{2}d_{3} & d_{2}d_{4} & 0\\
4123 & 0 & 0 & d_{1}d_{4} & 0 & d_{2}d_{4} & d_{3}d_{4}\\
4132 & 0 & 0 & d_{1}d_{4} & d_{2}d_{3} & d_{2}d_{4} & d_{3}d_{4}\\
4213 & d_{1}d_{2} & 0 & d_{1}d_{4} & 0 & d_{2}d_{4} & d_{3}d_{4}\\
4231 & d_{1}d_{2} & d_{1}d_{3} & d_{1}d_{4} & 0 & d_{2}d_{4} & d_{3}d_{4}\\
4312 & 0 & d_{1}d_{3} & d_{1}d_{4} & d_{2}d_{3} & d_{2}d_{4} & d_{3}d_{4}\\
4321 & d_{1}d_{2} & d_{1}d_{3} & d_{1}d_{4} & d_{2}d_{3} & d_{2}d_{4} &
d_{3}d_{4}%
\end{array}
\label{Figure 1}%
\end{equation}%
\begin{equation}%
\begin{array}
[c]{ccccc}
& 11-15/d_{1}d_{2}d_{3} & 16-20/d_{1}d_{2}d_{4} & 21-25/d_{1}d_{3}d_{4} &
26-30/d_{2}d_{3}d_{4}\\
1234 &  &  &  & \\
1243 &  &  & 21 & 26\\
1324 & 11 &  &  & 27\\
1342 & 11 & 16 &  & 28\\
1423 &  & 16 & 21 & 29\\
1432 & 11 & 16 & 21 & 30\\
2134 & 12 & 17 &  & \\
2143 & 12 & 17 & 21 & 26\\
2314 & 13 & 17 & 22 & \\
2341 & 13 & 18 & 23 & \\
2413 & 12 & 18 & 24 & 26\\
2431 & 13 & 18 & 25 & 26\\
3124 & 14 &  & 22 & 27\\
3142 & 14 & 16 & 22 & 28\\
3214 & 15 & 17 & 22 & 27\\
3241 & 15 & 18 & 23 & 27\\
3412 & 14 & 19 & 23 & 28\\
3421 & 15 & 20 & 23 & 28\\
4123 &  & 19 & 24 & 29\\
4132 & 11 & 19 & 24 & 30\\
4213 & 12 & 20 & 24 & 29\\
4231 & 13 & 20 & 25 & 29\\
4312 & 14 & 19 & 25 & 30\\
4321 & 15 & 20 & 25 & 30
\end{array}
\label{Figure 2}%
\end{equation}

\end{remark}

\begin{corollary}
\label{c3.1}Let $M$\ be a microlinear space with mappings%
\begin{align*}
\gamma_{1234}  &  :Q^{1234}\rightarrow M,\gamma_{1243}:Q^{1243}\rightarrow
M,\gamma_{1324}:Q^{1324}\rightarrow M,\\
\gamma_{1342}  &  :Q^{1342}\rightarrow M,\gamma_{1423}:Q^{1423}\rightarrow
M,\gamma_{1432}:Q^{1432}\rightarrow M,\\
\gamma_{2134}  &  :Q^{2134}\rightarrow M,\gamma_{2143}:Q^{2143}\rightarrow
M,\gamma_{2314}:Q^{2314}\rightarrow M,\\
\gamma_{2341}  &  :Q^{2341}\rightarrow M,\gamma_{2413}:Q^{2413}\rightarrow
M,\gamma_{2431}:Q^{2431}\rightarrow M,\\
\gamma_{3124}  &  :Q^{3124}\rightarrow M,\gamma_{3142}:Q^{3142}\rightarrow
M,\gamma_{3214}:Q^{3214}\rightarrow M,\\
\gamma_{3241}  &  :Q^{3241}\rightarrow M,\gamma_{3412}:Q^{3412}\rightarrow
M,\gamma_{3421}:Q^{3421}\rightarrow M,\\
\gamma_{4123}  &  :Q^{4123}\rightarrow M,\gamma_{4132}:Q^{4132}\rightarrow
M,\gamma_{4213}:Q^{4213}\rightarrow M,\\
\gamma_{4231}  &  :Q^{4231}\rightarrow M,\gamma_{4312}:Q^{4312}\rightarrow
M,\gamma_{4321}:Q^{4321}\rightarrow M
\end{align*}
abiding by%
\begin{align*}
\gamma_{1234}\circ g_{12}^{1234,1243}  &  =\gamma_{1243}\circ h_{12}%
^{1234,1243},\gamma_{1342}\circ g_{12}^{1342,1432}=\gamma_{1432}\circ
h_{12}^{1342,1432},\\
\gamma_{2341}\circ g_{12}^{2341,2431}  &  =\gamma_{2431}\circ h_{12}%
^{2341,2431},\gamma_{3421}\circ g_{12}^{3421,4321}=\gamma_{4321}\circ
h_{12}^{3421,4321},\\
\gamma_{2134}\circ g_{12}^{2134,2143}  &  =\gamma_{2143}\circ h_{12}%
^{2134,2143},\gamma_{3412}\circ g_{12}^{3412,4312}=\gamma_{4312}\circ
h_{12}^{3412,4312},\\
\gamma_{1324}\circ g_{13}^{1324,1342}  &  =\gamma_{1342}\circ h_{13}%
^{1324,1342},\gamma_{1243}\circ g_{13}^{1243,1423}=\gamma_{1423}\circ
h_{13}^{1243,1423},\\
\gamma_{3241}\circ g_{13}^{3241,3421}  &  =\gamma_{3421}\circ h_{13}%
^{3241,3421},\gamma_{2431}\circ g_{13}^{2431,4231}=\gamma_{4231}\circ
h_{13}^{2431,4231},\\
\gamma_{3124}\circ g_{13}^{3124,3142}  &  =\gamma_{3142}\circ h_{13}%
^{3124,3142},\gamma_{2413}\circ g_{13}^{2413,4213}=\gamma_{4213}\circ
h_{13}^{2413,4213},\\
\gamma_{1423}\circ g_{14}^{1423,1432}  &  =\gamma_{1432}\circ h_{14}%
^{1423,1432},\gamma_{1234}\circ g_{14}^{1234,1324}=\gamma_{1324}\circ
h_{14}^{1234,1324},\\
\gamma_{4231}\circ g_{14}^{4231,4321}  &  =\gamma_{4321}\circ h_{14}%
^{4231,4321},\gamma_{2341}\circ g_{14}^{2341,3241}=\gamma_{3241}\circ
h_{14}^{2341,3241},\\
\gamma_{4123}\circ g_{14}^{4123,4132}  &  =\gamma_{4132}\circ h_{14}%
^{4123,4132},\gamma_{2314}\circ g_{14}^{2314,3214}=\gamma_{3214}\circ
h_{14}^{2314,3214},\\
\gamma_{2314}\circ g_{23}^{2314,2341}  &  =\gamma_{2341}\circ h_{23}%
^{2314,2341},\gamma_{2143}\circ g_{23}^{2143,2413}=\gamma_{2413}\circ
h_{23}^{2143,2413},\\
\gamma_{3142}\circ g_{23}^{3142,3412}  &  =\gamma_{3412}\circ h_{23}%
^{3142,3412},\gamma_{1432}\circ g_{23}^{1432,4132}=\gamma_{4132}\circ
h_{23}^{1432,4132},\\
\gamma_{3214}\circ g_{23}^{3214,3241}  &  =\gamma_{3241}\circ h_{23}%
^{3214,3241},\gamma_{1423}\circ g_{23}^{1423,4123}=\gamma_{4123}\circ
h_{23}^{1423,4123},\\
\gamma_{2413}\circ g_{24}^{2413,2431}  &  =\gamma_{2431}\circ h_{24}%
^{2413,2431},\gamma_{2134}\circ g_{24}^{2134,2314}=\gamma_{2314}\circ
h_{24}^{2134,2314},\\
\gamma_{4132}\circ g_{24}^{4132,4312}  &  =\gamma_{4312}\circ h_{24}%
^{4132,4312},\gamma_{1342}\circ g_{24}^{1342,3142}=\gamma_{3142}\circ
h_{24}^{1342,3142},\\
\gamma_{4213}\circ g_{24}^{4213,4231}  &  =\gamma_{4231}\circ h_{24}%
^{4213,4231},\gamma_{1324}\circ g_{24}^{1324,3124}=\gamma_{3124}\circ
h_{24}^{1324,3124},\\
\gamma_{3412}\circ g_{34}^{3412,3421}  &  =\gamma_{3421}\circ h_{34}%
^{3412,3421},\gamma_{3124}\circ g_{34}^{3124,3214}=\gamma_{3214}\circ
h_{34}^{3124,3214},\\
\gamma_{4123}\circ g_{34}^{4123,4213}  &  =\gamma_{4213}\circ h_{34}%
^{4123,4213},\gamma_{1243}\circ g_{34}^{1243,2143}=\gamma_{2143}\circ
h_{34}^{1243,2143},\\
\gamma_{4312}\circ g_{34}^{4312,4321}  &  =\gamma_{4321}\circ h_{34}%
^{4312,4321},\gamma_{1234}\circ g_{34}^{1234,2134}=\gamma_{2134}\circ
h_{34}^{1234,2134}%
\end{align*}
Then there exists a unique mapping%
\[
\mathfrak{m}:P\rightarrow M
\]
such that
\begin{align*}
\mathfrak{m}\circ f_{1234}  &  =\gamma_{1234},\mathfrak{m}\circ f_{1243}%
=\gamma_{1243},\mathfrak{m}\circ f_{1324}=\gamma_{1324},\mathfrak{m}\circ
f_{1342}=\gamma_{1342},\mathfrak{m}\circ f_{1423}=\gamma_{1423},\mathfrak{m}%
\circ f_{1432}=\gamma_{1432},\\
\mathfrak{m}\circ f_{2134}  &  =\gamma_{2134},\mathfrak{m}\circ f_{2143}%
=\gamma_{2143},\mathfrak{m}\circ f_{2314}=\gamma_{2314},\mathfrak{m}\circ
f_{2341}=\gamma_{2341},\mathfrak{m}\circ f_{2413}=\gamma_{2413},\mathfrak{m}%
\circ f_{2431}=\gamma_{2431},\\
\mathfrak{m}\circ f_{3124}  &  =\gamma_{3124},\mathfrak{m}\circ f_{3142}%
=\gamma_{3142},\mathfrak{m}\circ f_{3214}=\gamma_{3214},\mathfrak{m}\circ
f_{3241}=\gamma_{3241},\mathfrak{m}\circ f_{3412}=\gamma_{3412},\mathfrak{m}%
\circ f_{3421}=\gamma_{3421},\\
\mathfrak{m}\circ f_{4123}  &  =\gamma_{4123},\mathfrak{m}\circ f_{4132}%
=\gamma_{4132},\mathfrak{m}\circ f_{4213}=\gamma_{4213},\mathfrak{m}\circ
f_{4231}=\gamma_{4231},\mathfrak{m}\circ f_{4312}=\gamma_{4312},\mathfrak{m}%
\circ f_{4321}=\gamma_{4321}%
\end{align*}

\end{corollary}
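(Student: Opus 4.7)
The plan is to derive Corollary \ref{c3.1} as a direct consequence of Theorem \ref{t3.1} together with the very definition of microlinearity. Recall that a space $M$ is called microlinear precisely when for every quasi-colimit diagram of small objects, the exponential functor $M^{(-)}$ carries the diagram to a limit diagram in the category of sets (equivalently, every cone with vertex $M$ over the underlying cospan factors uniquely through the quasi-colimit vertex). Since Theorem \ref{t3.1} exhibits the displayed diagram (with apex $P$ and $24$ copies of $D^{4}$ glued along the $36$ objects $R_{ij}^{\cdot,\cdot}$) as a quasi-colimit diagram of small objects, the universal property is immediately available to us once we have a compatible family of mappings into $M$.

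First I would observe that the $36$ compatibility equations listed in the hypotheses are exactly the cone conditions: for each vertex $R_{ij}^{\cdot,\cdot}$ of the diagram, the two arrows emanating from it terminate in $Q$'s that the family $(\gamma_{ijkl})$ sends to $M$ in the same way. Thus the data $\{\gamma_{ijkl}\}$ constitute a cone (with vertex $M$) over the sub-diagram obtained by deleting $P$ and the arrows $f_{ijkl}$. Next I would invoke microlinearity of $M$ applied to the quasi-colimit diagram of Theorem \ref{t3.1}: this guarantees a unique mapping $\mathfrak{m}:P\rightarrow M$ factoring the cone, that is, satisfying $\mathfrak{m}\circ f_{ijkl}=\gamma_{ijkl}$ for every permutation $ijkl$ of $1234$.

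The main obstacle, strictly speaking, has already been surmounted in the proof of Theorem \ref{t3.1}, where one checked the quasi-colimit property by a direct coefficient calculation using the general Kock--Lawvere axiom for $\mathbb{R}$. For the corollary itself there is nothing further to compute; one simply transports that universal property from $\mathbb{R}$ to an arbitrary microlinear $M$. The only bookkeeping point worth mentioning is that the definition of microlinearity is tailored to precisely this kind of transport: a quasi-colimit of small objects verified for $\mathbb{R}$ (via Kock--Lawvere) is automatically promoted to a cone-factorisation property for any microlinear $M$, and the uniqueness clause in the corollary is inherited from the uniqueness clause in the definition of a quasi-colimit.

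In summary, the proof reduces to the single sentence: apply the universal property of the quasi-colimit established in Theorem \ref{t3.1} to the cone $\{\gamma_{ijkl}\}$, whose cone conditions are exactly the $36$ hypothesised equalities, obtaining the unique $\mathfrak{m}:P\rightarrow M$ with $\mathfrak{m}\circ f_{ijkl}=\gamma_{ijkl}$ for all $ijkl\in\{1234,1243,\ldots,4321\}$.
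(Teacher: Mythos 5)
Your proposal is correct and coincides with the route the paper intends: Corollary \ref{c3.1} is stated without a separate argument precisely because, as with Corollaries \ref{cl2.1}--\ref{cl2.4}, it follows at once by applying the definition of microlinearity of $M$ to the quasi-colimit diagram established in Theorem \ref{t3.1}, the thirty-six hypothesised equalities being exactly the cone conditions. Nothing further is needed.
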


\begin{theorem}
\label{t3.2}Let $M$\ be a microlinear space. Let%
\[%
\begin{array}
[c]{c}%
\gamma_{1234},\gamma_{1243},\gamma_{1324},\gamma_{1342},\gamma_{1423}%
,\gamma_{1432},\gamma_{2134},\gamma_{2143},\gamma_{2314},\gamma_{2341}%
,\gamma_{2413},\gamma_{2431},\\
\gamma_{3124},\gamma_{3142},\gamma_{3214},\gamma_{3241},\gamma_{3124}%
,\gamma_{3142},\gamma_{4123},\gamma_{4132},\gamma_{4213},\gamma_{4231}%
,\gamma_{4312},\gamma_{4321}%
\end{array}
:D^{4}\rightarrow M
\]
with%
\begin{align*}
\gamma_{1234}  &  \mid D^{4}\left\{  \left(  3,4\right)  \right\}
=\gamma_{1243}\mid D^{4}\left\{  \left(  3,4\right)  \right\}  ,\gamma
_{1342}\mid D^{4}\left\{  \left(  3,4\right)  \right\}  =\gamma_{1432}\mid
D^{4}\left\{  \left(  3,4\right)  \right\}  ,\\
\gamma_{2341}  &  \mid D^{4}\left\{  \left(  3,4\right)  \right\}
=\gamma_{2431}\mid D^{4}\left\{  \left(  3,4\right)  \right\}  ,\gamma
_{3421}\mid D^{4}\left\{  \left(  3,4\right)  \right\}  =\gamma_{4321}\mid
D^{4}\left\{  \left(  3,4\right)  \right\}  ,\\
\gamma_{2143}  &  \mid D^{4}\left\{  \left(  3,4\right)  \right\}
=\gamma_{2134}\mid D^{4}\left\{  \left(  3,4\right)  \right\}  ,\gamma
_{4312}\mid D^{4}\left\{  \left(  3,4\right)  \right\}  =\gamma_{3412}\mid
D^{4}\left\{  \left(  3,4\right)  \right\}  ,\\
\gamma_{1342}  &  \mid D^{4}\left\{  \left(  2,4\right)  \right\}
=\gamma_{1324}\mid D^{4}\left\{  \left(  2,4\right)  \right\}  ,\gamma
_{1423}\mid D^{4}\left\{  \left(  2,4\right)  \right\}  =\gamma_{1243}\mid
D^{4}\left\{  \left(  2,4\right)  \right\}  ,\\
\gamma_{3421}  &  \mid D^{4}\left\{  \left(  2,4\right)  \right\}
=\gamma_{3241}\mid D^{4}\left\{  \left(  2,4\right)  \right\}  ,\gamma
_{4231}\mid D^{4}\left\{  \left(  2,4\right)  \right\}  =\gamma_{2431}\mid
D^{4}\left\{  \left(  2,4\right)  \right\}  ,\\
\gamma_{3124}  &  \mid D^{4}\left\{  \left(  2,4\right)  \right\}
=\gamma_{3142}\mid D^{4}\left\{  \left(  2,4\right)  \right\}  ,\gamma
_{2413}\mid D^{4}\left\{  \left(  2,4\right)  \right\}  =\gamma_{4213}\mid
D^{4}\left\{  \left(  2,4\right)  \right\}  ,\\
\gamma_{1423}  &  \mid D^{4}\left\{  \left(  2,3\right)  \right\}
=\gamma_{1432}\mid D^{4}\left\{  \left(  2,3\right)  \right\}  ,\gamma
_{1234}\mid D^{4}\left\{  \left(  2,3\right)  \right\}  =\gamma_{1324}\mid
D^{4}\left\{  \left(  2,3\right)  \right\}  ,\\
\gamma_{4231}  &  \mid D^{4}\left\{  \left(  2,3\right)  \right\}
=\gamma_{4321}\mid D^{4}\left\{  \left(  2,3\right)  \right\}  ,\gamma
_{2341}\mid D^{4}\left\{  \left(  2,3\right)  \right\}  =\gamma_{3241}\mid
D^{4}\left\{  \left(  2,3\right)  \right\}  ,\\
\gamma_{4132}  &  \mid D^{4}\left\{  \left(  2,3\right)  \right\}
=\gamma_{4123}\mid D^{4}\left\{  \left(  2,3\right)  \right\}  ,\gamma
_{3214}\mid D^{4}\left\{  \left(  2,3\right)  \right\}  =\gamma_{2314}\mid
D^{4}\left\{  \left(  2,3\right)  \right\}  ,\\
\gamma_{2314}  &  \mid D^{4}\left\{  \left(  1,4\right)  \right\}
=\gamma_{2341}\mid D^{4}\left\{  \left(  1,4\right)  \right\}  ,\gamma
_{2143}\mid D^{4}\left\{  \left(  1,4\right)  \right\}  =\gamma_{2413}\mid
D^{4}\left\{  \left(  1,4\right)  \right\}  ,\\
\gamma_{3142}  &  \mid D^{4}\left\{  \left(  1,4\right)  \right\}
=\gamma_{3412}\mid D^{4}\left\{  \left(  1,4\right)  \right\}  ,\gamma
_{1432}\mid D^{4}\left\{  \left(  1,4\right)  \right\}  =\gamma_{4132}\mid
D^{4}\left\{  \left(  1,4\right)  \right\}  ,\\
\gamma_{3241}  &  \mid D^{4}\left\{  \left(  1,4\right)  \right\}
=\gamma_{3214}\mid D^{4}\left\{  \left(  1,4\right)  \right\}  ,\gamma
_{4123}\mid D^{4}\left\{  \left(  1,4\right)  \right\}  =\gamma_{1423}\mid
D^{4}\left\{  \left(  1,4\right)  \right\}  ,\\
\gamma_{2431}  &  \mid D^{4}\left\{  \left(  1,3\right)  \right\}
=\gamma_{2413}\mid D^{4}\left\{  \left(  1,3\right)  \right\}  ,\gamma
_{2314}\mid D^{4}\left\{  \left(  1,3\right)  \right\}  =\gamma_{2134}\mid
D^{4}\left\{  \left(  1,3\right)  \right\}  ,\\
\gamma_{4312}  &  \mid D^{4}\left\{  \left(  1,3\right)  \right\}
=\gamma_{4132}\mid D^{4}\left\{  \left(  1,3\right)  \right\}  ,\gamma
_{3142}\mid D^{4}\left\{  \left(  1,3\right)  \right\}  =\gamma_{1342}\mid
D^{4}\left\{  \left(  1,3\right)  \right\}  ,\\
\gamma_{4213}  &  \mid D^{4}\left\{  \left(  1,3\right)  \right\}
=\gamma_{4231}\mid D^{4}\left\{  \left(  1,3\right)  \right\}  ,\gamma
_{1324}\mid D^{4}\left\{  \left(  1,3\right)  \right\}  =\gamma_{3124}\mid
D^{4}\left\{  \left(  1,3\right)  \right\}  ,\\
\gamma_{3412}  &  \mid D^{4}\left\{  \left(  1,2\right)  \right\}
=\gamma_{3421}\mid D^{4}\left\{  \left(  1,2\right)  \right\}  ,\gamma
_{3124}\mid D^{4}\left\{  \left(  1,2\right)  \right\}  =\gamma_{3214}\mid
D^{4}\left\{  \left(  1,2\right)  \right\}  ,\\
\gamma_{4123}  &  \mid D^{4}\left\{  \left(  1,2\right)  \right\}
=\gamma_{4213}\mid D^{4}\left\{  \left(  1,2\right)  \right\}  ,\gamma
_{1243}\mid D^{4}\left\{  \left(  1,2\right)  \right\}  =\gamma_{2143}\mid
D^{4}\left\{  \left(  1,2\right)  \right\}  ,\\
\gamma_{4321}  &  \mid D^{4}\left\{  \left(  1,2\right)  \right\}
=\gamma_{4312}\mid D^{4}\left\{  \left(  1,2\right)  \right\}  ,\gamma
_{2134}\mid D^{4}\left\{  \left(  1,2\right)  \right\}  =\gamma_{1234}\mid
D^{4}\left\{  \left(  1,2\right)  \right\}
\end{align*}
Then we have%
\begin{align}
&  \left(  \left(  \left(  \gamma_{1234}\overset{\cdot}{\underset{12}{-}%
}\gamma_{1243}\right)  \overset{\cdot}{\underset{1}{-}}\left(  \gamma
_{1342}\overset{\cdot}{\underset{12}{-}}\gamma_{1432}\right)  \right)
\overset{\cdot}{-}\left(  \left(  \gamma_{2341}\overset{\cdot}{\underset
{12}{-}}\gamma_{2431}\right)  \overset{\cdot}{\underset{1}{-}}\left(
\gamma_{3421}\overset{\cdot}{\underset{12}{-}}\gamma_{4321}\right)  \right)
\right)  +\nonumber\\
&  \left(  \left(  \left(  \gamma_{1342}\overset{\cdot}{\underset{13}{-}%
}\gamma_{1324}\right)  \overset{\cdot}{\underset{1}{-}}\left(  \gamma
_{1423}\overset{\cdot}{\underset{13}{-}}\gamma_{1243}\right)  \right)
\overset{\cdot}{-}\left(  \left(  \gamma_{3421}\overset{\cdot}{\underset
{13}{-}}\gamma_{3241}\right)  \overset{\cdot}{\underset{1}{-}}\left(
\gamma_{4231}\overset{\cdot}{\underset{13}{-}}\gamma_{2431}\right)  \right)
\right)  +\nonumber\\
&  \left(  \left(  \left(  \gamma_{1423}\overset{\cdot}{\underset{14}{-}%
}\gamma_{1432}\right)  \overset{\cdot}{\underset{1}{-}}\left(  \gamma
_{1234}\overset{\cdot}{\underset{14}{-}}\gamma_{1324}\right)  \right)
\overset{\cdot}{-}\left(  \left(  \gamma_{4231}\overset{\cdot}{\underset
{14}{-}}\gamma_{4321}\right)  \overset{\cdot}{\underset{1}{-}}\left(
\gamma_{2341}\overset{\cdot}{\underset{14}{-}}\gamma_{3241}\right)  \right)
\right)  +\nonumber\\
&  \left(  \left(  \left(  \gamma_{2143}\overset{\cdot}{\underset{21}{-}%
}\gamma_{2134}\right)  \overset{\cdot}{\underset{1}{-}}\left(  \gamma
_{2431}\overset{\cdot}{\underset{21}{-}}\gamma_{2341}\right)  \right)
\overset{\cdot}{-}\left(  \left(  \gamma_{1432}\overset{\cdot}{\underset
{21}{-}}\gamma_{1342}\right)  \overset{\cdot}{\underset{1}{-}}\left(
\gamma_{4312}\overset{\cdot}{\underset{21}{-}}\gamma_{3412}\right)  \right)
\right)  +\nonumber\\
&  \left(  \left(  \left(  \gamma_{2314}\overset{\cdot}{\underset{23}{-}%
}\gamma_{2341}\right)  \overset{\cdot}{\underset{1}{-}}\left(  \gamma
_{2143}\overset{\cdot}{\underset{23}{-}}\gamma_{2413}\right)  \right)
\overset{\cdot}{-}\left(  \left(  \gamma_{3142}\overset{\cdot}{\underset
{23}{-}}\gamma_{3412}\right)  \overset{\cdot}{\underset{1}{-}}\left(
\gamma_{1432}\overset{\cdot}{\underset{23}{-}}\gamma_{4132}\right)  \right)
\right)  +\nonumber\\
&  \left(  \left(  \left(  \gamma_{2431}\overset{\cdot}{\underset{24}{-}%
}\gamma_{2413}\right)  \overset{\cdot}{\underset{1}{-}}\left(  \gamma
_{2314}\overset{\cdot}{\underset{24}{-}}\gamma_{2134}\right)  \right)
\overset{\cdot}{-}\left(  \left(  \gamma_{4312}\overset{\cdot}{\underset
{24}{-}}\gamma_{4132}\right)  \overset{\cdot}{\underset{1}{-}}\left(
\gamma_{3142}\overset{\cdot}{\underset{24}{-}}\gamma_{1342}\right)  \right)
\right)  +\nonumber\\
&  \left(  \left(  \left(  \gamma_{3124}\overset{\cdot}{\underset{31}{-}%
}\gamma_{3142}\right)  \overset{\cdot}{\underset{1}{-}}\left(  \gamma
_{3241}\overset{\cdot}{\underset{31}{-}}\gamma_{3421}\right)  \right)
\overset{\cdot}{-}\left(  \left(  \gamma_{1243}\overset{\cdot}{\underset
{31}{-}}\gamma_{1423}\right)  \overset{\cdot}{\underset{1}{-}}\left(
\gamma_{2413}\overset{\cdot}{\underset{31}{-}}\gamma_{4213}\right)  \right)
\right)  +\nonumber\\
&  \left(  \left(  \left(  \gamma_{3241}\overset{\cdot}{\underset{32}{-}%
}\gamma_{3214}\right)  \overset{\cdot}{\underset{1}{-}}\left(  \gamma
_{3412}\overset{\cdot}{\underset{32}{-}}\gamma_{3142}\right)  \right)
\overset{\cdot}{-}\left(  \left(  \gamma_{2413}\overset{\cdot}{\underset
{32}{-}}\gamma_{2143}\right)  \overset{\cdot}{\underset{1}{-}}\left(
\gamma_{4123}\overset{\cdot}{\underset{32}{-}}\gamma_{1423}\right)  \right)
\right)  +\nonumber\\
&  \left(  \left(  \left(  \gamma_{3412}\overset{\cdot}{\underset{34}{-}%
}\gamma_{3421}\right)  \overset{\cdot}{\underset{1}{-}}\left(  \gamma
_{3124}\overset{\cdot}{\underset{34}{-}}\gamma_{3214}\right)  \right)
\overset{\cdot}{-}\left(  \left(  \gamma_{4123}\overset{\cdot}{\underset
{34}{-}}\gamma_{4213}\right)  \overset{\cdot}{\underset{1}{-}}\left(
\gamma_{1243}\overset{\cdot}{\underset{34}{-}}\gamma_{2143}\right)  \right)
\right)  +\nonumber\\
&  \left(  \left(  \left(  \gamma_{4132}\overset{\cdot}{\underset{41}{-}%
}\gamma_{4123}\right)  \overset{\cdot}{\underset{1}{-}}\left(  \gamma
_{4321}\overset{\cdot}{\underset{41}{-}}\gamma_{4231}\right)  \right)
\overset{\cdot}{-}\left(  \left(  \gamma_{1324}\overset{\cdot}{\underset
{41}{-}}\gamma_{1234}\right)  \overset{\cdot}{\underset{1}{-}}\left(
\gamma_{3214}\overset{\cdot}{\underset{41}{-}}\gamma_{2314}\right)  \right)
\right)  +\nonumber\\
&  \left(  \left(  \left(  \gamma_{4213}\overset{\cdot}{\underset{42}{-}%
}\gamma_{4231}\right)  \overset{\cdot}{\underset{1}{-}}\left(  \gamma
_{4132}\overset{\cdot}{\underset{42}{-}}\gamma_{4312}\right)  \right)
\overset{\cdot}{-}\left(  \left(  \gamma_{2134}\overset{\cdot}{\underset
{42}{-}}\gamma_{2314}\right)  \overset{\cdot}{\underset{1}{-}}\left(
\gamma_{1324}\overset{\cdot}{\underset{42}{-}}\gamma_{3124}\right)  \right)
\right)  +\nonumber\\
&  \left(  \left(  \gamma_{4321}\overset{\cdot}{\underset{43}{-}}\gamma
_{4312}\right)  \overset{\cdot}{\underset{1}{-}}\left(  \gamma_{4213}%
\overset{\cdot}{\underset{43}{-}}\gamma_{4123}\right)  \right)  \overset
{\cdot}{-}\left(  \left(  \gamma_{3214}\overset{\cdot}{\underset{43}{-}}%
\gamma_{3124}\right)  \overset{\cdot}{\underset{1}{-}}\left(  \gamma
_{2134}\overset{\cdot}{\underset{43}{-}}\gamma_{1234}\right)  \right)
\nonumber\\
&  =0 \label{t3.2.0}%
\end{align}

\end{theorem}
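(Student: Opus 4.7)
The plan is to bootstrap on Corollary~\ref{c3.1}. First, I would check that the 36 compatibility conditions hypothesized in Theorem~\ref{t3.2} are, term for term, exactly the 36 conditions required in Corollary~\ref{c3.1}; this amounts to unwinding the permutation convention in the notation for $\overset{\cdot}{\underset{ij}{-}}$ against the identity edges $g^{ab,cd}_{ij}$, $h^{ab,cd}_{ij}$ of the diagram in Theorem~\ref{t3.1}. Corollary~\ref{c3.1} then supplies a unique mediating map $\mathfrak{m}:P\rightarrow M$ with $\mathfrak{m}\circ f_{ijkl}=\gamma_{ijkl}$ for every permutation $ijkl$ of $\{1,2,3,4\}$.

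The second step is to invoke the naturality of strong difference and of tangent-vector addition. For any map $\mu$ between microlinear spaces and compatible $\gamma_{1},\gamma_{2}$, the uniqueness clause in Lemma~\ref{l2.1} (and its relative Lemma~\ref{l2.2}) forces $\mu\circ(\gamma_{1}\overset{\cdot}{-}\gamma_{2})=(\mu\circ\gamma_{1})\overset{\cdot}{-}(\mu\circ\gamma_{2})$, and addition of tangent vectors commutes with $\mu$ by a parallel argument. Iterating this naturality through the three nested layers of $\overset{\cdot}{-}$ and the outer sum of twelve terms, the left-hand side of (\ref{t3.2.0}) equals $\mathfrak{m}$ applied termwise to the \emph{universal} instance obtained by taking $M=P$ and $\gamma_{ijkl}=f_{ijkl}$. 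Hence it suffices to prove (\ref{t3.2.0}) in the universal case, where everything takes place inside $P$ itself and can be attacked coordinate by coordinate.

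In the universal case, each of the twelve terms is, by Proposition~\ref{p2.1} and the general Kock--Lawvere axiom, a mapping $D\rightarrow P$ whose coordinatewise expansion can be read off directly from the tables (\ref{Figure 1})--(\ref{Figure 2}). The three successive strong differences project out, in turn, the $d_{i}d_{j}$, $d_{i}d_{j}d_{k}$ and $d_{1}d_{2}d_{3}d_{4}$ slots, so each term contributes a $\{0,+1,-1\}$-valued indicator concentrated in the twenty-three positions $31$--$53$. The final task is to verify that the sum over the twelve even permutations of these signed indicators vanishes coordinatewise in those positions, a finite combinatorial identity mirroring the 4D Jacobi identity at the symbolic level. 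This sign audit is where I expect the main obstacle to lie: each of the twenty-three positions must be contacted by exactly one plus and one minus contribution among the twelve terms, and the bookkeeping through three nested $\overset{\cdot}{-}$ operations, together with the permutation relabellings $\sigma_{ijkl}$ of the notation, is intricate. Once this matching is exhibited entry by entry using (\ref{Figure 1})--(\ref{Figure 2}), the universal sum is zero, and applying $\mathfrak{m}$ yields (\ref{t3.2.0}).
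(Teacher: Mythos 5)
Your plan is essentially the paper's own proof: Corollary \ref{c3.1} furnishes the mediating map $\mathfrak{m}:P\rightarrow M$, each nested strong difference is then pushed through $\mathfrak{m}$ so that every one of the twelve terms becomes $\mathfrak{m}$ applied to an explicit $53$-tuple supported in positions $31$--$53$, and the identity reduces to a coordinatewise sign cancellation (the paper's twelve computational steps plus the table (\ref{Figure 3})). Two small corrections to your sketch: since $P$ is an infinitesimal object rather than a microlinear space, the reduction to the ``universal case $M=P$'' cannot be invoked literally \emph{via} the quasi-colimit property of $P$; the paper instead exhibits each mediating map $\mathfrak{n}_{(\gamma_{1},\gamma_{2})}$ explicitly as $\mathfrak{m}$ composed with a concrete polynomial map into $P$ and appeals to uniqueness, which is the rigorous form of your naturality argument. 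Also, the cancellation pattern is not ``one plus and one minus'' per position: as (\ref{t3.2.85}) and the table (\ref{Figure 3}) show, each of the twenty-three positions $31$--$53$ receives exactly four contributions, two equal to $+d$ and two equal to $-d$, so your sign audit should be set up to verify that $2\times 2$ matching rather than a $1\times 1$ one.
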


\begin{proof}
The proof is divided into thirteen steps.

\begin{enumerate}
\item Since
\begin{align*}
&  \gamma_{1234}\left(  d_{1},d_{2},d_{3},d_{4}\right) \\
&  =\mathfrak{m}\left(  d_{1},d_{2},d_{3},d_{4},\underset{5}{0},...,\underset
{53}{0}\right)
\end{align*}
and
\begin{align*}
&  \gamma_{1243}\left(  d_{1},d_{2},d_{3},d_{4}\right) \\
&  =\mathfrak{m}\left(
\begin{array}
[c]{c}%
d_{1},d_{2},d_{3},d_{4},\underset{5}{0},...,\underset{9}{0},\underset
{10}{d_{3}d_{4}},\underset{11}{0},...,\underset{20}{0},d_{1}d_{3}%
d_{4},\underset{22}{0},...,\underset{25}{0},d_{2}d_{3}d_{4},\underset{27}%
{0},...,\underset{30}{0},\\
d_{1}d_{2}d_{3}d_{4},\underset{32}{0},...,\underset{53}{0}%
\end{array}
\right)
\end{align*}
we have
\begin{align}
&  \left(  \gamma_{1234}\overset{\cdot}{\underset{12}{-}}\gamma_{1243}\right)
\left(  d_{1},d_{2},d_{3}\right) \nonumber\\
&  =\mathfrak{m}\left(  d_{1},d_{2},\underset{3}{0},...,\underset{9}%
{0},\underset{10}{-d_{3}},\underset{11}{0},...,\underset{20}{0},-d_{1}%
d_{3},\underset{22}{0},...,\underset{25}{0},-d_{2}d_{3},\underset{27}%
{0},...,\underset{30}{0},-d_{1}d_{2}d_{3},\underset{32}{0},...,\underset
{53}{0}\right)  \label{t3.2.1}%
\end{align}
For we have%
\begin{align*}
&  \mathfrak{n}_{\left(  \gamma_{1234},\gamma_{1243}\right)  }^{4}\left(
d_{1},d_{2},d_{3},d_{4},d_{5}\right) \\
&  =\mathfrak{m}\left(
\begin{array}
[c]{c}%
d_{1},d_{2},d_{3},d_{4},\underset{5}{0},...,\underset{9}{0},\underset
{10}{d_{3}d_{4}-d_{5}},\underset{11}{0},...,\underset{20}{0},d_{1}d_{3}%
d_{4}-d_{1}d_{5},\underset{22}{0},...,\underset{25}{0},\\
d_{2}d_{3}d_{4}-d_{2}d_{5},\underset{27}{0},...,\underset{30}{0},d_{1}%
d_{2}d_{3}d_{4}-d_{1}d_{2}d_{5},\underset{32}{0},...,\underset{53}{0}%
\end{array}
\right)
\end{align*}
Since
\begin{align*}
&  \gamma_{1342}\left(  d_{1},d_{2},d_{3},d_{4}\right) \\
&  =\mathfrak{m}\left(
\begin{array}
[c]{c}%
d_{1},d_{2},d_{3},d_{4},\underset{5}{0},...,\underset{7}{0},d_{2}d_{3}%
,d_{2}d_{4},\underset{10}{0},d_{1}d_{2}d_{3},\underset{12}{0},...,\underset
{15}{0},d_{1}d_{2}d_{4},\underset{17}{0},...,\underset{27}{0},\\
d_{2}d_{3}d_{4},0,0,0,0,d_{1}d_{2}d_{3}d_{4},\underset{34}{0},...,\underset
{53}{0}%
\end{array}
\right)
\end{align*}
and
\begin{align*}
&  \gamma_{1432}\left(  d_{1},d_{2},d_{3},d_{4}\right) \\
&  =\mathfrak{m}\left(
\begin{array}
[c]{c}%
d_{1},d_{2},d_{3},d_{4},\underset{5}{0},...,\underset{7}{0},d_{2}d_{3}%
,d_{2}d_{4},d_{3}d_{4},d_{1}d_{2}d_{3},\underset{12}{0},...,\underset{15}%
{0},d_{1}d_{2}d_{4},\underset{17}{0},...,\underset{20}{0},\\
d_{1}d_{3}d_{4},\underset{22}{0},...,\underset{29}{0},d_{2}d_{3}%
d_{4},\underset{31}{0},...,\underset{34}{0},d_{1}d_{2}d_{3}d_{4},\underset
{36}{0},...,\underset{53}{0}%
\end{array}
\right)
\end{align*}
we have
\begin{align}
&  \left(  \gamma_{1342}\overset{\cdot}{\underset{12}{-}}\gamma_{1432}\right)
\left(  d_{1},d_{2},d_{3}\right) \nonumber\\
&  =\mathfrak{m}\left(
\begin{array}
[c]{c}%
d_{1},d_{2},\underset{3}{0},...,\underset{9}{0},-d_{3},\underset{11}%
{0},...,\underset{20}{0},-d_{1}d_{3},\underset{22}{0},...,\underset{27}%
{0},d_{2}d_{3},\underset{29}{0},-d_{2}d_{3},\underset{31}{0},\underset{32}%
{0},d_{1}d_{2}d_{3},\underset{34}{0},\\
-d_{1}d_{2}d_{3},\underset{36}{0},...,\underset{53}{0}%
\end{array}
\right)  \label{t3.2.2}%
\end{align}
For we have%
\begin{align*}
&  \mathfrak{n}_{\left(  \gamma_{1342},\gamma_{1432}\right)  }^{4}\left(
d_{1},d_{2},d_{3},d_{4},d_{5}\right) \\
&  =\mathfrak{m}\left(
\begin{array}
[c]{c}%
d_{1},d_{2},d_{3},d_{4},\underset{5}{0},...,\underset{9}{0},\underset
{10}{d_{3}d_{4}-d_{5}},\underset{11}{0},...,\underset{20}{0},d_{1}d_{3}%
d_{4}-d_{1}d_{5},\underset{22}{0},...,\underset{27}{0},\\
d_{2}d_{5},\underset{29}{0},d_{2}d_{3}d_{4}-d_{2}d_{5},\underset{31}%
{0},\underset{32}{0},d_{1}d_{2}d_{5},\underset{34}{0},d_{1}d_{2}d_{3}%
d_{4}-d_{1}d_{2}d_{5},\underset{36}{0},...,\underset{53}{0}%
\end{array}
\right)
\end{align*}
For the sake of the completeness of our proof, we have provided the reason why
(\ref{t3.2.1}) and (\ref{t3.2.2}) are derivable, but we will omit such
reasoning from now on. (\ref{t3.2.1}) and (\ref{t3.2.2}) imply that
\begin{align}
&  \left(  \left(  \gamma_{1234}\overset{\cdot}{\underset{12}{-}}\gamma
_{1243}\right)  \overset{\cdot}{\underset{1}{-}}\left(  \gamma_{1342}%
\overset{\cdot}{\underset{12}{-}}\gamma_{1432}\right)  \right)  \left(
d_{1},d_{2}\right) \nonumber\\
&  =\mathfrak{m}\left(  d_{1},\underset{2}{0},...,\underset{25}{0}%
,-d_{2},\underset{27}{0},-d_{2},\underset{29}{0},d_{2},-d_{1}d_{2}%
,\underset{32}{0},-d_{1}d_{2},\underset{34}{0},d_{1}d_{2},\underset{36}%
{0},...,\underset{53}{0}\right)  \label{t3.2.3}%
\end{align}
Since
\begin{align*}
&  \gamma_{2341}\left(  d_{1},d_{2},d_{3},d_{4}\right) \\
&  =\mathfrak{m}\left(
\begin{array}
[c]{c}%
d_{1},d_{2},d_{3},d_{4},d_{1}d_{2},d_{1}d_{3},d_{1}d_{4},\underset{8}%
{0},...,\underset{12}{0},d_{1}d_{2}d_{3},\underset{14}{0},...,\underset{17}%
{0},d_{1}d_{2}d_{4},\underset{19}{0},...,\underset{22}{0},\\
d_{1}d_{3}d_{4},\underset{24}{0},...,\underset{38}{0},d_{1}d_{2}d_{3}%
d_{4},\underset{40}{0},...,\underset{53}{0}%
\end{array}
\right)
\end{align*}
and
\begin{align*}
&  \gamma_{2431}\left(  d_{1},d_{2},d_{3},d_{4}\right) \\
&  =\mathfrak{m}\left(
\begin{array}
[c]{c}%
d_{1},d_{2},d_{3},d_{4},d_{1}d_{2},d_{1}d_{3},d_{1}d_{4},\underset{8}%
{0},\underset{9}{0},d_{3}d_{4},\underset{11}{0},\underset{12}{0},d_{1}%
d_{2}d_{3},\underset{14}{0},...,\underset{17}{0},d_{1}d_{2}d_{4},\\
\underset{19}{0},...,\underset{24}{0},d_{1}d_{3}d_{4},d_{2}d_{3}%
d_{4},\underset{27}{0},...,\underset{40}{0},d_{1}d_{2}d_{3}d_{4},\underset
{42}{0},...,\underset{53}{0}%
\end{array}
\right)
\end{align*}
we have
\begin{align}
&  \left(  \gamma_{2341}\overset{\cdot}{\underset{12}{-}}\gamma_{2431}\right)
\left(  d_{1},d_{2},d_{3}\right) \nonumber\\
&  =\mathfrak{m}\left(
\begin{array}
[c]{c}%
d_{1},d_{2},\underset{3}{0},...,\underset{9}{0},-d_{3},\underset{11}%
{0},...,\underset{22}{0},d_{1}d_{3},0,-d_{1}d_{3},-d_{2}d_{3},\\
\underset{27}{0},...,\underset{38}{0},d_{1}d_{2}d_{3},0,-d_{1}d_{2}%
d_{3},\underset{42}{0},...,\underset{53}{0}%
\end{array}
\right)  \label{t3.2.4}%
\end{align}
Since
\begin{align*}
&  \gamma_{3421}\left(  d_{1},d_{2},d_{3},d_{4}\right) \\
&  =\mathfrak{m}\left(
\begin{array}
[c]{c}%
d_{1},d_{2},d_{3},d_{4},d_{1}d_{2},d_{1}d_{3},d_{1}d_{4},d_{2}d_{3},d_{2}%
d_{4},\underset{10}{0},...,\underset{14}{0},d_{1}d_{2}d_{3},\underset{16}%
{0},...,\underset{19}{0},d_{1}d_{2}d_{4},\\
\underset{21}{0},\underset{22}{0},d_{1}d_{3}d_{4},\underset{24}{0}%
,...,\underset{27}{0},d_{2}d_{3}d_{4},\underset{29}{0},...,\underset{46}%
{0},d_{1}d_{2}d_{3}d_{4},\underset{48}{0},...,\underset{53}{0}%
\end{array}
\right)
\end{align*}
and
\begin{align*}
&  \gamma_{4321}\left(  d_{1},d_{2},d_{3},d_{4}\right) \\
&  =\mathfrak{m}\left(
\begin{array}
[c]{c}%
d_{1},d_{2},d_{3},d_{4},d_{1}d_{2},d_{1}d_{3},d_{1}d_{4},d_{2}d_{3},d_{2}%
d_{4},d_{3}d_{4},\underset{11}{0},...,\underset{14}{0},d_{1}d_{2}%
d_{3},\underset{16}{0},...,\underset{19}{0},\\
d_{1}d_{2}d_{4},\underset{21}{0},...,\underset{24}{0},d_{1}d_{3}%
d_{4},\underset{26}{0},...,\underset{29}{0},d_{2}d_{3}d_{4},\underset{31}%
{0},...,\underset{52}{0},d_{1}d_{2}d_{3}d_{4}%
\end{array}
\right)
\end{align*}
we have
\begin{align}
&  \left(  \gamma_{3421}\overset{\cdot}{\underset{12}{-}}\gamma_{4321}\right)
\left(  d_{1},d_{2},d_{3}\right) \nonumber\\
&  =\mathfrak{m}\left(
\begin{array}
[c]{c}%
d_{1},d_{2},\underset{3}{0},...,\underset{9}{0},-d_{3},\underset{11}%
{0},...,\underset{22}{0},d_{1}d_{3},\underset{24}{0},-d_{1}d_{3},\underset
{26}{0},\underset{27}{0},d_{2}d_{3},\underset{29}{0},-d_{2}d_{3},\\
\underset{31}{0},...,\underset{46}{0},d_{1}d_{2}d_{3},\underset{48}%
{0},...,\underset{52}{0},-d_{1}d_{2}d_{3}%
\end{array}
\right)  \label{t3.2.5}%
\end{align}
(\ref{t3.2.4}) and (\ref{t3.2.5}) imply that
\begin{align}
&  \left(  \left(  \gamma_{2341}\overset{\cdot}{\underset{12}{-}}\gamma
_{2431}\right)  \overset{\cdot}{\underset{1}{-}}\left(  \gamma_{3421}%
\overset{\cdot}{\underset{12}{-}}\gamma_{4321}\right)  \right)  \left(
d_{1},d_{2}\right) \nonumber\\
&  =\mathfrak{m}\left(
\begin{array}
[c]{c}%
d_{1},\underset{2}{0},...,\underset{25}{0},-d_{2},\underset{27}{0}%
,-d_{2},\underset{29}{0},d_{2},\underset{31}{0},...,\underset{38}{0}%
,d_{1}d_{2},\underset{40}{0},\\
-d_{1}d_{2},\underset{42}{0},...,\underset{46}{0},-d_{1}d_{2},\underset{48}%
{0},...,\underset{52}{0},d_{1}d_{2}%
\end{array}
\right)  \label{t3.2.6}%
\end{align}
(\ref{t3.2.3}) and (\ref{t3.2.6}) imply that
\begin{align}
&  \left(
\begin{array}
[c]{c}%
\left(  \left(  \gamma_{1234}\overset{\cdot}{\underset{12}{-}}\gamma
_{1243}\right)  \overset{\cdot}{\underset{1}{-}}\left(  \gamma_{1342}%
\overset{\cdot}{\underset{12}{-}}\gamma_{1432}\right)  \right)  \overset
{\cdot}{-}\\
\left(  \left(  \gamma_{2341}\overset{\cdot}{\underset{12}{-}}\gamma
_{2431}\right)  \overset{\cdot}{\underset{1}{-}}\left(  \gamma_{3421}%
\overset{\cdot}{\underset{12}{-}}\gamma_{4321}\right)  \right)
\end{array}
\right)  \left(  d\right) \nonumber\\
&  =\mathfrak{m}\left(  \underset{1}{0},...,\underset{30}{0},-d,\underset
{32}{0},-d,\underset{34}{0},d,\underset{36}{0},...,\underset{38}%
{0},-d,0,d,\underset{42}{0},...,\underset{46}{0},d,\underset{48}%
{0},...,\underset{52}{0},-d\right)  \label{t3.2.7}%
\end{align}

\item Since
\begin{align*}
&  \gamma_{1342}\left(  d_{1},d_{2},d_{3},d_{4}\right) \\
&  =\mathfrak{m}\left(
\begin{array}
[c]{c}%
d_{1},d_{2},d_{3},d_{4},\underset{5}{0},...,\underset{7}{0},d_{2}d_{3}%
,d_{2}d_{4},\underset{10}{0},d_{1}d_{2}d_{3},\underset{12}{0},...,\underset
{15}{0},d_{1}d_{2}d_{4},\underset{17}{0},...,\underset{27}{0},\\
d_{2}d_{3}d_{4},0,0,0,0,d_{1}d_{2}d_{3}d_{4},\underset{34}{0},...,\underset
{53}{0}%
\end{array}
\right)
\end{align*}
and
\begin{align*}
&  \gamma_{1324}\left(  d_{1},d_{2},d_{3},d_{4}\right) \\
&  =\mathfrak{m}\left(
\begin{array}
[c]{c}%
d_{1},d_{2},d_{3},d_{4},\underset{5}{0},...,\underset{7}{0},d_{2}%
d_{3},\underset{9}{0},\underset{10}{0},d_{1}d_{2}d_{3},\underset{12}%
{0},...,\underset{26}{0},d_{2}d_{3}d_{4},\underset{28}{0},...,\underset{31}%
{0},\\
d_{1}d_{2}d_{3}d_{4},\underset{33}{0},...,\underset{53}{0}%
\end{array}
\right)
\end{align*}
we have
\begin{align}
&  \left(  \gamma_{1342}\overset{\cdot}{\underset{13}{-}}\gamma_{1324}\right)
\left(  d_{1},d_{2},d_{3}\right) \nonumber\\
&  =\mathfrak{m}\left(
\begin{array}
[c]{c}%
d_{1},0,d_{2},\underset{4}{0},...,\underset{8}{0},d_{3},\underset{10}%
{0},...,\underset{15}{0},d_{1}d_{3},\underset{17}{0},...,\underset{26}%
{0},-d_{2}d_{3},d_{2}d_{3},\underset{29}{0},...,\underset{31}{0},\\
-d_{1}d_{2}d_{3},d_{1}d_{2}d_{3},\underset{34}{0},...,\underset{53}{0}%
\end{array}
\right)  \label{t3.2.8}%
\end{align}
Since
\begin{align*}
&  \gamma_{1423}\left(  d_{1},d_{2},d_{3},d_{4}\right) \\
&  =\mathfrak{m}\left(
\begin{array}
[c]{c}%
d_{1},d_{2},d_{3},d_{4},\underset{5}{0},...,\underset{8}{0},d_{2}d_{4}%
,d_{3}d_{4},\underset{11}{0},...,\underset{15}{0},d_{1}d_{2}d_{4}%
,\underset{17}{0},...,\underset{20}{0},d_{1}d_{3}d_{4},\\
\underset{22}{0},...,\underset{28}{0},d_{2}d_{3}d_{4},\underset{30}%
{0},...,\underset{33}{0},d_{1}d_{2}d_{3}d_{4},\underset{35}{0},...,\underset
{53}{0}%
\end{array}
\right)
\end{align*}
and
\begin{align*}
&  \gamma_{1243}\left(  d_{1},d_{2},d_{3},d_{4}\right) \\
&  =\left(
\begin{array}
[c]{c}%
d_{1},d_{2},d_{3},d_{4},\underset{5}{0},...,\underset{9}{0},\underset
{10}{d_{3}d_{4}},\underset{11}{0},...,\underset{20}{0},d_{1}d_{3}%
d_{4},\underset{22}{0},...,\underset{25}{0},d_{2}d_{3}d_{4},\underset{27}%
{0},...,\underset{30}{0},\\
d_{1}d_{2}d_{3}d_{4},\underset{32}{0},...,\underset{53}{0}%
\end{array}
\right)
\end{align*}
we have
\begin{align}
&  \left(  \gamma_{1423}\overset{\cdot}{\underset{13}{-}}\gamma_{1243}\right)
\left(  d_{1},d_{2},d_{3}\right) \nonumber\\
&  =\mathfrak{m}\left(
\begin{array}
[c]{c}%
d_{1},0,d_{2},\underset{4}{0},...,\underset{8}{0},d_{3},\underset{10}%
{0},...,\underset{15}{0},d_{1}d_{3},\underset{17}{0},...,\underset{25}%
{0},-d_{2}d_{3},\underset{27}{0},\underset{28}{0},d_{2}d_{3},\underset{30}%
{0},\\
-d_{1}d_{2}d_{3},\underset{32}{0},\underset{33}{0},d_{1}d_{2}d_{3}%
,\underset{35}{0},...,\underset{53}{0}%
\end{array}
\right)  \label{t3.2.9}%
\end{align}
(\ref{t3.2.8}) and (\ref{t3.2.9}) imply that%
\begin{align}
&  \left(  \left(  \gamma_{1324}\overset{\cdot}{\underset{13}{-}}\gamma
_{1342}\right)  \overset{\cdot}{\underset{1}{-}}\left(  \gamma_{1243}%
\overset{\cdot}{\underset{13}{-}}\gamma_{1423}\right)  \right)  \left(
d_{1},d_{2}\right) \nonumber\\
&  =\mathfrak{m}\left(  d_{1},\underset{2}{0},...,\underset{25}{0}%
,d_{2},-d_{2},d_{2},-d_{2},\underset{30}{0},d_{1}d_{2},-d_{1}d_{2},d_{1}%
d_{2},-d_{1}d_{2},\underset{35}{0},...,\underset{53}{0}\right)
\label{t3.2.10}%
\end{align}
Since
\begin{align*}
&  \gamma_{3421}\left(  d_{1},d_{2},d_{3},d_{4}\right) \\
&  =\mathfrak{m}\left(
\begin{array}
[c]{c}%
d_{1},d_{2},d_{3},d_{4},d_{1}d_{2},d_{1}d_{3},d_{1}d_{4},d_{2}d_{3},d_{2}%
d_{4},\underset{10}{0},...,\underset{14}{0},d_{1}d_{2}d_{3},\underset{16}%
{0},...,\underset{19}{0},d_{1}d_{2}d_{4},\\
0,0,d_{1}d_{3}d_{4},\underset{24}{0},...,\underset{27}{0},d_{2}d_{3}%
d_{4},\underset{29}{0},...,\underset{46}{0},d_{1}d_{2}d_{3}d_{4},\underset
{48}{0},...,\underset{53}{0}%
\end{array}
\right)
\end{align*}
and
\begin{align*}
&  \gamma_{3241}\left(  d_{1},d_{2},d_{3},d_{4}\right) \\
&  =\mathfrak{m}\left(
\begin{array}
[c]{c}%
d_{1},d_{2},d_{3},d_{4},d_{1}d_{2},d_{1}d_{3},d_{1}d_{4},d_{2}d_{3}%
,\underset{9}{0},...,\underset{14}{0},d_{1}d_{2}d_{3},\underset{16}%
{0},\underset{17}{0},d_{1}d_{2}d_{4},\underset{19}{0},...,\underset{22}{0},\\
d_{1}d_{3}d_{4},\underset{24}{0},...,\underset{26}{0},d_{2}d_{3}%
d_{4},\underset{28}{0},...,\underset{44}{0},d_{1}d_{2}d_{3}d_{4},\underset
{46}{0},...,\underset{53}{0}%
\end{array}
\right)
\end{align*}
we have
\begin{align}
&  \left(  \gamma_{3421}\overset{\cdot}{\underset{13}{-}}\gamma_{3241}\right)
\left(  d_{1},d_{2},d_{3}\right) \nonumber\\
&  =\mathfrak{m}\left(
\begin{array}
[c]{c}%
d_{1},0,d_{2},\underset{4}{0},...,\underset{8}{0},d_{3},\underset{10}%
{0},...,\underset{17}{0},-d_{1}d_{3},\underset{19}{0},d_{1}d_{3},\underset
{21}{0},...,\underset{26}{0},-d_{2}d_{3},d_{2}d_{3},\\
\underset{29}{0},...,\underset{44}{0},-d_{1}d_{2}d_{3},\underset{46}{0}%
,d_{1}d_{2}d_{3},\underset{48}{0},...,\underset{53}{0}%
\end{array}
\right)  \label{t3.2.11}%
\end{align}
Since
\begin{align*}
&  \gamma_{4231}\left(  d_{1},d_{2},d_{3},d_{4}\right) \\
&  =\mathfrak{m}\left(
\begin{array}
[c]{c}%
d_{1},d_{2},d_{3},d_{4},d_{1}d_{2},d_{1}d_{3},d_{1}d_{4},\underset{8}{0}%
,d_{2}d_{4},d_{3}d_{4},\underset{11}{0},\underset{12}{0},d_{1}d_{2}%
d_{3},\underset{14}{0},...,\underset{19}{0},\\
d_{1}d_{2}d_{4},\underset{21}{0},...,\underset{24}{0},d_{1}d_{3}%
d_{4},\underset{26}{0},...,\underset{28}{0},d_{2}d_{3}d_{4},\underset{30}%
{0},...,\underset{50}{0},d_{1}d_{2}d_{3}d_{4},0,\underset{53}{0}%
\end{array}
\right)
\end{align*}
and
\begin{align*}
&  \gamma_{2431}\left(  d_{1},d_{2},d_{3},d_{4}\right) \\
&  =\mathfrak{m}\left(
\begin{array}
[c]{c}%
d_{1},d_{2},d_{3},d_{4},d_{1}d_{2},d_{1}d_{3},d_{1}d_{4},\underset{8}%
{0},\underset{9}{0},d_{3}d_{4},\underset{11}{0},0,d_{1}d_{2}d_{3}%
,\underset{14}{0},...,\underset{17}{0},d_{1}d_{2}d_{4},\\
\underset{19}{0},...,\underset{24}{0},d_{1}d_{3}d_{4},d_{2}d_{3}%
d_{4},\underset{27}{0},...,\underset{40}{0},d_{1}d_{2}d_{3}d_{4},\underset
{42}{0},...,\underset{53}{0}%
\end{array}
\right)
\end{align*}
we have
\begin{align}
&  \left(  \gamma_{4231}\overset{\cdot}{\underset{13}{-}}\gamma_{2431}\right)
\left(  d_{1},d_{2},d_{3}\right) \nonumber\\
&  =\mathfrak{m}\left(
\begin{array}
[c]{c}%
d_{1},\underset{2}{0},d_{2},\underset{4}{0},...,\underset{8}{0},d_{3}%
,\underset{10}{0},...,\underset{17}{0},-d_{1}d_{3},\underset{19}{0},d_{1}%
d_{3},\underset{21}{0},...,\underset{25}{0},-d_{2}d_{3},\underset{27}%
{0},\underset{28}{0},d_{2}d_{3},\\
\underset{30}{0},...,\underset{40}{0},-d_{1}d_{2}d_{3},\underset{42}%
{0},...,\underset{50}{0},d_{1}d_{2}d_{3},\underset{52}{0},\underset{53}{0}%
\end{array}
\right)  \label{t3.2.12}%
\end{align}
(\ref{t3.2.11}) and (\ref{t3.2.12}) imply that
\begin{align}
&  \left(  \left(  \gamma_{3421}\overset{\cdot}{\underset{13}{-}}\gamma
_{3241}\right)  \overset{\cdot}{\underset{1}{-}}\left(  \gamma_{4231}%
\overset{\cdot}{\underset{13}{-}}\gamma_{2431}\right)  \right)  \left(
d_{1},d_{2}\right) \nonumber\\
&  =\mathfrak{m}\left(
\begin{array}
[c]{c}%
d_{1},\underset{2}{0},...,\underset{25}{0},d_{2},-d_{2},d_{2},-d_{2}%
,\underset{30}{0},...,\underset{40}{0},d_{1}d_{2},\underset{42}{0}%
,...,\underset{44}{0},\\
-d_{1}d_{2},\underset{46}{0},d_{1}d_{2},\underset{48}{0},...,\underset{50}%
{0},-d_{1}d_{2},\underset{52}{0},\underset{53}{0}%
\end{array}
\right)  \label{t3.2.13}%
\end{align}
(\ref{t3.2.10}) and (\ref{t3.2.13}) imply that
\begin{align}
&  \left(
\begin{array}
[c]{c}%
\left(  \left(  \gamma_{1324}\overset{\cdot}{\underset{13}{-}}\gamma
_{1342}\right)  \overset{\cdot}{\underset{1}{-}}\left(  \gamma_{1243}%
\overset{\cdot}{\underset{13}{-}}\gamma_{1423}\right)  \right)  \overset
{\cdot}{-}\\
\left(  \left(  \gamma_{3241}\overset{\cdot}{\underset{13}{-}}\gamma
_{3421}\right)  \overset{\cdot}{\underset{1}{-}}\left(  \gamma_{2431}%
\overset{\cdot}{\underset{13}{-}}\gamma_{4231}\right)  \right)
\end{array}
\right)  \left(  d\right) \nonumber\\
&  =\mathfrak{m}\left(  \underset{1}{0},...,\underset{30}{0}%
,d,-d,d,-d,\underset{35}{0},...,\underset{40}{0},-d,\underset{42}%
{0},...\underset{44}{0},d,\underset{46}{0},-d,\underset{48}{0},...,\underset
{50}{0},d,\underset{52}{0},\underset{53}{0}\right)  \label{t3.2.14}%
\end{align}

\item Since
\begin{align*}
&  \gamma_{1423}\left(  d_{1},d_{2},d_{3},d_{4}\right) \\
&  =\mathfrak{m}\left(
\begin{array}
[c]{c}%
d_{1},d_{2},d_{3},d_{4},\underset{5}{0},...,\underset{8}{0},d_{2}d_{4}%
,d_{3}d_{4},\underset{11}{0},...,\underset{15}{0},d_{1}d_{2}d_{4}%
,\underset{17}{0},...,\underset{20}{0},d_{1}d_{3}d_{4},\\
\underset{22}{0},...,\underset{28}{0},d_{2}d_{3}d_{4},\underset{30}%
{0},...,\underset{33}{0},d_{1}d_{2}d_{3}d_{4},\underset{35}{0},...,\underset
{53}{0}%
\end{array}
\right)
\end{align*}
and
\begin{align*}
&  \gamma_{1432}\left(  d_{1},d_{2},d_{3},d_{4}\right) \\
&  =\mathfrak{m}\left(
\begin{array}
[c]{c}%
d_{1},d_{2},d_{3},d_{4},\underset{5}{0},...,\underset{7}{0},d_{2}d_{3}%
,d_{2}d_{4},d_{3}d_{4},d_{1}d_{2}d_{3},\underset{12}{0},...,\underset{15}%
{0},d_{1}d_{2}d_{4},\underset{17}{0},...,\underset{20}{0},\\
d_{1}d_{3}d_{4},\underset{22}{0},...,\underset{29}{0},d_{2}d_{3}%
d_{4},\underset{31}{0},...,\underset{34}{0},d_{1}d_{2}d_{3}d_{4},\underset
{36}{0},...,\underset{53}{0}%
\end{array}
\right)
\end{align*}
we have
\begin{align}
&  \left(  \gamma_{1423}\overset{\cdot}{\underset{14}{-}}\gamma_{1432}\right)
\left(  d_{1},d_{2},d_{3}\right) \nonumber\\
&  =\mathfrak{m}\left(
\begin{array}
[c]{c}%
d_{1},\underset{2}{0},\underset{3}{0},d_{2},\underset{5}{0},...,\underset
{7}{0},-d_{3},\underset{9}{0},\underset{10}{0},-d_{1}d_{3},\underset{12}%
{0},...,\underset{28}{0},d_{2}d_{3},-d_{2}d_{3},\\
\underset{31}{0},...,\underset{33}{0},d_{1}d_{2}d_{3},-d_{1}d_{2}%
d_{3},\underset{36}{0},...,\underset{53}{0}%
\end{array}
\right)  \label{t3.2.15}%
\end{align}
Since
\begin{align*}
&  \gamma_{1234}\left(  d_{1},d_{2},d_{3},d_{4}\right) \\
&  =\mathfrak{m}\left(  d_{1},d_{2},d_{3},d_{4},\underset{5}{0},...,\underset
{53}{0}\right)
\end{align*}
and
\begin{align*}
&  \gamma_{1324}\left(  d_{1},d_{2},d_{3},d_{4}\right) \\
&  =\mathfrak{m}\left(
\begin{array}
[c]{c}%
d_{1},d_{2},d_{3},d_{4},\underset{5}{0},...,\underset{7}{0},d_{2}%
d_{3},\underset{9}{0},\underset{10}{0},d_{1}d_{2}d_{3},\underset{12}%
{0},...,\underset{26}{0},d_{2}d_{3}d_{4},\underset{28}{0},...,\underset{30}%
{0},\\
0,d_{1}d_{2}d_{3}d_{4},\underset{33}{0},...,\underset{53}{0}%
\end{array}
\right)
\end{align*}
we have
\begin{align}
&  \left(  \gamma_{1234}\overset{\cdot}{\underset{14}{-}}\gamma_{1324}\right)
\left(  d_{1},d_{2},d_{3}\right) \nonumber\\
&  =\mathfrak{m}\left(
\begin{array}
[c]{c}%
d_{1},\underset{2}{0},\underset{3}{0},d_{2},\underset{5}{0},...,\underset
{7}{0},-d_{3},\underset{9}{0},\underset{10}{0},-d_{1}d_{3},\underset{12}%
{0},...,\underset{26}{0},-d_{2}d_{3},\\
\underset{28}{0},...,\underset{31}{0},-d_{1}d_{2}d_{3},\underset{33}%
{0},...,\underset{53}{0}%
\end{array}
\right)  \label{t3.2.16}%
\end{align}
(\ref{t3.2.15}) and (\ref{t3.2.16}) imply that
\begin{align}
&  \left(  \left(  \gamma_{1423}\overset{\cdot}{\underset{14}{-}}\gamma
_{1432}\right)  \overset{\cdot}{\underset{1}{-}}\left(  \gamma_{1234}%
\overset{\cdot}{\underset{14}{-}}\gamma_{1324}\right)  \right)  \left(
d_{1},d_{2}\right) \nonumber\\
&  =\mathfrak{m}\left(  d_{1},\underset{2}{0},...,\underset{26}{0}%
,d_{2},\underset{28}{0},d_{2},-d_{2},\underset{31}{0},d_{1}d_{2},\underset
{33}{0},d_{1}d_{2},-d_{1}d_{2},\underset{36}{0},...,\underset{53}{0}\right)
\label{t3.2.17}%
\end{align}
Since
\begin{align*}
&  \gamma_{4231}\left(  d_{1},d_{2},d_{3},d_{4}\right) \\
&  =\mathfrak{m}\left(
\begin{array}
[c]{c}%
d_{1},d_{2},d_{3},d_{4},d_{1}d_{2},d_{1}d_{3},d_{1}d_{4},\underset{8}{0}%
,d_{2}d_{4},d_{3}d_{4},\underset{11}{0},\underset{12}{0},d_{1}d_{2}%
d_{3},\underset{14}{0},...,\underset{19}{0},\\
d_{1}d_{2}d_{4},\underset{21}{0},...,\underset{24}{0},d_{1}d_{3}%
d_{4},\underset{26}{0},...,\underset{28}{0},d_{2}d_{3}d_{4},\underset{30}%
{0},...,\underset{50}{0},d_{1}d_{2}d_{3}d_{4},\underset{52}{0},\underset
{53}{0}%
\end{array}
\right)
\end{align*}
and
\begin{align*}
&  \gamma_{4321}\left(  d_{1},d_{2},d_{3},d_{4}\right) \\
&  =\mathfrak{m}\left(
\begin{array}
[c]{c}%
d_{1},d_{2},d_{3},d_{4},d_{1}d_{2},d_{1}d_{3},d_{1}d_{4},d_{2}d_{3},d_{2}%
d_{4},d_{3}d_{4},\underset{11}{0},...,\underset{14}{0},d_{1}d_{2}%
d_{3},\underset{16}{0},...,\underset{19}{0},\\
d_{1}d_{2}d_{4},\underset{21}{0},...,\underset{24}{0},d_{1}d_{3}%
d_{4},\underset{26}{0},...,\underset{29}{0},d_{2}d_{3}d_{4},\underset{31}%
{0},...,\underset{52}{0},d_{1}d_{2}d_{3}d_{4}%
\end{array}
\right)
\end{align*}
we have
\begin{align}
&  \left(  \gamma_{4231}\overset{\cdot}{\underset{14}{-}}\gamma_{4321}\right)
\left(  d_{1},d_{2},d_{3}\right) \nonumber\\
&  =\mathfrak{m}\left(
\begin{array}
[c]{c}%
d_{1},\underset{2}{0},\underset{3}{0},d_{2},\underset{5}{0},...,\underset
{7}{0},-d_{3},\underset{9}{0},...,\underset{12}{0},d_{1}d_{3},\underset{14}%
{0},-d_{1}d_{3},\underset{16}{0},...,\underset{28}{0},d_{2}d_{3},-d_{2}%
d_{3},\\
\underset{31}{0},...,\underset{50}{0},d_{1}d_{2}d_{3},\underset{52}{0}%
,-d_{1}d_{2}d_{3}%
\end{array}
\right)  \label{t3.2.18}%
\end{align}
Since
\begin{align*}
&  \gamma_{2341}\left(  d_{1},d_{2},d_{3},d_{4}\right) \\
&  =\mathfrak{m}\left(
\begin{array}
[c]{c}%
d_{1},d_{2},d_{3},d_{4},d_{1}d_{2},d_{1}d_{3},d_{1}d_{4},\underset{8}%
{0},...,\underset{12}{0},d_{1}d_{2}d_{3},\underset{14}{0},...,\underset{17}%
{0},d_{1}d_{2}d_{4},\underset{19}{0},...,\underset{22}{0},\\
d_{1}d_{3}d_{4},\underset{24}{0},...,\underset{38}{0},d_{1}d_{2}d_{3}%
d_{4},\underset{40}{0},...,\underset{53}{0}%
\end{array}
\right)
\end{align*}
and
\begin{align*}
&  \gamma_{3241}\left(  d_{1},d_{2},d_{3},d_{4}\right) \\
&  =\mathfrak{m}\left(
\begin{array}
[c]{c}%
d_{1},d_{2},d_{3},d_{4},d_{1}d_{2},d_{1}d_{3},d_{1}d_{4},d_{2}d_{3}%
,\underset{9}{0},...,\underset{14}{0},d_{1}d_{2}d_{3},\underset{16}%
{0},\underset{17}{0},d_{1}d_{2}d_{4},\underset{19}{0},...,\underset{22}{0},\\
d_{1}d_{3}d_{4},\underset{24}{0},...,\underset{26}{0},d_{2}d_{3}%
d_{4},\underset{28}{0},...,\underset{44}{0},d_{1}d_{2}d_{3}d_{4},\underset
{46}{0},...,\underset{53}{0}%
\end{array}
\right)
\end{align*}
we have
\begin{align}
&  \left(  \gamma_{2341}\overset{\cdot}{\underset{14}{-}}\gamma_{3241}\right)
\left(  d_{1},d_{2},d_{3}\right) \nonumber\\
&  =\mathfrak{m}\left(
\begin{array}
[c]{c}%
d_{1},\underset{2}{0},\underset{3}{0},d_{2},\underset{5}{0},...,\underset
{7}{0},-d_{3},\underset{9}{0},...,\underset{12}{0},d_{1}d_{3},\underset{14}%
{0},-d_{1}d_{3},\underset{16}{0},...,\underset{26}{0},-d_{2}d_{3},\\
\underset{28}{0},...,\underset{38}{0},d_{1}d_{2}d_{3},\underset{40}%
{0},...,\underset{44}{0},-d_{1}d_{2}d_{3},\underset{46}{0},...,\underset
{53}{0}%
\end{array}
\right)  \label{t3.2.19}%
\end{align}
(\ref{t3.2.18}) and (\ref{t3.2.19}) imply that
\begin{align}
&  \left(  \left(  \gamma_{4231}\overset{\cdot}{\underset{14}{-}}\gamma
_{4321}\right)  \overset{\cdot}{\underset{1}{-}}\left(  \gamma_{2341}%
\overset{\cdot}{\underset{14}{-}}\gamma_{3241}\right)  \right)  \left(
d_{1},d_{2}\right) \nonumber\\
&  =\mathfrak{m}\left(
\begin{array}
[c]{c}%
d_{1},\underset{2}{0},...,\underset{26}{0},d_{2},\underset{28}{0},d_{2}%
,-d_{2},\underset{31}{0},...,\underset{38}{0},-d_{1}d_{2},\\
\underset{40}{0},...,\underset{44}{0},d_{1}d_{2},\underset{46}{0}%
,...,\underset{50}{0},d_{1}d_{2},\underset{52}{0},-d_{1}d_{2}%
\end{array}
\right)  \label{t3.2.20}%
\end{align}
(\ref{t3.2.17}) and (\ref{t3.2.20}) imply that
\begin{align}
&  \left(
\begin{array}
[c]{c}%
\left(  \left(  \gamma_{1423}\overset{\cdot}{\underset{14}{-}}\gamma
_{1432}\right)  \overset{\cdot}{\underset{1}{-}}\left(  \gamma_{1234}%
\overset{\cdot}{\underset{14}{-}}\gamma_{1324}\right)  \right)  \overset
{\cdot}{-}\\
\left(  \left(  \gamma_{4231}\overset{\cdot}{\underset{14}{-}}\gamma
_{4321}\right)  \overset{\cdot}{\underset{1}{-}}\left(  \gamma_{2341}%
\overset{\cdot}{\underset{14}{-}}\gamma_{3241}\right)  \right)
\end{array}
\right)  \left(  d\right) \nonumber\\
&  =\mathfrak{m}\left(  \underset{1}{0},...,\underset{31}{0},d,\underset
{33}{0},d,-d,\underset{36}{0},...,\underset{38}{0},d,\underset{40}%
{0},...,\underset{44}{0},-d,\underset{46}{0},...,\underset{50}{0}%
,-d,\underset{52}{0},d\right)  \label{t3.2.21}%
\end{align}

\item Since
\begin{align*}
&  \gamma_{2143}\left(  d_{1},d_{2},d_{3},d_{4}\right) \\
&  =\mathfrak{m}\left(
\begin{array}
[c]{c}%
d_{1},d_{2},d_{3},d_{4},d_{1}d_{2},\underset{6}{0},...,\underset{9}{0}%
,d_{3}d_{4},\underset{11}{0},d_{1}d_{2}d_{3},\underset{13}{0},...,\underset
{16}{0},d_{1}d_{2}d_{4},\underset{18}{0},...,\underset{20}{0},d_{1}d_{3}%
d_{4},\\
\underset{22}{0},...,\underset{25}{0},d_{2}d_{3}d_{4},\underset{27}%
{0},...,\underset{36}{0},d_{1}d_{2}d_{3}d_{4},\underset{38}{0},...,\underset
{53}{0}%
\end{array}
\right)
\end{align*}
and
\begin{align*}
&  \gamma_{2134}\left(  d_{1},d_{2},d_{3},d_{4}\right) \\
&  =\mathfrak{m}\left(  d_{1},d_{2},d_{3},d_{4},d_{1}d_{2},\underset{6}%
{0},...,\underset{11}{0},d_{1}d_{2}d_{3},\underset{13}{0},...,\underset{16}%
{0},d_{1}d_{2}d_{4},\underset{18}{0},...,\underset{35}{0},d_{1}d_{2}d_{3}%
d_{4},\underset{37}{0},...,\underset{53}{0}\right)
\end{align*}
we have
\begin{align}
&  \left(  \gamma_{2143}\overset{\cdot}{\underset{21}{-}}\gamma_{2134}\right)
\left(  d_{1},d_{2},d_{3}\right) \nonumber\\
&  =\mathfrak{m}\left(
\begin{array}
[c]{c}%
d_{2},d_{1},\underset{3}{0},\underset{4}{0},d_{1}d_{2},\underset{6}%
{0},...,\underset{9}{0},d_{3},\underset{11}{0},...,\underset{20}{0},d_{2}%
d_{3},\underset{22}{0},...,\underset{25}{0},d_{1}d_{3},\\
\underset{27}{0},...,\underset{35}{0},-d_{1}d_{2}d_{3},d_{1}d_{2}%
d_{3},\underset{38}{0},...,\underset{53}{0}%
\end{array}
\right)  \label{t3.2.22}%
\end{align}
Since
\begin{align*}
&  \gamma_{2431}\left(  d_{1},d_{2},d_{3},d_{4}\right) \\
&  =\mathfrak{m}\left(
\begin{array}
[c]{c}%
d_{1},d_{2},d_{3},d_{4},d_{1}d_{2},d_{1}d_{3},d_{1}d_{4},\underset{8}%
{0},\underset{9}{0},d_{3}d_{4},\underset{11}{0},\underset{12}{0},d_{1}%
d_{2}d_{3},\underset{14}{0},...,\underset{17}{0},d_{1}d_{2}d_{4},\\
\underset{19}{0},...,\underset{24}{0},d_{1}d_{3}d_{4},d_{2}d_{3}%
d_{4},\underset{27}{0},...,\underset{40}{0},d_{1}d_{2}d_{3}d_{4},\underset
{42}{0},...,\underset{53}{0}%
\end{array}
\right)
\end{align*}
and
\begin{align*}
&  \gamma_{2341}\left(  d_{1},d_{2},d_{3},d_{4}\right) \\
&  =\mathfrak{m}\left(
\begin{array}
[c]{c}%
d_{1},d_{2},d_{3},d_{4},d_{1}d_{2},d_{1}d_{3},d_{1}d_{4},\underset{8}%
{0},...,\underset{12}{0},d_{1}d_{2}d_{3},\underset{14}{0},...,\underset{17}%
{0},d_{1}d_{2}d_{4},\underset{19}{0},...,\underset{22}{0},\\
d_{1}d_{3}d_{4},\underset{24}{0},...,\underset{38}{0},d_{1}d_{2}d_{3}%
d_{4},\underset{40}{0},...,\underset{53}{0}%
\end{array}
\right)
\end{align*}
we have
\begin{align}
&  \left(  \gamma_{2431}\overset{\cdot}{\underset{21}{-}}\gamma_{2341}\right)
\left(  d_{1},d_{2},d_{3}\right) \nonumber\\
&  =\mathfrak{m}\left(
\begin{array}
[c]{c}%
d_{2},d_{1},\underset{3}{0},\underset{4}{0},d_{1}d_{2},\underset{6}%
{0},...,\underset{9}{0},d_{3},\underset{11}{0},...,\underset{22}{0}%
,-d_{2}d_{3},\underset{24}{0},d_{2}d_{3},d_{1}d_{3},\\
\underset{27}{0},...,\underset{38}{0},-d_{1}d_{2}d_{3},\underset{40}{0}%
,d_{1}d_{2}d_{3},\underset{42}{0},...,\underset{53}{0}%
\end{array}
\right)  \label{t3.2.23}%
\end{align}
(\ref{t3.2.22}) and (\ref{t3.2.23}) imply that
\begin{align}
&  \left(  \left(  \gamma_{2143}\overset{\cdot}{\underset{21}{-}}\gamma
_{2134}\right)  \overset{\cdot}{\underset{1}{-}}\left(  \gamma_{2431}%
\overset{\cdot}{\underset{21}{-}}\gamma_{2341}\right)  \right)  \left(
d_{1},d_{2}\right) \nonumber\\
&  =\mathfrak{m}\left(
\begin{array}
[c]{c}%
\underset{1}{0},d_{1},\underset{3}{0},...,\underset{20}{0},d_{2},\underset
{22}{0},d_{2},\underset{24}{0},-d_{2},\underset{26}{0},...,\underset{35}{0},\\
-d_{1}d_{2},d_{1}d_{2},\underset{38}{0},d_{1}d_{2},\underset{40}{0}%
,-d_{1}d_{2},\underset{42}{0},...,\underset{53}{0}%
\end{array}
\right)  \label{t3.2.24}%
\end{align}
Since
\begin{align*}
&  \gamma_{1432}\left(  d_{1},d_{2},d_{3},d_{4}\right) \\
&  =\mathfrak{m}\left(
\begin{array}
[c]{c}%
d_{1},d_{2},d_{3},d_{4},\underset{5}{0},...,\underset{7}{0},d_{2}d_{3}%
,d_{2}d_{4},d_{3}d_{4},d_{1}d_{2}d_{3},\underset{12}{0},...,\underset{15}%
{0},d_{1}d_{2}d_{4},\underset{17}{0},...,\underset{20}{0},\\
d_{1}d_{3}d_{4},\underset{22}{0},...,\underset{29}{0},d_{2}d_{3}%
d_{4},\underset{31}{0},...,\underset{34}{0},d_{1}d_{2}d_{3}d_{4},\underset
{36}{0},...,\underset{53}{0}%
\end{array}
\right)
\end{align*}
and
\begin{align*}
&  \gamma_{1342}\left(  d_{1},d_{2},d_{3},d_{4}\right) \\
&  =\mathfrak{m}\left(
\begin{array}
[c]{c}%
d_{1},d_{2},d_{3},d_{4},\underset{5}{0},...,\underset{7}{0},d_{2}d_{3}%
,d_{2}d_{4},\underset{10}{0},d_{1}d_{2}d_{3},\underset{12}{0},...,\underset
{15}{0},d_{1}d_{2}d_{4},\underset{17}{0},...,\underset{27}{0},\\
d_{2}d_{3}d_{4},0,0,0,0,d_{1}d_{2}d_{3}d_{4},\underset{34}{0},...,\underset
{53}{0}%
\end{array}
\right)
\end{align*}
we have
\begin{align}
&  \left(  \gamma_{1432}\overset{\cdot}{\underset{21}{-}}\gamma_{1342}\right)
\left(  d_{1},d_{2},d_{3}\right) \nonumber\\
&  =\mathfrak{m}\left(
\begin{array}
[c]{c}%
d_{2},d_{1},\underset{3}{0},\underset{4}{0},d_{1}d_{2},\underset{6}%
{0},...,\underset{9}{0},d_{3},\underset{11}{0},...,\underset{20}{0},d_{1}%
d_{3},\underset{22}{0},...,\underset{27}{0},-d_{2}d_{3},\underset{29}{0}%
,d_{2}d_{3},\\
\underset{31}{0},\underset{32}{0},-d_{1}d_{2}d_{3},\underset{34}{0},d_{1}%
d_{2}d_{3},\underset{36}{0},...,\underset{53}{0}%
\end{array}
\right)  \label{t3.2.25}%
\end{align}
Since
\begin{align*}
&  \gamma_{4312}\left(  d_{1},d_{2},d_{3},d_{4}\right) \\
&  =\mathfrak{m}\left(
\begin{array}
[c]{c}%
d_{1},d_{2},d_{3},d_{4},\underset{5}{0},d_{1}d_{3},d_{1}d_{4},d_{2}d_{3}%
,d_{2}d_{4},d_{3}d_{4},\underset{11}{0},...,\underset{13}{0},d_{1}d_{2}%
d_{3},\underset{15}{0},...,\underset{18}{0},\\
d_{1}d_{2}d_{4},\underset{20}{0},...,\underset{24}{0},d_{1}d_{3}%
d_{4},\underset{26}{0},...,\underset{29}{0},d_{2}d_{3}d_{4},\underset{31}%
{0},...,\underset{51}{0},d_{1}d_{2}d_{3}d_{4},\underset{53}{0}%
\end{array}
\right)
\end{align*}
and
\begin{align*}
&  \gamma_{3412}\left(  d_{1},d_{2},d_{3},d_{4}\right) \\
&  =\left(
\begin{array}
[c]{c}%
d_{1},d_{2},d_{3},d_{4},\underset{5}{0},d_{1}d_{3},d_{1}d_{4},d_{2}d_{3}%
,d_{2}d_{4},\underset{10}{0},...,\underset{13}{0},d_{1}d_{2}d_{3}%
,\underset{15}{0},...,\underset{18}{0},d_{1}d_{2}d_{4},\\
\underset{20}{0},...,\underset{22}{0},d_{1}d_{3}d_{4},\underset{24}%
{0},...,\underset{27}{0},d_{2}d_{3}d_{4},\underset{29}{0},...,\underset{45}%
{0},d_{1}d_{2}d_{3}d_{4},\underset{47}{0},...,\underset{53}{0}%
\end{array}
\right)
\end{align*}
we have
\begin{align}
&  \left(  \gamma_{4312}\overset{\cdot}{\underset{21}{-}}\gamma_{3412}\right)
\left(  d_{1},d_{2},d_{3}\right) \nonumber\\
&  =\mathfrak{m}\left(
\begin{array}
[c]{c}%
d_{2},d_{1},\underset{3}{0},\underset{4}{0},d_{1}d_{2},\underset{6}%
{0},...,\underset{9}{0},d_{3},\underset{11}{0},...,\underset{20}{0}%
,\underset{21}{0},\underset{22}{0},-d_{2}d_{3},\underset{24}{0},d_{2}%
d_{3},\underset{26}{0},\underset{27}{0},\\
-d_{1}d_{3},\underset{29}{0},d_{1}d_{3},\underset{31}{0},...,\underset{45}%
{0},-d_{1}d_{2}d_{3},\underset{47}{0},...,\underset{51}{0},d_{1}d_{2}%
d_{3},\underset{53}{0}%
\end{array}
\right)  \label{t3.2.26}%
\end{align}
(\ref{t3.2.25}) and (\ref{t3.2.26}) imply that
\begin{align}
&  \left(  \left(  \gamma_{1432}\overset{\cdot}{\underset{21}{-}}\gamma
_{1342}\right)  \overset{\cdot}{\underset{1}{-}}\left(  \gamma_{4312}%
\overset{\cdot}{\underset{21}{-}}\gamma_{3412}\right)  \right)  \left(
d_{1},d_{2}\right) \nonumber\\
&  =\mathfrak{m}\left(
\begin{array}
[c]{c}%
\underset{1}{0},d_{1},\underset{3}{0},...,\underset{20}{0},d_{2},\underset
{22}{0},d_{2},\underset{24}{0},-d_{2},\underset{26}{0},...,\underset{32}%
{0},-d_{1}d_{2},\underset{34}{0},d_{1}d_{2},\\
\underset{36}{0},...,\underset{45}{0},d_{1}d_{2},\underset{47}{0}%
,...,\underset{51}{0},-d_{1}d_{2},\underset{53}{0}%
\end{array}
\right)  \label{t3.2.27}%
\end{align}
(\ref{t3.2.24}) and (\ref{t3.2.27}) imply that
\begin{align}
&  \left(
\begin{array}
[c]{c}%
\left(  \left(  \gamma_{2143}\overset{\cdot}{\underset{21}{-}}\gamma
_{2134}\right)  \overset{\cdot}{\underset{1}{-}}\left(  \gamma_{2431}%
\overset{\cdot}{\underset{21}{-}}\gamma_{2341}\right)  \right)  \overset
{\cdot}{-}\\
\left(  \left(  \gamma_{1432}\overset{\cdot}{\underset{21}{-}}\gamma
_{1342}\right)  \overset{\cdot}{\underset{1}{-}}\left(  \gamma_{4312}%
\overset{\cdot}{\underset{21}{-}}\gamma_{3412}\right)  \right)
\end{array}
\right)  \left(  d\right) \nonumber\\
&  =\mathfrak{m}\left(  \underset{1}{0},...,\underset{32}{0},d,\underset
{34}{0},-d,-d,d,\underset{38}{0},d,\underset{40}{0},-d,\underset{42}%
{0},...,\underset{45}{0},-d,\underset{47}{0},...,\underset{51}{0}%
,d,\underset{53}{0}\right)  \label{t3.2.28}%
\end{align}

\item Since
\begin{align*}
&  \gamma_{2314}\left(  d_{1},d_{2},d_{3},d_{4}\right) \\
&  =\mathfrak{m}\left(
\begin{array}
[c]{c}%
d_{1},d_{2},d_{3},d_{4},d_{1}d_{2},d_{1}d_{3},\underset{7}{0},...,\underset
{12}{0},d_{1}d_{2}d_{3},0,0,0,d_{1}d_{2}d_{4},\underset{18}{0},...,\underset
{21}{0},d_{1}d_{3}d_{4},\\
\underset{23}{0},...,\underset{37}{0},d_{1}d_{2}d_{3}d_{4},\underset{39}%
{0},...,\underset{53}{0}%
\end{array}
\right)
\end{align*}
and
\begin{align*}
&  \gamma_{2341}\left(  d_{1},d_{2},d_{3},d_{4}\right) \\
&  =\mathfrak{m}\left(
\begin{array}
[c]{c}%
d_{1},d_{2},d_{3},d_{4},d_{1}d_{2},d_{1}d_{3},d_{1}d_{4},\underset{8}%
{0},...,\underset{12}{0},d_{1}d_{2}d_{3},\underset{14}{0},...,\underset{17}%
{0},d_{1}d_{2}d_{4},\underset{19}{0},...,\underset{22}{0},\\
d_{1}d_{3}d_{4},\underset{24}{0},...,\underset{38}{0},d_{1}d_{2}d_{3}%
d_{4},\underset{40}{0},...,\underset{53}{0}%
\end{array}
\right)
\end{align*}
we have
\begin{align}
&  \left(  \gamma_{2314}\overset{\cdot}{\underset{23}{-}}\gamma_{2341}\right)
\left(  d_{1},d_{2},d_{3}\right) \nonumber\\
&  =\mathfrak{m}\left(
\begin{array}
[c]{c}%
\underset{1}{0},d_{1},d_{2},\underset{4}{0},...,\underset{6}{0},-d_{3}%
,\underset{8}{0},...,\underset{16}{0},d_{1}d_{3},-d_{1}d_{3},\underset{19}%
{0},...,\underset{21}{0},d_{2}d_{3},-d_{2}d_{3},\\
\underset{24}{0},...,\underset{37}{0},d_{1}d_{2}d_{3},-d_{1}d_{2}%
d_{3},\underset{40}{0},...,\underset{53}{0}%
\end{array}
\right)  \label{t3.2.29}%
\end{align}
Since
\begin{align*}
&  \gamma_{2143}\left(  d_{1},d_{2},d_{3},d_{4}\right) \\
&  =\mathfrak{m}\left(
\begin{array}
[c]{c}%
d_{1},d_{2},d_{3},d_{4},d_{1}d_{2},\underset{6}{0},...,\underset{9}{0}%
,d_{3}d_{4},0,d_{1}d_{2}d_{3},\underset{13}{0},...,\underset{16}{0},d_{1}%
d_{2}d_{4},0,0,0,d_{1}d_{3}d_{4},\\
\underset{22}{0},...,\underset{25}{0},d_{2}d_{3}d_{4},\underset{27}%
{0},...,\underset{36}{0},d_{1}d_{2}d_{3}d_{4},\underset{38}{0},...,\underset
{53}{0}%
\end{array}
\right)
\end{align*}
and
\begin{align*}
&  \gamma_{2413}\left(  d_{1},d_{2},d_{3},d_{4}\right) \\
&  =\mathfrak{m}\left(
\begin{array}
[c]{c}%
d_{1},d_{2},d_{3},d_{4},d_{1}d_{2},\underset{6}{0},d_{1}d_{4},\underset{8}%
{0},\underset{9}{0},d_{3}d_{4},\underset{11}{0},d_{1}d_{2}d_{3},\underset
{13}{0},...,\underset{17}{0},d_{1}d_{2}d_{4},\underset{19}{0},...,\underset
{23}{0},\\
d_{1}d_{3}d_{4},\underset{25}{0},d_{2}d_{3}d_{4},\underset{27}{0}%
,...,\underset{39}{0},d_{1}d_{2}d_{3}d_{4},\underset{41}{0},...,\underset
{53}{0}%
\end{array}
\right)
\end{align*}
we have
\begin{align}
&  \left(  \gamma_{2143}\overset{\cdot}{\underset{23}{-}}\gamma_{2413}\right)
\left(  d_{1},d_{2},d_{3}\right) \nonumber\\
&  =\mathfrak{m}\left(
\begin{array}
[c]{c}%
\underset{1}{0},d_{1},d_{2},\underset{4}{0},...,\underset{6}{0},-d_{3}%
,\underset{8}{0},...,\underset{16}{0},d_{1}d_{3},-d_{1}d_{3},\underset{19}%
{0},\underset{20}{0},d_{2}d_{3},\underset{22}{0},\underset{23}{0},\\
-d_{2}d_{3},\underset{25}{0},...,\underset{36}{0},d_{1}d_{2}d_{3}%
,\underset{38}{0},\underset{39}{0},-d_{1}d_{2}d_{3},\underset{41}%
{0},...,\underset{53}{0}%
\end{array}
\right)  \label{t3.2.30}%
\end{align}
(\ref{t3.2.29}) and (\ref{t3.2.30}) imply that
\begin{align}
&  \left(  \left(  \gamma_{2314}\overset{\cdot}{\underset{23}{-}}\gamma
_{2341}\right)  \overset{\cdot}{\underset{1}{-}}\left(  \gamma_{2143}%
\overset{\cdot}{\underset{23}{-}}\gamma_{2413}\right)  \right)  \left(
d_{1},d_{2}\right) \nonumber\\
&  =\mathfrak{m}\left(  \underset{1}{0},d_{1},\underset{3}{0},...,\underset
{20}{0},-d_{2},d_{2},-d_{2},d_{2},\underset{25}{0},...,\underset{36}{0}%
,-d_{1}d_{2},d_{1}d_{2},-d_{1}d_{2},d_{1}d_{2},\underset{41}{0},...,\underset
{53}{0}\right)  \label{t3.2.31}%
\end{align}
Since
\begin{align*}
&  \gamma_{3142}\left(  d_{1},d_{2},d_{3},d_{4}\right) \\
&  =\mathfrak{m}\left(
\begin{array}
[c]{c}%
d_{1},d_{2},d_{3},d_{4},\underset{5}{0},d_{1}d_{3},\underset{7}{0},d_{2}%
d_{3},d_{2}d_{4},\underset{10}{0},...,\underset{13}{0},d_{1}d_{2}d_{3}%
,0,d_{1}d_{2}d_{4},\underset{17}{0},...,\underset{21}{0},d_{1}d_{3}d_{4},\\
\underset{23}{0},...,\underset{27}{0},d_{2}d_{3}d_{4},\underset{29}%
{0},...,\underset{42}{0},d_{1}d_{2}d_{3}d_{4},\underset{44}{0},...,\underset
{53}{0}%
\end{array}
\right)
\end{align*}
and
\begin{align*}
&  \gamma_{3412}\left(  d_{1},d_{2},d_{3},d_{4}\right) \\
&  =\mathfrak{m}\left(
\begin{array}
[c]{c}%
d_{1},d_{2},d_{3},d_{4},\underset{5}{0},d_{1}d_{3},d_{1}d_{4},d_{2}d_{3}%
,d_{2}d_{4},\underset{10}{0},...,\underset{13}{0},d_{1}d_{2}d_{3}%
,\underset{15}{0},...,\underset{18}{0},d_{1}d_{2}d_{4},\\
\underset{20}{0},...,\underset{22}{0},d_{1}d_{3}d_{4},\underset{24}%
{0},...,\underset{27}{0},d_{2}d_{3}d_{4},\underset{29}{0},...,\underset{45}%
{0},d_{1}d_{2}d_{3}d_{4},\underset{47}{0},...,\underset{53}{0}%
\end{array}
\right)
\end{align*}
we have
\begin{align}
&  \left(  \gamma_{3142}\overset{\cdot}{\underset{23}{-}}\gamma_{3412}\right)
\left(  d_{1},d_{2},d_{3}\right) \nonumber\\
&  =\mathfrak{m}\left(
\begin{array}
[c]{c}%
\underset{1}{0},d_{1},d_{2},\underset{4}{0},...,\underset{6}{0},-d_{3}%
,\underset{8}{0},...,\underset{15}{0},d_{1}d_{3},\underset{17}{0}%
,\underset{18}{0},-d_{1}d_{3},\underset{20}{0},\underset{21}{0},d_{2}%
d_{3},-d_{2}d_{3},\\
\underset{24}{0},...,\underset{42}{0},d_{1}d_{2}d_{3},\underset{44}%
{0},\underset{45}{0},-d_{1}d_{2}d_{3},\underset{47}{0},...,\underset{53}{0}%
\end{array}
\right)  \label{t3.2.32}%
\end{align}
Since
\begin{align*}
&  \gamma_{1432}\left(  d_{1},d_{2},d_{3},d_{4}\right) \\
&  =\mathfrak{m}\left(
\begin{array}
[c]{c}%
d_{1},d_{2},d_{3},d_{4},\underset{5}{0},...,\underset{7}{0},d_{2}d_{3}%
,d_{2}d_{4},d_{3}d_{4},d_{1}d_{2}d_{3},\underset{12}{0},...,\underset{15}%
{0},d_{1}d_{2}d_{4},\underset{17}{0},...,\underset{20}{0},\\
d_{1}d_{3}d_{4},\underset{22}{0},...,\underset{29}{0},d_{2}d_{3}%
d_{4},\underset{31}{0},...,\underset{34}{0},d_{1}d_{2}d_{3}d_{4},\underset
{36}{0},...,\underset{53}{0}%
\end{array}
\right)
\end{align*}
and
\begin{align*}
&  \gamma_{4132}\left(  d_{1},d_{2},d_{3},d_{4}\right) \\
&  =\mathfrak{m}\left(
\begin{array}
[c]{c}%
d_{1},d_{2},d_{3},d_{4},\underset{5}{0},\underset{6}{0},d_{1}d_{4},d_{2}%
d_{3},d_{2}d_{4},d_{3}d_{4},d_{1}d_{2}d_{3},\underset{12}{0},...,\underset
{18}{0},d_{1}d_{2}d_{4},\underset{20}{0},...,\underset{23}{0},\\
d_{1}d_{3}d_{4},\underset{25}{0},...,\underset{29}{0},d_{2}d_{3}%
d_{4},\underset{31}{0},...,\underset{48}{0},d_{1}d_{2}d_{3}d_{4},\underset
{50}{0},...,\underset{53}{0}%
\end{array}
\right)
\end{align*}
we have
\begin{align}
&  \left(  \gamma_{1432}\overset{\cdot}{\underset{23}{-}}\gamma_{4132}\right)
\left(  d_{1},d_{2},d_{3}\right) \nonumber\\
&  =\mathfrak{m}\left(
\begin{array}
[c]{c}%
\underset{1}{0},d_{1},d_{2},\underset{4}{0},...,\underset{6}{0},-d_{3}%
,\underset{8}{0},...,\underset{15}{0},d_{1}d_{3},\underset{17}{0}%
,\underset{18}{0},-d_{1}d_{3},\underset{20}{0},d_{2}d_{3},\underset{22}%
{0},\underset{23}{0},\\
-d_{2}d_{3},\underset{25}{0},...,\underset{34}{0},d_{1}d_{2}d_{3}%
,\underset{36}{0},...,\underset{48}{0},-d_{1}d_{2}d_{3},\underset{50}%
{0},...,\underset{53}{0}%
\end{array}
\right)  \label{t3.2.33}%
\end{align}
(\ref{t3.2.32}) and (\ref{t3.2.33}) imply that
\begin{align}
&  \left(  \left(  \gamma_{3142}\overset{\cdot}{\underset{23}{-}}\gamma
_{3412}\right)  \overset{\cdot}{\underset{1}{-}}\left(  \gamma_{1432}%
\overset{\cdot}{\underset{23}{-}}\gamma_{4132}\right)  \right)  \left(
d_{1},d_{2}\right) \nonumber\\
&  =\mathfrak{m}\left(
\begin{array}
[c]{c}%
\underset{1}{0},d_{1},\underset{3}{0},...,\underset{20}{0},-d_{2},d_{2}%
,-d_{2},d_{2},\underset{25}{0},...,\underset{34}{0},-d_{1}d_{2},\underset
{36}{0},...,\underset{42}{0},\\
d_{1}d_{2},\underset{44}{0},\underset{45}{0},-d_{1}d_{2},\underset{47}%
{0},\underset{48}{0},d_{1}d_{2},\underset{50}{0},...,\underset{53}{0}%
\end{array}
\right)  \label{t3.2.34}%
\end{align}
(\ref{t3.2.31}) and (\ref{t3.2.34}) imply that
\begin{align}
&  \left(
\begin{array}
[c]{c}%
\left(  \left(  \gamma_{2314}\overset{\cdot}{\underset{23}{-}}\gamma
_{2341}\right)  \overset{\cdot}{\underset{1}{-}}\left(  \gamma_{2143}%
\overset{\cdot}{\underset{23}{-}}\gamma_{2413}\right)  \right)  \overset
{\cdot}{-}\\
\left(  \left(  \gamma_{3142}\overset{\cdot}{\underset{23}{-}}\gamma
_{3412}\right)  \overset{\cdot}{\underset{1}{-}}\left(  \gamma_{1432}%
\overset{\cdot}{\underset{23}{-}}\gamma_{4132}\right)  \right)
\end{array}
\right)  \left(  d\right) \nonumber\\
&  =\mathfrak{m}\left(  \underset{1}{0},...,\underset{34}{0},d,\underset
{36}{0},-d,d,-d,d,\underset{41}{0},\underset{42}{0},-d,\underset{44}%
{0},\underset{45}{0},d,\underset{47}{0},\underset{48}{0},-d,\underset{50}%
{0},...,\underset{53}{0}\right)  \label{t3.2.35}%
\end{align}

\item Since
\begin{align*}
&  \gamma_{2431}\left(  d_{1},d_{2},d_{3},d_{4}\right) \\
&  =\mathfrak{m}\left(
\begin{array}
[c]{c}%
d_{1},d_{2},d_{3},d_{4},d_{1}d_{2},d_{1}d_{3},d_{1}d_{4},\underset{8}%
{0},\underset{9}{0},d_{3}d_{4},\underset{11}{0},\underset{12}{0},d_{1}%
d_{2}d_{3},\underset{14}{0},...,\underset{17}{0},d_{1}d_{2}d_{4},\\
\underset{19}{0},...,\underset{24}{0},d_{1}d_{3}d_{4},d_{2}d_{3}%
d_{4},\underset{27}{0},...,\underset{40}{0},d_{1}d_{2}d_{3}d_{4},\underset
{42}{0},...,\underset{53}{0}%
\end{array}
\right)
\end{align*}
and
\begin{align*}
&  \gamma_{2413}\left(  d_{1},d_{2},d_{3},d_{4}\right) \\
&  =\mathfrak{m}\left(
\begin{array}
[c]{c}%
d_{1},d_{2},d_{3},d_{4},d_{1}d_{2},\underset{6}{0},d_{1}d_{4},\underset{8}%
{0},\underset{9}{0},d_{3}d_{4},\underset{11}{0},d_{1}d_{2}d_{3},\underset
{13}{0},...,\underset{17}{0},d_{1}d_{2}d_{4},\underset{19}{0},...,\underset
{23}{0},\\
d_{1}d_{3}d_{4},0,d_{2}d_{3}d_{4},\underset{27}{0},...,\underset{39}{0}%
,d_{1}d_{2}d_{3}d_{4},\underset{41}{0},...,\underset{53}{0}%
\end{array}
\right)
\end{align*}
we have
\begin{align}
&  \left(  \gamma_{2431}\overset{\cdot}{\underset{24}{-}}\gamma_{2413}\right)
\left(  d_{1},d_{2},d_{3}\right) \nonumber\\
&  =\mathfrak{m}\left(
\begin{array}
[c]{c}%
\underset{1}{0},d_{1},\underset{3}{0},d_{2},\underset{5}{0},d_{3},\underset
{7}{0},...,\underset{11}{0},-d_{1}d_{3},d_{1}d_{3},\underset{14}%
{0},...,\underset{23}{0},\\
-d_{2}d_{3},d_{2}d_{3},\underset{26}{0},...,\underset{39}{0},-d_{1}d_{2}%
d_{3},d_{1}d_{2}d_{3},\underset{42}{0},...,\underset{53}{0}%
\end{array}
\right)  \label{t3.2.36}%
\end{align}
Since
\begin{align*}
&  \gamma_{2314}\left(  d_{1},d_{2},d_{3},d_{4}\right) \\
&  =\mathfrak{m}\left(
\begin{array}
[c]{c}%
d_{1},d_{2},d_{3},d_{4},d_{1}d_{2},d_{1}d_{3},\underset{7}{0},...,\underset
{12}{0},d_{1}d_{2}d_{3},0,0,0,d_{1}d_{2}d_{4},\underset{18}{0},...,\underset
{21}{0},d_{1}d_{3}d_{4},\\
\underset{23}{0},...,\underset{37}{0},d_{1}d_{2}d_{3}d_{4},\underset{39}%
{0},...,\underset{53}{0}%
\end{array}
\right)
\end{align*}
and
\begin{align*}
&  \gamma_{2134}\left(  d_{1},d_{2},d_{3},d_{4}\right) \\
&  =\left(  d_{1},d_{2},d_{3},d_{4},d_{1}d_{2},\underset{6}{0},...,\underset
{11}{0},d_{1}d_{2}d_{3},\underset{13}{0},...,\underset{16}{0},d_{1}d_{2}%
d_{4},\underset{18}{0},...,\underset{35}{0},d_{1}d_{2}d_{3}d_{4},\underset
{37}{0},...,\underset{53}{0}\right)
\end{align*}
we have
\begin{align}
&  \left(  \gamma_{2314}\overset{\cdot}{\underset{24}{-}}\gamma_{2134}\right)
\left(  d_{1},d_{2},d_{3}\right) \nonumber\\
&  =\mathfrak{m}\left(
\begin{array}
[c]{c}%
\underset{1}{0},d_{1},\underset{3}{0},d_{2},\underset{5}{0},d_{3},\underset
{7}{0},...,\underset{11}{0},-d_{1}d_{3},d_{1}d_{3},\underset{14}%
{0},...,\underset{21}{0},d_{2}d_{3},\\
\underset{23}{0},...,\underset{35}{0},-d_{1}d_{2}d_{3},\underset{37}{0}%
,d_{1}d_{2}d_{3},\underset{39}{0},...,\underset{53}{0}%
\end{array}
\right)  \label{t3.2.37}%
\end{align}
(\ref{t3.2.36}) and (\ref{t3.2.37}) imply that
\begin{align}
&  \left(  \left(  \gamma_{2431}\overset{\cdot}{\underset{24}{-}}\gamma
_{2413}\right)  \overset{\cdot}{\underset{1}{-}}\left(  \gamma_{2314}%
\overset{\cdot}{\underset{24}{-}}\gamma_{2134}\right)  \right)  \left(
d_{1},d_{2}\right) \nonumber\\
&  =\mathfrak{m}\left(  \underset{1}{0},d_{1},\underset{3}{0},...,\underset
{21}{0},-d_{2},\underset{23}{0},-d_{2},d_{2},\underset{26}{0},...,\underset
{35}{0},d_{1}d_{2},\underset{37}{0},-d_{1}d_{2},\underset{39}{0},-d_{1}%
d_{2},d_{1}d_{2},\underset{42}{0},...,\underset{53}{0}\right)  \label{t3.2.38}%
\end{align}
Since
\begin{align*}
&  \gamma_{4312}\left(  d_{1},d_{2},d_{3},d_{4}\right) \\
&  =\mathfrak{m}\left(
\begin{array}
[c]{c}%
d_{1},d_{2},d_{3},d_{4},\underset{5}{0},d_{1}d_{3},d_{1}d_{4},d_{2}d_{3}%
,d_{2}d_{4},d_{3}d_{4},\underset{11}{0},...,\underset{13}{0},d_{1}d_{2}%
d_{3},\underset{15}{0},...,\underset{18}{0},\\
d_{1}d_{2}d_{4},\underset{20}{0},...,\underset{24}{0},d_{1}d_{3}%
d_{4},\underset{26}{0},...,\underset{29}{0},d_{2}d_{3}d_{4},\underset{31}%
{0},...,\underset{51}{0},d_{1}d_{2}d_{3}d_{4},\underset{53}{0}%
\end{array}
\right)
\end{align*}
and
\begin{align*}
&  \gamma_{4132}\left(  d_{1},d_{2},d_{3},d_{4}\right) \\
&  =\mathfrak{m}\left(
\begin{array}
[c]{c}%
d_{1},d_{2},d_{3},d_{4},\underset{5}{0},\underset{6}{0},d_{1}d_{4},d_{2}%
d_{3},d_{2}d_{4},d_{3}d_{4},d_{1}d_{2}d_{3},\underset{12}{0},...,\underset
{18}{0},d_{1}d_{2}d_{4},\underset{20}{0},...,\underset{23}{0},\\
d_{1}d_{3}d_{4},\underset{25}{0},...,\underset{29}{0},d_{2}d_{3}%
d_{4},\underset{31}{0},...,\underset{48}{0},d_{1}d_{2}d_{3}d_{4},\underset
{50}{0},...,\underset{53}{0}%
\end{array}
\right)
\end{align*}
we have
\begin{align}
&  \left(  \gamma_{4312}\overset{\cdot}{\underset{24}{-}}\gamma_{4132}\right)
\left(  d_{1},d_{2},d_{3}\right) \nonumber\\
&  =\mathfrak{m}\left(
\begin{array}
[c]{c}%
\underset{1}{0},d_{1},\underset{3}{0},d_{2},\underset{5}{0},d_{3},\underset
{7}{0},...,\underset{10}{0},-d_{1}d_{3},\underset{12}{0},\underset{13}%
{0},d_{1}d_{3},\underset{15}{0},...,\underset{23}{0},\\
-d_{2}d_{3},d_{2}d_{3},\underset{26}{0},...,\underset{48}{0},-d_{1}d_{2}%
d_{3},\underset{50}{0},\underset{51}{0},d_{1}d_{2}d_{3},\underset{53}{0}%
\end{array}
\right)  \label{t3.2.39}%
\end{align}
Since
\begin{align*}
&  \gamma_{3142}\left(  d_{1},d_{2},d_{3},d_{4}\right) \\
&  =\mathfrak{m}\left(
\begin{array}
[c]{c}%
d_{1},d_{2},d_{3},d_{4},\underset{5}{0},d_{1}d_{3},\underset{7}{0},d_{2}%
d_{3},d_{2}d_{4},\underset{10}{0},...,\underset{13}{0},d_{1}d_{2}d_{3}%
,0,d_{1}d_{2}d_{4},\underset{17}{0},...,\underset{21}{0},d_{1}d_{3}d_{4},\\
\underset{23}{0},...,\underset{27}{0},d_{2}d_{3}d_{4},\underset{29}%
{0},...,\underset{42}{0},d_{1}d_{2}d_{3}d_{4},\underset{44}{0},...,\underset
{53}{0}%
\end{array}
\right)
\end{align*}
and
\begin{align*}
&  \gamma_{1342}\left(  d_{1},d_{2},d_{3},d_{4}\right) \\
&  =\mathfrak{m}\left(
\begin{array}
[c]{c}%
d_{1},d_{2},d_{3},d_{4},\underset{5}{0},...,\underset{7}{0},d_{2}d_{3}%
,d_{2}d_{4},\underset{10}{0},d_{1}d_{2}d_{3},\underset{12}{0},...,\underset
{15}{0},d_{1}d_{2}d_{4},\underset{17}{0},...,\underset{27}{0},\\
d_{2}d_{3}d_{4},0,0,0,0,d_{1}d_{2}d_{3}d_{4},\underset{34}{0},...,\underset
{53}{0}%
\end{array}
\right)
\end{align*}
we have
\begin{align}
&  \left(  \gamma_{3142}\overset{\cdot}{\underset{24}{-}}\gamma_{1342}\right)
\left(  d_{1},d_{2},d_{3}\right) \nonumber\\
&  =\mathfrak{m}\left(
\begin{array}
[c]{c}%
\underset{1}{0},d_{1},\underset{3}{0},d_{2},\underset{5}{0},d_{3},\underset
{7}{0},...,\underset{10}{0},-d_{1}d_{3},\underset{12}{0},\underset{13}%
{0},d_{1}d_{3},\underset{15}{0},...,\underset{21}{0},d_{2}d_{3},\\
\underset{23}{0},...,\underset{32}{0},-d_{1}d_{2}d_{3},\underset{34}%
{0},...,\underset{42}{0},d_{1}d_{2}d_{3},\underset{44}{0},...,\underset{53}{0}%
\end{array}
\right)  \label{t3.2.40}%
\end{align}
(\ref{t3.2.39}) and (\ref{t3.2.40}) imply that
\begin{align}
&  \left(  \left(  \gamma_{4312}\overset{\cdot}{\underset{24}{-}}\gamma
_{4132}\right)  \overset{\cdot}{\underset{1}{-}}\left(  \gamma_{3142}%
\overset{\cdot}{\underset{24}{-}}\gamma_{1342}\right)  \right)  \left(
d_{1},d_{2}\right) \nonumber\\
&  =\mathfrak{m}\left(
\begin{array}
[c]{c}%
\underset{1}{0},d_{1},\underset{3}{0},...,\underset{21}{0},-d_{2}%
,\underset{23}{0},-d_{2},d_{2},\underset{26}{0},...,\underset{32}{0}%
,d_{1}d_{2},\underset{34}{0},...,\underset{42}{0},\\
-d_{1}d_{2},\underset{44}{0},...,\underset{48}{0},-d_{1}d_{2},\underset{50}%
{0},\underset{51}{0},d_{1}d_{2},\underset{53}{0}%
\end{array}
\right)  \label{t3.2.41}%
\end{align}
(\ref{t3.2.38}) and (\ref{t3.2.41}) imply that
\begin{align}
&  \left(
\begin{array}
[c]{c}%
\left(  \left(  \gamma_{2431}\overset{\cdot}{\underset{24}{-}}\gamma
_{2413}\right)  \overset{\cdot}{\underset{1}{-}}\left(  \gamma_{2314}%
\overset{\cdot}{\underset{24}{-}}\gamma_{2134}\right)  \right)  \overset
{\cdot}{-}\\
\left(  \left(  \gamma_{4312}\overset{\cdot}{\underset{24}{-}}\gamma
_{4132}\right)  \overset{\cdot}{\underset{1}{-}}\left(  \gamma_{3142}%
\overset{\cdot}{\underset{24}{-}}\gamma_{1342}\right)  \right)
\end{array}
\right)  \left(  d\right) \nonumber\\
&  =\mathfrak{m}\left(  \underset{1}{0},...,\underset{32}{0},-d,\underset
{34}{0},\underset{35}{0},d,\underset{37}{0},-d,\underset{39}{0},-d,d,\underset
{42}{0},d,\underset{44}{0},...,\underset{48}{0},d,\underset{50}{0}%
,\underset{51}{0},-d,\underset{53}{0}\right)  \label{t3.2.42}%
\end{align}

\item Since
\begin{align*}
&  \gamma_{3124}\left(  d_{1},d_{2},d_{3},d_{4}\right) \\
&  =\mathfrak{m}\left(
\begin{array}
[c]{c}%
d_{1},d_{2},d_{3},d_{4},\underset{5}{0},d_{1}d_{3},\underset{7}{0},d_{2}%
d_{3},\underset{9}{0},...,\underset{13}{0},d_{1}d_{2}d_{3},\underset{15}%
{0},...,\underset{21}{0},d_{1}d_{3}d_{4},\underset{23}{0},...,\underset{26}%
{0},\\
d_{2}d_{3}d_{4},\underset{28}{0},...,\underset{41}{0},d_{1}d_{2}d_{3}%
d_{4},\underset{43}{0},...,\underset{53}{0}%
\end{array}
\right)
\end{align*}
and
\begin{align*}
&  \gamma_{3142}\left(  d_{1},d_{2},d_{3},d_{4}\right) \\
&  =\mathfrak{m}\left(
\begin{array}
[c]{c}%
d_{1},d_{2},d_{3},d_{4},\underset{5}{0},d_{1}d_{3},\underset{7}{0},d_{2}%
d_{3},d_{2}d_{4},\underset{10}{0},...,\underset{13}{0},d_{1}d_{2}d_{3}%
,0,d_{1}d_{2}d_{4},\underset{17}{0},...,\underset{21}{0},\\
d_{1}d_{3}d_{4},\underset{23}{0},...,\underset{27}{0},d_{2}d_{3}%
d_{4},\underset{29}{0},...,\underset{42}{0},d_{1}d_{2}d_{3}d_{4},\underset
{44}{0},...,\underset{53}{0}%
\end{array}
\right)
\end{align*}
we have
\begin{align}
&  \left(  \gamma_{3124}\overset{\cdot}{\underset{31}{-}}\gamma_{3142}\right)
\left(  d_{1},d_{2},d_{3}\right) \nonumber\\
&  =\mathfrak{m}\left(
\begin{array}
[c]{c}%
d_{2},\underset{2}{0},d_{1},\underset{4}{0},...,\underset{8}{0},-d_{3}%
,\underset{10}{0},...,\underset{15}{0},-d_{2}d_{3},\underset{17}%
{0},...,\underset{26}{0},d_{1}d_{3},-d_{1}d_{3},\\
\underset{29}{0},...,\underset{41}{0},d_{1}d_{2}d_{3},-d_{1}d_{2}%
d_{3},\underset{44}{0},...,\underset{53}{0}%
\end{array}
\right)  \label{t3.2.43}%
\end{align}
Since
\begin{align*}
&  \gamma_{3241}\left(  d_{1},d_{2},d_{3},d_{4}\right) \\
&  =\mathfrak{m}\left(
\begin{array}
[c]{c}%
d_{1},d_{2},d_{3},d_{4},d_{1}d_{2},d_{1}d_{3},d_{1}d_{4},d_{2}d_{3}%
,\underset{9}{0},...,\underset{14}{0},d_{1}d_{2}d_{3},\underset{16}%
{0},\underset{17}{0},d_{1}d_{2}d_{4},\underset{19}{0},...,\underset{22}{0},\\
d_{1}d_{3}d_{4},\underset{24}{0},...,\underset{26}{0},d_{2}d_{3}%
d_{4},\underset{28}{0},...,\underset{44}{0},d_{1}d_{2}d_{3}d_{4},\underset
{46}{0},...,\underset{53}{0}%
\end{array}
\right)
\end{align*}
and
\begin{align*}
&  \gamma_{3421}\left(  d_{1},d_{2},d_{3},d_{4}\right) \\
&  =\mathfrak{m}\left(
\begin{array}
[c]{c}%
d_{1},d_{2},d_{3},d_{4},d_{1}d_{2},d_{1}d_{3},d_{1}d_{4},d_{2}d_{3},d_{2}%
d_{4},\underset{10}{0},...,\underset{14}{0},d_{1}d_{2}d_{3},\underset{16}%
{0},...,\underset{19}{0},d_{1}d_{2}d_{4},\\
\underset{21}{0},0,d_{1}d_{3}d_{4},\underset{24}{0},...,\underset{27}{0}%
,d_{2}d_{3}d_{4},\underset{29}{0},...,\underset{46}{0},d_{1}d_{2}d_{3}%
d_{4},\underset{48}{0},...,\underset{53}{0}%
\end{array}
\right)
\end{align*}
we have
\begin{align}
&  \left(  \gamma_{3241}\overset{\cdot}{\underset{31}{-}}\gamma_{3421}\right)
\left(  d_{1},d_{2},d_{3}\right) \nonumber\\
&  =\mathfrak{m}\left(
\begin{array}
[c]{c}%
d_{2},\underset{2}{0},d_{1},\underset{4}{0},...,\underset{8}{0},-d_{3}%
,\underset{10}{0},...,\underset{17}{0},d_{2}d_{3},\underset{19}{0},-d_{2}%
d_{3},\underset{21}{0},...,\underset{26}{0},\\
d_{1}d_{3},-d_{1}d_{3},\underset{29}{0},...,\underset{44}{0},d_{1}d_{2}%
d_{3},\underset{46}{0},-d_{1}d_{2}d_{3},\underset{48}{0},...,\underset{53}{0}%
\end{array}
\right)  \label{t3.2.44}%
\end{align}
(\ref{t3.2.43}) and (\ref{t3.2.44}) imply that
\begin{align}
&  \left(  \left(  \gamma_{3124}\overset{\cdot}{\underset{31}{-}}\gamma
_{3142}\right)  \overset{\cdot}{\underset{1}{-}}\left(  \gamma_{3241}%
\overset{\cdot}{\underset{31}{-}}\gamma_{3421}\right)  \right)  \left(
d_{1},d_{2}\right) \nonumber\\
&  =\mathfrak{m}\left(
\begin{array}
[c]{c}%
\underset{1}{0},\underset{2}{0},d_{1},\underset{4}{0},...,\underset{15}%
{0},-d_{2},\underset{17}{0},-d_{2},\underset{19}{0},d_{2},\underset{21}%
{0},...,\underset{41}{0},\\
d_{1}d_{2},-d_{1}d_{2},\underset{44}{0},-d_{1}d_{2},\underset{46}{0}%
,d_{1}d_{2},\underset{48}{0},...,\underset{53}{0}%
\end{array}
\right)  \label{t3.2.45}%
\end{align}
Since
\begin{align*}
&  \gamma_{1243}\left(  d_{1},d_{2},d_{3},d_{4}\right) \\
&  =\mathfrak{m}\left(
\begin{array}
[c]{c}%
d_{1},d_{2},d_{3},d_{4},\underset{5}{0},...,\underset{9}{0},\underset
{10}{d_{3}d_{4}},\underset{11}{0},...,\underset{20}{0},d_{1}d_{3}%
d_{4},\underset{22}{0},...,\underset{25}{0},d_{2}d_{3}d_{4},\underset{27}%
{0},...,\underset{30}{0},\\
d_{1}d_{2}d_{3}d_{4},\underset{32}{0},...,\underset{53}{0}%
\end{array}
\right)
\end{align*}
and
\begin{align*}
&  \gamma_{1423}\left(  d_{1},d_{2},d_{3},d_{4}\right) \\
&  =\mathfrak{m}\left(
\begin{array}
[c]{c}%
d_{1},d_{2},d_{3},d_{4},\underset{5}{0},...,\underset{8}{0},d_{2}d_{4}%
,d_{3}d_{4},\underset{11}{0},...,\underset{15}{0},d_{1}d_{2}d_{4}%
,\underset{17}{0},...,\underset{20}{0},d_{1}d_{3}d_{4},\\
\underset{22}{0},...,\underset{28}{0},d_{2}d_{3}d_{4},\underset{30}%
{0},...,\underset{33}{0},d_{1}d_{2}d_{3}d_{4},\underset{35}{0},...,\underset
{53}{0}%
\end{array}
\right)
\end{align*}
we have
\begin{align}
&  \left(  \gamma_{1243}\overset{\cdot}{\underset{31}{-}}\gamma_{1423}\right)
\left(  d_{1},d_{2},d_{3}\right) \nonumber\\
&  =\mathfrak{m}\left(
\begin{array}
[c]{c}%
d_{2},\underset{2}{0},d_{1},\underset{4}{0},...,\underset{8}{0},-d_{3}%
,\underset{10}{0},...,\underset{15}{0},-d_{2}d_{3},\underset{17}%
{0},...,\underset{25}{0},d_{1}d_{3},\\
\underset{27}{0},\underset{28}{0},-d_{1}d_{3},\underset{30}{0},d_{1}d_{2}%
d_{3},\underset{32}{0},\underset{33}{0},-d_{1}d_{2}d_{3},\underset{35}%
{0},...,\underset{53}{0}%
\end{array}
\right)  \label{t3.2.46}%
\end{align}
Since
\begin{align*}
&  \gamma_{2413}\left(  d_{1},d_{2},d_{3},d_{4}\right) \\
&  =\mathfrak{m}\left(
\begin{array}
[c]{c}%
d_{1},d_{2},d_{3},d_{4},d_{1}d_{2},\underset{6}{0},d_{1}d_{4},\underset{8}%
{0},\underset{9}{0},d_{3}d_{4},\underset{11}{0},d_{1}d_{2}d_{3},\underset
{13}{0},...,\underset{17}{0},d_{1}d_{2}d_{4},\underset{19}{0},...,\underset
{23}{0},\\
d_{1}d_{3}d_{4},0,d_{2}d_{3}d_{4},\underset{27}{0},...,\underset{39}{0}%
,d_{1}d_{2}d_{3}d_{4},\underset{41}{0},...,\underset{53}{0}%
\end{array}
\right)
\end{align*}
and
\begin{align*}
&  \gamma_{4213}\left(  d_{1},d_{2},d_{3},d_{4}\right) \\
&  =\mathfrak{m}\left(
\begin{array}
[c]{c}%
d_{1},d_{2},d_{3},d_{4},d_{1}d_{2},\underset{6}{0},d_{1}d_{4},\underset{8}%
{0},d_{2}d_{4},d_{3}d_{4},\underset{11}{0},d_{1}d_{2}d_{3},\underset{13}%
{0},...,\underset{19}{0},d_{1}d_{2}d_{4},\\
\underset{21}{0},...,\underset{23}{0},d_{1}d_{3}d_{4},\underset{25}%
{0},...,\underset{28}{0},d_{2}d_{3}d_{4},\underset{30}{0},...,\underset{49}%
{0},d_{1}d_{2}d_{3}d_{4},\underset{51}{0},...,\underset{53}{0}%
\end{array}
\right)
\end{align*}
we have
\begin{align}
&  \left(  \gamma_{2413}\overset{\cdot}{\underset{31}{-}}\gamma_{4213}\right)
\left(  d_{1},d_{2},d_{3}\right) \nonumber\\
&  =\mathfrak{m}\left(
\begin{array}
[c]{c}%
d_{2},\underset{2}{0},d_{1},\underset{4}{0},...,\underset{8}{0},-d_{3}%
,\underset{10}{0},...,\underset{17}{0},d_{2}d_{3},\underset{19}{0},-d_{2}%
d_{3},\underset{21}{0},...,\underset{25}{0},d_{1}d_{3},\\
\underset{27}{0},\underset{28}{0},-d_{1}d_{3},\underset{30}{0},...,\underset
{39}{0},d_{1}d_{2}d_{3},\underset{41}{0},...,\underset{49}{0},-d_{1}d_{2}%
d_{3},\underset{51}{0},...,\underset{53}{0}%
\end{array}
\right)  \label{t3.2.47}%
\end{align}
(\ref{t3.2.46}) and (\ref{t3.2.47}) imply that
\begin{align}
&  \left(  \left(  \gamma_{1243}\overset{\cdot}{\underset{31}{-}}\gamma
_{1423}\right)  \overset{\cdot}{\underset{1}{-}}\left(  \gamma_{2413}%
\overset{\cdot}{\underset{31}{-}}\gamma_{4213}\right)  \right)  \left(
d_{1},d_{2}\right) \nonumber\\
&  =\mathfrak{m}\left(
\begin{array}
[c]{c}%
\underset{1}{0},\underset{2}{0},d_{1},\underset{4}{0},...,\underset{15}%
{0},-d_{2},\underset{17}{0},-d_{2},\underset{19}{0},d_{2},\underset{21}%
{0},...,\underset{30}{0},d_{1}d_{2},\underset{32}{0},\underset{33}{0}%
,-d_{1}d_{2},\\
\underset{35}{0},...,\underset{39}{0},-d_{1}d_{2},\underset{41}{0}%
,...,\underset{49}{0},d_{1}d_{2},\underset{51}{0},...,\underset{53}{0}%
\end{array}
\right)  \label{t3.2.48}%
\end{align}
(\ref{t3.2.45}) and (\ref{t3.2.48}) imply that
\begin{align}
&  \left(
\begin{array}
[c]{c}%
\left(  \left(  \gamma_{3124}\overset{\cdot}{\underset{31}{-}}\gamma
_{3142}\right)  \overset{\cdot}{\underset{1}{-}}\left(  \gamma_{3241}%
\overset{\cdot}{\underset{31}{-}}\gamma_{3421}\right)  \right)  \overset
{\cdot}{-}\\
\left(  \left(  \gamma_{1243}\overset{\cdot}{\underset{31}{-}}\gamma
_{1423}\right)  \overset{\cdot}{\underset{1}{-}}\left(  \gamma_{2413}%
\overset{\cdot}{\underset{31}{-}}\gamma_{4213}\right)  \right)
\end{array}
\right)  \left(  d\right) \nonumber\\
&  =\mathfrak{m}\left(  \underset{1}{0},...,\underset{30}{0},-d,\underset
{32}{0},\underset{33}{0},d,\underset{35}{0},...,\underset{39}{0}%
,d,\underset{41}{0},d,-d,\underset{44}{0},-d,\underset{46}{0},d,\underset
{48}{0},\underset{49}{0},-d,\underset{51}{0},...,\underset{53}{0}\right)
\label{t3.2.49}%
\end{align}

\item Since
\begin{align*}
&  \gamma_{3241}\left(  d_{1},d_{2},d_{3},d_{4}\right) \\
&  =\mathfrak{m}\left(
\begin{array}
[c]{c}%
d_{1},d_{2},d_{3},d_{4},d_{1}d_{2},d_{1}d_{3},d_{1}d_{4},d_{2}d_{3}%
,\underset{9}{0},...,\underset{14}{0},d_{1}d_{2}d_{3},\underset{16}%
{0},\underset{17}{0},d_{1}d_{2}d_{4},\underset{19}{0},...,\underset{22}{0},\\
d_{1}d_{3}d_{4},\underset{24}{0},...,\underset{26}{0},d_{2}d_{3}%
d_{4},\underset{28}{0},...,\underset{44}{0},d_{1}d_{2}d_{3}d_{4},\underset
{46}{0},...,\underset{53}{0}%
\end{array}
\right)
\end{align*}
and
\begin{align*}
&  \gamma_{3214}\left(  d_{1},d_{2},d_{3},d_{4}\right) \\
&  =\mathfrak{m}\left(
\begin{array}
[c]{c}%
d_{1},d_{2},d_{3},d_{4},d_{1}d_{2},d_{1}d_{3},0,d_{2}d_{3},\underset{9}%
{0},...,\underset{14}{0},d_{1}d_{2}d_{3},0,d_{1}d_{2}d_{4},\underset{18}%
{0},...,\underset{21}{0},d_{1}d_{3}d_{4},\\
\underset{23}{0},...,\underset{26}{0},d_{2}d_{3}d_{4},\underset{28}%
{0},...,\underset{43}{0},d_{1}d_{2}d_{3}d_{4},\underset{45}{0},...,\underset
{53}{0}%
\end{array}
\right)
\end{align*}
we have
\begin{align}
&  \left(  \gamma_{3241}\overset{\cdot}{\underset{32}{-}}\gamma_{3214}\right)
\left(  d_{1},d_{2},d_{3}\right) \nonumber\\
&  =\mathfrak{m}\left(
\begin{array}
[c]{c}%
\underset{1}{0},d_{2},d_{1},\underset{4}{0},...,\underset{6}{0},d_{3}%
,\underset{8}{0},...,\underset{16}{0},-d_{2}d_{3},d_{2}d_{3},\underset{19}%
{0},...,\underset{21}{0},-d_{1}d_{3},d_{1}d_{3},\\
\underset{24}{0},...,\underset{43}{0},-d_{1}d_{2}d_{3},d_{1}d_{2}%
d_{3},\underset{46}{0},...,\underset{53}{0}%
\end{array}
\right)  \label{t3.2.50}%
\end{align}
Since
\begin{align*}
&  \gamma_{3412}\left(  d_{1},d_{2},d_{3},d_{4}\right) \\
&  =\mathfrak{m}\left(
\begin{array}
[c]{c}%
d_{1},d_{2},d_{3},d_{4},\underset{5}{0},d_{1}d_{3},d_{1}d_{4},d_{2}d_{3}%
,d_{2}d_{4},\underset{10}{0},...,\underset{13}{0},d_{1}d_{2}d_{3}%
,\underset{15}{0},...,\underset{18}{0},d_{1}d_{2}d_{4},\\
\underset{20}{0},...,\underset{22}{0},d_{1}d_{3}d_{4},\underset{24}%
{0},...,\underset{27}{0},d_{2}d_{3}d_{4},\underset{29}{0},...,\underset{45}%
{0},d_{1}d_{2}d_{3}d_{4},\underset{47}{0},...,\underset{53}{0}%
\end{array}
\right)
\end{align*}
and
\begin{align*}
&  \gamma_{3142}\left(  d_{1},d_{2},d_{3},d_{4}\right) \\
&  =\mathfrak{m}\left(
\begin{array}
[c]{c}%
d_{1},d_{2},d_{3},d_{4},\underset{5}{0},d_{1}d_{3},\underset{7}{0},d_{2}%
d_{3},d_{2}d_{4},\underset{10}{0},...,\underset{13}{0},d_{1}d_{2}d_{3}%
,0,d_{1}d_{2}d_{4},\underset{17}{0},...,\underset{21}{0},d_{1}d_{3}d_{4},\\
\underset{23}{0},...,\underset{27}{0},d_{2}d_{3}d_{4},\underset{29}%
{0},...,\underset{42}{0},d_{1}d_{2}d_{3}d_{4},\underset{44}{0},...,\underset
{53}{0}%
\end{array}
\right)
\end{align*}
we have
\begin{align}
&  \left(  \gamma_{3412}\overset{\cdot}{\underset{32}{-}}\gamma_{3142}\right)
\left(  d_{1},d_{2},d_{3}\right) \nonumber\\
&  =\mathfrak{m}\left(
\begin{array}
[c]{c}%
\underset{1}{0},d_{2},d_{1},\underset{4}{0},...,\underset{6}{0},d_{3}%
,\underset{8}{0},...,\underset{15}{0},-d_{2}d_{3},\underset{17}{0}%
,\underset{18}{0},d_{2}d_{3},\underset{20}{0},\underset{21}{0},\\
-d_{1}d_{3},d_{1}d_{3},\underset{24}{0},...,\underset{42}{0},-d_{1}d_{2}%
d_{3},\underset{44}{0},\underset{45}{0},d_{1}d_{2}d_{3},\underset{47}%
{0},...,\underset{53}{0}%
\end{array}
\right)  \label{t3.2.51}%
\end{align}
(\ref{t3.2.50}) and (\ref{t3.2.51}) imply that
\begin{align}
&  \left(  \left(  \gamma_{3241}\overset{\cdot}{\underset{32}{-}}\gamma
_{3214}\right)  \overset{\cdot}{\underset{1}{-}}\left(  \gamma_{3412}%
\overset{\cdot}{\underset{32}{-}}\gamma_{3142}\right)  \right)  \left(
d_{1},d_{2}\right) \nonumber\\
&  =\mathfrak{m}\left(  \underset{1}{0},\underset{2}{0},d_{1},\underset{4}%
{0},...,\underset{15}{0},d_{2},-d_{2},d_{2},-d_{2},\underset{20}%
{0},...,\underset{42}{0},d_{1}d_{2},-d_{1}d_{2},d_{1}d_{2},-d_{1}%
d_{2},\underset{47}{0},...,\underset{53}{0}\right)  \label{t3.2.52}%
\end{align}
Since
\begin{align*}
&  \gamma_{2413}\left(  d_{1},d_{2},d_{3},d_{4}\right) \\
&  =\mathfrak{m}\left(
\begin{array}
[c]{c}%
d_{1},d_{2},d_{3},d_{4},d_{1}d_{2},\underset{6}{0},d_{1}d_{4},\underset{8}%
{0},\underset{9}{0},d_{3}d_{4},\underset{11}{0},d_{1}d_{2}d_{3},\underset
{13}{0},...,\underset{17}{0},d_{1}d_{2}d_{4},\underset{19}{0},...,\underset
{23}{0},\\
d_{1}d_{3}d_{4},0,d_{2}d_{3}d_{4},\underset{27}{0},...,\underset{39}{0}%
,d_{1}d_{2}d_{3}d_{4},\underset{41}{0},...,\underset{53}{0}%
\end{array}
\right)
\end{align*}
and
\begin{align*}
&  \gamma_{2143}\left(  d_{1},d_{2},d_{3},d_{4}\right) \\
&  =\mathfrak{m}\left(
\begin{array}
[c]{c}%
d_{1},d_{2},d_{3},d_{4},d_{1}d_{2},\underset{6}{0},...,\underset{9}{0}%
,d_{3}d_{4},0,d_{1}d_{2}d_{3},\underset{13}{0},...,\underset{16}{0},d_{1}%
d_{2}d_{4},0,0,0,d_{1}d_{3}d_{4},\\
\underset{22}{0},...,\underset{25}{0},d_{2}d_{3}d_{4},\underset{27}%
{0},...,\underset{36}{0},d_{1}d_{2}d_{3}d_{4},\underset{38}{0},...,\underset
{53}{0}%
\end{array}
\right)
\end{align*}
we have
\begin{align}
&  \left(  \gamma_{2413}\overset{\cdot}{\underset{32}{-}}\gamma_{2143}\right)
\left(  d_{1},d_{2},d_{3}\right) \nonumber\\
&  =\mathfrak{m}\left(
\begin{array}
[c]{c}%
\underset{1}{0},d_{2},d_{1},\underset{4}{0},...,\underset{6}{0},d_{3}%
,\underset{8}{0},...,\underset{16}{0},-d_{2}d_{3},d_{2}d_{3},\underset{19}%
{0},\underset{20}{0},-d_{1}d_{3},\\
\underset{22}{0},\underset{23}{0},d_{1}d_{3},\underset{25}{0},...,\underset
{36}{0},-d_{1}d_{2}d_{3},\underset{38}{0},\underset{39}{0},d_{1}d_{2}%
d_{3},\underset{41}{0},...,\underset{53}{0}%
\end{array}
\right)  \label{t3.2.53}%
\end{align}
Since
\begin{align*}
&  \gamma_{4123}\left(  d_{1},d_{2},d_{3},d_{4}\right) \\
&  =\mathfrak{m}\left(
\begin{array}
[c]{c}%
d_{1},d_{2},d_{3},d_{4},\underset{5}{0},\underset{6}{0},d_{1}d_{4}%
,\underset{8}{0},d_{2}d_{4},d_{3}d_{4},\underset{11}{0},...,\underset{18}%
{0},d_{1}d_{2}d_{4},\underset{20}{0},...,\underset{23}{0},d_{1}d_{3}d_{4},\\
\underset{25}{0},...,\underset{28}{0},d_{2}d_{3}d_{4},\underset{30}%
{0},...,\underset{47}{0},d_{1}d_{2}d_{3}d_{4},\underset{49}{0},...,\underset
{53}{0}%
\end{array}
\right)
\end{align*}
and
\begin{align*}
&  \gamma_{1423}\left(  d_{1},d_{2},d_{3},d_{4}\right) \\
&  =\mathfrak{m}\left(
\begin{array}
[c]{c}%
d_{1},d_{2},d_{3},d_{4},\underset{5}{0},...,\underset{8}{0},d_{2}d_{4}%
,d_{3}d_{4},\underset{11}{0},...,\underset{15}{0},d_{1}d_{2}d_{4}%
,\underset{17}{0},...,\underset{20}{0},d_{1}d_{3}d_{4},\\
\underset{22}{0},...,\underset{28}{0},d_{2}d_{3}d_{4},\underset{30}%
{0},...,\underset{33}{0},d_{1}d_{2}d_{3}d_{4},\underset{35}{0},...,\underset
{53}{0}%
\end{array}
\right)
\end{align*}
we have
\begin{align}
&  \left(  \gamma_{4123}\overset{\cdot}{\underset{32}{-}}\gamma_{1423}\right)
\left(  d_{1},d_{2},d_{3}\right) \nonumber\\
&  =\mathfrak{m}\left(
\begin{array}
[c]{c}%
\underset{1}{0},d_{2},d_{1},\underset{4}{0},...,\underset{6}{0},d_{3}%
,\underset{8}{0},...,\underset{15}{0},-d_{2}d_{3},\underset{17}{0}%
,\underset{18}{0},d_{2}d_{3},\underset{20}{0},-d_{1}d_{3},\\
\underset{22}{0},\underset{23}{0},d_{1}d_{3},\underset{25}{0},...,\underset
{33}{0},-d_{1}d_{2}d_{3},\underset{35}{0},...,\underset{47}{0},d_{1}d_{2}%
d_{3},\underset{49}{0},...,\underset{53}{0}%
\end{array}
\right)  \label{t3.2.54}%
\end{align}
(\ref{3.2.53}) and (\ref{t3.2.54}) imply that
\begin{align}
&  \left(  \left(  \gamma_{2413}\overset{\cdot}{\underset{32}{-}}\gamma
_{2143}\right)  \overset{\cdot}{\underset{1}{-}}\left(  \gamma_{4123}%
\overset{\cdot}{\underset{32}{-}}\gamma_{1423}\right)  \right)  \left(
d_{1},d_{2}\right) \nonumber\\
&  =\mathfrak{m}\left(
\begin{array}
[c]{c}%
\underset{1}{0},\underset{2}{0},d_{1},\underset{4}{0},...,\underset{15}%
{0},d_{2},-d_{2},d_{2},-d_{2},\underset{20}{0},...,\underset{33}{0},d_{1}%
d_{2},\underset{35}{0},\underset{36}{0},-d_{1}d_{2},\\
\underset{38}{0},\underset{39}{0},d_{1}d_{2},\underset{41}{0},...,\underset
{47}{0},-d_{1}d_{2},\underset{49}{0},...,\underset{53}{0}%
\end{array}
\right)  \label{t3.2.55}%
\end{align}
(\ref{t3.2.52}) and (\ref{t3.2.55}) imply that
\begin{align}
&  \left(
\begin{array}
[c]{c}%
\left(  \left(  \gamma_{3241}\overset{\cdot}{\underset{32}{-}}\gamma
_{3214}\right)  \overset{\cdot}{\underset{1}{-}}\left(  \gamma_{3412}%
\overset{\cdot}{\underset{32}{-}}\gamma_{3142}\right)  \right)  \overset
{\cdot}{-}\\
\left(  \left(  \gamma_{2413}\overset{\cdot}{\underset{32}{-}}\gamma
_{2143}\right)  \overset{\cdot}{\underset{1}{-}}\left(  \gamma_{4123}%
\overset{\cdot}{\underset{32}{-}}\gamma_{1423}\right)  \right)
\end{array}
\right)  \left(  d\right) \nonumber\\
&  =\mathfrak{m}\left(  \underset{1}{0},...,\underset{33}{0},-d,\underset
{35}{0},\underset{36}{0},d,\underset{38}{0},\underset{39}{0},-d,\underset
{41}{0},\underset{42}{0},d,-d,d,-d,\underset{47}{0},d,\underset{49}%
{0},...,\underset{53}{0}\right)  \label{t3.2.56}%
\end{align}

\item Since
\begin{align*}
&  \gamma_{3412}\left(  d_{1},d_{2},d_{3},d_{4}\right) \\
&  =\mathfrak{m}\left(
\begin{array}
[c]{c}%
d_{1},d_{2},d_{3},d_{4},\underset{5}{0},d_{1}d_{3},d_{1}d_{4},d_{2}d_{3}%
,d_{2}d_{4},\underset{10}{0},...,\underset{13}{0},d_{1}d_{2}d_{3}%
,\underset{15}{0},...,\underset{18}{0},d_{1}d_{2}d_{4},\\
\underset{20}{0},...,\underset{22}{0},d_{1}d_{3}d_{4},\underset{24}%
{0},...,\underset{27}{0},d_{2}d_{3}d_{4},\underset{29}{0},...,\underset{45}%
{0},d_{1}d_{2}d_{3}d_{4},\underset{47}{0},...,\underset{53}{0}%
\end{array}
\right)
\end{align*}
and
\begin{align*}
&  \gamma_{3421}\left(  d_{1},d_{2},d_{3},d_{4}\right) \\
&  =\mathfrak{m}\left(
\begin{array}
[c]{c}%
d_{1},d_{2},d_{3},d_{4},d_{1}d_{2},d_{1}d_{3},d_{1}d_{4},d_{2}d_{3},d_{2}%
d_{4},\underset{10}{0},...,\underset{14}{0},d_{1}d_{2}d_{3},\underset{16}%
{0},...,\underset{19}{0},d_{1}d_{2}d_{4},\\
\underset{21}{0},\underset{22}{0},d_{1}d_{3}d_{4},\underset{24}{0}%
,...,\underset{27}{0},d_{2}d_{3}d_{4},\underset{29}{0},...,\underset{46}%
{0},d_{1}d_{2}d_{3}d_{4},\underset{48}{0},...,\underset{53}{0}%
\end{array}
\right)
\end{align*}
we have
\begin{align}
&  \left(  \gamma_{3412}\overset{\cdot}{\underset{34}{-}}\gamma_{3421}\right)
\left(  d_{1},d_{2},d_{3}\right) \nonumber\\
&  =\mathfrak{m}\left(
\begin{array}
[c]{c}%
\underset{1}{0},\underset{2}{0},d_{1},d_{2},-d_{3},\underset{6}{0}%
,...,\underset{13}{0},d_{1}d_{3},-d_{1}d_{3},\underset{16}{0},...,\underset
{18}{0},d_{2}d_{3},-d_{2}d_{3},\\
\underset{21}{0},...,\underset{45}{0},d_{1}d_{2}d_{3},-d_{1}d_{2}%
d_{3},\underset{48}{0},...,\underset{53}{0}%
\end{array}
\right)  \label{t3.2.57}%
\end{align}
Since
\begin{align*}
&  \gamma_{3124}\left(  d_{1},d_{2},d_{3},d_{4}\right) \\
&  =\mathfrak{m}\left(
\begin{array}
[c]{c}%
d_{1},d_{2},d_{3},d_{4},\underset{5}{0},d_{1}d_{3},\underset{7}{0},d_{2}%
d_{3},\underset{9}{0},...,\underset{13}{0},d_{1}d_{2}d_{3},\underset{15}%
{0},...,\underset{21}{0},d_{1}d_{3}d_{4},\underset{23}{0},...,\underset{26}%
{0},\\
d_{2}d_{3}d_{4},\underset{28}{0},...,\underset{41}{0},d_{1}d_{2}d_{3}%
d_{4},\underset{43}{0},...,\underset{53}{0}%
\end{array}
\right)
\end{align*}
and
\begin{align*}
&  \gamma_{3214}\left(  d_{1},d_{2},d_{3},d_{4}\right) \\
&  =\mathfrak{m}\left(
\begin{array}
[c]{c}%
d_{1},d_{2},d_{3},d_{4},d_{1}d_{2},d_{1}d_{3},\underset{7}{0},d_{2}%
d_{3},\underset{9}{0},...,\underset{14}{0},d_{1}d_{2}d_{3},0,d_{1}d_{2}%
d_{4},\underset{18}{0},...,\underset{21}{0},d_{1}d_{3}d_{4},\\
\underset{23}{0},...,\underset{26}{0},d_{2}d_{3}d_{4},\underset{28}%
{0},...,\underset{43}{0},d_{1}d_{2}d_{3}d_{4},\underset{45}{0},...,\underset
{53}{0}%
\end{array}
\right)
\end{align*}
we have
\begin{align}
&  \left(  \gamma_{3124}\overset{\cdot}{\underset{34}{-}}\gamma_{3214}\right)
\left(  d_{1},d_{2},d_{3}\right) \nonumber\\
&  =\mathfrak{m}\left(
\begin{array}
[c]{c}%
\underset{1}{0},\underset{2}{0},d_{1},d_{2},-d_{3},\underset{6}{0}%
,...,\underset{13}{0},d_{1}d_{3},-d_{1}d_{3},\underset{16}{0},-d_{2}%
d_{3},\underset{18}{0},...,\underset{41}{0},\\
d_{1}d_{2}d_{3},\underset{43}{0},-d_{1}d_{2}d_{3},\underset{45}{0}%
,...,\underset{53}{0}%
\end{array}
\right)  \label{t3.2.58}%
\end{align}
(\ref{t3.2.57}) and (\ref{t3.2.58}) imply that
\begin{align}
&  \left(  \left(  \gamma_{3412}\overset{\cdot}{\underset{34}{-}}\gamma
_{3421}\right)  \overset{\cdot}{\underset{1}{-}}\left(  \gamma_{3124}%
\overset{\cdot}{\underset{34}{-}}\gamma_{3214}\right)  \right)  \left(
d_{1},d_{2}\right) \nonumber\\
&  =\mathfrak{m}\left(
\begin{array}
[c]{c}%
\underset{1}{0},\underset{2}{0},d_{1},\underset{4}{0},...,\underset{16}%
{0},d_{2},\underset{18}{0},d_{2},-d_{2},\underset{21}{0},...\underset{41}%
{0},-d_{1}d_{2},\underset{43}{0},d_{1}d_{2},\\
\underset{45}{0},d_{1}d_{2},-d_{1}d_{2},\underset{48}{0},...,\underset{53}{0}%
\end{array}
\right)  \label{t3.2.59}%
\end{align}
Since
\begin{align*}
&  \gamma_{4123}\left(  d_{1},d_{2},d_{3},d_{4}\right) \\
&  =\mathfrak{m}\left(
\begin{array}
[c]{c}%
d_{1},d_{2},d_{3},d_{4},\underset{5}{0},\underset{6}{0},d_{1}d_{4}%
,\underset{8}{0},d_{2}d_{4},d_{3}d_{4},\underset{11}{0},...,\underset{18}%
{0},d_{1}d_{2}d_{4},\underset{20}{0},...,\underset{23}{0},d_{1}d_{3}d_{4},\\
\underset{25}{0},...,\underset{28}{0},d_{2}d_{3}d_{4},\underset{30}%
{0},...,\underset{47}{0},d_{1}d_{2}d_{3}d_{4},\underset{49}{0},...,\underset
{53}{0}%
\end{array}
\right)
\end{align*}
and
\begin{align*}
&  \gamma_{4213}\left(  d_{1},d_{2},d_{3},d_{4}\right) \\
&  =\mathfrak{m}\left(
\begin{array}
[c]{c}%
d_{1},d_{2},d_{3},d_{4},d_{1}d_{2},0,d_{1}d_{4},0,d_{2}d_{4},d_{3}%
d_{4},0,d_{1}d_{2}d_{3},\underset{13}{0},...,\underset{19}{0},d_{1}d_{2}%
d_{4},\\
\underset{21}{0},...,\underset{23}{0},d_{1}d_{3}d_{4},\underset{25}%
{0},...,\underset{28}{0},d_{2}d_{3}d_{4},\underset{30}{0},...,\underset{49}%
{0},d_{1}d_{2}d_{3}d_{4},\underset{51}{0},...,\underset{53}{0}%
\end{array}
\right)
\end{align*}
we have
\begin{align}
&  \left(  \gamma_{4123}\overset{\cdot}{\underset{34}{-}}\gamma_{4213}\right)
\left(  d_{1},d_{2},d_{3}\right) \nonumber\\
&  =\mathfrak{m}\left(
\begin{array}
[c]{c}%
\underset{1}{0},\underset{2}{0},d_{1},d_{2},-d_{3},\underset{6}{0}%
,...,\underset{11}{0},-d_{1}d_{3},\underset{13}{0},...,\underset{18}{0}%
,d_{2}d_{3},-d_{2}d_{3},\\
\underset{21}{0},...,\underset{47}{0},d_{1}d_{2}d_{3},\underset{49}{0}%
,-d_{1}d_{2}d_{3},\underset{51}{0},...,\underset{53}{0}%
\end{array}
\right)  \label{t3.2.60}%
\end{align}
Since
\begin{align*}
&  \gamma_{1243}\left(  d_{1},d_{2},d_{3},d_{4}\right) \\
&  =\mathfrak{m}\left(
\begin{array}
[c]{c}%
d_{1},d_{2},d_{3},d_{4},\underset{5}{0},...,\underset{9}{0},\underset
{10}{d_{3}d_{4}},\underset{11}{0},...,\underset{20}{0},d_{1}d_{3}%
d_{4},\underset{22}{0},...,\underset{25}{0},d_{2}d_{3}d_{4},\underset{27}%
{0},...,\underset{30}{0},\\
d_{1}d_{2}d_{3}d_{4},\underset{32}{0},...,\underset{53}{0}%
\end{array}
\right)
\end{align*}
and
\begin{align*}
&  \gamma_{2143}\left(  d_{1},d_{2},d_{3},d_{4}\right) \\
&  =\mathfrak{m}\left(
\begin{array}
[c]{c}%
d_{1},d_{2},d_{3},d_{4},d_{1}d_{2},\underset{6}{0},...,\underset{9}{0}%
,d_{3}d_{4},0,d_{1}d_{2}d_{3},\underset{13}{0},...,\underset{16}{0},d_{1}%
d_{2}d_{4},\underset{18}{0},...,\underset{20}{0},d_{1}d_{3}d_{4},\\
\underset{22}{0},...,\underset{25}{0},d_{2}d_{3}d_{4},\underset{27}%
{0},...,\underset{36}{0},d_{1}d_{2}d_{3}d_{4},\underset{38}{0},...,\underset
{53}{0}%
\end{array}
\right)
\end{align*}
we have
\begin{align}
&  \left(  \gamma_{1243}\overset{\cdot}{\underset{34}{-}}\gamma_{2143}\right)
\left(  d_{1},d_{2},d_{3}\right) \nonumber\\
&  =\mathfrak{m}\left(
\begin{array}
[c]{c}%
\underset{1}{0},\underset{2}{0},d_{1},d_{2},-d_{3},\underset{6}{0}%
,...,\underset{11}{0},-d_{1}d_{3},\underset{13}{0},...,\underset{16}{0}%
,-d_{2}d_{3},\\
\underset{18}{0},...,\underset{30}{0},d_{1}d_{2}d_{3},\underset{32}%
{0},...,\underset{36}{0},-d_{1}d_{2}d_{3},\underset{38}{0},...,\underset
{53}{0}%
\end{array}
\right)  \label{t3.2.61}%
\end{align}
(\ref{t3.2.60}) and (\ref{t3.2.61}) imply that
\begin{align}
&  \left(  \left(  \gamma_{4123}\overset{\cdot}{\underset{34}{-}}\gamma
_{4213}\right)  \overset{\cdot}{\underset{1}{-}}\left(  \gamma_{1243}%
\overset{\cdot}{\underset{34}{-}}\gamma_{2143}\right)  \right)  \left(
d_{1},d_{2}\right) \nonumber\\
&  =\mathfrak{m}\left(
\begin{array}
[c]{c}%
\underset{1}{0},\underset{2}{0},d_{1},\underset{4}{0},...,\underset{16}%
{0},d_{2},\underset{18}{0},d_{2},-d_{2},\underset{21}{0},...,\underset{30}%
{0},-d_{1}d_{2},\underset{32}{0},...,\underset{36}{0},\\
d_{1}d_{2},\underset{38}{0},...,\underset{47}{0},d_{1}d_{2},\underset{49}%
{0},-d_{1}d_{2},\underset{51}{0},...,\underset{53}{0}%
\end{array}
\right)  \label{t3.2.62}%
\end{align}
(\ref{t3.2.59}) and (\ref{t3.2.62}) imply that
\begin{align}
&  \left(
\begin{array}
[c]{c}%
\left(  \left(  \gamma_{3412}\overset{\cdot}{\underset{34}{-}}\gamma
_{3421}\right)  \overset{\cdot}{\underset{1}{-}}\left(  \gamma_{3124}%
\overset{\cdot}{\underset{34}{-}}\gamma_{3214}\right)  \right)  \overset
{\cdot}{-}\\
\left(  \left(  \gamma_{4123}\overset{\cdot}{\underset{34}{-}}\gamma
_{4213}\right)  \overset{\cdot}{\underset{1}{-}}\left(  \gamma_{1243}%
\overset{\cdot}{\underset{34}{-}}\gamma_{2143}\right)  \right)
\end{array}
\right)  \left(  d\right) \nonumber\\
&  =\mathfrak{m}\left(  \underset{1}{0},...,\underset{30}{0},d,\underset
{32}{0},...,\underset{36}{0},-d,\underset{38}{0},...,\underset{41}%
{0},-d,\underset{43}{0},d,\underset{45}{0},d,-d,-d,\underset{49}%
{0},d,\underset{51}{0},...,\underset{53}{0}\right)  \label{t3.2.63}%
\end{align}

\item Since
\begin{align*}
&  \gamma_{4132}\left(  d_{1},d_{2},d_{3},d_{4}\right) \\
&  =\mathfrak{m}\left(
\begin{array}
[c]{c}%
d_{1},d_{2},d_{3},d_{4},\underset{5}{0},\underset{6}{0},d_{1}d_{4},d_{2}%
d_{3},d_{2}d_{4},d_{3}d_{4},d_{1}d_{2}d_{3},\underset{12}{0},...,\underset
{18}{0},d_{1}d_{2}d_{4},\underset{20}{0},...,\underset{23}{0},\\
d_{1}d_{3}d_{4},\underset{25}{0},...,\underset{29}{0},d_{2}d_{3}%
d_{4},\underset{31}{0},...,\underset{48}{0},d_{1}d_{2}d_{3}d_{4},\underset
{50}{0},...,\underset{53}{0}%
\end{array}
\right)
\end{align*}
and
\begin{align*}
&  \gamma_{4123}\left(  d_{1},d_{2},d_{3},d_{4}\right) \\
&  =\mathfrak{m}\left(
\begin{array}
[c]{c}%
d_{1},d_{2},d_{3},d_{4},\underset{5}{0},\underset{6}{0},d_{1}d_{4}%
,\underset{8}{0},d_{2}d_{4},d_{3}d_{4},\underset{11}{0},...,\underset{18}%
{0},d_{1}d_{2}d_{4},\underset{20}{0},...,\underset{23}{0,}d_{1}d_{3}d_{4},\\
\underset{25}{0},...,\underset{28}{0},d_{2}d_{3}d_{4},\underset{30}%
{0},...,\underset{47}{0},d_{1}d_{2}d_{3}d_{4},\underset{49}{0},...,\underset
{53}{0}%
\end{array}
\right)
\end{align*}
we have
\begin{align}
&  \left(  \gamma_{4132}\overset{\cdot}{\underset{41}{-}}\gamma_{4123}\right)
\left(  d_{1},d_{2},d_{3}\right) \nonumber\\
&  =\mathfrak{m}\left(
\begin{array}
[c]{c}%
d_{2},\underset{2}{0},\underset{3}{0},d_{1},\underset{5}{0},...,\underset
{7}{0},d_{3},\underset{9}{0},...,\underset{10}{0},d_{2}d_{3},\underset{12}%
{0},...,\underset{28}{0},-d_{1}d_{3},d_{1}d_{3},\\
\underset{31}{0},...,\underset{47}{0},-d_{1}d_{2}d_{3},d_{1}d_{2}%
d_{3},\underset{50}{0},...,\underset{53}{0}%
\end{array}
\right)  \label{t3.2.64}%
\end{align}
Since
\begin{align*}
&  \gamma_{4321}\left(  d_{1},d_{2},d_{3},d_{4}\right) \\
&  =\mathfrak{m}\left(
\begin{array}
[c]{c}%
d_{1},d_{2},d_{3},d_{4},d_{1}d_{2},d_{1}d_{3},d_{1}d_{4},d_{2}d_{3},d_{2}%
d_{4},d_{3}d_{4},\underset{11}{0},...,\underset{14}{0},d_{1}d_{2}%
d_{3},\underset{16}{0},...,\underset{19}{0},\\
d_{1}d_{2}d_{4},\underset{21}{0},...,\underset{24}{0},d_{1}d_{3}%
d_{4},\underset{26}{0},...,\underset{29}{0},d_{2}d_{3}d_{4},\underset{31}%
{0},...,\underset{52}{0},d_{1}d_{2}d_{3}d_{4}%
\end{array}
\right)
\end{align*}
and
\begin{align*}
&  \gamma_{4231}\left(  d_{1},d_{2},d_{3},d_{4}\right) \\
&  =\mathfrak{m}\left(
\begin{array}
[c]{c}%
d_{1},d_{2},d_{3},d_{4},d_{1}d_{2},d_{1}d_{3},d_{1}d_{4},0,d_{2}d_{4}%
,d_{3}d_{4},0,0,d_{1}d_{2}d_{3},\underset{14}{0},...,\underset{19}{0},\\
d_{1}d_{2}d_{4},\underset{21}{0},...,\underset{24}{0},d_{1}d_{3}%
d_{4},\underset{26}{0},...,\underset{28}{0},d_{2}d_{3}d_{4},\underset{30}%
{0},...,\underset{50}{0},d_{1}d_{2}d_{3}d_{4},0,\underset{53}{0}%
\end{array}
\right)
\end{align*}
we have
\begin{align}
&  \left(  \gamma_{4321}\overset{\cdot}{\underset{41}{-}}\gamma_{4231}\right)
\left(  d_{1},d_{2},d_{3}\right) \nonumber\\
&  =\mathfrak{m}\left(
\begin{array}
[c]{c}%
d_{2},\underset{2}{0},\underset{3}{0},d_{1},\underset{5}{0},...,\underset
{7}{0},d_{3},\underset{9}{0},...,\underset{12}{0},-d_{2}d_{3},0,d_{2}%
d_{3},\underset{16}{0},...,\underset{28}{0},\\
-d_{1}d_{3},d_{1}d_{3},\underset{31}{0},...,\underset{50}{0},-d_{1}d_{2}%
d_{3},\underset{52}{0},d_{1}d_{2}d_{3}%
\end{array}
\right)  \label{t3..2.65}%
\end{align}
(\ref{t3.2.64}) and (\ref{t3.2.65}) imply that
\begin{align}
&  \left(  \left(  \gamma_{4132}\overset{\cdot}{\underset{41}{-}}\gamma
_{4123}\right)  \overset{\cdot}{\underset{1}{-}}\left(  \gamma_{4321}%
\overset{\cdot}{\underset{41}{-}}\gamma_{4231}\right)  \right)  \left(
d_{1},d_{2}\right) \nonumber\\
&  =\mathfrak{m}\left(  \underset{1}{0},...,\underset{3}{0},d_{1},\underset
{5}{0},...,\underset{10}{0},d_{2},\underset{12}{0},d_{2},\underset{14}%
{0},-d_{2},\underset{16}{0},...,\underset{47}{0},-d_{1}d_{2},d_{1}%
d_{2},\underset{50}{0},d_{1}d_{2},\underset{52}{0},-d_{1}d_{2}\right)
\label{t3.2.66}%
\end{align}
Since
\begin{align*}
&  \gamma_{1324}\left(  d_{1},d_{2},d_{3},d_{4}\right) \\
&  =\mathfrak{m}\left(
\begin{array}
[c]{c}%
d_{1},d_{2},d_{3},d_{4},\underset{5}{0},\underset{6}{0},\underset{7}{0}%
,d_{2}d_{3},\underset{9}{0},\underset{10}{0},d_{1}d_{2}d_{3},\underset{12}%
{0},...,\underset{26}{0},d_{2}d_{3}d_{4},\underset{28}{0},...,\underset{31}%
{0},\\
d_{1}d_{2}d_{3}d_{4},\underset{33}{0},...,\underset{53}{0}%
\end{array}
\right)
\end{align*}
and
\begin{align*}
&  \gamma_{1234}\left(  d_{1},d_{2},d_{3},d_{4}\right) \\
&  =\mathfrak{m}\left(  d_{1},d_{2},d_{3},d_{4},\underset{5}{0},...,\underset
{53}{0}\right)
\end{align*}
we have
\begin{align}
&  \left(  \gamma_{1324}\overset{\cdot}{\underset{41}{-}}\gamma_{1234}\right)
\left(  d_{1},d_{2},d_{3}\right) \nonumber\\
&  =\mathfrak{m}\left(  d_{2},\underset{2}{0},\underset{3}{0},d_{1}%
,\underset{5}{0},...,\underset{7}{0},d_{3},\underset{9}{0},\underset{10}%
{0},d_{2}d_{3},\underset{12}{0},...,\underset{26}{0},d_{1}d_{3},\underset
{28}{0},...,\underset{31}{0},d_{1}d_{2}d_{3},\underset{33}{0},...,\underset
{53}{0}\right)  \label{t3.2.67}%
\end{align}
Since
\begin{align*}
&  \gamma_{3214}\left(  d_{1},d_{2},d_{3},d_{4}\right) \\
&  =\mathfrak{m}\left(
\begin{array}
[c]{c}%
d_{1},d_{2},d_{3},d_{4},d_{1}d_{2},d_{1}d_{3},\underset{7}{0},d_{2}%
d_{3},\underset{9}{0},...,\underset{14}{0},d_{1}d_{2}d_{3},0,d_{1}d_{2}%
d_{4},\underset{18}{0},...,\underset{21}{0},d_{1}d_{3}d_{4},\\
\underset{23}{0},...,\underset{26}{0},d_{2}d_{3}d_{4},\underset{28}%
{0},...,\underset{43}{0},d_{1}d_{2}d_{3}d_{4},\underset{45}{0},...,\underset
{53}{0}%
\end{array}
\right)
\end{align*}
and
\begin{align*}
&  \gamma_{2314}\left(  d_{1},d_{2},d_{3},d_{4}\right) \\
&  =\mathfrak{m}\left(
\begin{array}
[c]{c}%
d_{1},d_{2},d_{3},d_{4},d_{1}d_{2},d_{1}d_{3},\underset{7}{0},...,\underset
{12}{0},d_{1}d_{2}d_{3},0,0,0,d_{1}d_{2}d_{4},\underset{18}{0},...,\underset
{21}{0},d_{1}d_{3}d_{4},\\
\underset{23}{0},...,\underset{37}{0},d_{1}d_{2}d_{3}d_{4},\underset{39}%
{0},...,\underset{53}{0}%
\end{array}
\right)
\end{align*}
we have
\begin{align}
&  \left(  \gamma_{3214}\overset{\cdot}{\underset{41}{-}}\gamma_{2314}\right)
\left(  d_{1},d_{2},d_{3}\right) \nonumber\\
&  =\mathfrak{m}\left(
\begin{array}
[c]{c}%
d_{2},\underset{2}{0},\underset{3}{0},d_{1},\underset{5}{0},...,\underset
{7}{0},d_{3},\underset{9}{0},...,\underset{12}{0},-d_{2}d_{3},\underset{14}%
{0},d_{2}d_{3},\underset{16}{0},...,\underset{26}{0},d_{1}d_{3},\\
\underset{28}{0},...,\underset{37}{0},-d_{1}d_{2}d_{3},\underset{39}%
{0},...,\underset{43}{0},d_{1}d_{2}d_{3},\underset{45}{0},...,\underset{53}{0}%
\end{array}
\right)  \label{t3.2.68}%
\end{align}
(\ref{t3.2.67}) and (\ref{t3.2.68}) imply that
\begin{align}
&  \left(  \left(  \gamma_{1324}\overset{\cdot}{\underset{41}{-}}\gamma
_{1234}\right)  \overset{\cdot}{\underset{1}{-}}\left(  \gamma_{3214}%
\overset{\cdot}{\underset{41}{-}}\gamma_{2314}\right)  \right)  \left(
d_{1},d_{2}\right) \nonumber\\
&  =\mathfrak{m}\left(
\begin{array}
[c]{c}%
\underset{1}{0},...,\underset{3}{0},d_{1},\underset{5}{0},...,\underset{10}%
{0},d_{2},\underset{12}{0},d_{2},\underset{14}{0},-d_{2},\underset{16}%
{0},...,\underset{31}{0},d_{1}d_{2},\\
\underset{33}{0},...,\underset{37}{0},d_{1}d_{2},\underset{39}{0}%
,...,\underset{43}{0},-d_{1}d_{2},\underset{45}{0},...,\underset{53}{0}%
\end{array}
\right)  \label{t3.2.69}%
\end{align}
(\ref{t3.2.66}) and (\ref{t3.2.69}) imply that
\begin{align}
&  \left(
\begin{array}
[c]{c}%
\left(  \left(  \gamma_{4132}\overset{\cdot}{\underset{41}{-}}\gamma
_{4123}\right)  \overset{\cdot}{\underset{1}{-}}\left(  \gamma_{4321}%
\overset{\cdot}{\underset{41}{-}}\gamma_{4231}\right)  \right)  \overset
{\cdot}{-}\\
\left(  \left(  \gamma_{1324}\overset{\cdot}{\underset{41}{-}}\gamma
_{1234}\right)  \overset{\cdot}{\underset{1}{-}}\left(  \gamma_{3214}%
\overset{\cdot}{\underset{41}{-}}\gamma_{2314}\right)  \right)
\end{array}
\right)  \left(  d\right) \nonumber\\
&  =\mathfrak{m}\left(  \underset{1}{0},...,\underset{31}{0},-d,\underset
{33}{0},...,\underset{37}{0},-d,\underset{39}{0},...,\underset{43}%
{0},d,\underset{45}{0},...,\underset{47}{0},-d,d,\underset{50}{0}%
,d,\underset{52}{0},\underset{53}{-d}\right)  \label{t3.2.70}%
\end{align}

\item Since
\begin{align*}
&  \gamma_{4213}\left(  d_{1},d_{2},d_{3},d_{4}\right) \\
&  =\mathfrak{m}\left(
\begin{array}
[c]{c}%
d_{1},d_{2},d_{3},d_{4},d_{1}d_{2},\underset{6}{0},d_{1}d_{4},\underset{8}%
{0},d_{2}d_{4},d_{3}d_{4},\underset{11}{0},d_{1}d_{2}d_{3},\underset{13}%
{0},...,\underset{19}{0},d_{1}d_{2}d_{4},\\
\underset{21}{0},...,\underset{23}{0},d_{1}d_{3}d_{4},\underset{25}%
{0},...,\underset{28}{0},d_{2}d_{3}d_{4},\underset{30}{0},...,\underset{49}%
{0},d_{1}d_{2}d_{3}d_{4},\underset{51}{0},...,\underset{53}{0}%
\end{array}
\right)
\end{align*}
and
\begin{align*}
&  \gamma_{4231}\left(  d_{1},d_{2},d_{3},d_{4}\right) \\
&  =\mathfrak{m}\left(
\begin{array}
[c]{c}%
d_{1},d_{2},d_{3},d_{4},d_{1}d_{2},d_{1}d_{3},d_{1}d_{4},\underset{8}{0}%
,d_{2}d_{4},d_{3}d_{4},\underset{11}{0},\underset{12}{0},d_{1}d_{2}%
d_{3},\underset{14}{0},...,\underset{19}{0},\\
d_{1}d_{2}d_{4},\underset{21}{0},...,\underset{24}{0},d_{1}d_{3}%
d_{4},\underset{26}{0},...,\underset{28}{0},d_{2}d_{3}d_{4},\underset{30}%
{0},...,\underset{50}{0},d_{1}d_{2}d_{3}d_{4},\underset{52}{0},\underset
{53}{0}%
\end{array}
\right)
\end{align*}
we have
\begin{align}
&  \left(  \gamma_{4213}\overset{\cdot}{\underset{42}{-}}\gamma_{4231}\right)
\left(  d_{1},d_{2},d_{3}\right) \nonumber\\
&  =\mathfrak{m}\left(
\begin{array}
[c]{c}%
\underset{1}{0},d_{2},\underset{3}{0},d_{1},\underset{5}{0},-d_{3}%
,\underset{7}{0},...,\underset{11}{0},d_{2}d_{3},-d_{2}d_{3},\underset{14}%
{0},...,\underset{23}{0},\\
d_{1}d_{3},-d_{1}d_{3},\underset{26}{0},...,\underset{49}{0},d_{1}d_{2}%
d_{3},\underset{51}{-d_{1}d_{2}d_{3}},0,0
\end{array}
\right)  \label{t3.2.71}%
\end{align}
Since
\begin{align*}
&  \gamma_{4132}\left(  d_{1},d_{2},d_{3},d_{4}\right) \\
&  =\mathfrak{m}\left(
\begin{array}
[c]{c}%
d_{1},d_{2},d_{3},d_{4},\underset{5}{0},\underset{6}{0},d_{1}d_{4},d_{2}%
d_{3},d_{2}d_{4},d_{3}d_{4},d_{1}d_{2}d_{3},\underset{12}{0},...,\underset
{18}{0},d_{1}d_{2}d_{4},\underset{20}{0},...,\underset{23}{0},\\
d_{1}d_{3}d_{4},\underset{25}{0},...,\underset{29}{0},d_{2}d_{3}%
d_{4},\underset{31}{0},...,\underset{48}{0},d_{1}d_{2}d_{3}d_{4},\underset
{50}{0},...,\underset{53}{0}%
\end{array}
\right)
\end{align*}
and
\begin{align*}
&  \gamma_{4312}\left(  d_{1},d_{2},d_{3},d_{4}\right) \\
&  =\mathfrak{m}\left(
\begin{array}
[c]{c}%
d_{1},d_{2},d_{3},d_{4},0,d_{1}d_{3},d_{1}d_{4},d_{2}d_{3},d_{2}d_{4}%
,d_{3}d_{4},\underset{11}{0},...,\underset{13}{0},d_{1}d_{2}d_{3}%
,\underset{15}{0},...,\underset{18}{0},\\
d_{1}d_{2}d_{4},\underset{20}{0},...,\underset{24}{0},d_{1}d_{3}%
d_{4},\underset{26}{0},...,\underset{29}{0},d_{2}d_{3}d_{4},\underset{31}%
{0},...,\underset{51}{0},d_{1}d_{2}d_{3}d_{4},\underset{53}{0}%
\end{array}
\right)
\end{align*}
we have
\begin{align}
&  \left(  \gamma_{4132}\overset{\cdot}{\underset{42}{-}}\gamma_{4312}\right)
\left(  d_{1},d_{2},d_{3}\right) \nonumber\\
&  =\mathfrak{m}\left(
\begin{array}
[c]{c}%
\underset{1}{0},d_{2},\underset{3}{0},d_{1},\underset{5}{0},-d_{3}%
,\underset{7}{0},...,\underset{10}{0},d_{2}d_{3},\underset{12}{0}%
,\underset{13}{0},-d_{2}d_{3},\underset{15}{0},...,\underset{23}{0},d_{1}%
d_{3},-d_{1}d_{3},\\
\underset{26}{0},...,\underset{48}{0},d_{1}d_{2}d_{3},\underset{50}%
{0},\underset{51}{0},-d_{1}d_{2}d_{3},\underset{53}{0}%
\end{array}
\right)  \label{t3.2.72}%
\end{align}
(\ref{t3.2.71}) and (\ref{t3.2.72}) imply that
\begin{align}
&  \left(  \left(  \gamma_{4213}\overset{\cdot}{\underset{42}{-}}\gamma
_{4231}\right)  \overset{\cdot}{\underset{1}{-}}\left(  \gamma_{4132}%
\overset{\cdot}{\underset{42}{-}}\gamma_{4312}\right)  \right)  \left(
d_{1},d_{2}\right) \nonumber\\
&  =\mathfrak{m}\left(  \underset{1}{0},...,\underset{3}{0},d_{1},\underset
{5}{0},...,\underset{10}{0},-d_{2},d_{2},-d_{2},d_{2}\underset{15}%
{,0},...,\underset{48}{0},-d_{1}d_{2},d_{1}d_{2},-d_{1}d_{2},d_{1}%
d_{2},\underset{53}{0}\right)  \label{t3.2.73}%
\end{align}
Since
\begin{align*}
&  \gamma_{2134}\left(  d_{1},d_{2},d_{3},d_{4}\right) \\
&  =\mathfrak{m}\left(  d_{1},d_{2},d_{3},d_{4},d_{1}d_{2},\underset{6}%
{0},...,\underset{11}{0},d_{1}d_{2}d_{3},\underset{13}{0},...,\underset{16}%
{0},d_{1}d_{2}d_{4},\underset{18}{0},...,\underset{35}{0},d_{1}d_{2}d_{3}%
d_{4},\underset{37}{0},...,\underset{53}{0}\right)
\end{align*}
and
\begin{align*}
&  \gamma_{2314}\left(  d_{1},d_{2},d_{3},d_{4}\right) \\
&  =\mathfrak{m}\left(
\begin{array}
[c]{c}%
d_{1},d_{2},d_{3},d_{4},d_{1}d_{2},d_{1}d_{3},\underset{7}{0},...,\underset
{12}{0},d_{1}d_{2}d_{3},0,0,0,d_{1}d_{2}d_{4},\underset{18}{0},...,\underset
{21}{0},d_{1}d_{3}d_{4},\\
\underset{23}{0},...,\underset{37}{0},d_{1}d_{2}d_{3}d_{4},\underset{39}%
{0},...,\underset{53}{0}%
\end{array}
\right)
\end{align*}
we have
\begin{align}
&  \left(  \gamma_{2134}\overset{\cdot}{\underset{42}{-}}\gamma_{2314}\right)
\left(  d_{1},d_{2},d_{3}\right) \nonumber\\
&  =\mathfrak{m}\left(
\begin{array}
[c]{c}%
\underset{1}{0},d_{2},\underset{3}{0},d_{1},\underset{5}{0},-d_{3}%
,\underset{7}{0},...,\underset{11}{0},d_{2}d_{3},-d_{2}d_{3},\underset{14}%
{0},...,\underset{21}{0},-d_{1}d_{3},\\
\underset{23}{0},...,\underset{35}{0},d_{1}d_{2}d_{3},0,-d_{1}d_{2}%
d_{3},\underset{39}{0},...,\underset{53}{0}%
\end{array}
\right)  \label{t3.2.74}%
\end{align}
Since
\begin{align*}
&  \gamma_{1324}\left(  d_{1},d_{2},d_{3},d_{4}\right) \\
&  =\mathfrak{m}\left(
\begin{array}
[c]{c}%
d_{1},d_{2},d_{3},d_{4},\underset{5}{0},\underset{6}{0},\underset{7}{0}%
,d_{2}d_{3},\underset{9}{0},\underset{10}{0},d_{1}d_{2}d_{3},\underset{12}%
{0},...,\underset{26}{0},d_{2}d_{3}d_{4},0,0,\underset{30}{0},\\
0,d_{1}d_{2}d_{3}d_{4},\underset{33}{0},...,\underset{53}{0}%
\end{array}
\right)
\end{align*}
and
\begin{align*}
&  \gamma_{3124}\left(  d_{1},d_{2},d_{3},d_{4}\right) \\
&  =\mathfrak{m}\left(
\begin{array}
[c]{c}%
d_{1},d_{2},d_{3},d_{4},\underset{5}{0},d_{1}d_{3},\underset{7}{0},d_{2}%
d_{3},\underset{9}{0},...,\underset{13}{0},d_{1}d_{2}d_{3},\underset{15}%
{0},...,\underset{21}{0},d_{1}d_{3}d_{4},\underset{23}{0},...,\underset{26}%
{0},\\
d_{2}d_{3}d_{4},\underset{28}{0},...,\underset{41}{0},d_{1}d_{2}d_{3}%
d_{4},\underset{43}{0},...,\underset{53}{0}%
\end{array}
\right)
\end{align*}
we have
\begin{align}
&  \left(  \gamma_{1324}\overset{\cdot}{\underset{42}{-}}\gamma_{3124}\right)
\left(  d_{1},d_{2},d_{3}\right) \nonumber\\
&  =\mathfrak{m}\left(
\begin{array}
[c]{c}%
\underset{1}{0},d_{2},\underset{3}{0},d_{1},\underset{5}{0},-d_{3}%
,\underset{7}{0},...,\underset{10}{0},d_{2}d_{3},0,0,-d_{2}d_{3},\underset
{15}{0},...,\underset{21}{0},\\
-d_{1}d_{3},\underset{23}{0},...,\underset{31}{0},d_{1}d_{2}d_{3}%
,\underset{33}{0},...,\underset{41}{0},-d_{1}d_{2}d_{3},\underset{43}%
{0},...,\underset{53}{0}%
\end{array}
\right)  \label{t3.2.75}%
\end{align}
(\ref{t3.2.74}) and (\ref{t3.2.75}) imply that
\begin{align}
&  \left(  \left(  \gamma_{2134}\overset{\cdot}{\underset{42}{-}}\gamma
_{2314}\right)  \overset{\cdot}{\underset{1}{-}}\left(  \gamma_{1324}%
\overset{\cdot}{\underset{42}{-}}\gamma_{3124}\right)  \right)  \left(
d_{1},d_{2}\right) \nonumber\\
&  =\mathfrak{m}\left(
\begin{array}
[c]{c}%
\underset{1}{0},...,\underset{3}{0},d_{1},\underset{5}{0},...,\underset{10}%
{0},-d_{2},d_{2},-d_{2},d_{2},\underset{15}{0},...,\underset{31}{0}%
,-d_{1}d_{2},\underset{33}{0},...,\underset{35}{0},\\
d_{1}d_{2},0,-d_{1}d_{2},\underset{39}{0},...,\underset{41}{0},d_{1}%
d_{2},\underset{43}{0},...,\underset{53}{0}%
\end{array}
\right)  \label{t3.2.76}%
\end{align}
(\ref{t3.2.73}) and (\ref{t3.2.76}) imply that
\begin{align}
&  \left(
\begin{array}
[c]{c}%
\left(  \left(  \gamma_{4213}\overset{\cdot}{\underset{42}{-}}\gamma
_{4231}\right)  \overset{\cdot}{\underset{1}{-}}\left(  \gamma_{4132}%
\overset{\cdot}{\underset{42}{-}}\gamma_{4312}\right)  \right)  \overset
{\cdot}{-}\\
\left(  \left(  \gamma_{2134}\overset{\cdot}{\underset{42}{-}}\gamma
_{2314}\right)  \overset{\cdot}{\underset{1}{-}}\left(  \gamma_{1324}%
\overset{\cdot}{\underset{42}{-}}\gamma_{3124}\right)  \right)
\end{array}
\right)  \left(  d\right) \nonumber\\
&  =\mathfrak{m}\left(  \underset{1}{0},...,\underset{31}{0},d,\underset
{33}{0},...,\underset{35}{0},-d,0,d,\underset{39}{0},...,\underset{41}%
{0},-d,\underset{43}{0},...,\underset{48}{0},-d,d,-d,d,\underset{53}%
{0}\right)  \label{t3.2.77}%
\end{align}

\item Since
\begin{align*}
&  \gamma_{4321}\left(  d_{1},d_{2},d_{3},d_{4}\right) \\
&  =\mathfrak{m}\left(
\begin{array}
[c]{c}%
d_{1},d_{2},d_{3},d_{4},d_{1}d_{2},d_{1}d_{3},d_{1}d_{4},d_{2}d_{3},d_{2}%
d_{4},d_{3}d_{4},\underset{11}{0},...,\underset{14}{0},d_{1}d_{2}%
d_{3},\underset{16}{0},...,\underset{19}{0},\\
d_{1}d_{2}d_{4},\underset{21}{0},...,\underset{24}{0},d_{1}d_{3}%
d_{4},\underset{26}{0},...,\underset{29}{0},d_{2}d_{3}d_{4},\underset{31}%
{0},...,\underset{52}{0},d_{1}d_{2}d_{3}d_{4}%
\end{array}
\right)
\end{align*}
and
\begin{align*}
&  \gamma_{4312}\left(  d_{1},d_{2},d_{3},d_{4}\right) \\
&  =\mathfrak{m}\left(
\begin{array}
[c]{c}%
d_{1},d_{2},d_{3},d_{4},\underset{5}{0},d_{1}d_{3},d_{1}d_{4},d_{2}d_{3}%
,d_{2}d_{4},d_{3}d_{4},\underset{11}{0},...,\underset{13}{0},d_{1}d_{2}%
d_{3},\underset{15}{0},...,\underset{18}{0},\\
d_{1}d_{2}d_{4},\underset{20}{0},...,\underset{24}{0},d_{1}d_{3}%
d_{4},\underset{26}{0},...,\underset{29}{0},d_{2}d_{3}d_{4},\underset{31}%
{0},...,\underset{51}{0},d_{1}d_{2}d_{3}d_{4},\underset{53}{0}%
\end{array}
\right)
\end{align*}
we have
\begin{align}
&  \left(  \gamma_{4321}\overset{\cdot}{\underset{43}{-}}\gamma_{4312}\right)
\left(  d_{1},d_{2},d_{3}\right) \nonumber\\
&  =\mathfrak{m}\left(
\begin{array}
[c]{c}%
\underset{1}{0},\underset{2}{0},d_{2},d_{1},d_{3},\underset{6}{0}%
,...,\underset{13}{0},-d_{2}d_{3},d_{2}d_{3},\underset{16}{0},...,\underset
{18}{0},-d_{1}d_{3},d_{1}d_{3},\\
\underset{21}{0},...,\underset{51}{0},-d_{1}d_{2}d_{3},d_{1}d_{2}d_{3}%
\end{array}
\right)  \label{t3.2.78}%
\end{align}
Since
\begin{align*}
&  \gamma_{4213}\left(  d_{1},d_{2},d_{3},d_{4}\right) \\
&  =\mathfrak{m}\left(
\begin{array}
[c]{c}%
d_{1},d_{2},d_{3},d_{4},d_{1}d_{2},\underset{6}{0},d_{1}d_{4},\underset{8}%
{0},d_{2}d_{4},d_{3}d_{4},\underset{11}{0},d_{1}d_{2}d_{3},\underset{13}%
{0},...,\underset{19}{0},d_{1}d_{2}d_{4},\\
\underset{21}{0},...,\underset{23}{0},d_{1}d_{3}d_{4},\underset{25}%
{0},...,\underset{28}{0},d_{2}d_{3}d_{4},\underset{30}{0},...,\underset{49}%
{0},d_{1}d_{2}d_{3}d_{4},\underset{51}{0},...,\underset{53}{0}%
\end{array}
\right)
\end{align*}
and
\begin{align*}
&  \gamma_{4123}\left(  d_{1},d_{2},d_{3},d_{4}\right) \\
&  =\mathfrak{m}\left(
\begin{array}
[c]{c}%
d_{1},d_{2},d_{3},d_{4},0,0,d_{1}d_{4},0,0,d_{2}d_{4},d_{3}d_{4},\underset
{11}{0},...,\underset{18}{0},d_{1}d_{2}d_{4},\underset{20}{0},...,\underset
{23}{0,}d_{1}d_{3}d_{4},\\
\underset{25}{0},...,\underset{28}{0},d_{2}d_{3}d_{4},\underset{30}%
{0},...,\underset{47}{0},d_{1}d_{2}d_{3}d_{4},\underset{49}{0},...,\underset
{53}{0}%
\end{array}
\right)
\end{align*}
we have
\begin{align}
&  \left(  \gamma_{4213}\overset{\cdot}{\underset{43}{-}}\gamma_{4123}\right)
\left(  d_{1},d_{2},d_{3}\right) \nonumber\\
&  =\mathfrak{m}\left(
\begin{array}
[c]{c}%
\underset{1}{0},\underset{2}{0},d_{2},d_{1},d_{3},\underset{6}{0}%
,...,\underset{11}{0},d_{2}d_{3},\underset{13}{0},...,\underset{18}{0}%
,-d_{1}d_{3},d_{1}d_{3},\underset{21}{0},...,\underset{47}{0},-d_{1}d_{2}%
d_{3},\underset{49}{0},\\
d_{1}d_{2}d_{3},\underset{51}{0},...,\underset{53}{0}%
\end{array}
\right)  \label{t3.2.79}%
\end{align}
(\ref{t3.2.78}) and (\ref{t3.2.79}) imply that
\begin{align}
&  \left(  \left(  \gamma_{4321}\overset{\cdot}{\underset{43}{-}}\gamma
_{4312}\right)  \overset{\cdot}{\underset{1}{-}}\left(  \gamma_{4213}%
\overset{\cdot}{\underset{43}{-}}\gamma_{4123}\right)  \right)  \left(
d_{1},d_{2}\right) \nonumber\\
&  =\mathfrak{m}\left(  \underset{1}{0},...,\underset{3}{0},d_{1},\underset
{5}{0},...,\underset{11}{0},-d_{2},0,-d_{2},d_{2},\underset{16}{0}%
,...,\underset{47}{0},d_{1}d_{2},\underset{49}{0},-d_{1}d_{2},\underset{51}%
{0},-d_{1}d_{2},d_{1}d_{2}\right)  \label{t3.2.80}%
\end{align}
Since
\begin{align*}
&  \gamma_{3214}\left(  d_{1},d_{2},d_{3},d_{4}\right) \\
&  =\mathfrak{m}\left(
\begin{array}
[c]{c}%
d_{1},d_{2},d_{3},d_{4},d_{1}d_{2},d_{1}d_{3},0,d_{2}d_{3},\underset{9}%
{0},...,\underset{14}{0},d_{1}d_{2}d_{3},0,d_{1}d_{2}d_{4},\underset{18}%
{0},...,\underset{21}{0},d_{1}d_{3}d_{4},\\
\underset{23}{0},...,\underset{26}{0},d_{2}d_{3}d_{4},\underset{28}%
{0},...,\underset{43}{0},d_{1}d_{2}d_{3}d_{4},\underset{45}{0},...,\underset
{53}{0}%
\end{array}
\right)
\end{align*}
and
\begin{align*}
&  \gamma_{3124}\left(  d_{1},d_{2},d_{3},d_{4}\right) \\
&  =\mathfrak{m}\left(
\begin{array}
[c]{c}%
d_{1},d_{2},d_{3},d_{4},\underset{5}{0},d_{1}d_{3},\underset{7}{0},d_{2}%
d_{3},\underset{9}{0},...,\underset{13}{0},d_{1}d_{2}d_{3},\underset{15}%
{0},...,\underset{21}{0},d_{1}d_{3}d_{4},\underset{23}{0},...,\underset{26}%
{0},\\
d_{2}d_{3}d_{4},\underset{28}{0},...,\underset{41}{0},d_{1}d_{2}d_{3}%
d_{4},\underset{43}{0},...,\underset{53}{0}%
\end{array}
\right)
\end{align*}
we have
\begin{align}
&  \left(  \gamma_{3214}\overset{\cdot}{\underset{43}{-}}\gamma_{3124}\right)
\left(  d_{1},d_{2},d_{3}\right) \nonumber\\
&  =\mathfrak{m}\left(
\begin{array}
[c]{c}%
\underset{1}{0},\underset{2}{0},d_{2},d_{1},d_{3},\underset{6}{0}%
,...,\underset{13}{0},-d_{2}d_{3},d_{2}d_{3},0,d_{1}d_{3},\\
\underset{18}{0},...,\underset{41}{0},-d_{1}d_{2}d_{3},0,d_{1}d_{2}%
d_{3},\underset{45}{0},...,\underset{53}{0}%
\end{array}
\right)  \label{t3.2.81}%
\end{align}
Since
\begin{align*}
&  \gamma_{2134}\left(  d_{1},d_{2},d_{3},d_{4}\right) \\
&  =\mathfrak{m}\left(  d_{1},d_{2},d_{3},d_{4},d_{1}d_{2},\underset{6}%
{0},...,\underset{11}{0},d_{1}d_{2}d_{3},\underset{13}{0},...,\underset{16}%
{0},d_{1}d_{2}d_{4},\underset{18}{0},...,\underset{35}{0},d_{1}d_{2}d_{3}%
d_{4},\underset{37}{0},...,\underset{53}{0}\right)
\end{align*}
and
\begin{align*}
&  \gamma_{1234}\left(  d_{1},d_{2},d_{3},d_{4}\right) \\
&  =\mathfrak{m}\left(  d_{1},d_{2},d_{3},d_{4},\underset{5}{0},...,\underset
{53}{0}\right)
\end{align*}
we have
\begin{align}
&  \left(  \gamma_{2134}\overset{\cdot}{\underset{43}{-}}\gamma_{1234}\right)
\left(  d_{1},d_{2},d_{3}\right) \nonumber\\
&  =\mathfrak{m}\left(  \underset{1}{0},\underset{2}{0},d_{2},d_{1}%
,d_{3},\underset{6}{0},...,\underset{11}{0},d_{2}d_{3},\underset{13}%
{0},...,\underset{16}{0},d_{1}d_{3},\underset{18}{0},...,\underset{35}%
{0},d_{1}d_{2}d_{3},\underset{37}{0},...,\underset{53}{0}\right)
\label{t3.2.82}%
\end{align}
(\ref{t3.2.81}) and (\ref{t3.2.82}) imply that
\begin{align}
&  \left(  \left(  \gamma_{3214}\overset{\cdot}{\underset{43}{-}}\gamma
_{3124}\right)  \overset{\cdot}{\underset{1}{-}}\left(  \gamma_{2134}%
\overset{\cdot}{\underset{43}{-}}\gamma_{1234}\right)  \right)  \left(
d_{1},d_{2}\right) \nonumber\\
&  =\mathfrak{m}\left(
\begin{array}
[c]{c}%
\underset{1}{0},...,\underset{3}{0},d_{1},\underset{5}{0},...,\underset{11}%
{0},-d_{2},0,-d_{2},d_{2},\underset{16}{0},...,\underset{35}{0},-d_{1}d_{2},\\
\underset{37}{0},...,\underset{41}{0},-d_{1}d_{2},\underset{43}{0},d_{1}%
d_{2},\underset{45}{0},...,\underset{53}{0}%
\end{array}
\right)  \label{t3.2.83}%
\end{align}
(\ref{t3.2.80}) and (\ref{t3.2.83}) imply that
\begin{align}
&  \left(
\begin{array}
[c]{c}%
\left(  \left(  \gamma_{4321}\overset{\cdot}{\underset{43}{-}}\gamma
_{4312}\right)  \overset{\cdot}{\underset{1}{-}}\left(  \gamma_{4213}%
\overset{\cdot}{\underset{43}{-}}\gamma_{4123}\right)  \right)  \overset
{\cdot}{-}\\
\left(  \left(  \gamma_{3214}\overset{\cdot}{\underset{43}{-}}\gamma
_{3124}\right)  \overset{\cdot}{\underset{1}{-}}\left(  \gamma_{2134}%
\overset{\cdot}{\underset{43}{-}}\gamma_{1234}\right)  \right)
\end{array}
\right)  \left(  d\right) \nonumber\\
&  =\mathfrak{m}\left(  \underset{1}{0},...,\underset{35}{0},d,\underset
{37}{0},...,\underset{41}{0},d,0,-d,\underset{45}{0},...,\underset{47}%
{0},d,\underset{49}{0},-d,\underset{51}{0},-d,d\right)  \label{t3.2.84}%
\end{align}

\item By (\ref{t.8.2.7}), (\ref{t8.2.14}), (\ref{t8.2.21}), (\ref{t8.2.28}),
(\ref{t3.2.35}), (\ref{t3.2.42}), (\ref{t3.2.49}), (\ref{t3.2.56}),
(\ref{t3.2.63}), (\ref{t3.2.70}), (\ref{t3.2.77}) and (\ref{t3.2.84}), we have%
\begin{align}
&  \text{the left-hand side of the equation (\ref{t3.2.0})}\nonumber\\
&  =\left(  \lambda d.\mathfrak{m}\left(  \underset{1}{0},...,\underset{30}%
{0},-d,\underset{32}{0},-d,\underset{34}{0},d,\underset{36}{0},...,\underset
{38}{0},-d,0,d,\underset{42}{0},...,\underset{46}{0},d,\underset{48}%
{0},...,\underset{52}{0},-d\right)  \right)  +\nonumber\\
&  \left(  \lambda d.\mathfrak{m}\left(  \underset{1}{0},...,\underset{30}%
{0},d,-d,d,-d,\underset{35}{0},...,\underset{40}{0},-d,\underset{42}%
{0},...\underset{44}{0},d,\underset{46}{0},-d,\underset{48}{0},...,\underset
{50}{0},d,\underset{52}{0},\underset{53}{0}\right)  \right)  +\nonumber\\
&  \left(  \lambda d.\mathfrak{m}\left(  \underset{1}{0},...,\underset{31}%
{0},d,\underset{33}{0},d,-d,\underset{36}{0},...,\underset{38}{0}%
,d,\underset{40}{0},...,\underset{44}{0},-d,\underset{46}{0},...,\underset
{50}{0},-d,\underset{52}{0},d\right)  \right)  +\nonumber\\
&  \left(  \lambda d.\mathfrak{m}\left(  \underset{1}{0},...,\underset{32}%
{0},d,\underset{34}{0},-d,-d,d,\underset{38}{0},d,\underset{40}{0}%
,-d,\underset{42}{0},...,\underset{45}{0},-d,\underset{47}{0},...,\underset
{51}{0},d,\underset{53}{0}\right)  \right)  +\nonumber\\
&  \left(  \lambda d.\mathfrak{m}\left(  \underset{1}{0},...,\underset{34}%
{0},d,\underset{36}{0},-d,d,-d,d,\underset{41}{0},\underset{42}{0}%
,-d,\underset{44}{0},\underset{45}{0},d,\underset{47}{0},\underset{48}%
{0},-d,\underset{50}{0},...,\underset{53}{0}\right)  \right)  +\nonumber\\
&  \left(  \lambda d.\mathfrak{m}\left(  \underset{1}{0},...,\underset{32}%
{0},-d,\underset{34}{0},\underset{35}{0},d,\underset{37}{0},-d,\underset
{39}{0},-d,d,\underset{42}{0},d,\underset{44}{0},...,\underset{48}%
{0},d,\underset{50}{0},\underset{51}{0},-d,\underset{53}{0}\right)  \right)
+\nonumber\\
&  \left(  \lambda d.\mathfrak{m}\left(  \underset{1}{0},...,\underset{30}%
{0},-d,\underset{32}{0},\underset{33}{0},d,\underset{35}{0},...,\underset
{39}{0},d,\underset{41}{0},d,-d,\underset{44}{0},-d,\underset{46}%
{0},d,\underset{48}{0},\underset{49}{0},-d,\underset{51}{0},...,\underset
{53}{0}\right)  \right)  +\nonumber\\
&  \left(  \lambda d.\mathfrak{m}\left(  \underset{1}{0},...,\underset{30}%
{0},-d,\underset{32}{0},\underset{33}{0},d,\underset{35}{0},...,\underset
{39}{0},d,\underset{41}{0},d,-d,\underset{44}{0},-d,\underset{46}%
{0},d,\underset{48}{0},\underset{49}{0},-d,\underset{51}{0},...,\underset
{53}{0}\right)  \right)  +\nonumber\\
&  \left(  \lambda d.\mathfrak{m}\left(  \underset{1}{0},...,\underset{33}%
{0},-d,\underset{35}{0},\underset{36}{0},d,\underset{38}{0},\underset{39}%
{0},-d,\underset{41}{0},\underset{42}{0},d,-d,d,-d,\underset{47}%
{0},d,\underset{49}{0},...,\underset{53}{0}\right)  \right)  +\nonumber\\
&  \left(  \lambda d.\mathfrak{m}\left(  \underset{1}{0},...,\underset{30}%
{0},d,\underset{32}{0},...,\underset{36}{0},-d,\underset{38}{0},...,\underset
{41}{0},-d,\underset{43}{0},d,\underset{45}{0},d,-d,-d,\underset{49}%
{0},d,\underset{51}{0},...,\underset{53}{0}\right)  \right)  +\nonumber\\
&  \left(  \lambda d.\mathfrak{m}\left(  \underset{1}{0},...,\underset{31}%
{0},-d,\underset{33}{0},...,\underset{37}{0},-d,\underset{39}{0}%
,...,\underset{43}{0},d,\underset{45}{0},...,\underset{47}{0},-d,d,\underset
{50}{0},d,0,\underset{53}{-d}\right)  \right)  +\nonumber\\
&  \left(  \lambda d.\mathfrak{m}\left(  \underset{1}{0},...,\underset{31}%
{0},d,\underset{33}{0},...,\underset{35}{0},-d,0,d,\underset{39}%
{0},...,\underset{41}{0},-d,\underset{43}{0},...,\underset{48}{0}%
,-d,d,-d,d,0\right)  \right)  +\nonumber\\
&  \left(  \lambda d.\mathfrak{m}\left(  \underset{1}{0},...,\underset{35}%
{0},d,\underset{37}{0},...,\underset{41}{0},d,0,-d,\underset{45}%
{0},...,\underset{47}{0},d,0,-d,0,-d,d\right)  \right) \nonumber\\
&  =\lambda d.\mathfrak{m}\left(
\begin{array}
[c]{c}%
\underset{1}{0},...,\underset{30}{0},\underset{31}{-d+d-d+d},\underset
{32}{-d+d-d+d},\underset{33}{-d+d+d-d},\\
\underset{34}{-d+d+d-d},\underset{35}{d-d-d+d},\underset{36}{-d+d-d+d},\\
\underset{37}{d-d+d-d},\underset{38}{d-d-d+d},\underset{39}{-d+d+d-d},\\
\underset{40}{d-d+d-d},\underset{41}{d-d-d+d},\underset{42}{d-d-d+d},\\
\underset{43}{-d+d-d+d},\underset{44}{-d+d+d-d},\underset{45}{d-d-d+d},\\
\underset{46}{-d+d-d+d},\underset{47}{d-d+d-d},\underset{48}{d-d-d+d},\\
\underset{49}{-d+d+d-d},\underset{50}{-d+d+d-d},\underset{51}{d-d+d-d},\\
\underset{52}{d-d+d-d},\underset{53}{-d+d-d+d}%
\end{array}
\right) \nonumber\\
&  =\lambda d.\mathfrak{m}\left(  \underset{1}{0},...,\underset{53}{0}\right)
\label{t3.2.85}%
\end{align}

\end{enumerate}
\end{proof}

\begin{remark}
For our convenience we display the positions $31-53$ in (\ref{t3.2.7}),
(\ref{t3.2.14}), (\ref{t3.2.21}), (\ref{t3.2.28}), (\ref{t3.2.35}),
(\ref{t3.2.42}), (\ref{t3.2.49}), (\ref{t3.2.56}), (\ref{t3.2.63}),
(\ref{t3.2.70}), (\ref{t3.2.77}) and (\ref{t3.2.84}) as a table:%
\begin{equation}%
\begin{array}
[c]{ccccccccccccc}
& 1/12 & 2/13 & 3/14 & 4/21 & 5/23 & 6/24 & 7/31 & 8/32 & 9/34 & 10/41 &
11/42 & 12/43\\
31/1243 & -d & d &  &  &  &  & -d &  & d &  &  & \\
32/1324 &  & -d & d &  &  &  &  &  &  & -d & d & \\
33/1342 & -d & d &  & d &  & -d &  &  &  &  &  & \\
34/1423 &  & -d & d &  &  &  & d & -d &  &  &  & \\
35/1432 & d &  & -d & -d & d &  &  &  &  &  &  & \\
36/2134 &  &  &  & -d &  & d &  &  &  &  & -d & d\\
37/2143 &  &  &  & d & -d &  &  & d & -d &  &  & \\
38/2314 &  &  &  &  & d & -d &  &  &  & -d & d & \\
39/2341 & -d &  & d & d & -d &  &  &  &  &  &  & \\
40/2413 &  &  &  &  & d & -d & d & -d &  &  &  & \\
41/2431 & d & -d &  & -d &  & d &  &  &  &  &  & \\
42/3124 &  &  &  &  &  &  & d &  & -d &  & -d & d\\
43/3142 &  &  &  &  & -d & d & -d & d &  &  &  & \\
44/3214 &  &  &  &  &  &  &  & -d & d & d &  & -d\\
45/3241 &  & d & -d &  &  &  & -d & d &  &  &  & \\
46/3412 &  &  &  & -d & d &  &  & -d & d &  &  & \\
47/3421 & d & -d &  &  &  &  & d &  & -d &  &  & \\
48/4123 &  &  &  &  &  &  &  & d & -d & -d &  & d\\
49/4132 &  &  &  &  & -d & d &  &  &  & d & -d & \\
50/4213 &  &  &  &  &  &  & -d &  & d &  & d & -d\\
51/4231 &  & d & -d &  &  &  &  &  &  & d & -d & \\
52/4312 &  &  &  & d &  & -d &  &  &  &  & d & -d\\
53/4321 & -d &  & d &  &  &  &  &  &  & -d &  & d
\end{array}
\label{Figure 3}%
\end{equation}

\end{remark}

\begin{corollary}
Let $M$\ be a microlinear space with
\[
X_{1},X_{2},X_{3},X_{4}\in\mathfrak{X}\left(  M\right)
\]
Then we have
\begin{align*}
&  \left[  X_{1},\left[  X_{2},\left[  X_{3},X_{4}\right]  \right]  \right]
+\left[  X_{1},\left[  X_{3},\left[  X_{4},X_{2}\right]  \right]  \right]
+\left[  X_{1},\left[  X_{4},\left[  X_{2},X_{3}\right]  \right]  \right]  +\\
&  \left[  X_{2},\left[  X_{1},\left[  X_{4},X_{3}\right]  \right]  \right]
+\left[  X_{2},\left[  X_{3},\left[  X_{1},X_{4}\right]  \right]  \right]
+\left[  X_{2},\left[  X_{4},\left[  X_{3},X_{1}\right]  \right]  \right]  +\\
&  \left[  X_{3},\left[  X_{1},\left[  X_{2},X_{4}\right]  \right]  \right]
+\left[  X_{3},\left[  X_{2},\left[  X_{4},X_{1}\right]  \right]  \right]
+\left[  X_{3},\left[  X_{4},\left[  X_{1},X_{2}\right]  \right]  \right]  +\\
&  \left[  X_{4},\left[  X_{1},\left[  X_{3},X_{2}\right]  \right]  \right]
+\left[  X_{4},\left[  X_{2},\left[  X_{1},X_{3}\right]  \right]  \right]
+\left[  X_{4},\left[  X_{3},\left[  X_{2},X_{1}\right]  \right]  \right] \\
&  =0
\end{align*}

\end{corollary}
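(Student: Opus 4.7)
The plan is to derive the corollary directly from Theorem \ref{t3.2} by choosing the twenty-four microcubes $\gamma_{ijkl}:D^{4}\rightarrow M$ to be suitable permuted compositions of the iterated flows $X_{\sigma(4)}\ast X_{\sigma(3)}\ast X_{\sigma(2)}\ast X_{\sigma(1)}$. Specifically, I would take
\[
\gamma_{ijkl}=\left(X_{l}\ast X_{k}\ast X_{j}\ast X_{i}\right)^{\sigma_{ijkl}}
\]
for each of the twenty-four permutations $ijkl$ of $\{1,2,3,4\}$, where $\sigma_{ijkl}$ is the permutation introduced in the Notation immediately before \S\ref{s3} (so that $\sigma_{1234}=\mathrm{id}$ and the remaining $\sigma_{ijkl}$ are as tabulated for the triple-bracket formula). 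With this choice, the twelve outermost summands in (\ref{t3.2.0}) are precisely the twelve triple brackets $[X_{a},[X_{b},[X_{c},X_{d}]]]$ indexed by the even permutations of $\{1,2,3,4\}$; this is what the explicit expansion of $[X_{1},[X_{2},[X_{3},X_{4}]]]$ recorded in the Notation section exhibits in the case $abcd=1234$, and the remaining eleven identifications follow by relabelling the indices $1,2,3,4$ through the appropriate cyclic or transpositional action.

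Next I would verify that the thirty-six compatibility hypotheses of Theorem \ref{t3.2} are automatically satisfied by these compositions. Each hypothesis has the form
\[
\gamma_{ijkl}\mid D^{4}\{(p,q)\}=\gamma_{i'j'k'l'}\mid D^{4}\{(p,q)\},
\]
and under the chosen identification both sides are evaluated at tuples $(d_{1},d_{2},d_{3},d_{4})$ with $d_{p}d_{q}=0$. On such tuples the two compositions $X_{l}\ast X_{k}\ast X_{j}\ast X_{i}$ and $X_{l'}\ast X_{k'}\ast X_{j'}\ast X_{i'}$ differ at most in the relative order of two adjacent factors whose infinitesimal parameters multiply to zero, and by the standard fact that $X_{a,s}\circ X_{b,t}=X_{b,t}\circ X_{a,s}$ whenever $st=0$ (which is a consequence of microlinearity, cf.\ the proof of Proposition~8 in \S 3.4 of \cite{la}), the two compositions agree. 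Applying the appropriate permutation $\sigma$ to the variables preserves this coincidence, so the hypothesis holds in each of the thirty-six cases.

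Having verified the hypotheses, Theorem \ref{t3.2} delivers the identity (\ref{t3.2.0}) with this particular choice of $\gamma$'s. The left-hand side of that identity, under the translation dictated by the triple-bracket formula of the Notation, is exactly the sum of the twelve brackets $[X_{a},[X_{b},[X_{c},X_{d}]]]$ appearing in the statement of the corollary, so the corollary follows.

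The main obstacle will be the bookkeeping in the first step: one must check carefully that the block of four strong differences
\[
\left(\left(\gamma_{abcd}\overset{\cdot}{\underset{ab}{-}}\gamma_{abdc}\right)\overset{\cdot}{\underset{a}{-}}\left(\gamma_{acdb}\overset{\cdot}{\underset{ab}{-}}\gamma_{adcb}\right)\right)\overset{\cdot}{-}\left(\cdots\right)
\]
associated to each even permutation $abcd$ genuinely reproduces $[X_{a},[X_{b},[X_{c},X_{d}]]]$ after substituting $\gamma_{ijkl}=(X_{l}\ast X_{k}\ast X_{j}\ast X_{i})^{\sigma_{ijkl}}$. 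For $abcd=1234$ this is guaranteed by the displayed formula in the Notation; for the other eleven even permutations the matching is obtained by the symmetry of the construction in the indices $1,2,3,4$, but one needs to confirm that the outer strong differences and the inner strong differences $\overset{\cdot}{\underset{ab}{-}}$ and $\overset{\cdot}{\underset{a}{-}}$ rotate in the correct way. Once this combinatorial identification is set up, the verification of the compatibilities is routine and the vanishing follows purely from Theorem \ref{t3.2}.
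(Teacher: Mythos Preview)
Your proposal is correct and follows essentially the same approach as the paper: define $\gamma_{ijkl}=(X_{l}\ast X_{k}\ast X_{j}\ast X_{i})^{\sigma_{ijkl}}$ for all twenty-four permutations, identify each summand in (\ref{t3.2.0}) with the corresponding triple bracket via the formula displayed in the Notation, and invoke Theorem~\ref{t3.2}. The paper's proof simply lists the $\gamma$'s and the twelve bracket identities without explicitly discussing the thirty-six compatibility hypotheses; your sketch of why those hold (via Corollary~\ref{cl2.3} applied to $M^{M}$, which gives $X_{a,s}\circ X_{b,t}=X_{b,t}\circ X_{a,s}$ on $D^{2}\{(1,2)\}$) fills in what the paper leaves implicit, but the overall strategy is identical.
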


\begin{proof}
Let%
\begin{align*}
\gamma_{1234}  &  =X_{4}\ast X_{3}\ast X_{2}\ast X_{1},\gamma_{1243}=\left(
X_{3}\ast X_{4}\ast X_{2}\ast X_{1}\right)  ^{\sigma_{1243}},\\
\gamma_{1324}  &  =\left(  X_{4}\ast X_{2}\ast X_{3}\ast X_{1}\right)
^{\sigma_{1324}},\gamma_{1342}=\left(  X_{2}\ast X_{4}\ast X_{3}\ast
X_{1}\right)  ^{\sigma_{1342}},\\
\gamma_{1423}  &  =\left(  X_{3}\ast X_{2}\ast X_{4}\ast X_{1}\right)
^{\sigma_{1423}},\gamma_{1432}=\left(  X_{2}\ast X_{3}\ast X_{4}\ast
X_{1}\right)  ^{\sigma_{1432}},\\
\gamma_{2134}  &  =\left(  X_{4}\ast X_{3}\ast X_{1}\ast X_{2}\right)
^{\sigma_{2134}},\gamma_{2143}=\left(  X_{3}\ast X_{4}\ast X_{1}\ast
X_{2}\right)  ^{\sigma_{2143}},\\
\gamma_{2314}  &  =\left(  X_{4}\ast X_{1}\ast X_{3}\ast X_{2}\right)
^{\sigma_{2314}},\gamma_{2341}=\left(  X_{1}\ast X_{4}\ast X_{3}\ast
X_{2}\right)  ^{\sigma_{2341}},\\
\gamma_{2413}  &  =\left(  X_{3}\ast X_{1}\ast X_{4}\ast X_{2}\right)
^{\sigma_{2413}},\gamma_{2431}=\left(  X_{1}\ast X_{3}\ast X_{4}\ast
X_{2}\right)  ^{\sigma_{2431}},\\
\gamma_{3124}  &  =\left(  X_{4}\ast X_{2}\ast X_{1}\ast X_{3}\right)
^{\sigma_{3124}},\gamma_{3142}=\left(  X_{2}\ast X_{4}\ast X_{1}\ast
X_{3}\right)  ^{\sigma_{3142}},\\
\gamma_{3214}  &  =\left(  X_{4}\ast X_{1}\ast X_{2}\ast X_{3}\right)
^{\sigma_{3214}},\gamma_{3241}=\left(  X_{1}\ast X_{4}\ast X_{2}\ast
X_{3}\right)  ^{\sigma_{3241}},\\
\gamma_{3412}  &  =\left(  X_{2}\ast X_{1}\ast X_{4}\ast X_{3}\right)
^{\sigma_{3412}},\gamma_{3421}=\left(  X_{1}\ast X_{2}\ast X_{4}\ast
X_{3}\right)  ^{\sigma_{3421}},\\
\gamma_{4123}  &  =\left(  X_{3}\ast X_{2}\ast X_{1}\ast X_{4}\right)
^{\sigma_{4123}},\gamma_{4132}=\left(  X_{2}\ast X_{3}\ast X_{1}\ast
X_{4}\right)  ^{\sigma_{4132}},\\
\gamma_{4213}  &  =\left(  X_{3}\ast X_{1}\ast X_{2}\ast X_{4}\right)
^{\sigma_{4213}},\gamma_{4231}=\left(  X_{1}\ast X_{3}\ast X_{2}\ast
X_{4}\right)  ^{\sigma_{4231}},\\
\gamma_{4312}  &  =\left(  X_{2}\ast X_{1}\ast X_{3}\ast X_{4}\right)
^{\sigma_{4312}},\gamma_{4321}=\left(  X_{1}\ast X_{2}\ast X_{3}\ast
X_{4}\right)  ^{\sigma_{4321}}%
\end{align*}
with%
\begin{align*}
\sigma_{1243}  &  =\left(
\begin{array}
[c]{c}%
1234\\
1243
\end{array}
\right)  ,\sigma_{1324}=\left(
\begin{array}
[c]{c}%
1234\\
1324
\end{array}
\right)  ,\sigma_{1342}=\left(
\begin{array}
[c]{c}%
1234\\
1423
\end{array}
\right)  ,\sigma_{1423}=\left(
\begin{array}
[c]{c}%
1234\\
1342
\end{array}
\right)  ,\sigma_{1432}=\left(
\begin{array}
[c]{c}%
1234\\
1432
\end{array}
\right)  ,\\
\sigma_{2134}  &  =\left(
\begin{array}
[c]{c}%
1234\\
2134
\end{array}
\right)  ,\sigma_{2143}=\left(
\begin{array}
[c]{c}%
1234\\
2143
\end{array}
\right)  ,\sigma_{2314}=\left(
\begin{array}
[c]{c}%
1234\\
3124
\end{array}
\right)  ,\sigma_{2341}=\left(
\begin{array}
[c]{c}%
1234\\
4123
\end{array}
\right)  ,\sigma_{2413}=\left(
\begin{array}
[c]{c}%
1234\\
3142
\end{array}
\right)  ,\\
\sigma_{2431}  &  =\left(
\begin{array}
[c]{c}%
1234\\
4132
\end{array}
\right)  ,\sigma_{3124}=\left(
\begin{array}
[c]{c}%
1234\\
2314
\end{array}
\right)  ,\sigma_{3142}=\left(
\begin{array}
[c]{c}%
1234\\
2413
\end{array}
\right)  ,\sigma_{3214}=\left(
\begin{array}
[c]{c}%
1234\\
3214
\end{array}
\right)  ,\sigma_{3241}=\left(
\begin{array}
[c]{c}%
1234\\
4213
\end{array}
\right)  ,\\
\sigma_{3412}  &  =\left(
\begin{array}
[c]{c}%
1234\\
3412
\end{array}
\right)  ,\sigma_{3421}=\left(
\begin{array}
[c]{c}%
1234\\
4312
\end{array}
\right)  ,\sigma_{4123}=\left(
\begin{array}
[c]{c}%
1234\\
2341
\end{array}
\right)  ,\sigma_{4132}=\left(
\begin{array}
[c]{c}%
1234\\
2431
\end{array}
\right)  ,\sigma_{4213}=\left(
\begin{array}
[c]{c}%
1234\\
3241
\end{array}
\right)  ,\\
\sigma_{4231}  &  =\left(
\begin{array}
[c]{c}%
1234\\
4231
\end{array}
\right)  ,\sigma_{4312}=\left(
\begin{array}
[c]{c}%
1234\\
3421
\end{array}
\right)  ,\sigma_{4321}=\left(
\begin{array}
[c]{c}%
1234\\
4321
\end{array}
\right)
\end{align*}
Then it is easy to see that%
\begin{align*}
&  \left[  X_{1},\left[  X_{2},\left[  X_{3},X_{4}\right]  \right]  \right] \\
&  =\left(  \left(  \gamma_{1234}\overset{\cdot}{\underset{12}{-}}%
\gamma_{1243}\right)  \overset{\cdot}{\underset{1}{-}}\left(  \gamma
_{1342}\overset{\cdot}{\underset{12}{-}}\gamma_{1432}\right)  \right)
\overset{\cdot}{-}\left(  \left(  \gamma_{2341}\overset{\cdot}{\underset
{12}{-}}\gamma_{2431}\right)  \overset{\cdot}{\underset{1}{-}}\left(
\gamma_{3421}\overset{\cdot}{\underset{12}{-}}\gamma_{4321}\right)  \right)
\end{align*}%
\begin{align*}
&  \left[  X_{1},\left[  X_{3},\left[  X_{4},X_{2}\right]  \right]  \right] \\
&  =\left(  \left(  \gamma_{1342}\overset{\cdot}{\underset{13}{-}}%
\gamma_{1324}\right)  \overset{\cdot}{\underset{1}{-}}\left(  \gamma
_{1423}\overset{\cdot}{\underset{13}{-}}\gamma_{1243}\right)  \right)
\overset{\cdot}{-}\left(  \left(  \gamma_{3421}\overset{\cdot}{\underset
{13}{-}}\gamma_{3241}\right)  \overset{\cdot}{\underset{1}{-}}\left(
\gamma_{4231}\overset{\cdot}{\underset{13}{-}}\gamma_{2431}\right)  \right)
\end{align*}%
\begin{align*}
&  \left[  X_{1},\left[  X_{4},\left[  X_{2},X_{3}\right]  \right]  \right] \\
&  =\left(  \left(  \gamma_{1423}\overset{\cdot}{\underset{14}{-}}%
\gamma_{1432}\right)  \overset{\cdot}{\underset{1}{-}}\left(  \gamma
_{1234}\overset{\cdot}{\underset{14}{-}}\gamma_{1324}\right)  \right)
\overset{\cdot}{-}\left(  \left(  \gamma_{4231}\overset{\cdot}{\underset
{14}{-}}\gamma_{4321}\right)  \overset{\cdot}{\underset{1}{-}}\left(
\gamma_{2341}\overset{\cdot}{\underset{14}{-}}\gamma_{3241}\right)  \right)
\end{align*}%
\begin{align*}
&  \left[  X_{2},\left[  X_{1},\left[  X_{4},X_{3}\right]  \right]  \right] \\
&  =\left(  \left(  \gamma_{2143}\overset{\cdot}{\underset{21}{-}}%
\gamma_{2134}\right)  \overset{\cdot}{\underset{1}{-}}\left(  \gamma
_{2431}\overset{\cdot}{\underset{21}{-}}\gamma_{2341}\right)  \right)
\overset{\cdot}{-}\left(  \left(  \gamma_{1432}\overset{\cdot}{\underset
{21}{-}}\gamma_{1342}\right)  \overset{\cdot}{\underset{1}{-}}\left(
\gamma_{4312}\overset{\cdot}{\underset{21}{-}}\gamma_{3412}\right)  \right)
\end{align*}%
\begin{align*}
&  \left[  X_{2},\left[  X_{3},\left[  X_{1},X_{4}\right]  \right]  \right] \\
&  =\left(  \left(  \gamma_{2314}\overset{\cdot}{\underset{23}{-}}%
\gamma_{2341}\right)  \overset{\cdot}{\underset{1}{-}}\left(  \gamma
_{2143}\overset{\cdot}{\underset{23}{-}}\gamma_{2413}\right)  \right)
\overset{\cdot}{-}\left(  \left(  \gamma_{3142}\overset{\cdot}{\underset
{23}{-}}\gamma_{3412}\right)  \overset{\cdot}{\underset{1}{-}}\left(
\gamma_{1432}\overset{\cdot}{\underset{23}{-}}\gamma_{4132}\right)  \right)
\end{align*}%
\begin{align*}
&  \left[  X_{2},\left[  X_{4},\left[  X_{3},X_{1}\right]  \right]  \right] \\
&  =\left(  \left(  \gamma_{2431}\overset{\cdot}{\underset{24}{-}}%
\gamma_{2413}\right)  \overset{\cdot}{\underset{1}{-}}\left(  \gamma
_{2314}\overset{\cdot}{\underset{24}{-}}\gamma_{2134}\right)  \right)
\overset{\cdot}{-}\left(  \left(  \gamma_{4312}\overset{\cdot}{\underset
{24}{-}}\gamma_{4132}\right)  \overset{\cdot}{\underset{1}{-}}\left(
\gamma_{3142}\overset{\cdot}{\underset{24}{-}}\gamma_{1342}\right)  \right)
\end{align*}%
\begin{align*}
&  \left[  X_{3},\left[  X_{1},\left[  X_{2},X_{4}\right]  \right]  \right] \\
&  =\left(  \left(  \gamma_{3124}\overset{\cdot}{\underset{31}{-}}%
\gamma_{3142}\right)  \overset{\cdot}{\underset{1}{-}}\left(  \gamma
_{3241}\overset{\cdot}{\underset{31}{-}}\gamma_{3421}\right)  \right)
\overset{\cdot}{-}\left(  \left(  \gamma_{1243}\overset{\cdot}{\underset
{31}{-}}\gamma_{1423}\right)  \overset{\cdot}{\underset{1}{-}}\left(
\gamma_{2413}\overset{\cdot}{\underset{31}{-}}\gamma_{4213}\right)  \right)
\end{align*}%
\begin{align*}
&  \left[  X_{3},\left[  X_{2},\left[  X_{4},X_{1}\right]  \right]  \right] \\
&  =\left(  \left(  \gamma_{3241}\overset{\cdot}{\underset{32}{-}}%
\gamma_{3214}\right)  \overset{\cdot}{\underset{1}{-}}\left(  \gamma
_{3412}\overset{\cdot}{\underset{32}{-}}\gamma_{3142}\right)  \right)
\overset{\cdot}{-}\left(  \left(  \gamma_{2413}\overset{\cdot}{\underset
{32}{-}}\gamma_{2143}\right)  \overset{\cdot}{\underset{1}{-}}\left(
\gamma_{4123}\overset{\cdot}{\underset{32}{-}}\gamma_{1423}\right)  \right)
\end{align*}%
\begin{align*}
&  \left[  X_{3},\left[  X_{4},\left[  X_{1},X_{2}\right]  \right]  \right] \\
&  =\left(  \left(  \gamma_{3412}\overset{\cdot}{\underset{34}{-}}%
\gamma_{3421}\right)  \overset{\cdot}{\underset{1}{-}}\left(  \gamma
_{3124}\overset{\cdot}{\underset{34}{-}}\gamma_{3214}\right)  \right)
\overset{\cdot}{-}\left(  \left(  \gamma_{4123}\overset{\cdot}{\underset
{34}{-}}\gamma_{4213}\right)  \overset{\cdot}{\underset{1}{-}}\left(
\gamma_{1243}\overset{\cdot}{\underset{34}{-}}\gamma_{2143}\right)  \right)
\end{align*}%
\begin{align*}
&  \left[  X_{4},\left[  X_{1},\left[  X_{3},X_{2}\right]  \right]  \right] \\
&  =\left(  \left(  \gamma_{4132}\overset{\cdot}{\underset{41}{-}}%
\gamma_{4123}\right)  \overset{\cdot}{\underset{1}{-}}\left(  \gamma
_{4321}\overset{\cdot}{\underset{41}{-}}\gamma_{4231}\right)  \right)
\overset{\cdot}{-}\left(  \left(  \gamma_{1324}\overset{\cdot}{\underset
{41}{-}}\gamma_{1234}\right)  \overset{\cdot}{\underset{1}{-}}\left(
\gamma_{3214}\overset{\cdot}{\underset{41}{-}}\gamma_{2314}\right)  \right)
\end{align*}%
\begin{align*}
&  \left[  X_{4},\left[  X_{2},\left[  X_{1},X_{3}\right]  \right]  \right] \\
&  =\left(  \left(  \gamma_{4213}\overset{\cdot}{\underset{42}{-}}%
\gamma_{4231}\right)  \overset{\cdot}{\underset{1}{-}}\left(  \gamma
_{4132}\overset{\cdot}{\underset{42}{-}}\gamma_{4312}\right)  \right)
\overset{\cdot}{-}\left(  \left(  \gamma_{2134}\overset{\cdot}{\underset
{42}{-}}\gamma_{2314}\right)  \overset{\cdot}{\underset{1}{-}}\left(
\gamma_{1324}\overset{\cdot}{\underset{42}{-}}\gamma_{3124}\right)  \right)
\end{align*}%
\begin{align*}
&  \left[  X_{4},\left[  X_{3},\left[  X_{2},X_{1}\right]  \right]  \right] \\
&  =\left(  \left(  \gamma_{4321}\overset{\cdot}{\underset{43}{-}}%
\gamma_{4312}\right)  \overset{\cdot}{\underset{1}{-}}\left(  \gamma
_{4213}\overset{\cdot}{\underset{43}{-}}\gamma_{4123}\right)  \right)
\overset{\cdot}{-}\left(  \left(  \gamma_{3214}\overset{\cdot}{\underset
{43}{-}}\gamma_{3124}\right)  \overset{\cdot}{\underset{1}{-}}\left(
\gamma_{2134}\overset{\cdot}{\underset{43}{-}}\gamma_{1234}\right)  \right)
\end{align*}

\end{proof}

\end{document}